\theoremstyle{plain}
\newtheorem{axiom}{Axiom}
\newtheorem{claim}[axiom]{Claim}
\newtheorem{theorem}{Theorem}[section]
\newtheorem{lemma}[theorem]{Lemma}
\theoremstyle{remark}
\newtheorem{rem}[theorem]{Remark}
\newtheorem{definition}[theorem]{Definition}
\newtheorem{prop}[theorem]{Proposition}
\newtheorem{cor}[theorem]{Corollary}
\newtheorem*{example}{Example}
\def \a{\alpha}   \def \d{\delta}
\def \t{\theta}   \def \e{\epsilon}
 \def \l{\lambda}  \def \o{\omega}
\def \O{\Omega} \def \D{\Delta}
\def \L{\Lambda}
\begin{document}

\begin{frontmatter}
\title{Ergodicity and Mixing of invariant capacities and applications}
%\title{A sample article title with some additional note\thanksref{t1}}
\runtitle{Ergodicity and Mixing of invariant capacities and applications}
%\thankstext{T1}{A sample additional note to the title.}

\begin{aug}
%%%%%%%%%%%%%%%%%%%%%%%%%%%%%%%%%%%%%%%%%%%%%%%
%% Only one address is permitted per author. %%
%% Only division, organization and e-mail is %%
%% included in the address.                  %%
%% Additional information can be included in %%
%% the Acknowledgments section if necessary. %%
%% ORCID can be inserted by command:         %%
%% \orcid{0000-0000-0000-0000}               %%
%%%%%%%%%%%%%%%%%%%%%%%%%%%%%%%%%%%%%%%%%%%%%%%
\author[A]{\fnms{Chunrong}~\snm{Feng}\ead[label=e1]{chunrong.feng@durham.ac.uk}\orcid{0000-0001-5244-4664}},
\author[B]{\fnms{Wen}~\snm{Huang}\ead[label=e2]{wenh@mail.ustc.edu.cn}\orcid{0000-0003-1074-1331}}
\author[C,A,B]{\fnms{Chunlin}~\snm{Liu}\ead[label=e3]{chunlinliu@mail.ustc.edu.cn} \orcid{0000-0001-6277-013X}}

\author[A,D]{\fnms{Huaizhong}~\snm{Zhao}\ead[label=e4]{huaizhong.zhao@durham.ac.uk}\orcid{0000-0002-8873-2040}}
%%%%%%%%%%%%%%%%%%%%%%%%%%%%%%%%%%%%%%%%%%%%%%
%% Addresses                                %%
%%%%%%%%%%%%%%%%%%%%%%%%%%%%%%%%%%%%%%%%%%%%%%
\address[A]{Department of Mathematical Sciences, Durham University, DH1 3LE, United Kingdom
	%\printead[presep={,\ }]
	%{e1,e4}
}

\address[B]{School of Mathematical Sciences, University of Science and Technology of China, Hefei, Anhui, 230026, P.R. China
	%\printead[presep={,\ }]
	%{e2,e3}
}
\address[C]{School of Mathematical Sciences, Dalian University of Technology, Dalian, 116024, P.R. China
	%\printead[presep={,\ }]
	%{e2,e3}
}
\address[D]{Research Center for Mathematics and Interdisciplinary Sciences, Shandong University, Qingdao 266237, China 
}
\address{
	\printead[presep={\ }]{e1,e2,e3,e4}
}
\end{aug}

\begin{abstract}
	We introduce the notion of common conditional expectation to investigate Birkhoff's ergodic theorem and subadditive ergodic theorem for invariant upper probabilities. If in addition, the upper probability is ergodic, we construct an invariant probability to characterize the limit of the ergodic mean.  Moreover, this skeleton probability is the unique ergodic  probability in the core of the upper probability, that is equal to all probabilities in the core on all invariant sets.
We have the following applications of these two theorems:\\
$\bullet$  provide a  strong law of large numbers for ergodic stationary sequence on upper probability spaces;\\
$\bullet$   prove the multiplicative ergodic theorem on  upper probability spaces;\\
$\bullet$ establish a criterion for the ergodicity of upper probabilities in terms of independence.

Furthermore, we introduce and study weak mixing for capacity preserving systems. Using the skeleton idea, we also provide several characterizations of weak mixing for invariant upper probabilities.

Finally, we provide  examples of ergodic and weakly mixing capacity preserving systems. As applications, we obtain new results in the classical ergodic theory. e.g. in characterizing dynamical properties on measure preserving systems, such as weak mixing, periodicity. Moreover, we use our results in the nonlinear theory to obtain the asymptotic independence, Birkhoff's type ergodic theorem, subadditive ergodic theorem, and multiplicative ergodic theorem for non-invariant probabilities. 
\end{abstract}

\begin{keyword}[class=MSC]
\kwd[Primary ]{60A10}
\kwd{28A12}
\kwd[; secondary ]{28D05}\kwd{37A25}
\end{keyword}

\begin{keyword}
\kwd{capacity; Choquet integral; weak mixing; law of large numbers; ergodic theorem for non-invariant probability}
\end{keyword}

\end{frontmatter}
%%%%%%%%%%%%%%%%%%%%%%%%%%%%%%%%%%%%%%%%%%%%%%
%% Please use \tableofcontents for articles %%
%% with 50 pages and more                   %%
%%%%%%%%%%%%%%%%%%%%%%%%%%%%%%%%%%%%%%%%%%%%%%
%\tableofcontents

	\tableofcontents

	\section{Introduction}
Ergodic theory is a branch of mathematics that focuses on the behavior of a given measure preserving system  $(\O,\mathcal{F},P,T)$, where  $(\O,\mathcal{F},P)$ is a probability space (i.e., $\O$ is a nonempty set, $\mathcal{F}$ is a $\sigma$-algebra on $\O$, and $P$ is a probability on $\mathcal{F}$), and $T:\O\to\O$ is a measurable transformation such that $P(T^{-1}A)=P(A)$ for any $A\in\mathcal{F}$.  One of the key theorems in ergodic theory is Birkhoff's Ergodic Theorem \cite{Birkhoff1}, which provided a rigorous mathematical framework to investigate  the Boltzmann Ergodic Hypothesis, that is, for a closed system, the time averages of a physical quantity over long periods would converge to the ensemble average.  Furthermore, it has many connections with other fields apart from ergodic theory, for example, number theory (c.f. Einsiedler and Ward \cite{Ward2011} and Furstenberg \cite{Furstenberg1981}), stationary process  (c.f. Doob \cite{Doob1953}), and harmonic analysis  (c.f. Rosenblatt and Wierdl \cite{Rosenblatt1995}).  
Afterward, in order to address the conjecture for subadditive stochastic processes raised by Hammersley and Welsh \cite{HammersleyWelsh1965}, Kingman \cite{Kingman1968,Kingman1973} extended Birkhoff's Ergodic Theorem to subadditive sequences, which became known as the subadditive ergodic theorem.  Meanwhile, it also has many other applications, for example, the study of the multiplicative ergodic theorem by Oseledec \cite{Oseledec1968}. 

As research in ergodic theory has progressed, many generalizations and applications of these two theorems have been obtained. However, a majority of this research has focused on measure preserving systems. In real-world scenarios, it is often the case that we cannot find an ideal situation where the probability can be exactly determined. For example, it has been shown that the classical probability theory on measurable space may not be sufficient for modelling such situations, as in economics  (c.f. Billot \cite{BILLOT199275}, Marinacci and  Montrucchio\cite{MM}, and Schmeidler \cite{MR999273}) and statistics (c.f.  Walley \cite{MR1145491}). To address this challenge, capacities (or nonadditive probabilities) and nonlinear expectations are used as a tool for modeling heterogeneous environments, such as financial markets where biased beliefs of future price movements drive the decision of stock-market participants and create ambiguous volatility. 
Moreover, the following example is well-known in  number theory.
\begin{example}\label{ex:number theory}
	Recall the definition of upper density for a subset $A$ of $\mathbb{Z}$, given by $$\bar{d}(A)=\limsup_{n\to\infty}\frac{1}{2n+1}|A\cap [-n,n]|,$$ where $|A|$ denotes the number of elements of $A$. Let $T:\mathbb{Z}\to\mathbb{Z}, x\mapsto x+1$ and $2^\mathbb{Z}$ be the family consisting of all subsets of $\mathbb{Z}$. Then $(\mathbb{Z}, 2^{\mathbb{Z}}, \bar{d}, T)$ is a capacity preserving system, but not a measure preserving system, as $\bar{d}$ is not additive.
\end{example}

Thus, the study of dynamical systems on a capacity space is a natural and necessary extension. Due to the loss of the additivity, many classical results in probability theory and ergodic theory may fail. So far, there are only two main works on invariant capacities  by  Cerreia-Vioglio, Maccheroni and Marinacci \cite{CMM2016} and Feng, Wu and Zhao \cite{FWZ2020}, and one work on invariant sublinear expectations by Feng and Zhao  \cite{FengZhao2021}. In this paper, we focus on invariant capacities.
Following ideas in classical ergodic theory, we call $(\O,\mathcal{F},\mu,T)$ a capacity preserving system if $(\O,\mathcal{F})$ is a measurable space, $T:\O\to \O$ is a measurable transformation and $\mu$ is a $(T\text{-})$invariant capacity.

Firstly, we introduce the concept of common conditional expectation (see Definition \ref{defn:common conditional expectation}) to study ergodic theory on capacity spaces. In particular, given a standard  measurable space $(\Omega,\mathcal{F})$, and a measurable transformation $T:\Omega\to\Omega$, we prove that for any bounded $\mathcal{F}$-measurable function $f$, there exists a  bounded $\mathcal{I}$-measurable  function $g_f:\Omega\to\mathbb{R}$ such that for any $T$-invariant probability $P$ on $(\O,\mathcal{F})$, 
\begin{equation}\label{eq:123}
	\mathbb{E}_P(f\mid \mathcal{I})=g_f,\text{ } P\text{-almost surely},
\end{equation}
where $\mathcal{I}=\{A\in\mathcal{F}:T^{-1}A=A\}$ and $\mathbb{E}_P(f\mid \mathcal{I})$ is the conditional expectation of $f$ with respect to the $\sigma$-algebra $\mathcal{I}$ and the probability $P$. Note that in the nonlinear theory, there is no general definition for conditional expectations apart from some special cases, for example, conditional expectations with respect to $G$-Brownian motions introduced by Peng \cite{Peng2007}. However, the common conditional expectation with respect to $\mathcal{I}$ provides a method to define the conditional expectation for some upper expectations with respect to a special $\sigma$-algebra. More specifically, given a subset $\L$ of invariant probabilities, let $\hat{\mathbb{E}}=\sup_{P\in\L}\mathbb{E}_P$ be the upper expectation with respect to $\L$. Then we can define the nonlinear conditional expectation for $\hat{\mathbb{E}}$ with respect to $\mathcal{I}$ by 
\[\hat{\mathbb{E}}(f\mid \mathcal{I})=g_f\text{ for any bounded $\mathcal{F}$-measurable function }f.\]
It is easy to verify that  $\hat{\mathbb{E}}[1_A\cdot\hat{\mathbb{E}}[f|\mathcal{I}]]=\hat{\mathbb{E}}[1_A\cdot f]$ for any $A\in\mathcal{I}$.
Moreover,  we show that for any upper probability $V$, if $V$ is $T$-invariant then 
\begin{equation*}
	V(\O\setminus\{\o\in\O:\lim_{n\to\infty}\frac{1}{n}\sum_{i=0}^{n-1}f(T^i\o)=g_f(\o)\})=0,
\end{equation*}
where $g_f$ is the common conditional expectation obtained in \eqref{eq:123}  (see Theorem \ref{thm:common}).

Feng, Wu, and Zhao \cite{FWZ2020} introduced a definition of ergodicity of capacities  to describe the inability to decompose the system into disjoint subsystems inspired by the ergodicity of sublinear expectations \cite{FengZhao2021} as follows:
Given a capacity preserving system  $(\O,\mathcal{F},\mu,T)$, $\mu$ is said to be ergodic (with respect to $T$) if for any $B\in\mathcal{I}$ the following
two conditions hold:
\begin{longlist}
	\item 	 $\mu(B)=0$ or $\mu(B)=1$

	\item $\mu(B)=0$ or $\mu(\O\setminus B)=0.$ 
\end{longlist}
They provided a number of equivalent characterizations of the ergodicity, especially, in terms of the spectral properties of transformation operator whose eigenvalue $1$ being simple was proved. This leads to a result   that  an invariant upper probability $V$ is ergodic with respect to $T$ if and only if for any bounded $\mathcal{F}$-measurable function $f$, there exists a constant $c_f\in\mathbb{R}$ such that
$
V(\O\setminus\{\o\in\O:\lim _{n \rightarrow \infty} \frac{1}{n} \sum_{i=0}^{n-1} f(T^i \o)=c_f\})=0.
$
When $V$ is a probability, we know that $c_f=\int f dV$ by  Birkhoff's ergodic theorem for probabilities.
However, the uncertainty of the upper probability $V$ results in the loss of information of this constant $c_f$. In this paper,  we demonstrate what this constant is. In fact, one of the main results can be stated as follows:\\
\\
\textbf{The first main result (Theorem \ref{lem:ergodic of core} and Theorem \ref{thm:main Birk})}.
Let $(\O,\mathcal{F},V,T)$ be a capacity preserving system, where $V$ is an  upper probability.  Then $V$ is ergodic with respect to $T$ if and only if there exists a unique ergodic probability $Q$ on $\mathcal{F}$ such that $Q(A)\le V(A)$ for any $A\in\mathcal{F}$ and $Q(B)=V(B)$ for any $B\in\mathcal{I}$. Moreover, these are equivalent to that 
there exists an ergodic probability $Q$ on $\mathcal{F}$ with respect to $T$ such that for any  $f\in L^1(\O,\mathcal{F},Q)$,
$$
V(\O\setminus\{\o\in\O:\lim _{n \rightarrow \infty} \frac{1}{n} \sum_{i=0}^{n-1} f(T^{i} \o)=\int f dQ\})=0.
$$
This suggests that the above requirement for  ergodicity is not redundant. Meanwhile, the definition of ergodicity that $\mu(\mathcal{I})\in\{0,1\}$ suggested by \cite{CMM2016} cannot imply that the uniqueness of such $Q$ and thus the irreducibility of the dynamical system cannot hold. The invariant probability $Q$ is called invariant skeleton and satisfies $P|_{\mathcal{I}}=Q|_{\mathcal{I}}$ for any $P\in\operatorname{core}(V)$.

Recall that  the strong law of large numbers for processes on probability spaces can be obtained from Birkhoff's ergodic theorem.
We extend this result to upper probability space, that is, for any ergodic stationary process $\{Y_n\}_{n\in\mathbb{N}}$ on an upper probability space $(\O,\mathcal{F},V)$,  there exists a probability $Q$ on $(\O,\mathcal{F})$ such that $\lim_{n\to\infty}\frac{1}{n}\sum_{i=1}^nY_i=\int Y_1 dQ$ almost surely (\textbf{see Theorem \ref{thm:application for process}}).

As another application of the first main result, motivated by the ergodic theory of measure preserving systems,  we provide a characterization for ergodicity of upper probabilities in terms of independence as follows (\textbf{see Theorem \ref{thm:ergodicity of upper}}):
Let $(\O,\mathcal{F},V,T)$ be a capacity preserving system, where $V$ is an upper probability. Then
$V$ is ergodic if and only if there exists an ergodic probability $Q$ on $\mathcal{F}$ such that $V(A)=Q(A)$ for any $A\in\mathcal{I}$, and 
$\lim_{n\to\infty}\int\frac{1}{n}\sum_{i=0}^{n-1} f\cdot(g\circ T^i)dV=\int fdV\int gdQ$
for any bounded $\mathcal{F}$-measurable functions $f,g$ with $g\ge 0$, where the integral with respect to the capacity is the Choquet integral.

In addition to ergodicity, the concept of mixing plays a fundamental role in understanding the behavior of measure preserving systems. Weak mixing, a type of mixing that exhibits a certain level of randomness and unpredictability, is an important tool for understanding the properties of dynamical systems and has connections to the theory of unique ergodicity and rigidity (refer to the book by Glasner \cite{Glanser2003}). Meanwhile, the study of weak mixing has important implications for a range of research fields, including number theory and combinatorics  (refer to books by  Einsiedler and Ward \cite{Ward2011} and Furstenberg \cite{Furstenberg1981}). Therefore, studying weak mixing for capacity preserving systems is also critical for capacities.
In this paper, we introduce the definition of weak mixing for capacity preserving systems and study its properties. It is well known that an invariant probability is weakly mixing if and only if the product probability of itself is ergodic. Naturally, we want to have a similar characterization for capacities. However, Carath\'eodory's extension theorem from an algebra  to a $\sigma$-algebra is not true for capacities (see \cite[Chapter 12]{Denneberg2000} for example). To address this issue, we provide a means to define the product of two upper probabilities and show that the unique invariant skeleton of a weakly mixing upper probability  is a weakly mixing probability (\textbf{see Lemma \ref{lem:core of w.m.}}). Moreover, we prove that weak mixing for an invariant upper probability is equivalent to the ergodicity of the product upper probability of itself (\textbf{see Theorem \ref{thm:ergodic by independ.}}). More specifically, let $(\O,\mathcal{F},V,T)$ be a capacity preserving system, where $V$ is an upper probability. Then the following statements are equivalent:
\begin{longlist}
	\item $V$ is weakly mixing with respect to $T$;
	\medskip
	\item for any capacity preserving system $(\O',\mathcal{F}',V',T')$ with $V'$ being an ergodic upper probability, $V\times V'$ is ergodic with respect to $T\times T'$;
	\medskip
	\item $V\times V$ is ergodic with respect to $T\times T$.
\end{longlist}

As a corollary, we extend  Birkhoff's ergodic theorem along
polynomial subsequences to weakly mixing upper probability spaces (\textbf{see Corollary \ref{thm:Birkhoff along sequence}}). Namely, let $(\O, \mathcal{F}, V, T)$ be an weakly mixing upper probability space and let $p(x)$ be a polynomial with integer coefficients. Then there exists an weakly mixing probability $Q$ on $\mathcal{F}$ such that for any  $f\in L^r(\O,\mathcal{F},Q)$, $r>1$,
$$
V(\O\setminus\{\o\in\O:\lim _{n \rightarrow \infty} \frac{1}{n} \sum_{i=1}^n f(T^{p(i)} \o)=\int fdQ\})=0.
$$

We also provide some examples of ergodic and weak mixing capacity preserving systems by concave distortion of ergodic  and weakly mixing measure preserving systems. Meanwhile, we show that a subadditive weakly mixing capacity  must be ergodic, and provide examples to show that the reverse is not true. As applications, for an ergodic measure preserving system $(\O,\mathcal{F},P,T)$,  we establish lower and upper bounds to the limit $\lim_{n\to\infty}\frac{1}{n}\sum_{i=0}^{n-1}P^{1/2}(B\cap T^{-i}C)$ for any $B,C\in\mathcal{F}$, and utilize this limit to obtain new characterizations of the system to being 
\begin{longlist}
	\item weak mixing (\textbf{see Proposition \ref{prop1}}): $P$ is weakly mixing if and only if \[\lim_{n\to\infty}\frac{1}{n}\sum_{i=0}^{n-1}P^{1/2}(B\cap T^{-i}C)=P^{1/2}(B)P^{1/2}(C)\text{ for any }B,C\in\mathcal{F};\] 
	\item periodic (\textbf{see Proposition \ref{prop2}}): there exists $B\in\mathcal{F}$ with $P(B)>0$ such that for any $C\subset B$,
	\[\lim_{n\to\infty}\frac{1}{n}\sum_{i=0}^{n-1}P^{1/2}(B\cap T^{-i}C)=P^{1/2}(B)P(C)\] if and only if
	there exist $r\in\mathbb{N}$ and distinct points $\o_1,\ldots,\o_r\in\O$ such that $P(\{\o_i\})=\frac{1}{r}$, $i=1,2,\ldots,r$.
\end{longlist}

Further applications of the nonlinear theory are for  non-invariant probabilities (linear), namely, for a probability space $(\O,\mathcal{F},P)$ and an invertible measurable map $T:\O\to\O$. The programme of studying an ergodic theory for non-invariant probabilities was initiated in Hurewicz \cite{Hurewicz1944}. The main result obtained was Birkhoff's law of large  numbers about the convergence of pathwise average of a function along $P$ almost every trajectory. In this paper, under the help of the nonlinear ergodic theory of upper probabilities, we push the study of this problem further, of which the main results are briefly described as follows. Suppose that $\lim_{n \rightarrow \infty}\frac{1}{n}\sum_{i=0}^{n-1}P\circ T^{-i}$ exists,  i.e., the limit $\lim_{n \rightarrow \infty}\frac{1}{n}\sum_{i=0}^{n-1}P(T^{-i}A)$ exists for any $A\in\mathcal{F}$. Denote the limit by $Q(A)$ for any $A\in\mathcal{F}$. By Vitali-Hahn-Saks theorem  (see Lemma \ref{lem:VHStheorem}), $Q$ is an invariant probability. Moreover, if $Q$ is 
\begin{longlist}
	\item  ergodic (\textbf{see Theorem \ref{thm:app1}}) then 
	for any $f\in L^1(\O,\mathcal{F},Q)$,
	\begin{equation}\label{eq1.1}
		\lim_{n\to\infty}\frac{1}{n}\sum_{i=0}^{n-1}f(T^{i}\o)=\int fdQ\text{ for }P\text{-a.s. } \o\in\O,
	\end{equation}
	and \begin{equation}\label{eq1.2}
		\lim_{n\to\infty}\frac{1}{n}\sum_{i=0}^{n-1}P(B\cap T^{-i} C)=P(B)Q(C)\text{ for any }B,C\in\mathcal{F};
	\end{equation}
	\item  weakly mixing (\textbf{see Theorem \ref{thm:app2}}) then 
	\begin{equation}\label{eq1.3}
		\lim_{n\to\infty}\frac{1}{n}\sum_{i=0}^{n-1}|P(B\cap T^{-i} C)-P(B)Q(C)|^2=0\text{ for any }B,C\in\mathcal{F},
	\end{equation}
	and  for any $f\in B(\O,\mathcal{F})$, there exists a subset $J=J_f$ of $\mathbb{N}$ with $D(J)=0$ such that 
	\begin{equation}\label{eq1.4}
		\lim_{n\notin J,n\to\infty}\int f\circ T^ndP=\int fdQ.
	\end{equation}
\end{longlist}
Moreover, we can prove that \eqref{eq1.1} and \eqref{eq1.2} are equivalent, and are both equivalent to $Q$ being ergodic; and \eqref{eq1.3} and \eqref{eq1.4} are equivalent, and are both equivalent to $Q$ being weakly mixing.

Finally, as a further extension of Birkhoff's ergodic theorem for capacities, we extend Kingman's subadditive ergodic theorem to upper probability spaces by taking advantage of the common conditional expectation.\\
\textbf{The second main result (Theorem \ref{thm:main theorem})}.
Let $(\Omega,\mathcal{F})$ be a standard measurable space,  $T:\O\to\O$ be a measurable transformation, and $V$ be an invariant upper probability. Suppose that $\{f_n\}_{n\in\mathbb{N}}$ is a sequence of $\mathcal{F}$-measurable functions satisfying the following conditions:
\begin{longlist}
	\item there exists $\l >0$ such that $-\l n\le f_n(\o)\le \l n$ for any $n\in\mathbb{N}$, and $\o\in\O$;
	\medskip
	\item for each $m,n\in\mathbb{N}$, $V(\O\setminus \{\o\in\O:f_{n+m}(\o)\le f_n(\o)+f_m(T^n\o)\})=0.$ 
\end{longlist} 
Then there exists a bounded $T$-invariant $\mathcal{F}$-measurable function $f^{*}$ such that 
\[V(\O\setminus\{\o\in\O:\lim_{n\to\infty}\frac{1}{n}f_n(\o)=f^{*}(\o)\})=0.\]
Note that $f^*$ can be represented by the common conditional expectations of $\{f_n\}_{n\in\mathbb{N}}$. 

As an application of this theorem, we  extend Furstenberg-Kesten theorem \cite{FurstenbergKesten1960} from probability spaces to upper probability spaces. Moreover, we use this extension to prove the multiplicative ergodic theorem on upper probability spaces (\textbf{see Theorem \ref{thm:MET}}). Meanwhile, we also provide the subadditive ergodic theorem \textbf{(see Theorem \ref{thm:sub for noninvariant})} and  the multiplicative ergodic theorem \textbf{(see Theorem \ref{thm:met for non})} for a class of non-invariant probabilities.
\medskip

The structure of the paper is as follows. In Section \ref{sec:prelimilaries}, we recall some basic notion and prove some basic properties that we use in this paper. In Section \ref{sec:Birk}, we study  Birkhoff's ergodic theorem for upper probabilities, and prove a strong law of large numbers for processes on capacity spaces. 
In Section \ref{sec:concave}, we provide  characterizations for ergodicity of upper probabilities in terms of independence.   In Section \ref{sec:weak mixing}, we introduce the notion of weak mixing for capacity preserving systems, and provide a number of their characterizations. In Section \ref{sec:ex and open}, we will study some examples including some applications on measure preserving systems. 	In Section \ref{sec:subadditive}, we investigate subadditive ergodic theorem for upper probabilities, and prove the multiplicative ergodic theorem on upper probability spaces.

\section{Preliminaries}\label{sec:prelimilaries}
In this paper, we denote by $\mathbb{N}$, $\mathbb{Z}_+$, $\mathbb{Z}$, $\mathbb{R}$, $\mathbb{R}_+$ and $\mathbb{C}$ the set of all natural numbers, natural numbers with $0$, integers, real numbers, positive real numbers and complex numbers, respectively.

Let $(\O,\mathcal{F})$ be a measurable space. 
Denote by $B(\O,\mathcal{F})$ the set of all bounded and $\mathcal{F}$-measurable functions from $\O$ to $\mathbb{R}$. For a subset $A$ of $\O$,  write $\O\setminus A$ as $A^c$. 

If $\O$ is a topological space, then we denote by $\mathcal{B}(\O)$ the Borel $\sigma$-algebra on $\O$.  A measurable space $(\O,\mathcal{F})$ is said to be standard if there exists a complete and  separable metric space $X$ such that $(\O,\mathcal{F})$ is isomorphic to $(X,\mathcal{B}(X))$, that is, there exists a bijection $f:\O\to X$ such that for any $E\subset \O$, we have $E\in\mathcal{F}$ if and only if $f(E)\in \mathcal{B}(X)$. Note that in this paper, unless stated otherwise, we do not require measurable space $(\O,\mathcal{F})$ is standard.
\subsection{Set functions and Choquet integrals}\label{sec:set functions}
Let $(\Omega, \mathcal{F})$ be a measurable space. Recall a set function $\mu: \mathcal{F} \rightarrow[0,1]$ is
\begin{itemize}
	\item a capacity if $\mu(\emptyset)=0, \mu(\Omega)=1$, and $\mu(A) \leq \mu(B) $ for all $A, B \in \mathcal{F}$ such that $A \subset B$;
	\item concave if $\mu(A \cup B)+\mu(A \cap B) \leq \mu(A)+\mu(B) $ for all $A, B \in \mathcal{F}$;
	\item subadditive if $\mu(A \cup B) \leq \mu(A)+\mu(B) $ for all $A, B \in \mathcal{F}$ with $A \cap B=\emptyset$;
	\item additive if $\mu(A \cup B) = \mu(A)+\mu(B) $ for all $A, B \in \mathcal{F}$ with $A \cap B=\emptyset$;
	\item $\sigma$-additive if $\mu(\cup_{n=1}^\infty A_n) = \sum_{n=1}^\infty \mu(A_n)$ for all $\{A_n\}_{n\in\mathbb{N}} \subset\mathcal{F}$ with $A_i \cap A_j=\emptyset$ for any $i\neq j$;
	\item continuous from below if $\lim _{n \rightarrow \infty} \mu\left(A_n\right)=1$ for $A_n \uparrow \O$;
	\item continuous from above if $\lim _{n \rightarrow \infty} \mu\left(A_n\right)=0$ for $A_n \downarrow \emptyset$;
	\item continuous if it is both continuous from below and above;
	\item a probability if it is a $\sigma$-additive capacity.
\end{itemize}
Denote by $\D(\O,\mathcal{F})$ the set of all additive capacities on $(\O,\mathcal{F})$, and by $\D^{\sigma}(\O,\mathcal{F})$ the set of all probabilities on $(\O,\mathcal{F})$. We endow both sets with the weak* topology\footnote{Recall that a net $\left\{P_\alpha\right\}_{\alpha \in I}$ converges to $P$, in the weak*  topology if and only if $P_\alpha(A) \rightarrow$ $P(A)$ for all $A \in \mathcal{F}$ (see \cite[Page 3382]{CMM2016} for more details).}.   	Given a sub-$\sigma$-algebra $\mathcal{F}'$ of $\mathcal{F}$, denote by $\mu|_{\mathcal{F}'}$ the capacity $\mu$ restricted to $\mathcal{F}'$.
Given  $P\in \D^{\sigma}(\O,\mathcal{F})$, we denote 
\[L^r(\O,\mathcal{F},P)=\{f:\O\to\mathbb{R}:f\text{ is }\mathcal{F}\text{-measurable and }\|f\|_{r,P}:=\left(\int |f|^rdP\right)^r<\infty\},\text{ }r\ge 1\]
and 
\[L^\infty(\O,\mathcal{F},P)=\{f:\O\to\mathbb{R}:f\text{ is }\mathcal{F}\text{-measurable and }\|f\|_{\infty,P}<\infty\},\]
where $\|f\|_{\infty,P}:=\inf\{C\ge0:|f(\o)|\le C\text{ for  }P\text{-a.s. }\o\in\O\}$.

A capacity  is called an  upper probability if there exists a compact subset $\Lambda$ of $\D^{\sigma}(\O,\mathcal{F})$ in the weak* topology such that $V(A)=\max_{P\in\Lambda}P(A)\text{ for any }A\in \mathcal{F}$. In this case, for any $A\in\mathcal{F}$, there exists $P\in\L$ such that $V(A)=P(A)$.
Given a sub-$\sigma$-algebra $\mathcal{F}'$ of $\mathcal{F}$,  it is easy to check that $V|_{\mathcal{F}'}$ is also an  upper  probability. 

The core of a capacity $\mu$ is  defined by 
\begin{equation}\label{eq:core}
	\operatorname{core}(\mu)=\{P\in\D(\O,\mathcal{F}):P(A)\le \mu(A)\text{ for any }A\in\mathcal{F}\}.
\end{equation}
We remark that for a general capacity $\mu$, $\operatorname{core}(\mu)$ may be empty. If it is not empty, then $\operatorname{core}(\mu)$ is compact in the weak* topology (see \cite[Proposition 4.2]{MM} for example). 

From \cite[Page 3382 and 3383]{CMM2016} and \cite[Lemma 2.2 (ii)]{FWZ2020}, if $V$ is an upper  probability then $V$ is continuous, and hence $\operatorname{core}(V)\subset \D^{\sigma}(\O,\mathcal{F})$. Conversely, if $V$ is a concave capacity continuous from above, then $V$  is an upper probability. Meanwhile, if $V$ is continuous and  $V=\sup_{P\in\operatorname{core}(V)}P$, then $V$ is an upper probability. Indeed, if $V$ is continuous then $\operatorname{core}(V)\subset\D^\sigma(\O,\mathcal{F})$, and then by the compactness of $\operatorname{core}(V)$ we have that $V$ is an upper probability.

\begin{definition}
	In a capacity space $(\O,\mathcal{F},\mu)$, we call that a statement holds for $\mu$-almost surely ($\mu$-a.s. for short) if it holds on a set $A\in\mathcal{F}$ with $\mu(A^c)=0$.
\end{definition}
The following result can be found in \cite[III. 7.2, Theorem 2 and Corollary 8]{DunfordSchwarz1988}.
\begin{lemma}[Vitali-Hahn-Saks]\label{lem:VHStheorem}
	Let $(\O, \mathcal{F})$ be a measurable space, and $\{P_n\}_{n\in\mathbb{N}}$ be a sequence  of probabilities. Suppose that  for any $A\in \mathcal{F}$ the  limits exist $\lim _{n \rightarrow \infty} P_n(A)=Q(A)$. Then $Q$ is a probability. If we further suppose that each $P_n$ is absolutely continuous with respect to the probability $Q$, then the absolute continuity of the $P_n$ with respect to $Q$ is uniform in $n\in\mathbb{N}$, that is, for any $\epsilon>0$ there exists $\d>0$ such that for  any $A\in\mathcal{F}$, if  $Q(A)<\d$ then $ P_n(A)<\e$ for all $n\in\mathbb{N}$. 
\end{lemma}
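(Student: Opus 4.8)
The plan is to establish the two assertions separately, with the analytic core---a Baire category (Nikodym) argument on a measure algebra---carrying both. First observe that the finite additivity of $Q$ is automatic: for disjoint $A,B\in\mathcal{F}$ we have $Q(A\cup B)=\lim_n P_n(A\cup B)=\lim_n[P_n(A)+P_n(B)]=Q(A)+Q(B)$, and clearly $Q(\emptyset)=0$, $Q(\O)=1$, $0\le Q\le 1$. Thus the only content of the first claim is $\sigma$-additivity, which is equivalent to continuity from above at $\emptyset$: whenever $A_k\downarrow\emptyset$ one has $Q(A_k)\to0$. I would reduce this to \emph{uniform} countable additivity of the family $\{P_n\}$, i.e. $\sup_n P_n(A_k)\to0$ for $A_k\downarrow\emptyset$, since $Q(A_k)\le\sup_nP_n(A_k)$.

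To obtain this uniformity, introduce the auxiliary probability $\l:=\sum_{n\ge1}2^{-n}P_n$, which is genuinely $\sigma$-additive (the series converges uniformly and each $P_n$ is $\sigma$-additive) and satisfies $\l(A)=0\iff P_n(A)=0$ for all $n$, so $P_n\ll\l$ for every $n$. On the measure algebra $(\mathcal{F}/\mathcal{N}_\l,\r)$ with $\r(A,B)=\l(A\triangle B)$---a complete metric space---each single map $A\mapsto P_n(A)$ is $\r$-continuous, because absolute continuity of one $\sigma$-additive measure gives the $\e$--$\d$ estimate $\l(A\triangle B)<\d_n\Rightarrow|P_n(A)-P_n(B)|<\e$. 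I would then set $E_m:=\{A:\sup_{j,k\ge m}|P_j(A)-P_k(A)|\le\e\}$; each $E_m$ is $\r$-closed (a sublevel set of a lower semicontinuous function), and $\bigcup_m E_m$ is the whole algebra because $(P_n(A))_n$ is Cauchy for every fixed $A$. By Baire's theorem some $E_{m_0}$ contains a ball $B(A_0,\d)$.

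The decisive (``sliding hump'') step turns this local control into a global modulus of continuity. For any $A$ with $\l(A)<\d$, both $A_0\cup A$ and $A_0\setminus A$ lie in $B(A_0,\d)$, and since $A_0\cup A$ is the disjoint union of $A$ and $A_0\setminus A$ one has $P_j(A)=P_j(A_0\cup A)-P_j(A_0\setminus A)$; subtracting the same identity for $P_k$ gives $|P_j(A)-P_k(A)|\le2\e$ for all $j,k\ge m_0$. Fixing $k=m_0$ and invoking $P_{m_0}\ll\l$ (and shrinking $\d$ to absorb the finitely many indices $n<m_0$, each $\ll\l$) yields $\sup_nP_n(A)\le C\e$ whenever $\l(A)<\d$. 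This is uniform $\l$-absolute continuity, hence uniform countable additivity, so $Q$ is $\sigma$-additive and the first claim follows.

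For the second claim, $Q$ is now a bona fide probability with $P_n\ll Q$ for all $n$, so I would rerun the identical Baire-category-plus-sliding-hump argument verbatim, but on the measure algebra of $Q$ (metric $Q(A\triangle B)$) in place of $\l$. The outcome, $\forall\e\,\exists\d$ with $Q(A)<\d\Rightarrow\sup_nP_n(A)\le C\e$, is exactly the asserted uniform absolute continuity. The main obstacle is the Baire-category step together with the sliding-hump decomposition, which is where the passage from pointwise ($A$-by-$A$) convergence to a uniform modulus genuinely happens; the supporting facts---completeness of the measure algebra and the single-measure $\e$--$\d$ continuity---are routine but must be in place for Baire to apply.
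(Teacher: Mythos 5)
Your proof is correct. The paper itself does not prove this lemma at all --- it simply cites Dunford and Schwartz, III.7.2, Theorem 2 and Corollary 8 --- and the argument you give is essentially the classical proof found there: the Nikodym/Baire-category argument on the complete measure algebra of a dominating measure ($\lambda=\sum_n 2^{-n}P_n$ for the first claim, $Q$ itself for the second), followed by the sliding-hump decomposition $A_0\cup A=(A_0\setminus A)\sqcup A$ to upgrade local equicontinuity at $A_0$ to a uniform modulus at $\emptyset$. All the supporting steps you flag are in order: $P_n$ descends to a well-defined $\rho$-continuous function on $\mathcal{F}/\mathcal{N}_\lambda$ precisely because $P_n\ll\lambda$, the sets $E_m$ are closed as sublevel sets of a supremum of continuous functions, and uniform absolute continuity with respect to $\lambda$ does yield uniform countable additivity and hence the $\sigma$-additivity of the setwise limit $Q$. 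So the proposal supplies a complete, self-contained proof of exactly the statement the paper imports from the literature.
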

Next, we recall  Choquet integral introduced by Choquet \cite{Choquet1955}.
A capacity $\mu$ induces Choquet integral, defined by
\[\int_\O fd\mu=\int_{0}^{\infty}\mu(\{\o\in \O:f(\o)\ge t\})dt+\int_{-\infty}^{0}\left(\mu(\{\o\in \O:f(\o)\ge t\})-1\right)dt\]
for all $\mathcal{F}$-measurable functions $f$, where the integrals on the right-hand side are Lebesgue integrals. If $\mu$ is  additive, then Choquet integral reduces to the standard additive integral.

The following result can be checked by definitions.
\begin{prop}\label{prop:choquet inegral properties}
	Let $(\O,\mathcal{F},\mu)$ be a capacity space, $f$ and $g$ be two $\mathcal{F}$-measurable functions. Then
	\begin{longlist}
		\item (Positive homogeneity): $\int_\O\alpha fd\mu=\alpha \int_\O fd\mu$ for each $\alpha \geq 0$.
		\item (Translation invariance): $\int_\O (f+\alpha 1_{\Omega})d\mu=\int_\O fd\mu+\alpha$ for each $\alpha \in \mathbb{R}$.
		\item (Monotonicity): $\int_\O fd\mu \geq \int_\O gd\mu$ if $f \geq g$.
	\end{longlist}
\end{prop}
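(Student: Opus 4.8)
The plan is to verify all three properties directly from the definition of the Choquet integral, after introducing the auxiliary notation
\[
G_f(t) := \mu(\{\o\in\O : f(\o)\ge t\}),
\]
the nonincreasing level (survival) function of $f$, so that
\[
\int_\O f\,d\mu = \int_0^\infty G_f(t)\,dt + \int_{-\infty}^0 \big(G_f(t) - 1\big)\,dt.
\]
Throughout one may restrict attention to $f,g\in B(\O,\mathcal{F})$ (or more generally to functions for which both integrals on the right are finite), in which case $G_f(t)=1$ for $t$ below $\inf f$ and $G_f(t)=0$ for $t$ above $\sup f$; this guarantees that every integral appearing below is a finite Lebesgue integral and disposes of any well-definedness concern.

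For monotonicity (iii), I would observe that $f\ge g$ forces the nesting $\{f\ge t\}\supseteq\{g\ge t\}$ for every $t\in\mathbb{R}$, whence the monotonicity axiom of the capacity $\mu$ yields $G_f(t)\ge G_g(t)$ pointwise in $t$. Since $G_f\ge G_g$ on all of $\mathbb{R}$, the integrand of each of the two Lebesgue integrals defining the Choquet integral is pointwise larger for $f$ than for $g$; integrating preserves the inequality term by term, and adding the two contributions gives $\int_\O f\,d\mu\ge\int_\O g\,d\mu$.

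For positive homogeneity (i), the case $\alpha=0$ is immediate, so take $\alpha>0$ and use the scaling identity $\{\alpha f\ge t\}=\{f\ge t/\alpha\}$, i.e. $G_{\alpha f}(t)=G_f(t/\alpha)$. Substituting $s=t/\alpha$ (so $dt=\alpha\,ds$) in each of the two integrals — a substitution that maps $[0,\infty)$ and $(-\infty,0]$ onto themselves since $\alpha>0$ — pulls out the factor $\alpha$ and returns $\alpha\int_\O f\,d\mu$.

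Translation invariance (ii) is where the only genuine bookkeeping lies, and is the step I expect to be the main (if modest) obstacle, because here the change of variables shifts the endpoints of integration rather than fixing them. Using $\{f+\alpha\ge t\}=\{f\ge t-\alpha\}$, i.e. $G_{f+\alpha}(t)=G_f(t-\alpha)$, and substituting $s=t-\alpha$ turns the two defining integrals into $\int_{-\alpha}^\infty G_f(s)\,ds$ and $\int_{-\infty}^{-\alpha}\big(G_f(s)-1\big)\,ds$. Comparing these against the integrals for $\int_\O f\,d\mu$, the mismatched regions are exactly the interval between $0$ and $-\alpha$; collecting the difference, the $G_f$ contributions cancel and one is left with $\int_{-\alpha}^0 1\,ds=\alpha$ (taking $\alpha>0$; the case $\alpha<0$ is symmetric, and $\alpha=0$ trivial). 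This confirms $\int_\O (f+\alpha 1_\O)\,d\mu=\int_\O f\,d\mu+\alpha$ and completes the verification.
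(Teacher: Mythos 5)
Your verification is correct and is exactly the route the paper intends: the paper gives no written proof, stating only that the proposition "can be checked by definitions," and your direct computation with the survival function $G_f$ (monotonicity of $\mu$ for (iii), the substitutions $s=t/\alpha$ and $s=t-\alpha$ for (i) and (ii)) is the standard way to carry that out. The bookkeeping in the translation-invariance step, where the leftover interval contributes exactly $\int_{-\alpha}^{0}1\,ds=\alpha$, is handled correctly.
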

If there is no ambiguity, we will omit $\O$, and write $\int_\O$ as $\int$ for simplicity.

The following result provides a dominated convergence theorem in a capacity space with respect to   Choquet integral, which was proved in \cite[Lemma 2.2]{FWZ2020}.
\begin{lemma}\label{lem:dominate convergence}
	Let $\mu$ be a continuous subadditive capacity on $(\O,\mathcal{F})$. For any $\mathcal{F}$-measurable functions $\{f_n\}_{n\in\mathbb{N}},$ $g$ and $h$ with $g\le f_n\le h$ for each $n\in\mathbb{N}$, and $\int gd\mu$, $\int hd\mu$ being finite, if $f_n\to f$ $\mu$-a.s. then 
	\[\lim_{n\to\infty}\int f_nd\mu=\int fd\mu.\]
\end{lemma}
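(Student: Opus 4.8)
The plan is to rewrite each Choquet integral through its survival function and reduce the claim to the classical dominated convergence theorem on the real line. For an $\mathcal{F}$-measurable function $\varphi$ write $F_\varphi(t)=\mu(\{\varphi\ge t\})$, so that $\int\varphi\,d\mu=\int_0^\infty F_\varphi(t)\,dt+\int_{-\infty}^0(F_\varphi(t)-1)\,dt$. Setting $F_n=F_{f_n}$, $F=F_f$, $G=F_g$, $H=F_h$, monotonicity of $\mu$ together with $g\le f_n\le h$ gives $G(t)\le F_n(t)\le H(t)$ for every $t$. Since $\int g\,d\mu$ and $\int h\,d\mu$ are finite, $H$ is Lebesgue integrable on $(0,\infty)$ and $1-G$ is Lebesgue integrable on $(-\infty,0)$, which supplies an integrable dominating function on each half-line. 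Hence, once I have the pointwise convergence $F_n(t)\to F(t)$ for Lebesgue-almost every $t$, the two Lebesgue integrals defining $\int f_n\,d\mu$ converge to those defining $\int f\,d\mu$, finishing the proof.

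The heart of the argument, and the step I expect to be the main obstacle, is deducing $F_n(t)\to F(t)$ for a.e.\ $t$ from the mere hypothesis $f_n\to f$ $\mu$-a.s. First I would record two Fatou-type inequalities for $\mu$. Writing $C_m=\bigcap_{n\ge m}A_n\uparrow\liminf_n A_n$, continuity from below yields $\mu(\liminf_n A_n)\le\liminf_n\mu(A_n)$; writing $D_m=\bigcup_{n\ge m}A_n\downarrow\limsup_n A_n$ and using subadditivity to bound $\mu(D_m)\le\mu(\limsup_n A_n)+\mu\bigl(D_m\setminus\limsup_n A_n\bigr)$ with the last term tending to $0$ by continuity from above at $\emptyset$, I get $\limsup_n\mu(A_n)\le\mu(\limsup_n A_n)$. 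I would also note the two consequences of subadditivity valid when $\mu(A^c)=0$, namely $\mu(E\cup A^c)\le\mu(E)+\mu(A^c)=\mu(E)$ and $\mu(E)\le\mu(E\cap A)+\mu(A^c)=\mu(E\cap A)$, obtained by decomposing into disjoint pieces.

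Now fix $A\in\mathcal{F}$ with $\mu(A^c)=0$ on which $f_n\to f$ pointwise. For each $t$, pointwise convergence on $A$ gives the inclusions $A\cap\limsup_n\{f_n\ge t\}\subseteq\{f\ge t\}$ and $A\cap\{f>t\}\subseteq\liminf_n\{f_n\ge t\}$. Combining these with the Fatou inequalities, with monotonicity, and with the subadditivity estimates above, I obtain $\limsup_n F_n(t)\le\mu(\{f\ge t\})=F(t)$ and $\liminf_n F_n(t)\ge\mu(\{f>t\})$ for every $t$. It then remains to identify $\mu(\{f>t\})$ with $F(t)$ for a.e.\ $t$: since $\{f\ge s\}\uparrow\{f>t\}$ as $s\downarrow t$, continuity from below gives $\mu(\{f>t\})=F(t^+)$, while $\{f\ge s\}\downarrow\{f\ge t\}$ as $s\uparrow t$ together with continuity from above (again via subadditivity) gives $F(t^-)=F(t)$. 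As the non-increasing function $F$ has at most countably many discontinuities, $\mu(\{f>t\})=F(t)$ for all but countably many $t$; at each such $t$ the sandwich $F(t)\ge\limsup_n F_n(t)\ge\liminf_n F_n(t)\ge F(t)$ forces $F_n(t)\to F(t)$. This delivers the Lebesgue-a.e.\ convergence required in the first paragraph and completes the argument. The essential role of subadditivity throughout—converting a $\mu$-a.s.\ statement into set estimates and upgrading continuity from above at $\emptyset$ to the full Fatou inequality—is exactly where the hypothesis is used.
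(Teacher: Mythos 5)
The paper does not prove this lemma itself --- it imports it verbatim from \cite[Lemma 2.2]{FWZ2020} --- so there is no in-paper argument to compare against; judged on its own, your proof is correct and follows the standard route for dominated convergence of Choquet integrals (reduce to the survival functions $t\mapsto\mu(\{f_n\ge t\})$, prove Fatou-type inequalities for the capacity, and invoke the classical dominated convergence theorem on each half-line). One point deserves a sentence of care: in this paper ``continuous from below'' means only that $\mu(A_n)\to 1$ when $A_n\uparrow\O$, so your step ``continuity from below yields $\mu(\liminf_n A_n)\le\liminf_n\mu(A_n)$'' is not a direct appeal to the hypothesis. It is, however, immediate from the tools you already use: if $C_m\uparrow C$ then $C\setminus C_m\downarrow\emptyset$, and subadditivity gives $\mu(C)\le\mu(C_m)+\mu(C\setminus C_m)\to\liminf_m\mu(C_m)$, which together with monotonicity yields continuity along arbitrary increasing sequences; the same remark applies to your identification $\mu(\{f>t\})=F(t^+)$. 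With that derivation made explicit (it is the exact mirror of what you already wrote for the decreasing case), the argument is complete; you might also note in passing that the sandwich $G\le F\le H$ a.e.\ guarantees $\int f\,d\mu$ is finite, so the limit identity is meaningful.
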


\subsection{Invariant probabilities and capacities}
In this section, we fix  a measurable space $(\O,\mathcal{F})$  and  a measurable transformation $T$ from $\O$ to itself. Denote by $\mathcal{M}(T)$ the set of all ($T$-)invariant probabilities on $(\O,\mathcal{F})$. An invariant probability  $P$ is said to be ergodic if and only if $P(\mathcal{I})=\{0,1\}$.  We denote by $\mathcal{M}^e(T)$ the set of all ergodic probabilities on $(\O,\mathcal{F})$. Furthermore, if $P\times P$ is ergodic with respect to $T\times T$, then $P$ is said to be weak mixing with respect to $T$. Denote by $\mathcal{ M}^{wm}(T)$ the set of all weak mixing probabilities on $(\O,\mathcal{F})$.

For convenience to use, we list the following well-known results.
\begin{theorem}[Birkhoff's ergodic theorem\cite{Birkhoff1}]\label{thm:Birkhoff for measures}
	Let $(\Omega,\mathcal{F},P,T)$ be a measure preserving system. 
	For any $f\in L^1(\O,\mathcal{F},P)$, 
	\[\lim_{n\to\infty}\frac{1}{n}\sum_{i=0}^{n-1}f(T^i\o)=\mathbb{E}_P(f\mid \mathcal{I})(\o)\] 
	for $P$-a.s. $\o\in \O$, and $\mathbb{E}_P(f\mid \mathcal{I})$ is the conditional expectation of $f$ with respect to  the $\sigma$-algebra $\mathcal{I}$. If in addition, $P$ is ergodic  then $\mathbb{E}_P(f\mid\mathcal{I})=\mathbb{E}_P(f)$, $P$-a.s.
\end{theorem}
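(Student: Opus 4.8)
The plan is to prove the pointwise convergence first and then identify the limit, following the classical maximal-ergodic-theorem route. Write $S_n f=\sum_{i=0}^{n-1}f\circ T^i$ and $A_n f=\frac1n S_n f$ for the ergodic sums and averages. The engine of the argument is the \emph{maximal ergodic theorem}: for any $g\in L^1(\O,\mathcal{F},P)$, setting $M_N=\max\{0,S_1 g,\ldots,S_N g\}$ and $E=\{\o:\sup_{n\ge1}S_n g(\o)>0\}$, one has $\int_E g\,dP\ge 0$. I would obtain this via Garsia's short argument: for $1\le n\le N$ the recursion $S_n g=g+(S_{n-1}g)\circ T\le g+M_N\circ T$ gives, on the set $\{M_N>0\}$ where $M_N=\max_{1\le n\le N}S_n g$, the pointwise bound $g\ge M_N-M_N\circ T$; integrating over $\{M_N>0\}$ and using $\int M_N\,dP=\int (M_N\circ T)\,dP$ (invariance of $P$) together with $M_N\ge0$ yields $\int_{\{M_N>0\}}g\,dP\ge0$, and letting $N\to\infty$ with dominated convergence (dominated by $|g|$) gives the claim.

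Next I would use this to pin down the oscillation of $A_n f$. Set $\bar f=\limsup_n A_n f$ and $\underline f=\liminf_n A_n f$; since $A_n f\circ T$ and $A_n f$ differ by a term of order $1/n$, both $\bar f$ and $\underline f$ are $T$-invariant and hence $\mathcal{I}$-measurable. For rationals $\a<\b$ the set $E_{\a,\b}=\{\underline f<\a\}\cap\{\bar f>\b\}$ is $T$-invariant. Applying the maximal ergodic theorem to $(f-\b)\mathbf 1_{E_{\a,\b}}$ and to $(\a-f)\mathbf 1_{E_{\a,\b}}$ on the invariant subsystem $E_{\a,\b}$ (where $\bar f>\b$ forces $\sup_n A_n f>\b$ and $\underline f<\a$ forces $\inf_n A_n f<\a$ throughout) yields $\b P(E_{\a,\b})\le\int_{E_{\a,\b}}f\,dP\le\a P(E_{\a,\b})$. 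As $\a<\b$ this forces $P(E_{\a,\b})=0$, and taking the countable union over rationals gives $P(\underline f<\bar f)=0$. Thus $f^{*}:=\lim_n A_n f$ exists $P$-a.s., is $\mathcal{I}$-measurable, and by Fatou's lemma $\int|f^{*}|\,dP\le\int|f|\,dP<\infty$.

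Finally I would identify $f^{*}$ with $\mathbb{E}_P(f\mid\mathcal{I})$. For $A\in\mathcal{I}$ invariance gives $\int_A f\circ T^i\,dP=\int_{T^{-i}A}f\,dP=\int_A f\,dP$, so $\int_A A_n f\,dP=\int_A f\,dP$ for every $n$. The family $\{A_n f\}_n$ is uniformly integrable, being the average of the identically distributed (hence uniformly integrable) family $\{f\circ T^i\}$, so one may pass to the limit to obtain $\int_A f^{*}\,dP=\int_A f\,dP$ for all $A\in\mathcal{I}$. Since $f^{*}$ is $\mathcal{I}$-measurable, this is exactly the defining property of the conditional expectation, whence $f^{*}=\mathbb{E}_P(f\mid\mathcal{I})$, $P$-a.s. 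For the ergodic case, $P(\mathcal{I})=\{0,1\}$ means every $\mathcal{I}$-measurable function is $P$-a.s. constant; that constant must equal its own integral $\int\mathbb{E}_P(f\mid\mathcal{I})\,dP=\int f\,dP=\mathbb{E}_P(f)$, giving $\mathbb{E}_P(f\mid\mathcal{I})=\mathbb{E}_P(f)$ a.s.

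I expect the main obstacle to be the maximal ergodic theorem, which is where the additivity of $P$ and its invariance are used decisively; once it is in hand, the pointwise convergence and the identification of the limit are comparatively routine, the only delicate point being the uniform integrability needed to exchange limit and integral when identifying $f^{*}$.
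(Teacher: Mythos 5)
The paper does not prove this statement at all: it is quoted verbatim as the classical Birkhoff ergodic theorem (cited to Birkhoff's 1931 paper) in the list of ``well-known results'' in Section 2.2, so there is no in-paper argument to compare yours against. Your proof is the standard modern one --- Garsia's proof of the maximal ergodic theorem, the oscillation sets $E_{\a,\b}$ to get almost sure convergence, uniform integrability of $\{f\circ T^i\}$ (which holds because each $f\circ T^i$ has the same distribution as $f$ under the invariant $P$) to pass to the limit in $\int_A A_nf\,dP=\int_A f\,dP$ for $A\in\mathcal{I}$, and the identification of the limit as $\mathbb{E}_P(f\mid\mathcal{I})$ --- and it is correct and complete, including the ergodic case.
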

\begin{theorem}[Subadditive ergodic theorem\cite{Kingman1968,Kingman1973}]\label{thm:standard subadditive}
	Let $(\Omega,\mathcal{F},P,T)$ be a measure preserving system. Suppose that $\{f_n\}_{n\in\mathbb{N}}$ is a sequence of $\mathcal{F}$-measurable functions satisfying the following conditions:
	\begin{longlist}
		\item $\int |f_1|dP<\infty$;
		\medskip
		\item for each $k,n\in\mathbb{N}$, $f_{n+k}\le f_n+f_k\circ T^n$, $P$-a.s.
	\end{longlist} 
	Then there exists a $T$-invariant function $\phi:\O\to\mathbb{R}$ such that 
	\[\lim_{n\to\infty}\frac{1}{n}f_n=\phi\text{,   }P\text{-a.s. }\]
	Moreover, the function $\phi$ is given by
	\[\phi(\o)=\inf_{n\in\mathbb{N}}\frac{1}{n}\mathbb{E}_P(f_n\mid \mathcal{I})(\o)=\lim_{n\to\infty}\frac{1}{n}\mathbb{E}_P(f_n\mid \mathcal{I})(\o)\text{ for each }\o\in\O.\]
\end{theorem}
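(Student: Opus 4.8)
The plan is to follow the classical conditional-expectation route to Kingman's theorem: first identify the candidate limit $\phi$ together with its integral, and then squeeze $\frac1n f_n$ between a $\limsup$ bound and a $\liminf$ bound whose integrals are both forced to equal $\gamma:=\lim_{n\to\infty}\frac1n\int f_n\,dP$. To fix the candidate, put $g_n:=\mathbb{E}_P(f_n\mid\mathcal{I})$. Since $P$ is $T$-invariant and every set in $\mathcal{I}$ is $T$-invariant, one checks $\mathbb{E}_P(h\circ T\mid\mathcal{I})=\mathbb{E}_P(h\mid\mathcal{I})$ for integrable $h$; applying $\mathbb{E}_P(\cdot\mid\mathcal{I})$ to condition (ii) then gives $g_{n+k}\le g_n+g_k$, $P$-a.s. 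Hence for $P$-a.e.\ $\omega$ the sequence $n\mapsto g_n(\omega)$ is subadditive, so Fekete's lemma yields that $\phi(\omega):=\lim_{n\to\infty}\frac1n g_n(\omega)=\inf_{n\in\mathbb{N}}\frac1n g_n(\omega)$ exists; being a pointwise limit of $\mathcal{I}$-measurable functions it is $\mathcal{I}$-measurable, i.e.\ $T$-invariant. Because $n\mapsto\int f_n\,dP$ is subadditive, $\gamma=\inf_n\frac1n\int f_n\,dP$ exists, and $\int|f_1|\,dP<\infty$ forces $\gamma>-\infty$. From $\phi\le\frac1n g_n$ we get $\int\phi\,dP\le\gamma$, while a reverse Fatou argument using the domination $\frac1n g_n\le g_1\in L^1$ gives the matching lower bound; thus $\int\phi\,dP=\gamma$ is finite and $\phi$ is real-valued $P$-a.s.

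For the upper bound I would show $\limsup_{n}\frac1n f_n\le\phi$ $P$-a.s. Fixing $m\ge1$ and iterating (ii) with $n=qm+r$, $0\le r<m$, gives $f_n\le\sum_{j=0}^{q-1}f_m\circ T^{jm}+f_r\circ T^{qm}$ off a null set. Birkhoff's theorem (Theorem \ref{thm:Birkhoff for measures}) applied to $f_m$ under $T^m$ shows $\frac1q\sum_{j=0}^{q-1}f_m\circ T^{jm}\to\mathbb{E}_P(f_m\mid\mathcal{I}_m)$ a.s., where $\mathcal{I}_m$ is the $T^m$-invariant $\sigma$-algebra, while $\frac1n f_r\circ T^{qm}\to0$ a.s.\ by integrability; hence $\limsup_n\frac1n f_n\le\frac1m\mathbb{E}_P(f_m\mid\mathcal{I}_m)$. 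Since $\limsup_n\frac1n f_n$ is itself a.s.\ $T$-invariant (the same iteration of (ii) shows this), hence $\mathcal{I}$-measurable, applying $\mathbb{E}_P(\cdot\mid\mathcal{I})$ and using $\mathcal{I}\subset\mathcal{I}_m$ with the tower property replaces the right-hand side by $\frac1m g_m$; taking the infimum over $m$ yields $\limsup_n\frac1n f_n\le\phi$.

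The hard part is the matching lower bound $\int\liminf_n\frac1n f_n\,dP\ge\gamma$, for which I would run the stopping-time argument of Katznelson--Weiss. Writing $g:=\liminf_n\frac1n f_n$ and $g_M:=\max(g,-M)$, fix $\epsilon>0$; for a.e.\ $\omega$ there are arbitrarily large $n$ with $f_n(\omega)\le n(g_M(\omega)+\epsilon)$, so $\tau(\omega):=\min\{n\ge1:f_n(\omega)\le n(g_M(\omega)+\epsilon)\}$ is finite a.s. Choosing $N$ with $P(\tau>N)$ small, I would greedily tile the orbit block $\{0,\dots,n-1\}$ by good intervals of length $\tau(T^{i}\omega)\le N$, on which subadditivity and the $T$-invariance of $g_M$ give the block estimate $f_{\tau(T^i\omega)}(T^i\omega)\le\tau(T^i\omega)(g_M(\omega)+\epsilon)$, while the residual positions (where $\tau>N$, plus a short tail) are covered by unit steps contributing $\sum_{i\in B_n(\omega)}f_1(T^i\omega)$. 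Summing gives $f_n\le n(g_M+\epsilon)+\sum_{i\in B_n}f_1\circ T^i+(\text{tail})$; integrating, dividing by $n$, and using Birkhoff on the bad-set average shows the error is at most $\int 1_{\{\tau>N\}}|f_1|\,dP+o(1)$, so $\gamma\le\int g_M\,dP+\epsilon+\int 1_{\{\tau>N\}}|f_1|\,dP$. Letting $N\to\infty$ (integrability of $f_1$ kills the last term), then $M\to\infty$ and $\epsilon\to0$, gives $\gamma\le\int g\,dP$. The delicate points here—finiteness of $\tau$, controlling the boundary and the $\{\tau>N\}$ contributions through integrability of $f_1$, and the truncation needed because $g$ may a priori take the value $-\infty$—are the technical heart of the proof.

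Finally I would combine the two bounds. From them, $\gamma\le\int\liminf_n\frac1n f_n\,dP\le\int\limsup_n\frac1n f_n\,dP\le\int\phi\,dP\le\gamma$, so every inequality is an equality. Since $\liminf_n\frac1n f_n\le\limsup_n\frac1n f_n\le\phi$ pointwise and $\int(\phi-\liminf_n\frac1n f_n)\,dP=0$ with nonnegative integrand, the integrand vanishes $P$-a.s.; therefore $\lim_{n\to\infty}\frac1n f_n=\phi$ $P$-a.s., with $\phi=\inf_n\frac1n\mathbb{E}_P(f_n\mid\mathcal{I})=\lim_{n\to\infty}\frac1n\mathbb{E}_P(f_n\mid\mathcal{I})$, as claimed.
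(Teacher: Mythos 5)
The paper does not prove this theorem: it is quoted verbatim from Kingman with a citation, so there is no internal proof to compare against. Your proposal is the standard Katznelson--Weiss route (identify $\phi=\inf_n\frac1n\mathbb{E}_P(f_n\mid\mathcal I)$ via pointwise Fekete, upper bound by Birkhoff along $T^m$, lower bound by the stopping-time tiling), and the overall architecture is sound.

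There is, however, one genuine error. You assert that $\int|f_1|\,dP<\infty$ forces $\gamma:=\inf_n\frac1n\int f_n\,dP>-\infty$. This is false: take $f_n\equiv -n^2$, which is subadditive (since $-(n+k)^2\le -n^2-k^2$) with $f_1\equiv -1$ integrable, yet $\frac1n\int f_n\,dP=-n\to-\infty$. This matters because your entire closing step rests on finiteness of $\gamma$: you sandwich $\gamma\le\int\liminf_n\frac1n f_n\,dP\le\int\phi\,dP\le\gamma$ and then deduce $\liminf_n\frac1n f_n=\phi$ a.s.\ from $\int(\phi-\liminf_n\frac1n f_n)\,dP=0$. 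When $\gamma=-\infty$ all three integrals are $-\infty$ and this deduction gives nothing; the Katznelson--Weiss lower bound $\gamma\le\int g_M\,dP+\dots$ is likewise vacuous. The standard repair is to first prove the theorem under the extra hypothesis $\gamma>-\infty$ (exactly your argument), then apply it to the truncated cocycle $f_n^{(C)}:=f_n\vee(-Cn)$, which is still subadditive because $\max(a+b,\,c+d)\le\max(a,c)+\max(b,d)$, satisfies $\gamma^{(C)}\ge -C$, and has $\frac1n\mathbb{E}_P(f_n^{(C)}\mid\mathcal I)\downarrow$ to the right object as $C\to\infty$; alternatively one localizes to the invariant set where $\phi>-\infty$. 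Without some such step the proof does not cover the stated hypotheses.

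Two smaller points you should make explicit rather than wave at: only $f_m^+\in L^1$ is guaranteed (via $f_m\le\sum_{i=0}^{m-1}f_1\circ T^i$), not $f_m\in L^1$, so the appeal to Birkhoff for $f_m$ under $T^m$ and the claim $\frac1n f_r\circ T^{qm}\to0$ must be run with the extended (upper-integrable) version of the ergodic theorem, or after truncating $f_m$ from below — harmless for an upper bound, but it needs saying. Likewise the a.s.\ $T$-invariance of $\limsup_n\frac1n f_n$ and of $g=\liminf_n\frac1n f_n$ follows from $F\circ T\ge F$ a.s.\ together with $P$-invariance (equality in distribution forces a.s.\ equality); this one-line argument is what licenses both the tower-property step and the block estimate $f_{\tau(T^i\omega)}(T^i\omega)\le\tau(T^i\omega)(g_M(\omega)+\epsilon)$.
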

The following result is obtained by Bourgain \cite[Theorem 1]{Bourgain}.
\begin{theorem}\label{thm:Birk along poly}
	Let $(\Omega,\mathcal{F},P,T)$ be a measure preserving system, and let $p(x)$ be a polynomial with integer coefficients. If $f \in L^r(\Omega,\mathcal{F},P)$, $r>1$, then
	$$
	\lim _{n \rightarrow \infty} \frac{1}{n} \sum_{i=1}^{n} f(T^{p(i)} \o)
	$$
	exists for $P$-a.s. $\o \in \O$. Furthermore, if $T$ is weakly mixing, then
	the limit is equal to $\int fdP$, for $P$-a.s. $\o\in\O$.
\end{theorem}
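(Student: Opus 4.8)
The plan is to treat the two assertions separately: first the existence of the pointwise limit for every $f \in L^r$ with $r>1$ (the genuinely deep part, due to Bourgain), and then the identification of that limit with $\int f\,dP$ under weak mixing (a softer spectral argument). Throughout I write $A_n f(\o) = \frac{1}{n}\sum_{i=1}^{n} f(T^{p(i)}\o)$ and assume $\deg p \ge 1$, the case of constant $p$ being trivial.

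For the existence of the limit I would follow the Fourier-analytic strategy. The first reduction is the Calder\'on transference principle: a bound for the maximal operator $\sup_n |A_n f|$ in $L^r(\O,\mathcal{F},P)$ follows from the corresponding $\ell^r(\mathbb{Z})$ bound for the arithmetic averages $g \mapsto \frac{1}{n}\sum_{i=1}^{n} g(\cdot + p(i))$ acting on functions on $\mathbb{Z}$. Thus it suffices to prove a maximal inequality on $\ell^r(\mathbb{Z})$, and this is the heart of the matter, which I would establish by the circle method. The Fourier multiplier of the $n$-th average is the Weyl sum $m_n(\t) = \frac{1}{n}\sum_{i=1}^{n} e^{2\pi i p(i)\t}$. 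Decompose the frequency torus into major arcs -- small neighbourhoods of rationals $a/q$ with $q$ bounded by a power of $n$ -- and minor arcs. On the minor arcs, Weyl's inequality (or Vinogradov's mean value theorem for higher-degree $p$) gives power-saving decay of $m_n$, which after a Littlewood--Paley square-function summation over the dyadic scales $n \sim 2^k$ controls that contribution. On the major arcs, approximate $m_n$ by $\frac{1}{q}G(a,q)\,\Phi_n(\t - a/q)$, where $G(a,q)$ is a complete Gauss-type sum and $\Phi_n$ the continuous integral analogue of the Weyl sum; the continuous maximal function is handled by Hardy--Littlewood theory, and the arithmetic factors $\frac{1}{q}G(a,q)$ are summed using their cancellation. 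Assembling the pieces yields $\|\sup_n |A_n g|\|_{\ell^r} \lesssim \|g\|_{\ell^r}$ for $r>1$, and in fact an oscillation/variation-norm estimate. Pointwise convergence then follows: the variational bound gives convergence directly, or alternatively one checks convergence on a dense subclass and uses the maximal inequality to show that the set of $f$ for which $A_n f$ converges a.e. is closed in $L^r$.

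For the second assertion, suppose $T$ is weakly mixing and set $f_0 = f - \int f\,dP$; it suffices to show $A_n f_0 \to 0$. Since a.e.\ convergence is already in hand and $f$ may be taken bounded by density, I would prove $L^2$ convergence and match the limits. Let $U$ be the Koopman operator and $\mu_{f_0}$ the spectral measure of $f_0$ on the circle, so that by the spectral theorem
\[
\|A_n f_0\|_{L^2}^2 = \int_{\mathbb{T}}\Big|\frac{1}{n}\sum_{i=1}^{n} z^{p(i)}\Big|^2\,d\mu_{f_0}(z).
\]
Weak mixing is equivalent to $\mu_{f_0}$ being nonatomic. By Weyl's equidistribution theorem, for every $z = e^{2\pi i \t}$ with $\t$ irrational the inner sum tends to $0$; the exceptional set where it may not vanish is the countable set of roots of unity, which is $\mu_{f_0}$-null by nonatomicity. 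Hence the integrand tends to $0$ $\mu_{f_0}$-a.e.\ and is bounded by $1$, so dominated convergence gives $\|A_n f_0\|_{L^2}\to 0$. Combined with the already-established a.e.\ convergence, the a.e.\ limit is forced to equal $\int f\,dP$.

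The main obstacle is squarely the second paragraph: the $\ell^r(\mathbb{Z})$ maximal/oscillation inequality obtained through the circle method. Controlling the minor-arc Weyl sums uniformly in the degree of $p$, and summing the major-arc contributions across all scales without losing the full range $r>1$, is precisely Bourgain's deep contribution; by contrast the transference reduction and the spectral identification of the limit under weak mixing are comparatively routine.
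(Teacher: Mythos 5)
The paper does not prove this statement at all: it is quoted verbatim as Bourgain's theorem (\cite[Theorem 1]{Bourgain}) and used as a black box, so there is no in-paper argument to compare against. Your proposal is a faithful and essentially correct outline of the known proof. The second half — identifying the limit under weak mixing via the spectral measure of $f-\int f\,dP$ being nonatomic, Weyl equidistribution of $p(i)\theta$ for irrational $\theta$ (valid since the leading coefficient of $p$ is a nonzero integer), dominated convergence on the circle, and the density/maximal-inequality transfer from bounded $f$ to $f\in L^r$ — is a complete and correct argument, modulo the first half. The first half, however, remains a roadmap rather than a proof: the Calder\'on transference reduction and the major/minor arc decomposition are correctly described, but the $\ell^r(\mathbb{Z})$ maximal and oscillation inequalities are asserted, not established, and you acknowledge as much. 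Two points deserve emphasis if you were to flesh this out: (1) for polynomial averages there is no elementary dense subclass of $L^r$ on which a.e.\ convergence is easy (coboundaries do not help), which is precisely why the oscillation/variation estimate, and not merely the maximal inequality, is indispensable; (2) obtaining the full range $r>1$ (as opposed to $r$ near $2$) requires the multi-frequency vector-valued analysis of Bourgain's 1989 paper, not just Weyl's inequality plus a single square function. Since the paper itself treats the theorem as an imported result, your sketch is an appropriate level of detail for this context, but it should be presented as a citation with commentary rather than as a self-contained proof.
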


The following result should be standard but we were not able to locate a clear
reference to it. Thus, we provide a proof for it.
\begin{lemma}\label{lem:invariant meausre }
	Let $(\O,\mathcal{F})$ be a measurable space, and $T:\O\to \O$ be a measurable transformation. Given $P,Q\in\mathcal{ M}(T)$  if $P(A)=Q(A)$ for any $A\in\mathcal{I}$, then $P=Q$. 
\end{lemma}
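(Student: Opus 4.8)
The plan is to reduce the statement to an integral identity: I will show that $\int f\,dP=\int f\,dQ$ for every bounded $\mathcal{F}$-measurable $f$, and then take $f=1_A$ to conclude $P(A)=Q(A)$ for all $A\in\mathcal{F}$, i.e. $P=Q$. The bridge between the two measures will be Birkhoff's ergodic theorem (Theorem~\ref{thm:Birkhoff for measures}), which I apply to the two measure preserving systems $(\O,\mathcal{F},P,T)$ and $(\O,\mathcal{F},Q,T)$ simultaneously, exploiting that the pathwise averages $\frac1n\sum_{i=0}^{n-1}f\circ T^i$ are one and the same object for both systems.

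First I would fix a bounded $\mathcal{F}$-measurable $f$, write $S_nf=\frac1n\sum_{i=0}^{n-1}f\circ T^i$, and set $h:=\limsup_{n\to\infty}S_nf$. Since $S_nf\circ T-S_nf=\frac1n(f\circ T^n-f)$ tends to $0$ uniformly (this is exactly where boundedness of $f$ is used), the function $h$ satisfies $h\circ T=h$ everywhere; consequently $\{h\in B\}\in\mathcal{I}$ for every Borel $B$, so $h$ is a bounded $\mathcal{I}$-measurable function. This $h$ is built without reference to either measure, which is the crux of the argument.

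Next I would invoke Birkhoff's theorem twice. For $P$ it gives $S_nf\to\mathbb{E}_P(f\mid\mathcal{I})$ $P$-a.s., whence $h=\mathbb{E}_P(f\mid\mathcal{I})$ $P$-a.s., and by the defining (tower) property of conditional expectation $\int h\,dP=\int f\,dP$. Running the identical argument for $Q$ yields $h=\mathbb{E}_Q(f\mid\mathcal{I})$ $Q$-a.s. and $\int h\,dQ=\int f\,dQ$. The single $\mathcal{I}$-measurable function $h$ thus computes the correct integral against both $P$ and $Q$.

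Finally, since $h$ is bounded and $\mathcal{I}$-measurable and $P|_{\mathcal{I}}=Q|_{\mathcal{I}}$ by hypothesis, standard measure theory (approximating $h$ by $\mathcal{I}$-measurable simple functions and passing to the limit) gives $\int h\,dP=\int h\,dQ$. Chaining the equalities produces $\int f\,dP=\int h\,dP=\int h\,dQ=\int f\,dQ$, which with $f=1_A$ completes the proof. The only genuinely delicate step is the $\mathcal{I}$-measurability of $h$, which is what forces the boundedness of $f$; everything else is routine bookkeeping. One could equivalently restrict to indicators $f=1_A$ throughout, but phrasing the argument for bounded $f$ keeps the conditional-expectation manipulations transparent.
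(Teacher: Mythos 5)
Your proof is correct and follows essentially the same route as the paper: apply Birkhoff's ergodic theorem to both $P$ and $Q$, observe that the pathwise averages give a single $\mathcal{I}$-measurable representative of both conditional expectations, and use $P|_{\mathcal{I}}=Q|_{\mathcal{I}}$ to equate the integrals. Your use of $h=\limsup_n S_nf$ as an everywhere-defined invariant function is a minor (and clean) repackaging of the paper's step of transferring the $P$-full invariant set to $Q$; the substance is identical.
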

\begin{proof}
	For any bounded measurable function $f$, by Theorem \ref{thm:Birkhoff for measures}, one has that 
	\[P(\{\o\in\O:\lim_{n\to\infty}\frac{1}{n}\sum_{i=0}^{n-1}f( T^i\o)=\mathbb{E}_P(f\mid \mathcal{I})(\o)\})=1\]
	and 
	\[Q(\{\o\in\O:\lim_{n\to\infty}\frac{1}{n}\sum_{i=0}^{n-1}f( T^i\o)=\mathbb{E}_Q(f\mid \mathcal{I})(\o)\})=1.\]
	Since $\{\o\in\O:\lim_{n\to\infty}\frac{1}{n}\sum_{i=0}^{n-1}f( T^i\o)=\mathbb{E}_P(f\mid \mathcal{I})(\o)\}\in \mathcal{I}$ and $P|_{\mathcal{I}}=Q|_{\mathcal{I}}$, it follows that 
	\[Q(\{\o\in\O:\lim_{n\to\infty}\frac{1}{n}\sum_{i=0}^{n-1}f( T^i\o)=\mathbb{E}_P(f\mid \mathcal{I})(\o)\})=1.\]
	Thus, $\mathbb{E}_P(f\mid\mathcal{I})=\mathbb{E}_Q(f\mid\mathcal{I})$, $Q$-a.s.
	Since $P|_{\mathcal{I}}=Q|_{\mathcal{I}}$ and $\mathbb{E}_P(f\mid\mathcal{I})$ is $\mathcal{I}$-measurable, it follows that 
	\[\int fdP=\int\mathbb{E}_P(f\mid\mathcal{I})dP=\int\mathbb{E}_P(f\mid\mathcal{I})dQ=\int\mathbb{E}_Q(f\mid\mathcal{I})dQ=\int fdQ.\]
	The proof is completed as $f$ is arbitrary.
\end{proof}
\begin{rem}\label{rem:sing}
	Given two probabilities $P$ and $Q$, they are said to  be singular, if there exist two disjoint subsets $B,C\in\mathcal{F}$ with $\O=B\cup C$ such that $P(C)=0$ and $Q(B)=0$. 		
	By Lemma \ref{lem:invariant meausre }, it is easy to check that for any $P,Q\in\mathcal{ M}^e(T)$ with $P\neq Q$, $P$ is singular with $Q$.
\end{rem}
Next, we recall and prove some results for invariant capacities. Let $(\O,\mathcal{F},\mu)$ be a capacity space and $T$ be a measurable transformation from $\O$ to itself. Then
$\mu$ is invariant if for each $A\in\mathcal{F}$,
$\mu(A)=\mu(T^{-1}A).$ Recall that a $T$-invariant capacity is ergodic if for any $A\in\mathcal{I}$, $\mu(A)\in\{0,1\}$ and $\mu(A)=0$ or $\mu(A^c)=0$. If in addition, we suppose that $\mu$ is subadditive, then it is ergodic if and only if  for any $A\in\mathcal{I}$, $\mu(A)=0$ or $\mu(A^c)=0$.

The following result is a characterization of ergodicity for subadditive capacities via measurable functions.
\begin{lemma}[Theorem 4.4 in \cite{FWZ2020}]\label{lem:char. for ergodic}
	Let $(\O,\mathcal{F},\mu,T)$ be a capacity preserving system. If $\mu$  is subadditive and continuous then the following three statements are equivalent:
	\begin{longlist}
		\item $\mu$ is ergodic;
		\item if $f\in B(\O,\mathcal{F})$  is $T$-invariant then $f$ is  constant $\mu$-a.s.;
		\item if $f:\O\to \mathbb{R}$ is  $\mathcal{F}$-measurable and  $T$-invariant $\mu$-a.s. then $f$ is  constant $\mu$-a.s.
	\end{longlist}
	Note that (iii) $\Rightarrow$ (ii) $\Rightarrow$ (i) does not need the continuity of $\mu$.
\end{lemma}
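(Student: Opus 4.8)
The plan is to prove the cycle (i) $\Rightarrow$ (iii) $\Rightarrow$ (ii) $\Rightarrow$ (i), isolating the use of continuity in the single implication (i) $\Rightarrow$ (iii). The two easy implications come first and use no continuity, matching the stated remark. For (iii) $\Rightarrow$ (ii) I would simply observe that a genuinely $T$-invariant bounded $\mathcal{F}$-measurable function is in particular $T$-invariant $\mu$-a.s., so (ii) is just the restriction of (iii) to this subclass. For (ii) $\Rightarrow$ (i), given $A\in\mathcal{I}$ the indicator $1_A$ is bounded and $\mathcal{F}$-measurable and satisfies $1_A\circ T = 1_{T^{-1}A} = 1_A$; by (ii) it is constant $\mu$-a.s., and since it takes only the values $0,1$ this forces $\mu(A)=0$ or $\mu(A^c)=0$, which is precisely ergodicity for subadditive $\mu$.

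The substance is (i) $\Rightarrow$ (iii). First I would record the only place continuity is needed: for a continuous subadditive capacity, a countable union of $\mu$-null sets is $\mu$-null. Given null sets $N_n$, I would disjointify them as $N_n' = N_n\setminus\bigcup_{k<n}N_k$ (still null by monotonicity), so that the tails $G_k=\bigcup_{n>k}N_n'$ decrease to $\emptyset$; continuity from above at $\emptyset$ gives $\mu(G_k)\to 0$, and writing $\bigcup_n N_n = \big(\bigcup_{n\le k}N_n'\big)\cup G_k$ as a disjoint union, subadditivity (together with the fact that finite unions of null sets are null) yields $\mu(\bigcup_n N_n)\le \mu(\bigcup_{n\le k}N_n')+\mu(G_k)=\mu(G_k)\to 0$.

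Next I would reduce to bounded $f$: replacing $f$ by $\arctan f$ preserves $\mu$-a.s.\ $T$-invariance and measurability and changes neither the hypothesis nor the conclusion of being $\mu$-a.s.\ constant, so assume $f$ is bounded. For $c\in\mathbb{R}$ set $A_c=\{f\le c\}$. Since $f$ is $T$-invariant off a null set $N_0$, one has $A_c\triangle T^{-1}A_c\subset N_0$, hence $\mu(A_c\triangle T^{-1}A_c)=0$; an induction using invariance of $\mu$ and subadditivity upgrades this to $\mu(A_c\triangle T^{-n}A_c)=0$ for all $n$, and the closure property above makes $N:=\bigcup_{n\ge 0}(A_c\triangle T^{-n}A_c)$ null. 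The set $\hat A_c:=\limsup_n T^{-n}A_c$ lies in $\mathcal{I}$, and on $N^c$ it coincides with $A_c$, so $A_c\triangle \hat A_c\subset N$ is null and therefore $\mu(A_c)=\mu(\hat A_c)$ and $\mu(A_c^c)=\mu(\hat A_c^c)$. Ergodicity applied to $\hat A_c\in\mathcal{I}$ then gives $\mu(A_c)\in\{0,1\}$ for every $c$.

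Finally, the map $c\mapsto\mu(A_c)$ is non-decreasing and $\{0,1\}$-valued, equal to $0$ below $\inf f$ and to $1$ above $\sup f$; let $c^{*}$ denote its jump point. For each $\e>0$ both $\{f\le c^{*}-\e\}$ and $\{f\ge c^{*}+\e\}$ are null, so $\mu(\{|f-c^{*}|\ge\e\})=0$, and the null-union closure applied to $\e=1/k$ gives $\mu(\{f\ne c^{*}\})=0$, i.e.\ $f=c^{*}$ $\mu$-a.s. The \emph{main obstacle} is the absence of additivity, which blocks the usual almost-sure manipulations: the two steps requiring genuine care are the null-union closure (the sole use of continuity) and the passage from the merely $\mu$-a.s.\ invariant level sets $A_c$ to the genuinely invariant sets $\hat A_c$ on which ergodicity can be invoked.
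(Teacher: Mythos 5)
The paper does not prove this lemma at all: it is imported verbatim as Theorem~4.4 of \cite{FWZ2020}, so there is no in-text argument to compare against. Your self-contained proof is correct and follows the classical level-set scheme for ``invariant $\Rightarrow$ constant'' adapted to subadditive capacities. The two delicate points are handled properly: the countable null-union closure is correctly derived from continuity from above via disjointification (note the paper's subadditivity is only stated for disjoint sets, and your disjointification supplies exactly that), and the passage from the merely $\mu$-a.s.\ invariant level sets $A_c$ to the genuinely invariant sets $\hat A_c=\limsup_n T^{-n}A_c\in\mathcal{I}$ is the right device, since ergodicity in this setting is only a statement about exactly invariant sets. One step you should make explicit: in the final paragraph you assert that $\{f\ge c^*+\e\}$ is null, which amounts to $\mu(A_c^c)=0$ for $c>c^*$; since $\mu$ is only subadditive, $\mu(A_c)=1$ does \emph{not} by itself yield $\mu(A_c^c)=0$, and you must invoke the second clause of ergodicity ($\mu(\hat A_c)=0$ or $\mu(\hat A_c^c)=0$) together with your identity $\mu(A_c^c)=\mu(\hat A_c^c)$. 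Both ingredients are already present in your write-up, so this is a presentational fix rather than a gap. The remaining implications (iii)~$\Rightarrow$~(ii)~$\Rightarrow$~(i) are correct and, as you note, use neither continuity nor anything beyond the subadditive reformulation of ergodicity.
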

We recall a version of Birkhoff's ergodic theorem for ergodic upper probabilities \cite[Theorem 4.5]{FWZ2020}.
\begin{theorem}\label{thm:original version}
	Let $(\O,\mathcal{F},V,T)$ be a capacity preserving system, where $V$ is an upper probability. Then $V$ is ergodic with respect to $T$ if and only if for any $f\in B(\O,\mathcal{F})$, there exists a unique $c_f\in\mathbb{R}$ such that
	$$
	\lim _{n \rightarrow \infty} \frac{1}{n} \sum_{i=0}^{n-1} f(T^{i} \o)=c_f \text{ for }V\text{-a.s. }\o\in\O.  
	$$
\end{theorem}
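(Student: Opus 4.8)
The plan is to prove the two implications separately, using Lemma~\ref{lem:char. for ergodic} to convert ergodicity into a statement about $T$-invariant functions, and combining the core of $V$ with the classical Birkhoff theorem (Theorem~\ref{thm:Birkhoff for measures}) to identify the limit. Throughout I would use that an upper probability is subadditive and continuous, that $\operatorname{core}(V)$ is a nonempty, convex, weak*-compact subset of $\D^{\sigma}(\O,\mathcal{F})$, and that $V=\sup_{P\in\operatorname{core}(V)}P$.

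For the easy converse, suppose that for every $f\in B(\O,\mathcal{F})$ the Birkhoff averages converge $V$-a.s.\ to a constant $c_f$. Since $V$ is subadditive and continuous, Lemma~\ref{lem:char. for ergodic} applies, so it suffices to verify condition (ii): every $T$-invariant $f\in B(\O,\mathcal{F})$ is constant $V$-a.s. But if $f$ is $T$-invariant then $\frac1n\sum_{i=0}^{n-1}f(T^i\o)=f(\o)$ for every $\o$, so the hypothesis forces $f=c_f$ $V$-a.s.; hence $V$ is ergodic. Uniqueness of $c_f$ is immediate, since two admissible constants would agree on a $V$-full, hence nonempty, set.

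For the forward direction, fix $f\in B(\O,\mathcal{F})$ and set $\bar f(\o)=\limsup_{n\to\infty}\frac1n\sum_{i=0}^{n-1}f(T^i\o)$ and $\underline f(\o)=\liminf_{n\to\infty}\frac1n\sum_{i=0}^{n-1}f(T^i\o)$. A boundary-term estimate, using $|f|\le M$ so that $\frac1n\bigl(f(T^n\o)-f(\o)\bigr)\to 0$, shows that $\bar f$ and $\underline f$ are everywhere $T$-invariant and $\mathcal{F}$-measurable. Since $V$ is ergodic, subadditive and continuous, Lemma~\ref{lem:char. for ergodic} yields constants $c_1\ge c_2$ with $\bar f=c_1$ and $\underline f=c_2$ $V$-a.s.; by subadditivity the joint exceptional set still has $V$-measure $0$, so everything reduces to showing $c_1=c_2$.

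This last step is the main obstacle: the naive idea of choosing $P\in\L$ with $V(A)=P(A)$ fails, because the maximizers need not be $T$-invariant, so Birkhoff's theorem cannot be applied to them. The resolution is to manufacture a genuine invariant probability inside the core. Invariance of $V$ gives, for any $P\in\operatorname{core}(V)$ and $B\in\mathcal{F}$, $(P\circ T^{-1})(B)=P(T^{-1}B)\le V(T^{-1}B)=V(B)$, so $P\mapsto P\circ T^{-1}$ maps $\operatorname{core}(V)$ into itself; this map is affine and weak*-continuous, so a Markov--Kakutani (equivalently Krylov--Bogolyubov/Ces\`aro averaging) argument produces an invariant $Q\in\operatorname{core}(V)$. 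Because $Q\le V$, every $V$-null set is $Q$-null, so $\bar f=c_1$ and $\underline f=c_2$ also hold $Q$-a.s. On the other hand $Q$ is an invariant probability, so Theorem~\ref{thm:Birkhoff for measures} gives $\bar f=\underline f$ $Q$-a.s. Intersecting these $Q$-full sets and using $Q(\O)=1$ forces $c_1=c_2=:c_f$, whence $\lim_{n\to\infty}\frac1n\sum_{i=0}^{n-1}f(T^i\o)=c_f$ for $V$-a.s.\ $\o$, completing the proof.
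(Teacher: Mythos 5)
Your argument is correct. Note, however, that this paper does not prove Theorem \ref{thm:original version} at all: it is imported verbatim as \cite[Theorem 4.5]{FWZ2020}, so there is no in-paper proof to compare against. Your reconstruction is exactly the argument the paper's own toolkit suggests: the converse and the reduction of the forward direction to $c_1=c_2$ run through Lemma \ref{lem:char. for ergodic} applied to the (exactly $T$-invariant, by the boundary-term estimate) functions $\bar f$ and $\underline f$, and the key step of producing an invariant $Q\in\operatorname{core}(V)$ is precisely the content of Lemma \ref{lem:core of invariant lower probabilities}(i) (the ``invariant skeleton'', via \cite{CMM2016}), so you could cite that lemma in place of the Markov--Kakutani/Ces\`aro averaging construction; after that, absolute continuity of $Q$ with respect to $V$-null sets plus the classical Birkhoff theorem closes the gap as you describe.
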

\begin{lemma}\label{lem:core of invariant lower probabilities}
	Let $(\O,\mathcal{F},V,T)$ be a capacity preserving system, where $V$ is an upper probability. Then
	\begin{longlist}
		\item	for any $P\in \operatorname{core}(V)$, there exists a unique $\hat{P}\in \mathcal{M}(T)\cap\operatorname{core}(V)$ such that $P(A)=\hat{P}(A)$ for any $A\in\mathcal{I}$;
		\medskip
		\item given $A\in\mathcal{F}$, if for any $P\in\operatorname{core}(V)$, $\hat{P}(A)=0$, then $V(A)=0$.
	\end{longlist}
\end{lemma}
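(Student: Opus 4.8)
The plan is to construct $\hat P$ by time-averaging $P$ and extracting a weak* limit, and then to transfer all the information about invariant sets through these ergodic averages. For part (i), given $P\in\operatorname{core}(V)$ (which is automatically $\sigma$-additive, since $V$ is continuous and hence $\operatorname{core}(V)\subset\D^{\sigma}(\O,\mathcal F)$), I would form the Cesàro averages
\[
P_n=\frac1n\sum_{i=0}^{n-1}P\circ T^{-i}.
\]
Each $P_n$ is a probability, and it lies in $\operatorname{core}(V)$ because for every $A\in\mathcal F$,
\[
P_n(A)=\frac1n\sum_{i=0}^{n-1}P(T^{-i}A)\le\frac1n\sum_{i=0}^{n-1}V(T^{-i}A)=V(A),
\]
using $P\in\operatorname{core}(V)$ and the $T$-invariance of $V$. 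Since $\operatorname{core}(V)$ is weak*-compact, I extract a convergent subnet $P_{n_\alpha}\to\hat P\in\operatorname{core}(V)$.

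Next I would verify the three required properties of $\hat P$. Invariance is the standard Krylov--Bogolyubov telescoping: for each $A$ one has $|P_n(T^{-1}A)-P_n(A)|=\tfrac1n|P(T^{-n}A)-P(A)|\le\tfrac1n\to0$, whence $\hat P(T^{-1}A)=\hat P(A)$, so $\hat P\in\mathcal M(T)\cap\operatorname{core}(V)$. Agreement on $\mathcal I$ is immediate: if $A\in\mathcal I$ then $T^{-i}A=A$, so $P_n(A)=P(A)$ for all $n$ and hence $\hat P(A)=P(A)$. Uniqueness is then exactly Lemma \ref{lem:invariant meausre }: any two elements of $\mathcal M(T)\cap\operatorname{core}(V)$ agreeing with $P$ on $\mathcal I$ agree with each other on $\mathcal I$, and two invariant probabilities coinciding on $\mathcal I$ must be equal.

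For part (ii) I would argue by contradiction, manufacturing a large invariant set out of $A$. Suppose $V(A)>0$ and put $U=\bigcup_{n\ge0}T^{-n}A$ and $W=\bigcap_{k\ge0}T^{-k}U=\bigcap_{k\ge0}\bigcup_{n\ge k}T^{-n}A$, the set of points whose orbit meets $A$ infinitely often; note $W$ is $T$-invariant and $W\subset U$. By invariance of $V$, $V(T^{-k}U)=V(U)\ge V(A)>0$ for every $k$, and since $T^{-k}U\downarrow W$ and $V$ is continuous from above, $V(W)=V(U)\ge V(A)>0$. As $V$ is an upper probability, its value $V(W)$ is attained by some $P\in\operatorname{core}(V)$, i.e. $P(W)=V(W)>0$. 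On the other hand, the invariance of the corresponding $\hat P$ together with countable subadditivity gives $\hat P(U)\le\sum_{n\ge0}\hat P(T^{-n}A)=\sum_{n\ge0}\hat P(A)=0$ by the hypothesis $\hat P(A)=0$, so $\hat P(W)\le\hat P(U)=0$. But $W\in\mathcal I$, so part (i) forces $\hat P(W)=P(W)=V(W)>0$, a contradiction; hence $V(A)=0$.

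I do not expect real difficulty in part (i) beyond checking that $P_n$ stays in the compact core and that the subnet limit is genuinely invariant. The delicate point in part (ii) is the identity $V(W)=V(U)$, which requires continuity from above of $V$ along a decreasing sequence with \emph{nonempty} limit, rather than merely for sequences decreasing to $\emptyset$; I would justify it from the weak*-compactness of the representing set $\Lambda$, writing $V(T^{-k}U)=P_k(T^{-k}U)$, passing to a subnet $P_k\to P^*\in\Lambda$, and using $P_k(T^{-k}U)\le P_k(T^{-m}U)\to P^*(T^{-m}U)$ for $k\ge m$ followed by $P^*(T^{-m}U)\downarrow P^*(W)$. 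The other thing to keep straight is that the contradiction in (ii) is legitimate precisely because the $\hat P$ produced in (i) agrees with $P$ on the invariant set $W$.
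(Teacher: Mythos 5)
Your proposal is correct, and for part (ii) it is in substance the same argument as the paper's. The paper proves (ii) by forming exactly your set $W$ (there called $\bar A=\bigcap_{n\ge 1}\bigcup_{k\ge n}T^{-k}A$), observing it is invariant and $\hat P$-null for every $P\in\operatorname{core}(V)$ by the same countable-subadditivity estimate, hence $V$-null by part (i), and then transferring back to $A$ via $V(A)=\lim_n V(T^{-n}A)\le V(\bar A)$ using a reverse Fatou inequality for Choquet integrals (citing Wang--Klir). You instead use $V(A)\le V(U)$ and $V(U)=V(W)$ by continuity of $V$ from above along a decreasing sequence with nonempty intersection, which you correctly justify from the weak*-compactness of the representing set; this is a legitimate and essentially self-contained substitute for the cited Fatou lemma, and your framing as a contradiction versus the paper's direct inequality is immaterial. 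For part (i) the paper simply cites Corollary 1 of Cerreia-Vioglio--Maccheroni--Marinacci for existence and its Lemma on invariant probabilities agreeing on $\mathcal{I}$ for uniqueness, whereas you supply the Krylov--Bogolyubov construction in full: the Ces\`aro averages stay in $\operatorname{core}(V)$ by $T$-invariance of $V$, a subnet limit exists by compactness of the core, it is $\sigma$-additive because $\operatorname{core}(V)\subset\D^{\sigma}(\Omega,\mathcal F)$ for an upper probability, invariance follows from the telescoping estimate, and agreement on $\mathcal{I}$ is immediate since $P_n(A)=P(A)$ for $A\in\mathcal{I}$. This is almost certainly the content of the cited corollary, so nothing is gained or lost mathematically, but your write-up makes the lemma self-contained.
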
 
\begin{proof}
	The existence of $\hat{P}$ in the first statement was proved in  Corollary 1 of \cite{CMM2016}, and the uniqueness is obtained by Lemma \ref{lem:invariant meausre }.

	Fix any $A\in\mathcal{F}$ satisfying that for any $P\in\operatorname{core}(V)$, $\hat{P}(A)=0$. Let $\bar{A}=\cap_{n=1}^\infty\cup_{k=n}^\infty T^{-k}A$. Then by the invariance of $\hat{P}$, one deduces that \[\hat{P}(\bar{A})\le \sum_{i=1}^{\infty}\hat{P}(T^{-i}A)=0\] for any $P\in\operatorname{core}(V)$. Since $\cup_{k=n}^\infty T^{-k}A$ decreases to $\bar{A}$ as $n\to\infty$, it follows  that $T^{-1}\bar{A}=\bar{A}$, i.e., $\bar{A}\in\mathcal{I}$.
	This together with (i), implies that $P(\bar{A})=0$ for any $P\in\operatorname{core}(V)$. Thus, $V(\bar{A})=0$.  By the invariance of $V$, 
	\[V(A)=\lim_{n\to\infty}V(T^{-n}A)=\limsup_{n\to\infty}\int 1_{T^{-n}A}dV\le \int \limsup_{n\to\infty}1_{T^{-n}A}dV=V(\bar{A})=0,\]
	where the inequality is obtained from \cite[Corollary 9.5]{wang2009generalized}.
	The proof is completed.
\end{proof}
\begin{rem}
	In the following, the invariant  probability $\hat{P}$ given in the above lemma is called the invariant skeleton of $P\in\operatorname{core}(V)$.
\end{rem}

\subsection{Common conditional expectations}
In this subsection, we introduce the notion of common conditional expectation as follows.
\begin{definition}\label{defn:common conditional expectation}
	Let $(\O,\mathcal{F})$ be a measurable space, and $\mathcal{H}$ be a sub-$\sigma$-algebra of $\mathcal{F}$. Given a subset $\Lambda\subset \D^{\sigma}(\O,\mathcal{F})$ and $f\in B(\O,\mathcal{F})$, an $\mathcal{H}$-measurable function $g_f$ is said to be the common conditional expectation of $f$ with respect to $\mathcal{H}$ for $\Lambda$ if 
	\[\mathbb{E}_P(f\mid \mathcal{H})=g_f \text{, }P\text{-a.s.}\]
	for any $P\in\Lambda$, where $\mathbb{E}_P(f\mid \mathcal{H})$ is the conditional expectation of $f$ with respect to $\mathcal{H}$ for $P$.
\end{definition}

The following result shows that there exists a common conditional expectation with respect to $\mathcal{I}$ for all invariant probabilities on a standard measurable space. Namely,
\begin{lemma}\label{lem:common conditional expectation}
	Let $(\Omega,\mathcal{F})$ be a standard  measurable space, and $T:\Omega\to\Omega$ be  measurable. Then for any $f\in B(\Omega,\mathcal{F})$, there exists an $\mathcal{I}$-measurable function $g_f\in B(\Omega,\mathcal{F})$ such that for any $P\in\mathcal{M}(T)$, $g_f\in L^1(\Omega,\mathcal{I},P)$ and 
	\[\mathbb{E}_P(f\mid \mathcal{I})=g_f\text{, }P\text{-a.s.}\]
\end{lemma}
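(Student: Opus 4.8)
The plan is to take $g_f$ to be the pathwise Birkhoff average itself, which is a function of $\omega$ alone and makes no reference to any particular probability. Concretely, writing $S_n(\omega)=\sum_{i=0}^{n-1}f(T^i\omega)$, I would set $\bar g(\omega)=\limsup_{n\to\infty}\frac1n S_n(\omega)$ and $\underline g(\omega)=\liminf_{n\to\infty}\frac1n S_n(\omega)$. Both are bounded (by $\sup_\omega|f(\omega)|$) and $\mathcal F$-measurable. The crucial first step is to check that $\bar g$ and $\underline g$ are $T$-invariant \emph{everywhere}, not merely almost surely: since $S_n(T\omega)=S_n(\omega)+f(T^n\omega)-f(\omega)$ and $f$ is bounded, $\frac1n S_n(T\omega)-\frac1n S_n(\omega)=\frac{f(T^n\omega)-f(\omega)}{n}\to 0$ uniformly in $\omega$, whence $\bar g\circ T=\bar g$ and $\underline g\circ T=\underline g$ at every point.

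Next I would use that an $\mathcal F$-measurable $T$-invariant function is automatically $\mathcal I$-measurable: for Borel $B$, the everywhere identity $h\circ T=h$ gives $T^{-1}(h^{-1}(B))=h^{-1}(B)$, so $h^{-1}(B)\in\mathcal I$. Hence $\bar g$ and $\underline g$ are $\mathcal I$-measurable, and so is the set $A=\{\omega:\bar g(\omega)=\underline g(\omega)\}$ on which the Birkhoff average converges. I would then define $g_f=\bar g\,1_A$, a bounded $\mathcal I$-measurable function constructed with no reference to any $P$.

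The final step is to verify the defining property for an arbitrary $P\in\mathcal M(T)$. Since $f$ is bounded it lies in $L^1(\Omega,\mathcal F,P)$, so Theorem \ref{thm:Birkhoff for measures} yields $\frac1n S_n\to\mathbb E_P(f\mid\mathcal I)$ $P$-a.s.; in particular $P(A)=1$, and on $A$ we have $g_f=\lim_{n\to\infty}\frac1n S_n=\mathbb E_P(f\mid\mathcal I)$ $P$-a.s. Boundedness of $g_f$ gives $g_f\in L^1(\Omega,\mathcal I,P)$, completing the verification for every invariant $P$ simultaneously.

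I do not anticipate a genuine obstacle. The whole content is that the limit of the ergodic averages is intrinsic to the triple $(\Omega,T,f)$, and that Birkhoff's theorem identifies it with the conditional expectation \emph{separately} for each invariant $P$; this is precisely what lets a single $g_f$ serve all of $\mathcal M(T)$ at once. The only points demanding care are the everywhere (rather than a.s.) $T$-invariance of $\bar g,\underline g$, which is what guarantees true $\mathcal I$-measurability of $g_f$ and its independence of $P$. I would also note that this argument does not in fact invoke the standardness of $(\Omega,\mathcal F)$; that hypothesis, stated for uniformity with the rest of the paper, is needed only in the later disintegration-type constructions.
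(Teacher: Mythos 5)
Your proof is correct, and it takes a genuinely different and more elementary route than the paper's. The paper fixes a countable generating algebra $\hat{\mathcal{F}}$ of $\mathcal{F}$ (this is where standardness enters), forms the set $G(\hat{\mathcal{F}})$ on which the Birkhoff averages of all indicators $1_F$, $F\in\hat{\mathcal{F}}$, converge, extends the resulting finitely additive set function $\mathfrak{p}_\o$ to a probability on $\mathcal{F}$ by Carath\'eodory's theorem, and then sets $g_f(\o)=\int f\,d\mathfrak{p}_\o$; in effect it builds a single universal family of measures (an ergodic-decomposition-type object) from which the common conditional expectation of \emph{every} $f$ can be read off at once. You instead work with one $f$ at a time and take $g_f$ to be the pathwise limit of $\frac1n\sum_{i=0}^{n-1}f\circ T^i$ on the ($\mathcal{I}$-measurable) set where it exists. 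The two key checks you isolate --- the \emph{everywhere} (not merely a.s.) $T$-invariance of $\limsup$ and $\liminf$ via $\frac1n|S_n(T\o)-S_n(\o)|\le \frac{2}{n}\sup|f|\to 0$, and the fact that an everywhere $T$-invariant measurable function is $\mathcal{I}$-measurable for $\mathcal{I}=\{A:T^{-1}A=A\}$ --- are exactly the points that make the construction $P$-free, and the final identification with $\mathbb{E}_P(f\mid\mathcal{I})$ is Birkhoff's theorem applied to each invariant $P$ separately. Your observation that standardness is not needed for this statement is also correct: it is used in the paper only to run the extension argument producing the measures $\mathfrak{p}_\o$, which your per-$f$ construction bypasses. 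What the paper's heavier approach buys is the simultaneous object $\{\mathfrak{p}_\o\}$; what yours buys is brevity and a weaker hypothesis.
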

\begin{proof}
	Since $(\O,\mathcal{F})$ is a standard measurable space, let $\hat{\mathcal{F}}$ be a countable generating algebra of $\mathcal{F}$. For any $F\in \hat{\mathcal{F}}$, let 
	\[G(F)=\{\o\in\Omega:\lim_{n\to\infty}\frac{1}{n}\sum_{i=0}^{n-1}1_F(T^i\omega)\text{ exists}\}\]
	and  let 
	\[G(\hat{\mathcal{F}})=\cap_{F\in \hat{\mathcal{F}}}G(F).\]
	For $\o\in G(\hat{\mathcal{F}})$ and $F\in\hat{\mathcal{F}}$, define
	\[\mathfrak{p}_\o(F)=\lim_{n\to\infty}\frac{1}{n}\sum_{i=0}^{n-1}1_F(T^i\omega).\]
	It is easy to check that $\mathfrak{p}_\o$ is  nonnegative, normalized (i.e., $\mathfrak{p}_\o(\O)=1$), and finitely additive on $\hat{\mathcal{F}}$. Since the space is standard, by Carath\'eodory's extension theorem,
	for any $\o\in G(\hat{\mathcal{F}})$, $\mathfrak{p}_\o$ can be extended to a probability on $\mathcal{F}$, uniquely. We still denote them by $\mathfrak{p}_\o$.
	
	Note that $\mathfrak{p}_\o(F)$ is independent of $P\in\mathcal{M}(T)$, since it is a pointwise limit. Fix any $P\in\mathcal{M}(T)$.
	By Theorem \ref{thm:Birkhoff for measures}, one has that  $P(G(\hat{\mathcal{F}}))=1,$
	and for any  $F\in\hat{\mathcal{F}}$,
	\[\mathfrak{p}_\o(F)=\mathbb{E}_P(1_F\mid \mathcal{I})(\o)\text{ for }P\text{-a.s. }\o\in G(\hat{\mathcal{F}}).\]
	Thus, for any  $F\in\hat{\mathcal{F}}$,
	\[\mathfrak{p}_\o(F)=\mathbb{E}_P(1_F\mid \mathcal{I})(\o)\text{ for }P\text{-a.s. }\o\in \O.\]
	Note that for $P$-a.s. $\o\in\O$, $\mathbb{E}_P(1_F\mid \mathcal{I})(\o)$ is also nonnegative, normalized, and finitely additive on $\hat{\mathcal{F}}$.
	By the uniqueness of the extension, one has for $P$-a.s. $\o\in \O$,
	\begin{equation*}
		\mathfrak{p}_\o(F)=\mathbb{E}_P(1_F\mid \mathcal{I})(\o) \text{ for any }F\in\mathcal{F}.
	\end{equation*}
	Furthermore, by the  standard argument for approximation of a measurable function by simple functions, one has that for $P$-a.s. $\o\in \O$,
	\begin{equation}\label{eq2}
		\int_\O fd\mathfrak{p}_\o=\mathbb{E}_P(f\mid \mathcal{I})(\o) \text{ for any }f\in B(\O,\mathcal{F}).
	\end{equation}
	Let $g_f(\o)=\int_\O fd\mathfrak{p}_\o$ if $\o\in G(\hat{\mathcal{F}})$, and otherwise $g_f(\o)=0$. By \eqref{eq2} and the arbitrariness of $P\in\mathcal{ M}(T)$, $g_f$ is desired.
\end{proof}

\section{Birkhoff's ergodic theorem for capacities}\label{sec:Birk}
In the study of measure preserving systems, Birkhoff's ergodic theorem plays a crucial role in understanding the behavior of complex systems. It characterizes the ergodicity of a system and connects it with the time average behavior of certain functions. Upper probabilities are a natural generalization of probabilities and have important applications in areas such as decision theory and risk analysis.  In this section, we continue the work of Feng, Wu and Zhao \cite{FWZ2020} in extending Birkhoff's ergodic theorem to upper probabilities. By studying it, we gain a deeper understanding of the behavior of dynamics of these more general  systems. As an application, we provide a strong law of large numbers for stationary and ergodic sequences on upper probability spaces.
\subsection{Birkhoff's ergodic theorem for invariant upper probabilities}
In this section, we use common conditional expectations to represent the limit of ergodic mean for invariant upper probabilities. 
\begin{theorem}\label{thm:common}
	Let $(\Omega,\mathcal{F})$ be a standard measurable space,  $T:\O\to\O$ be a measurable transformation, and $V$ be an invariant upper probability. Then for any $f\in B(\O,\mathcal{F})$,
	$$
	V(\{\omega\in\O: \lim _{n \rightarrow \infty} \frac{1}{n} \sum_{i=0}^{n-1} f(T^i\omega)=g_f(\o)\}^c)=0,
	$$
	where $g_f$ is the common conditional expectation of $f$ given as in Lemma \ref{lem:common conditional expectation}.
\end{theorem}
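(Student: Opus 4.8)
The plan is to reduce the statement about the upper probability $V$ to the corresponding Birkhoff statements for the individual probabilities in the core, exploiting the fact that $V$ is an upper probability and hence (by the discussion after the definition of upper probabilities) $V=\max_{P\in\L}P$ for a weak*-compact family $\L\subset\D^\sigma(\O,\mathcal{F})$, with $\operatorname{core}(V)\subset\D^\sigma(\O,\mathcal{F})$. Write $B=\{\o\in\O:\lim_{n\to\infty}\frac1n\sum_{i=0}^{n-1}f(T^i\o)=g_f(\o)\}$ and observe that since the convergent-average set and the limit $g_f$ are all $\mathcal{I}$-measurable, we have $B\in\mathcal{I}$; the goal is to show $V(B^c)=0$.

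First I would establish the pointwise convergence off a null set for every invariant probability. Fix any $P\in\operatorname{core}(V)$; since $V$ is $T$-invariant, $P$ need not itself be invariant, so I would pass to its invariant skeleton $\hat P\in\mathcal{M}(T)\cap\operatorname{core}(V)$ furnished by Lemma~\ref{lem:core of invariant lower probabilities}(i), which satisfies $P(A)=\hat P(A)$ for all $A\in\mathcal{I}$. Applying Birkhoff's ergodic theorem (Theorem~\ref{thm:Birkhoff for measures}) to $\hat P$ gives $\lim_{n}\frac1n\sum_{i=0}^{n-1}f(T^i\o)=\mathbb{E}_{\hat P}(f\mid\mathcal{I})(\o)$ for $\hat P$-a.s.\ $\o$. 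The key identification is that, by Lemma~\ref{lem:common conditional expectation}, the common conditional expectation $g_f$ satisfies $\mathbb{E}_{\hat P}(f\mid\mathcal{I})=g_f$, $\hat P$-a.s.\ (this is precisely why $g_f$ is defined to work simultaneously for all invariant probabilities). Hence $\hat P(B)=1$, i.e.\ $\hat P(B^c)=0$.

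Now I would push this through the core. Since $B^c\in\mathcal{I}$ and $P|_{\mathcal{I}}=\hat P|_{\mathcal{I}}$, we get $P(B^c)=\hat P(B^c)=0$ for the chosen $P$; but $P\in\operatorname{core}(V)$ was arbitrary, so $\hat P(B^c)=0$ for every $P\in\operatorname{core}(V)$. This is exactly the hypothesis of Lemma~\ref{lem:core of invariant lower probabilities}(ii), applied with the set $A=B^c$: since $\hat P(B^c)=0$ for all $P\in\operatorname{core}(V)$, we conclude $V(B^c)=0$, which is the claim.

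The main obstacle to check carefully is the compatibility of the common conditional expectation $g_f$ with the skeleton: one must make sure that $g_f=\mathbb{E}_{\hat P}(f\mid\mathcal{I})$ $\hat P$-almost surely for \emph{every} $P\in\operatorname{core}(V)$ simultaneously, not merely for one fixed invariant probability. This is where the universality built into Lemma~\ref{lem:common conditional expectation}—that the single $\mathcal{I}$-measurable function $g_f$ represents the conditional expectation for all of $\mathcal{M}(T)$ at once—is indispensable, and it relies on the standardness of $(\O,\mathcal{F})$ through the pointwise construction of the ergodic averages $\mathfrak{p}_\o$. A secondary point to verify is that $\operatorname{core}(V)$ is nonempty and rich enough that Lemma~\ref{lem:core of invariant lower probabilities}(ii) genuinely forces $V(B^c)=0$; this is guaranteed by $V$ being an upper probability, so that $V(B^c)=\max_{P\in\L}P(B^c)$ is attained and every such maximizer lies in $\operatorname{core}(V)$.
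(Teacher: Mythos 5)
Your proposal is correct and follows essentially the same route as the paper: pass from each $P\in\operatorname{core}(V)$ to its invariant skeleton $\hat P$, apply Birkhoff's theorem together with Lemma \ref{lem:common conditional expectation} to identify the limit with $g_f$, and use that the exceptional set lies in $\mathcal{I}$ to transfer the null statement from $\hat P$ back to $P$ and hence to $V=\max_{P\in\operatorname{core}(V)}P$. The only cosmetic difference is that your final appeal to Lemma \ref{lem:core of invariant lower probabilities}(ii) is redundant once you have $P(B^c)=0$ for every $P\in\operatorname{core}(V)$; the paper concludes directly from the maximum representation.
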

\begin{proof}
	For any $P\in\operatorname{core}(V)$, let $\hat{P}$ be the invariant skeleton of $P$. Since $\hat{P}\in\mathcal{ M}(T)$, it follows from classical Birkhoff's ergodic theorem that 
	\[\lim_{n\to\infty}\frac{1}{n}\sum_{i=0}^{n-1}f(T^i\o)=\mathbb{E}_{\hat{P}}(f\mid\mathcal{I})(\o)\text{ for }\hat{P}\text{-a.s. }\o\in\O.\]
	By Lemma \ref{lem:common conditional expectation}, we have that   $\mathbb{E}_{\hat{P}}(f\mid\mathcal{I})(\o)=g_f(\o)$ for $\hat{P}$-a.s. $\o\in\O$.
	Since the set $\{\o
	\in \O:\lim_{n\to\infty}\frac{1}{n}\sum_{i=0}^{n-1}f(T^i\o)=g_f(\o)\}\in \mathcal{I}$,  for any $P\in\operatorname{core}(V)$, 
\begin{align*}
P(\{\o
\in \O:&\lim_{n\to\infty}\frac{1}{n}\sum_{i=0}^{n-1}f(T^i\o)=g_f(\o)\}^c)\\
&=\hat{P}(\{\o
\in \O:\lim_{n\to\infty}\frac{1}{n}\sum_{i=0}^{n-1}f(T^i\o)=g_f(\o)\}^c)=0.
\end{align*}
	Thus, 
\begin{align*}
V(\{\o
\in \O:&\lim_{n\to\infty}\frac{1}{n}\sum_{i=0}^{n-1}f(T^i\o)=g_f(\o)\}^c)\\
&=\max_{P\in\operatorname{core}(V)}P(\{\o
\in \O:\lim_{n\to\infty}\frac{1}{n}\sum_{i=0}^{n-1}f(T^i\o)=g_f(\o)\}^c)=0,
\end{align*}
	proving this result.
\end{proof}

\subsection{Birkhoff's ergodic theorem for ergodic upper probabilities}
In this section, we finish the proof of our first main result, whose proof is divided into Theorems \ref{lem:ergodic of core} and \ref{thm:main Birk}.

For the sake of presentation in the following proofs of this section, we denote 
\[\O_{f,\a}=\{\o\in\O:\lim_{n\to\infty}\frac{1}{n}\sum_{i=0}^{n-1}f(T^i\o)=\a\}\text{ for }f\in B(\O,\mathcal{F}),\a\in\mathbb{R}.\]
Now we prove the first statement in our first main result as follows.
\begin{theorem}\label{lem:ergodic of core}
	Let $(\O,\mathcal{F},V,T)$ be a capacity preserving system.
	If $V$ is an upper probability, then $V$ is ergodic if and only if there exists a (unique) $Q\in \mathcal{M}^e(T)\cap \operatorname{core}(V)$ such that for any $P\in\operatorname{core}(V)$,
	\[P(A)=Q(A) \text{ for any }A\in\mathcal{I}.\]
\end{theorem}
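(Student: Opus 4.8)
The plan is to prove both implications using the invariant skeleton construction of Lemma \ref{lem:core of invariant lower probabilities} as the central device, together with two facts already available: that an upper probability admits the representation $V(A)=\max_{P\in\operatorname{core}(V)}P(A)$ for every $A\in\mathcal{F}$, and the rigidity statement of Lemma \ref{lem:invariant meausre } that two invariant probabilities coinciding on $\mathcal{I}$ must be equal. Throughout I would exploit that $Q(A)\le V(A)$ for $Q\in\operatorname{core}(V)$ and that $\mathcal{I}$ is closed under complementation.

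For the forward direction, assume $V$ is ergodic. I would fix an arbitrary element $P\in\operatorname{core}(V)$ and let $Q:=\hat{P}$ be its invariant skeleton, so that $Q\in\mathcal{M}(T)\cap\operatorname{core}(V)$ and $Q=P$ on $\mathcal{I}$. The first task is to check $Q\in\mathcal{M}^e(T)$: for $A\in\mathcal{I}$, ergodicity of $V$ supplies both $V(A)\in\{0,1\}$ and the dichotomy $V(A)=0$ or $V(A^c)=0$; sandwiching $Q(A)\le V(A)$ and $Q(A^c)\le V(A^c)$ then forces $Q(A)=0$ in the first case and $Q(A)=1$ in the second, so $Q(\mathcal{I})=\{0,1\}$. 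The second task is that every $P'\in\operatorname{core}(V)$ agrees with $Q$ on $\mathcal{I}$: for $A\in\mathcal{I}$ the same dichotomy pins $P'(A)$ (via $P'(A)\le V(A)$ and $P'(A^c)\le V(A^c)$) to exactly the value $Q(A)\in\{0,1\}$. Uniqueness of such $Q$ is then immediate from Lemma \ref{lem:invariant meausre }, since any two candidates lie in $\mathcal{M}(T)$ and coincide on $\mathcal{I}$ (both equalling the fixed $P$ there).

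For the reverse direction, assume such a $Q$ exists and fix $A\in\mathcal{I}$. Since $Q\in\mathcal{M}^e(T)$ we have $Q(A)\in\{0,1\}$. If $Q(A)=0$, then the defining property of $Q$ gives $P(A)=Q(A)=0$ for every $P\in\operatorname{core}(V)$, whence $V(A)=\max_{P\in\operatorname{core}(V)}P(A)=0$. If $Q(A)=1$, then $A^c\in\mathcal{I}$ has $Q(A^c)=0$, so the previous case applied to $A^c$ yields $V(A^c)=0$, while $Q\in\operatorname{core}(V)$ forces $V(A)\ge Q(A)=1$, hence $V(A)=1$. In both cases $V(A)\in\{0,1\}$ together with $V(A)=0$ or $V(A^c)=0$, which is precisely the ergodicity of $V$.

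The argument is essentially bookkeeping once the skeleton lemma is available, so I do not anticipate a serious obstacle. The one point demanding care is that ergodicity of $V$ is a two-part condition: one must invoke the disjointness part ($V(A)=0$ or $V(A^c)=0$), not merely $V(A)\in\{0,1\}$, in order to upgrade the core inequalities $P'(A)\le V(A)$ into the exact equalities $P'(A)=Q(A)$ on $\mathcal{I}$; symmetrically, in the reverse direction it is the interplay of $Q$'s ergodicity with the max-over-core representation of $V$ that simultaneously delivers both parts of the ergodicity of $V$.
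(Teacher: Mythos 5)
Your proof is correct, and its overall architecture matches the paper's: both directions rest on the invariant skeleton from Lemma \ref{lem:core of invariant lower probabilities}, the verification that the skeleton is ergodic is word-for-word the paper's Step 1, uniqueness is settled by Lemma \ref{lem:invariant meausre } in both treatments, and the reverse implication is identical. The one place you genuinely diverge is the middle step of the forward direction, where you must show that every $P'\in\operatorname{core}(V)$ agrees with $Q$ on $\mathcal{I}$. The paper gets this by invoking Theorem \ref{thm:original version} (the Birkhoff-type theorem for ergodic upper probabilities): for each $A\in\mathcal{F}$ it produces the constant $c_A$ with $V((\O_{1_A,c_A})^c)=0$, compares with the classical Birkhoff theorem for each ergodic skeleton $\hat{P}$, and concludes the stronger fact that all skeletons coincide as measures on all of $\mathcal{F}$, from which agreement on $\mathcal{I}$ follows. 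You instead argue directly: for $A\in\mathcal{I}$ the ergodicity dichotomy $V(A)=0$ or $V(A^c)=0$, combined with the core inequalities $P'(A)\le V(A)$ and $P'(A^c)\le V(A^c)$, pins every $P'(A)$ to the common value $0$ or $1$. This is more elementary --- it needs nothing beyond the definition of ergodicity and the definition of the core, and in particular does not rely on the machinery of Theorem \ref{thm:original version}. What the paper's longer route buys is the stronger conclusion $\hat{P}=Q$ on all of $\mathcal{F}$ (not just on $\mathcal{I}$), which it reuses later (e.g.\ in the proof of Corollary \ref{cor:Q<=>V}); for the statement of this theorem alone, your shortcut suffices.
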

\begin{proof}
	($\Rightarrow$)
	Let $\hat{P}\in\operatorname{core}(V)\cap \mathcal{ M}(T)$ be the invariant skeleton of $P\in \operatorname{core}(V)$.
	
	\textbf{Step 1.} Prove that $\hat{P}\in \mathcal{M}^e(T)$ for any $P\in\operatorname{core}(V)$. 
	
	Fix any $A\in\mathcal{I}$. It suffices to prove $\hat{P}(A)=0$ or $1$.
	As $\hat{P}\in\operatorname{core}(V)$, one has 
	$\hat{P}(A)\le V(A)$.
	Since $V$ is ergodic and subadditive, it follows that  either
	\[V(A)=0\text{ or }V(A^c)=0.\]
	If $V(A)=0$, then $\hat{P}(A)=0$; if $V(A^c)=0$ then $\hat{P}(A^c)\le V(A^c)=0$ and hence $\hat{P}(A)=1$. This shows that $\hat{P}(A)=0$ or $1$, proving Step 1.

	\textbf{Step 2.} Prove the existence of $Q$. 
	
	Given any $A\in\mathcal{F}$, by Theorem \ref{thm:original version}, there exists a unique $c_A\in\mathbb{R}$ such that
	$V((\O_{1_A,c_A})^c)=0$.
	Thus, 
	\begin{equation}\label{eqq1}
		\hat{P}(\O_{1_A,c_A})=1\text{ for any }P\in\operatorname{core}(V).
	\end{equation}
	
	On the other hand, by Step 1, $\hat{P}\in \mathcal{M}^e(T)$, which together with the classical Birkhoff's ergodic theorem implies that
	\begin{equation}\label{eqq2}
		\hat{P}(\O_{1_A,\hat{P}(A)})=1\text{ for any }P\in\operatorname{core}(V).
	\end{equation}
	Comparing \eqref{eqq1} and \eqref{eqq2}, one has 
	\[\hat{P}(A)=c_A\text{ for any }P\in\operatorname{core}(V).\]
	Define
	$Q(A)=c_A\text{ for any }A\in\mathcal{F}.$
	Thus, for any $P\in\operatorname{core}(V)$,
	\[Q(A)=c_A=\hat{P}(A)\text{ for any }A\in\mathcal{F},\]
	that is, $Q=\hat{P}$ for any $P\in\operatorname{core}(V)$.
	This together with Step 1, implies that $Q\in\mathcal{ M}^e(T)\cap\operatorname{core}(V)$. The existence of $Q$ has been proved.
	
	\textbf{Step 3.}  Prove that $Q$ is the unique invariant probability in $\operatorname{core}(V)$. 
	
	We assume that  $Q'\in\mathcal{ M}(T)\cap\operatorname{core}(V)$.  By Step 2, we have that for any $P\in\operatorname{core}(V)$, $P|_{\mathcal{I}}=Q|_{\mathcal{I}}$. 
	In particular, $Q(A)=Q'(A)$ for any $A\in\mathcal{I}$, which together with Lemma \ref{lem:invariant meausre } implies that $Q=Q'$. 
	
	($\Leftarrow$) For any $A\in\mathcal{I}$, $Q(A)=0$ or $1$, as $Q$ is ergodic. This implies that either $P(A)=0$ for any $P\in\operatorname{core}(V)$  or $P(A)=1$ for any $P\in\operatorname{core}(V)$.  In the first case, $V(A)=0$. In the second case, $P(A^c)=0$ for any $P\in\operatorname{core}(V)$, and hence $V(A)=1$ and $V(A^c)=0$. As $A\in\mathcal{I}$ is arbitrary, we finish the proof.
\end{proof}

Now we prove the second statement in our first main result, which provides more information about the constant $c_f$ in Theorem \ref{thm:original version} to overcome the uncertainty caused by upper probabilities.
\begin{theorem}\label{thm:main Birk}
	Let $(\O,\mathcal{F},V,T)$ be a capacity preserving system, where $V$ is an  upper probability.  Then $V$ is ergodic with respect to $T$ if and only if there exists a unique $Q\in \mathcal{M}^e(T)\cap \operatorname{core}(V)$ such that for any  $f\in L^1(\O,\mathcal{F},Q)$,
	\begin{equation}\label{eq:1111}
		\lim _{n \rightarrow \infty} \frac{1}{n} \sum_{i=0}^{n-1} f(T^{i} \o)=\int f dQ \text{ for }V\text{-a.s. }\o\in\O.
	\end{equation}
\end{theorem}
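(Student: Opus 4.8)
The plan is to derive this theorem from the already-established Theorem~\ref{lem:ergodic of core} together with the classical Birkhoff theorem (Theorem~\ref{thm:Birkhoff for measures}), the point being to transfer the convergence from the skeleton $Q$ to every $P\in\operatorname{core}(V)$ through their agreement on $\mathcal{I}$. Note that the common conditional expectation of Lemma~\ref{lem:common conditional expectation} and Theorem~\ref{thm:common} is only available for $f\in B(\O,\mathcal{F})$, so for general $f\in L^1(\O,\mathcal{F},Q)$ a different route is needed; I would avoid that machinery entirely.

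For the forward implication, suppose $V$ is ergodic and let $Q\in\mathcal{M}^e(T)\cap\operatorname{core}(V)$ be the probability furnished by Theorem~\ref{lem:ergodic of core}, so that $P|_{\mathcal{I}}=Q|_{\mathcal{I}}$ for every $P\in\operatorname{core}(V)$. Fix $f\in L^1(\O,\mathcal{F},Q)$. Since $Q$ is ergodic, Theorem~\ref{thm:Birkhoff for measures} gives $Q(\O_{f,\int f dQ})=1$. The decisive step is to check that $\O_{f,\int f dQ}\in\mathcal{I}$: writing $S_n(\o)=\sum_{i=0}^{n-1}f(T^i\o)$, the identity $S_n(T\o)=S_{n+1}(\o)-f(\o)$ shows that $\tfrac1n S_n(T\o)=\tfrac{n+1}{n}\cdot\tfrac{1}{n+1}S_{n+1}(\o)-\tfrac1n f(\o)$, so $\tfrac1n S_n(\o)\to\a$ if and only if $\tfrac1n S_n(T\o)\to\a$, whence $\O_{f,\a}=T^{-1}\O_{f,\a}\in\mathcal{I}$ for every $\a\in\mathbb{R}$. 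Because this set lies in $\mathcal{I}$ and $P|_{\mathcal{I}}=Q|_{\mathcal{I}}$, we get $P(\O_{f,\int f dQ})=Q(\O_{f,\int f dQ})=1$ for every $P\in\operatorname{core}(V)$, and therefore $V((\O_{f,\int f dQ})^c)=\max_{P\in\operatorname{core}(V)}P((\O_{f,\int f dQ})^c)=0$, using $V=\sup_{P\in\operatorname{core}(V)}P$ and the weak* compactness of the core. Uniqueness of $Q$ is immediate, since the proof of Theorem~\ref{lem:ergodic of core} shows $\mathcal{M}(T)\cap\operatorname{core}(V)=\{Q\}$ when $V$ is ergodic.

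For the converse, assume such a $Q$ exists and fix $A\in\mathcal{I}$. Applying the hypothesis to $f=1_A$ and using $T^{-1}A=A$, so that $\tfrac1n\sum_{i=0}^{n-1}1_A(T^i\o)=1_A(\o)$ for every $n$, yields $1_A(\o)=\int 1_A\,dQ=Q(A)$ for $V$-a.s.\ $\o$. Since $Q$ is ergodic and $A\in\mathcal{I}$, we have $Q(A)\in\{0,1\}$; if $Q(A)=0$ then $V(A)=0$, and if $Q(A)=1$ then $V(A^c)=0$. As $A\in\mathcal{I}$ was arbitrary and $V$, being an upper probability, is subadditive, this is exactly the defining condition for ergodicity of $V$.

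I expect the main obstacle to be precisely the unbounded case in the forward direction: one cannot produce the convergence set for $f\in L^1(\O,\mathcal{F},Q)$ from the common conditional expectation, and one must instead establish that $\O_{f,\int f dQ}$ is a \emph{genuine} element of $\mathcal{I}$ (not merely $Q$-almost invariant) so that the equality $P|_{\mathcal{I}}=Q|_{\mathcal{I}}$ applies to it verbatim and the conclusion propagates to $V$ through the core. The purely pointwise identity $S_n(T\o)=S_{n+1}(\o)-f(\o)$ is what makes this work for every real-valued $f$, finite at each point, regardless of integrability.
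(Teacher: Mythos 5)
Your proposal is correct and follows essentially the same route as the paper: in both directions it reduces to Theorem~\ref{lem:ergodic of core}, using that $\O_{f,\int f\,dQ}$ belongs to $\mathcal{I}$ so that $P|_{\mathcal{I}}=Q|_{\mathcal{I}}$ transfers the full-measure convergence set from $Q$ to every $P\in\operatorname{core}(V)$ and hence to $V$, and the converse is obtained by testing \eqref{eq:1111} on $f=1_A$ for $A\in\mathcal{I}$. Your explicit verification that $\O_{f,\a}$ is exactly (not merely almost) $T$-invariant via $S_n(T\o)=S_{n+1}(\o)-f(\o)$ is a detail the paper asserts without proof, but it is the same argument.
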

\begin{proof}
	($\Leftarrow$) Consider $A\in\mathcal{I}$, then $Q(A)=0$ or $Q(A)=1$, as $Q\in \mathcal{ M}^e(T)$. Note that if $Q(A)=0$ then
	\[\O_{1_A,Q(A)}=\{\o\in\O:\lim_{n\to\infty}\frac{1}{n}\sum_{i=0}^{n-1}1_A(T^i\o)=0\}=\{\o\in\O:1_A(\o)=0\}=A^c.\]
	By \eqref{eq:1111}, one has that $V(A)=V((\O_{1_A,Q(A)})^c)=0$. Meanwhile,
	if $Q(A)=1$ then
	\[\O_{1_A,Q(A)}=\{\o\in\O:1_A(\o)=1\}=A.\]
	Similarly, by \eqref{eq:1111}, we have that $V(A^c)=V((\O_{1_A,Q(A)})^c)=0$. Thus, we obtain that either $V(A)=0$ or $V(A^c)=0$  for any $A\in\mathcal{I}$, and hence $V$ is ergodic.
	
	($\Rightarrow$) Let $Q\in\mathcal{ M}^e(T)\cap\operatorname{core}(V)$ be the ergodic probability  obtained by Theorem \ref{lem:ergodic of core}.
	Since $Q$ is ergodic, we have that for  any $f\in L^1(\O,\mathcal{F},Q)$,
	$Q((\O_{f,\int fdQ})^c)=0.$
	Since the set $\O_{f,\int fdQ}\in\mathcal{I}$, it follows that 
	$P((\O_{f,\int fdQ})^c)=0\text{ for any }P\in\operatorname{core}(V).$
	Thus, $V((\O_{f,\int fdQ})^c)=0.$
	The proof is completed, as the uniqueness can be obtained similar to Theorem \ref{lem:ergodic of core}.
\end{proof}
In the classical ergodic theory, the probability invariance seems to play an essential role. To break this restriction,   Hurewicz \cite{Hurewicz1944}  provided the condition of no wandering sets such that Birkhoff's ergodic theorem holds for a non-invariant probability.  In this seminal paper, this interesting problem was initiated. But   
progress along this line did not go very far as only limited number of results have been achieved since then.

As a consequence of Theorem \ref{thm:main Birk}, in the next result, we provide a condition in terms of an upper probability for Birkhoff's ergodic theorem to hold for a non-invariant probability. 
Beginning with this preliminary result, with the help of further results of ergodic theory of upper probabilities that we obtain in this paper later, we are able to push the ergodic theory of non-invariant probability a very big step by considering  kinds of ergodic theorems. Moreover, we will be able to construct an invariant upper probability and invariant skeleton from the non-invariant probability and measurable transformation.  Consequently, we obtain a number of results,
including averaging asymptotic independence, long time independence, subadditive ergodic theorem and multiplicative ergodic theorem, with no need of referring to the upper probability. See Remark \ref{rem4.5}, Theorem \ref{thm:app1}, Theorem \ref{thm:app2}, Theorem \ref{thm:sub for noninvariant} and Theorem \ref{thm:met for non} for details.
\begin{cor}\label{cor;non}
	Let $(\O,\mathcal{F},P)$ be a probability space, and $T:\O\to\O$ be a measurable transformation. If there exists an upper probability $V$ on $(\O,\mathcal{F})$ such that $V$ is ergodic with respect to $T$, and $P\in\operatorname{core}(V)$, then there exists a unique ergodic probability $Q\in\mathcal{ M}^e(T)\cap \operatorname{core}(V)$ on $(\O,\mathcal{F})$ such that  for any $f\in L^1(\O,\mathcal{F},Q)$,
	$$
	\lim _{n \rightarrow \infty} \frac{1}{n} \sum_{i=0}^{n-1} f(T^{i} \o)=\int f dQ,\text{ for }P\text{-a.s. }\o\in\O.
	$$
\end{cor}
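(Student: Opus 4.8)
The plan is to obtain everything from Theorem \ref{thm:main Birk} and then perform a single comparison between $V$ and $P$ on null sets. First I would observe that the hypothesis hands us exactly the input required by Theorem \ref{thm:main Birk}: an upper probability $V$ on $(\O,\mathcal{F})$ that is ergodic with respect to $T$. Thus that theorem applies verbatim and produces a unique $Q\in\mathcal{M}^e(T)\cap\operatorname{core}(V)$ such that, for every $f\in L^1(\O,\mathcal{F},Q)$, one has $\lim_{n\to\infty}\frac{1}{n}\sum_{i=0}^{n-1}f(T^i\o)=\int fdQ$ for $V$-almost every $\o$. In the notation introduced before Theorem \ref{lem:ergodic of core}, this says precisely that $V((\O_{f,\int fdQ})^c)=0$ for each such $f$.

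The only remaining step is to replace $V$-almost sure convergence by $P$-almost sure convergence, and this is immediate from the assumption $P\in\operatorname{core}(V)$. By the defining inequality \eqref{eq:core} of the core, $P(A)\le V(A)$ for every $A\in\mathcal{F}$, so every $V$-null set is automatically $P$-null. Applying this to $A=(\O_{f,\int fdQ})^c$ yields $P((\O_{f,\int fdQ})^c)\le V((\O_{f,\int fdQ})^c)=0$, which is exactly the asserted conclusion that the ergodic average converges to $\int fdQ$ for $P$-almost every $\o$. The uniqueness of $Q$, and the fact that $Q$ is ergodic and lies in $\operatorname{core}(V)$, are inherited directly from the corresponding assertions in Theorem \ref{thm:main Birk}.

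I do not anticipate any genuine obstacle: the corollary is a soft consequence in which the nonlinear ergodic theorem does all the work, and the passage from $V$ to $P$ costs nothing beyond the monotonicity built into the core. The one point worth keeping straight is the direction of the inequality — because $P$ sits below $V$, the strong $V$-a.s.\ statement is the one we are handed while the weaker $P$-a.s.\ statement is what we need, so no further tool (continuity of $V$, a covering argument, or the like) is required.
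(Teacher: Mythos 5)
Your proof is correct and matches the paper's intended argument: the paper states this corollary without a separate proof, treating it as an immediate consequence of Theorem \ref{thm:main Birk}, and the only extra step — that $P\in\operatorname{core}(V)$ forces every $V$-null set to be $P$-null — is exactly the monotonicity observation you make.
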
 
\begin{rem}
	\begin{longlist}
		\item	The limit $	\lim _{n \rightarrow \infty} \frac{1}{n} \sum_{i=0}^{n-1} f(T^{i} \o)$ is a pathwise limit for $f\in B(\O,\mathcal{F})$, so it does not depend on the choice of the upper probabilities. In fact, if $V'$ is another ergodic upper probability such that $P\in\operatorname{core}(V')$, and $Q'$ is the corresponding ergodic probability in $\mathcal{ M}^e(T)\cap\operatorname{core}(V')$, then $Q'=Q$, as by Corollary \ref{cor;non}, we have $\int fdQ=\int fdQ'$ for any $f\in B(\O,\mathcal{F})$.
		\medskip
		\item We will provide further results in constructing an appropriate upper probability $V$  and the ergodic probability $Q$ for a class of probabilities on $(\O,\mathcal{F})$.
	\end{longlist}
\end{rem}
Let us see a concrete example of Birkhoff's ergodic theorem for non-invariant probability.  More examples  can be found in Section \ref{sec:ex and open}.
\begin{example}\label{ex:upper}
	Let  $\O=[0,2)$ and $\mathcal{B}(\O)$ be the Borel $\sigma$-algebra on $\O$. Given an irrational number $\a$, define $T_\alpha:\O \rightarrow \O$ by
	$$
	T_\alpha(x)= \begin{cases}((x+\a) \bmod 1)+1, & x \in [0,1), \\ x-1, & x \in [1,2) .\end{cases}
	$$
	For each $i=1,2$, define
	$$
	\bar{P}_i(A):=P_i\left(A \cap [i-1,i)\right),\text { for any } A \in \mathcal{B}({\Omega}),
	$$
	where $P_i$ is the Lebesgue measure on $[i-1,i)$,
	and the upper probability is defined by
	\begin{equation}\label{eq:def of upper}
		V=\max\{\bar{P}_1,\bar{P}_2\}.
	\end{equation}
	First, it is not difficult to check that $\bar{P}_1,\bar{P}_2\in\operatorname{core}(V)$ and $V$ is $T$-invariant. 
	Note that the probability $Q:=\frac{1}{2}(\bar{P}_1+\bar{P}_2)$ is ergodic with respect to $T_\a$, by the observation that for any $A\in\mathcal{I}$,  $T_\alpha^{-2}(A\cap [i-1,i))=A\cap [i-1,i)$ for $i=1,2$, and the ergodicity  of the irrational rotation on the torus with respect to the Lebesgue measure. Note that for any $A\in\mathcal{F}$, if $Q(A)=0$ then $\bar{P}_i(A)=0$, $i=1,2$, and hence $V(A)=0$. In particular, for any $P\in\operatorname{core}(V)$,  $P|_{\mathcal{I}}=Q|_{\mathcal{I}}$, as $Q(\mathcal{I})=\{0,1\}$. By Theorem \ref{lem:ergodic of core}, we have that $V$ is ergodic. The ergodicity of $V$ was obtained in  Example 4.6 of \cite{FQZ2020} by a  different argument. By Corollary \ref{cor;non}, we have Birkhoff's ergodic theorem for probabilities $\bar{P}_1$ and $\bar{P}_2$, but both $\bar{P}_1$ and $\bar{P}_2$ are not invariant with respect to $T_\a$.
\end{example}
The following corollary shows the core structure  of any ergodic upper probability. The first statement strengths the result in Theorem \ref{lem:ergodic of core}.
\begin{cor}\label{cor:Q<=>V}
	If $V$ is an ergodic upper probability on a measurable space $(\O,\mathcal{F})$ with respect to a measurable transformation $T:\O\to \O$, then 
	$\operatorname{core}(V)\cap \mathcal{ M}(T)=\operatorname{core}(V)\cap \mathcal{ M}^e(T)$
	has only one element, denoted by $Q$. Moreover, for any $A\in\mathcal{F}$, 
	\[Q(A)=0\text{ if and only if }V(A)=0.\]
	In particular, $V(A)=Q(A)$ for any $A\in\mathcal{I}$.
\end{cor}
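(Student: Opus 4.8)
The plan is to deduce everything from Theorem~\ref{lem:ergodic of core} and Lemma~\ref{lem:core of invariant lower probabilities}, since those already carry the structural content and the corollary is essentially a repackaging. First I would establish the set equality $\operatorname{core}(V)\cap\mathcal{M}(T)=\operatorname{core}(V)\cap\mathcal{M}^e(T)$ together with the fact that it is a singleton. The inclusion $\supseteq$ is immediate because ergodic probabilities are invariant. For $\subseteq$, take any $Q'\in\operatorname{core}(V)\cap\mathcal{M}(T)$; since $V$ is ergodic, Theorem~\ref{lem:ergodic of core} applies and, by Step 3 of its proof, the probability $Q$ produced there is the \emph{unique} invariant probability lying in $\operatorname{core}(V)$. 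Hence $Q'=Q$, which is ergodic, so $Q'\in\operatorname{core}(V)\cap\mathcal{M}^e(T)$. This single argument gives both the set equality and that the common set equals $\{Q\}$.

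Next I would prove the equivalence $Q(A)=0$ if and only if $V(A)=0$. The direction $V(A)=0\Rightarrow Q(A)=0$ is trivial, since $Q\in\operatorname{core}(V)$ forces $Q(A)\le V(A)$. For the converse, the key observation is that, by Step 2 of the proof of Theorem~\ref{lem:ergodic of core}, the invariant skeleton $\hat P$ of \emph{every} $P\in\operatorname{core}(V)$ coincides with $Q$. Thus if $Q(A)=0$, then $\hat P(A)=0$ for all $P\in\operatorname{core}(V)$, and Lemma~\ref{lem:core of invariant lower probabilities}(ii) immediately yields $V(A)=0$. I expect this converse to be the main obstacle, because it is precisely the point where the non-additivity of $V$ threatens to block the passage from a core-wise statement to a statement about $V$ itself; fortunately the genuine work is already isolated inside Lemma~\ref{lem:core of invariant lower probabilities}(ii), which handles it via the continuity of $V$ and a Borel--Cantelli type estimate on $\bar A=\cap_{n}\cup_{k\ge n}T^{-k}A$.

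Finally, for $A\in\mathcal{I}$ I would split on the value of $Q(A)$, which lies in $\{0,1\}$ by ergodicity of $Q$. If $Q(A)=0$, the equivalence just established gives $V(A)=0=Q(A)$. If $Q(A)=1$, then $Q(A^c)=0$ and, since $A^c\in\mathcal{I}$, the same equivalence gives $V(A^c)=0$; combining this with $Q(A)\le V(A)\le 1$ forces $V(A)=1=Q(A)$. Hence $V=Q$ on all of $\mathcal{I}$, which completes the proof. The only substantive ingredient beyond bookkeeping is the converse implication of the second part, and that is delivered by Lemma~\ref{lem:core of invariant lower probabilities}(ii) together with the identification of all invariant skeletons with $Q$.
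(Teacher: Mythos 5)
Your proposal is correct and follows essentially the same route as the paper: uniqueness of the invariant element of the core via Theorem \ref{lem:ergodic of core} (the paper phrases this through the invariant-skeleton argument and Lemma \ref{lem:invariant meausre }, which is what Step 3 of that proof encodes), and the nontrivial implication $Q(A)=0\Rightarrow V(A)=0$ via the identification $\hat P=Q$ for all $P\in\operatorname{core}(V)$ combined with Lemma \ref{lem:core of invariant lower probabilities}(ii). Your explicit case split for $A\in\mathcal{I}$ is a fine way to record the final "in particular" claim, which the paper leaves implicit.
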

\begin{proof}
	From Theorem \ref{lem:ergodic of core}, there exists a unique probability $Q\in\operatorname{core}(V)\cap \mathcal{ M}^e(T)$ satisfying that $Q|_{\mathcal{I}}=P|_{\mathcal{I}}$ for any $P\in\operatorname{core}(V)$. If there exists $Q'\in \operatorname{core}(V)\cap \mathcal{ M}(T)$, then $Q$ is the invariant skeleton of $Q'$, which together with Lemma \ref{lem:invariant meausre } implies that $Q=Q'$, proving the first statement. 
	
	Now we prove the second statement. Given $A\in\mathcal{F}$, if $V(A)=0$ then $Q(A)=0$, as $Q\in\operatorname{core}(V)$. Conversely, suppose that $Q(A)=0$. By Theorem \ref{lem:ergodic of core}, we have that for any $P\in\operatorname{core}(V)$, $P|_{\mathcal{I}}=Q|_{\mathcal{I}}$, which together with (i) of Lemma \ref{lem:core of invariant lower probabilities}, 
	\[\hat{P}=Q\text{ for any }P\in\operatorname{core}(V).\]
	In particular, $\hat{P}(A)=Q(A)=0$ for any $P\in\operatorname{core}(V)$.
	Thus, by (ii) of Lemma \ref{lem:core of invariant lower probabilities}, we have that $V(A)=0$.
\end{proof}
Finally, we consider the invariant upper probabilities on a class of special measurable spaces.
\begin{cor}
	Let $(\O,\mathcal{F})$ be a measurable space and $T:\O\to \O$ be a measurable transformation. If $(\O,\mathcal{F})$ is uniquely ergodic with respect to $T$, that is, $\mathcal{ M}(T)=\{Q\}$, then each invariant upper probability is ergodic.
\end{cor}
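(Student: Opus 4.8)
The plan is to reduce the statement to the characterization of ergodicity in Theorem \ref{lem:ergodic of core}, which asserts that an invariant upper probability $V$ is ergodic if and only if there is a (unique) $Q\in\mathcal{M}^e(T)\cap\operatorname{core}(V)$ with $P|_{\mathcal{I}}=Q|_{\mathcal{I}}$ for every $P\in\operatorname{core}(V)$. Under the unique ergodicity hypothesis $\mathcal{M}(T)=\{Q\}$, I would exhibit exactly such a $Q$ and check that it lies in $\mathcal{M}^e(T)\cap\operatorname{core}(V)$, after which the ``if'' direction of Theorem \ref{lem:ergodic of core} closes the argument.

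First I would observe that $\operatorname{core}(V)\neq\emptyset$: since $V$ is an upper probability, the compact family $\Lambda$ with $V=\max_{P\in\Lambda}P$ is nonempty and every $P\in\Lambda$ satisfies $P(A)\le V(A)$, so $\Lambda\subset\operatorname{core}(V)$. Fixing any $P\in\operatorname{core}(V)$, Lemma \ref{lem:core of invariant lower probabilities}(i) produces its invariant skeleton $\hat{P}\in\mathcal{M}(T)\cap\operatorname{core}(V)$ with $P|_{\mathcal{I}}=\hat{P}|_{\mathcal{I}}$. Because $\mathcal{M}(T)=\{Q\}$, necessarily $\hat{P}=Q$; in particular $Q\in\operatorname{core}(V)$. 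Running the same argument for an arbitrary $P\in\operatorname{core}(V)$ gives $\hat{P}=Q$ for every such $P$, whence $P|_{\mathcal{I}}=Q|_{\mathcal{I}}$ for all $P\in\operatorname{core}(V)$.

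It then remains only to verify that $Q\in\mathcal{M}^e(T)$, i.e.\ that the unique invariant probability is automatically ergodic. I would prove this directly: if some $A\in\mathcal{I}$ had $0<Q(A)<1$, then $B\mapsto Q(B\cap A)/Q(A)$ and $B\mapsto Q(B\cap A^c)/Q(A^c)$ would both be $T$-invariant probabilities (using $A\in\mathcal{I}$ so that $T^{-1}A=A$ and $T^{-1}A^c=A^c$), and they would be distinct since they are carried by the disjoint sets $A$ and $A^c$. This contradicts $\mathcal{M}(T)=\{Q\}$, so $Q(A)\in\{0,1\}$ for every $A\in\mathcal{I}$ and hence $Q\in\mathcal{M}^e(T)$.

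Having produced $Q\in\mathcal{M}^e(T)\cap\operatorname{core}(V)$ with $P|_{\mathcal{I}}=Q|_{\mathcal{I}}$ for all $P\in\operatorname{core}(V)$, Theorem \ref{lem:ergodic of core} yields that $V$ is ergodic, and since $V$ was an arbitrary invariant upper probability the corollary follows. I do not expect a genuine obstacle here: the whole argument is packaged into the skeleton construction of Lemma \ref{lem:core of invariant lower probabilities} and the characterization in Theorem \ref{lem:ergodic of core}, and the only self-contained point is the classical decomposition argument showing that the unique invariant probability $Q$ is ergodic, which is routine.
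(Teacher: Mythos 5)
Your proposal is correct and follows essentially the same route as the paper: pass to invariant skeletons via Lemma \ref{lem:core of invariant lower probabilities}, use unique ergodicity to force every skeleton to equal $Q$, note that the unique invariant probability is ergodic, and invoke Theorem \ref{lem:ergodic of core}. The only difference is that you spell out the standard decomposition argument for why the unique invariant probability is ergodic (and check that the core is nonempty), whereas the paper simply cites this as well known.
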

\begin{proof}
	Fix an invariant upper probability $V$ on $(\O,\mathcal{F})$.Let $\hat{P}$ be the invariant skeleton of $P\in\operatorname{core}(V)$. 
	Since $(\O,\mathcal{F})$ is uniquely ergodic with respect to $T$, it follows that $\hat{P}=Q$ for any $P\in\operatorname{core}(V)$. It is well known that if $\mathcal{ M}(T)$ has only one element then $\mathcal{ M}(T)=\mathcal{ M}^e(T)$, and hence $Q\in\mathcal{ M}^e(T)$. By Theorem \ref{lem:ergodic of core}, it is easy to see that $V$ is ergodic.
\end{proof}
\subsection{Ergodicity of stationary processes on capacity spaces}
In classical probability theory, the notion of stationary stochastic process is one possible generalization of independent identically distribution random variables. Recently, it was defined on capacity spaces by Cerreia-Vioglio, Maccheroni and Marinacci \cite{CMM2016} as follows.
\begin{definition}
	Given a capacity space $(\Omega, \mathcal{F}, \mu)$, a stochastic process $\left\{Y_{n}\right\}_{n \in \mathbb{N}}$ is called to be stationary if  for each $n \in \mathbb{N}, k \in \mathbb{Z}_{+}$ and Borel subset $A$ of $\mathbb{R}^{k+1}$,
	$$
	\mu\left(\left\{\omega\in\O:\left(Y_{n}(\omega), \ldots, Y_{n+k}(\omega)\right) \in A\right\}\right)=\mu\left(\left\{\omega\in\O:\left(Y_{n+1}(\omega), \ldots, Y_{n+1+k}(\omega)\right) \in A\right\}\right) .
	$$
\end{definition}
In classical probability theory, independent identically distributed random variables form a stationary sequence. However, in the context of capacity theory, this result does not hold in general. A counterexample can be found in Example 2.1 of \cite{FWZ2020}.

A stationary ergodic process is a type of stochastic process that has been extensively studied in the fields of probability theory, statistics, and information theory. As it conforms to the ergodic theorem, there exists a strong law of large numbers for stationary ergodic stochastic sequences on probability spaces (see \cite[Theorem 2.1 of Chapter X]{Doob1953} for example). More recently, Feng, Wu and Zhao extended this result to upper probability  spaces \cite[Theorem 5.1]{FWZ2020}. 

In this section, we use Theorem \ref{thm:main Birk} to obtain a stronger result. 
Let $\left(\mathbb{R}^{\mathbb{N}}, \sigma(\mathcal{C})\right)$ denote the space of sequences endowed with the $\sigma$-algebra generated by the set of all cylinders $\mathcal{C}$.  Any set $C\in\mathcal{C}$ is called a cylinder, which has the following form
$$
C=\left\{\mathbf{x}=\left(x_{1}, x_{2}, x_{3}, \ldots\right):\left(x_{1}, \ldots, x_{n}\right) \in H\right\},
$$
where $n \in \mathbb{N}$ and $H \in \mathcal{B}\left(\mathbb{R}^{n}\right)$. It is well known that $\mathcal{C}$ is an algebra. We consider the left shift transformation $T: \mathbb{R}^{\mathbb{N}} \rightarrow \mathbb{R}^{\mathbb{N}}$ defined by
$$
T(\mathbf{x})=T\left(x_{1}, x_{2}, x_{3}, \ldots\right)=\left(x_{2}, x_{3}, x_{4}, \ldots\right)\text { for any } \mathbf{x}=\left(x_{1}, x_{2}, x_{3}, \ldots\right) \in \mathbb{R}^{\mathbb{N}}.
$$
The stochastic process $\left\{Y_{n}\right\}_{n \in \mathbb{N}}$ induces a measurable map from $(\Omega, \mathcal{F})$ to $\left(\mathbb{R}^{\mathbb{N}}, \sigma(\mathcal{C})\right)$ by
$$
\omega \mapsto \mathbf{Y}(\omega)=\left(Y_{1}(\omega), Y_{2}(\omega), Y_{3}(\omega), \ldots\right)\text { for any } \omega \in \Omega.
$$
Define $\mu_{\mathbf{Y}}: \sigma(\mathcal{C}) \rightarrow[0,1]$ by
\begin{equation}\label{eq:defn of process}
	\mu_{\mathbf{Y}}(C)=\mu\left(\mathbf{Y}^{-1}(C)\right)\text { for any } C \in \sigma(\mathcal{C}).
\end{equation}

It is easy to check that $\mu_{\mathbf{Y}}$ is a capacity on $\sigma(\mathcal{C})$ and $\mu_{\mathbf{Y}}$ is continuous/convex/concave if $\mu$ is continuous/convex/concave respectively, as $\mathbf{Y}^{-1}\left(\bigcup_{n=1}^{\infty} C_{n}\right)=\bigcup_{n=1}^{\infty} \mathbf{Y}^{-1}\left(C_{n}\right)$ and $\mathbf{Y}^{-1}\left(\bigcap_{n=1}^{\infty} C_{n}\right)=\bigcap_{n=1}^{\infty} \mathbf{Y}^{-1}\left(C_{n}\right)$, for any $\left\{C_{n}\right\}_{n \in \mathbb{N}} \subseteq \sigma(\mathcal{C})$. 

The following result was obtained in Proposition 5.1 of \cite{FWZ2020} to establish the relation between dynamical systems and  stationary stochastic processes  on a capacity space.
\begin{prop}\label{prop:stationary and invariant}
	Let $\mathbf{Y}=\left\{Y_{n}\right\}_{n \in \mathbb{N}}$ be a stochastic process on the capacity space $(\Omega, \mathcal{F}, \mu)$, where $\mu$ is continuous. Then $\mathbf{Y}$ is stationary if and only if $\mu_{\mathbf{Y}}$ is the shift transformation $T$-invariant.
\end{prop}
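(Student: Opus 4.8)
The plan is to isolate one elementary identity tying together $\mathbf{Y}^{-1}$, the shift $T^{-1}$, and the coordinate shift of the process, so that stationarity becomes nothing but $T$-invariance read off on cylinders, and then to upgrade that equality from the generating algebra $\mathcal{C}$ to all of $\sigma(\mathcal{C})$ by a monotone-class argument driven by continuity rather than additivity.

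First I would record the computation on a cylinder $C=\{\mathbf{x}:(x_1,\dots,x_m)\in H\}$ with $H\in\mathcal{B}(\mathbb{R}^m)$. Here $\mathbf{Y}^{-1}(C)=\{\omega:(Y_1(\omega),\dots,Y_m(\omega))\in H\}$, while $T^{-1}C=\{\mathbf{x}:(x_2,\dots,x_{m+1})\in H\}$ is again a cylinder, with $\mathbf{Y}^{-1}(T^{-1}C)=\{\omega:(Y_2(\omega),\dots,Y_{m+1}(\omega))\in H\}$. Hence $\mu_{\mathbf{Y}}(C)=\mu(\{(Y_1,\dots,Y_m)\in H\})$ and $\mu_{\mathbf{Y}}(T^{-1}C)=\mu(\{(Y_2,\dots,Y_{m+1})\in H\})$. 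Choosing $m=n+k$ and $H=\mathbb{R}^{n-1}\times A$ shows that the invariance equality $\mu_{\mathbf{Y}}(T^{-1}C)=\mu_{\mathbf{Y}}(C)$ for this particular cylinder is exactly the stationarity condition at index $n$, window length $k$, and Borel set $A\subset\mathbb{R}^{k+1}$. Thus invariance of $\mu_{\mathbf{Y}}$ restricted to $\mathcal{C}$ is literally equivalent to the stationarity of $\mathbf{Y}$, and as $A$, $n$, $k$ range over all admissible values the cylinders $C$ range over all of $\mathcal{C}$. In particular the direction ($\Leftarrow$) is immediate: if $\mu_{\mathbf{Y}}$ is $T$-invariant on $\sigma(\mathcal{C})$ it is invariant on $\mathcal{C}$, which yields every stationarity identity.

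For ($\Rightarrow$) the same correspondence gives invariance only on the generating algebra $\mathcal{C}$, and the real work is to push it to the whole $\sigma$-algebra. I would consider the collection $\mathcal{G}=\{C\in\sigma(\mathcal{C}):\mu_{\mathbf{Y}}(T^{-1}C)=\mu_{\mathbf{Y}}(C)\}$, which contains the algebra $\mathcal{C}$, and argue that $\mathcal{G}$ is a monotone class, so that the monotone class theorem forces $\mathcal{G}=\sigma(\mathcal{C})$. Closure under monotone limits rests on two facts: both $T^{-1}$ and $\mathbf{Y}^{-1}$ commute with increasing unions and decreasing intersections of sets (so $C_j\uparrow C$ gives $T^{-1}C_j\uparrow T^{-1}C$, and likewise for $\downarrow$), as already noted via $\mathbf{Y}^{-1}(\bigcup_j C_j)=\bigcup_j\mathbf{Y}^{-1}(C_j)$ and $\mathbf{Y}^{-1}(\bigcap_j C_j)=\bigcap_j\mathbf{Y}^{-1}(C_j)$ before the statement; and the continuity of $\mu_{\mathbf{Y}}$ inherited from $\mu$. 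Passing to the limit in the identity $\mu_{\mathbf{Y}}(C_j)=\mu_{\mathbf{Y}}(T^{-1}C_j)$ then keeps the limit set inside $\mathcal{G}$.

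The step I expect to be the main obstacle is exactly this extension from the algebra to the $\sigma$-algebra. Since $\mu$ is only a capacity, Carath\'eodory's extension theorem is unavailable—as the paper itself stresses—so one cannot invoke uniqueness of extensions to conclude that two capacities agreeing on $\mathcal{C}$ agree on $\sigma(\mathcal{C})$; continuity is the only substitute, and it must be used in the full monotone form $\mu_{\mathbf{Y}}(C_j)\to\mu_{\mathbf{Y}}(C)$ for $C_j\uparrow C$ and $C_j\downarrow C$, not merely the normalizations at $\Omega$ and $\emptyset$. I would therefore first check that the continuity hypothesis on $\mu$, together with its monotonicity (and, where it is needed to propagate the boundary continuity to arbitrary monotone limits, subadditivity), yields full monotone continuity of $\mu_{\mathbf{Y}}$; once that is secured, the monotone class argument closes the proof.
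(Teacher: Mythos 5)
The paper never proves this proposition: it is quoted verbatim from Proposition 5.1 of \cite{FWZ2020}, so there is no in-paper argument to compare against. Your proof is the standard one and is essentially correct: the identification of the stationarity identities with $T$-invariance on the cylinders (via $H=\mathbb{R}^{n-1}\times A$) is exact and gives ($\Leftarrow$) immediately, and the extension to $\sigma(\mathcal{C})$ by applying the monotone class theorem to $\mathcal{G}=\{C\in\sigma(\mathcal{C}):\mu_{\mathbf{Y}}(T^{-1}C)=\mu_{\mathbf{Y}}(C)\}$ is the right mechanism, since Carath\'eodory extension is unavailable for capacities. The only point to nail down is the one you flag yourself: the monotone-class step requires $\mu_{\mathbf{Y}}(C_j)\to\mu_{\mathbf{Y}}(C)$ along \emph{arbitrary} monotone sequences, whereas Section~\ref{sec:set functions} literally defines continuity only at $\Omega$ and $\emptyset$. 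The intended reading --- and the one the paper itself uses elsewhere, e.g.\ in showing that $V_1\times V_2$ is $T_1\times T_2$-invariant --- is continuity along all monotone sequences, under which your argument closes with no extra hypotheses; if one insists on the literal boundary-only definition, subadditivity is indeed the cheapest upgrade, but it is not among the stated hypotheses, so the clean route is simply to invoke the strong form of continuity for $\mu$ and note that it transfers to $\mu_{\mathbf{Y}}$ because $\mathbf{Y}^{-1}$ commutes with monotone unions and intersections.
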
 

Similar to the ergodicity of stochastic processes on probability spaces, Feng, Wu and Zhao \cite{FWZ2020} introduced the ergodicity of stochastic processes on  capacity spaces as follows.
\begin{definition}
	The stationary stochastic process $\textbf{Y}$ on a capacity space $(\Omega, \mathcal{F}, \mu)$ is called ergodic if the left shift transformation $T$ is ergodic with respect to $\mu_{\mathbf{Y}}$.
\end{definition} 

Now we give the strong law of large numbers for ergodic stationary  stochastic sequences on an upper probability space.
\begin{theorem}\label{thm:application for process}
	Let $V$ be an  upper probability on a measurable space $(\O,\mathcal{F})$. Given a bounded stationary process $\textbf{Y}=\{Y_n\}_{n\in\mathbb{N}}$ on the capacity space $(\O,\mathcal{F},V)$, if $\textbf{Y}$ is ergodic, then there exists $Q\in\operatorname{core}(V)$ such that 
	\[\lim_{n\to\infty}\frac{1}{n}\sum_{i=1}^nY_i=\int Y_1dQ\text{, }V\text{-a.s.}\]
\end{theorem}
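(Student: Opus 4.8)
The plan is to transport everything to the sequence space and invoke Theorem~\ref{thm:main Birk}. I would work on $(\mathbb{R}^{\mathbb{N}},\sigma(\mathcal{C}))$ with the left shift $T$, and push $V$ forward through the process, setting $V_{\mathbf{Y}}=V\circ\mathbf{Y}^{-1}$ as in \eqref{eq:defn of process}. Writing $V=\max_{P\in\Lambda}P$ for a weak*-compact $\Lambda\subseteq\D^{\sigma}(\O,\mathcal{F})$, the map $P\mapsto P\circ\mathbf{Y}^{-1}$ is weak*-continuous, so $\Lambda_{\mathbf{Y}}=\{P\circ\mathbf{Y}^{-1}:P\in\Lambda\}$ is weak*-compact and $V_{\mathbf{Y}}=\max_{R\in\Lambda_{\mathbf{Y}}}R$; hence $V_{\mathbf{Y}}$ is again an upper probability. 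Since $\mathbf{Y}$ is stationary and $V$ is continuous, Proposition~\ref{prop:stationary and invariant} gives that $V_{\mathbf{Y}}$ is $T$-invariant, and the ergodicity of $\mathbf{Y}$ is exactly the ergodicity of $V_{\mathbf{Y}}$ with respect to $T$. Thus $(\mathbb{R}^{\mathbb{N}},\sigma(\mathcal{C}),V_{\mathbf{Y}},T)$ is a capacity preserving system with $V_{\mathbf{Y}}$ an ergodic upper probability.

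By Theorem~\ref{thm:main Birk} there is a unique $\tilde{Q}\in\mathcal{M}^e(T)\cap\operatorname{core}(V_{\mathbf{Y}})$ such that $\frac1n\sum_{i=0}^{n-1}f(T^i\mathbf{x})\to\int f\,d\tilde{Q}$ for $V_{\mathbf{Y}}$-a.s. $\mathbf{x}$ and every $f\in L^1(\tilde{Q})$. I would take $f(\mathbf{x})=x_1$, the first coordinate. Boundedness of $\mathbf{Y}$, say $|Y_n|\le M$, gives $V_{\mathbf{Y}}(\{|x_1|>M\})=V(\{|Y_1|>M\})=0$, and since $\tilde{Q}\le V_{\mathbf{Y}}$ also $\tilde{Q}(\{|x_1|>M\})=0$, so $f\in L^1(\tilde{Q})$ and the theorem applies. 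Because $f(T^i\mathbf{x})=x_{i+1}$, evaluating along $\mathbf{x}=\mathbf{Y}(\omega)$ turns the ergodic average into $\frac1n\sum_{i=1}^{n}Y_i(\omega)$; pulling the exceptional set back through $\mathbf{Y}$ and using $V_{\mathbf{Y}}(A^c)=V(\mathbf{Y}^{-1}(A)^c)$ yields $\lim_{n}\frac1n\sum_{i=1}^{n}Y_i=c$ $V$-a.s., where $c:=\int x_1\,d\tilde{Q}$.

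It remains to produce $Q\in\operatorname{core}(V)$ with $\int Y_1\,dQ=c$. By the change of variables $\int Y_1\,dQ=\int x_1\,d(Q\circ\mathbf{Y}^{-1})$, it suffices to find $Q\in\operatorname{core}(V)$ whose pushforward is $\tilde{Q}$, and the clean way is the lifting identity $\operatorname{core}(V_{\mathbf{Y}})=\{P\circ\mathbf{Y}^{-1}:P\in\operatorname{core}(V)\}$: granting it, $\tilde{Q}=Q\circ\mathbf{Y}^{-1}$ for some $Q\in\operatorname{core}(V)$ and $\int Y_1\,dQ=\int x_1\,d\tilde{Q}=c$. The inclusion $\supseteq$ is clear and the right-hand set is weak*-compact, convex, with upper envelope $V_{\mathbf{Y}}$; the substance is the reverse inclusion, namely that every probability dominated by $V_{\mathbf{Y}}$ lifts, across the sub-$\sigma$-algebra $\mathbf{Y}^{-1}(\sigma(\mathcal{C}))$, to a probability dominated by $V$. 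I expect this extension step to be the main obstacle: for a general (non-concave) upper probability the Choquet integral may strictly exceed $\max_{P\in\operatorname{core}(V)}\int\,\cdot\,dP$, so a one-line separation argument does not close, and one must extend a dominated measure set by set. I would carry it out by a Zorn/one-step-extension scheme—adjoining one set $A$ at a time and checking the finitely many domination inequalities remain solvable, then invoking the continuity of $V$ to pass to the generated $\sigma$-algebra—while noting two simplifications: it is enough to lift the single invariant ergodic measure $\tilde{Q}$ rather than the whole core, and in the special case $\O=\mathbb{R}^{\mathbb{N}}$ with $\mathbf{Y}$ the coordinate process the shift already acts on $\O$, so $Q$ may be taken to be the invariant skeleton of any $P_0\in\operatorname{core}(V)$ and no lifting is needed.
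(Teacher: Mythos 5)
Your overall route is the same as the paper's: push $V$ forward to the sequence space, apply Theorem~\ref{thm:main Birk} to the coordinate function $f(\mathbf{x})=x_1$ for the ergodic upper probability $V_{\mathbf{Y}}$, pull the exceptional set back through $\mathbf{Y}$, and then lift the ergodic measure $\tilde{Q}\in\operatorname{core}(V_{\mathbf{Y}})$ to some $Q\in\operatorname{core}(V)$ with $\tilde{Q}=Q\circ\mathbf{Y}^{-1}$. All of that is fine, and you correctly isolate the lifting of $\tilde{Q}$ across the sub-$\sigma$-algebra $\mathbf{Y}^{-1}(\sigma(\mathcal{C}))$ as the real content (it is exactly the paper's Lemma~\ref{lem:core of process}).

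The gap is that you never actually prove the lifting. Your proposed ``Zorn/one-step-extension scheme, adjoining one set $A$ at a time'' is not a workable argument as stated: when you adjoin a new set you must choose $P^*(A\cap E)$ consistently for \emph{all} $E$ in the algebra built so far while keeping the domination $P^*\le V$ on the whole enlarged algebra, and it is not clear the resulting system of inequalities is solvable for a general upper probability. Moreover, your worry that ``the Choquet integral may strictly exceed $\max_{P\in\operatorname{core}(V)}\int\cdot\,dP$, so a one-line separation argument does not close'' points at the wrong functional: the separation argument does close, you just must not use the Choquet integral. The paper first checks that $P^*(A):=\tilde{Q}(C_A)$ (for $A=\mathbf{Y}^{-1}(C_A)$) is a well-defined probability on $\mathcal{Y}^\infty=\mathbf{Y}^{-1}(\sigma(\mathcal{C}))$ (well-definedness and $\sigma$-additivity both use $V$-null estimates on symmetric differences), and then extends the linear functional $J(g)=\int g\,dP^*$ from $B(\O,\mathcal{Y}^\infty)$ to $B(\O,\mathcal{F})$ by the Hahn--Banach dominated extension theorem, the dominating sublinear functional being $I(g)=\sup_{P\in\operatorname{core}(V)}\int g\,dP$ (which satisfies $I(1_A)=V(A)$ since $\Lambda\subseteq\operatorname{core}(V)$). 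Riesz representation then gives a finitely additive $P'$ with $P'\le V$, and continuity from above of the upper probability forces $P'$ to be $\sigma$-additive, so $P'\in\operatorname{core}(V)$ and $P'\circ\mathbf{Y}^{-1}=\tilde{Q}$. Without this (or an equivalent) argument your proof is incomplete; your two proposed shortcuts do not rescue it, since lifting only $\tilde{Q}$ is still the same extension problem, and the coordinate-process special case does not cover a general bounded stationary process on a general $(\O,\mathcal{F})$.
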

\begin{rem}
	There are many references on strong law of large numbers for capacities under different definitions, see for example \cite{MR3484493,MR3021837,MR2414259,MR2135316,Peng2009,CMM2016}. In particular, Feng, Wu and Zhao \cite{FWZ2020} replaced the independent identically distributed hypothesis by the stationarity and ergodicity. It was obtained in \cite{FWZ2020} that there exists a constant $c$ such that 
	\[\lim_{n\to\infty}\frac{1}{n}\sum_{i=1}^nY_i=c\text{, }V\text{-a.s.}\]
	By strengthening Birkhoff's ergodic theorem for capacity preserving systems, in the following, we obtain a probability $Q\in\operatorname{core}(V)$ such that $c=\int Y_1dQ$. Moreover, if $V$ is a probability, then  $\operatorname{core}(V)=\{V\}$, and so this result is the classical strong law of large numbers in ergodic theory for probabilities (see \cite[Page 24]{Krengel+1985} for example).
\end{rem}
Before proving  Theorem \ref{thm:application for process}, we prove a lemma.
\begin{lemma}\label{lem:core of process}
	Let $V$ be an  upper probability on a measurable space $(\O,\mathcal{F})$, and  $\textbf{Y}=\{Y_n\}_{n\in\mathbb{N}}$ be a bounded stationary process on a capacity space $(\O,\mathcal{F},V)$. Then for any $\widetilde{P}\in \operatorname{core}(V_{\textbf{Y}})$, there exists  $P'\in\operatorname{core}(V)$ such that 
	\begin{equation}\label{eq:33}
		\widetilde{P}(C)=P'(\textbf{Y}^{-1}(C)) \text{ for any }C\in\sigma(\mathcal{C}).
	\end{equation}
\end{lemma}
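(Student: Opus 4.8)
The plan is to turn the assertion into an extension problem for $\widetilde{P}$ and then solve it by a Hahn--Banach/Choquet argument. First I would set $\mathcal{G}:=\mathbf{Y}^{-1}(\sigma(\mathcal{C}))$, a sub-$\sigma$-algebra of $\mathcal{F}$, and define a set function $\mu_0$ on $\mathcal{G}$ by $\mu_0(\mathbf{Y}^{-1}(C))=\widetilde{P}(C)$. This is well defined: if $\mathbf{Y}^{-1}(C_1)=\mathbf{Y}^{-1}(C_2)$ then $\mathbf{Y}^{-1}(C_1\triangle C_2)=\emptyset$, whence $V_{\mathbf{Y}}(C_1\triangle C_2)=V(\emptyset)=0$, and so $\widetilde{P}(C_1\triangle C_2)\le V_{\mathbf{Y}}(C_1\triangle C_2)=0$, giving $\widetilde{P}(C_1)=\widetilde{P}(C_2)$. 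Since $\mathbf{Y}^{-1}$ commutes with countable unions, intersections and complements, the same null-overlap device (disjointifying a sequence $\{C_n\}$ whose preimages are disjoint, the pairwise overlaps being $\widetilde{P}$-null by the displayed argument) shows that $\mu_0$ inherits countable additivity from $\widetilde{P}$; hence $\mu_0$ is a probability on $\mathcal{G}$. Finally $\mu_0\le V|_{\mathcal{G}}$, because $\mu_0(\mathbf{Y}^{-1}(C))=\widetilde{P}(C)\le V_{\mathbf{Y}}(C)=V(\mathbf{Y}^{-1}(C))$.

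The key observation is that \eqref{eq:33} is equivalent to the existence of $P'\in\operatorname{core}(V)$ with $P'|_{\mathcal{G}}=\mu_0$: any such $P'$ automatically satisfies $P'(\mathbf{Y}^{-1}(C))=\mu_0(\mathbf{Y}^{-1}(C))=\widetilde{P}(C)$. Thus the lemma reduces to the extension statement that a probability living on $\mathcal{G}$ and dominated there by $V$ extends to an element of $\operatorname{core}(V)$. To produce the extension I would run a Hahn--Banach sandwich on $B(\O,\mathcal{F})$. Using the layer-cake formula together with $\mu_0\le V|_{\mathcal{G}}$ one obtains $\int g\,d\mu_0\le\int g\,dV$ for every $\mathcal{G}$-measurable $g$, the inequality holding termwise in $t$ because $\{g\ge t\}\in\mathcal{G}$. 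Dominating the linear functional $g\mapsto\int g\,d\mu_0$ on $B(\O,\mathcal{G})$ by the Choquet functional $g\mapsto\int g\,dV$ and extending gives $\hat{\ell}$ on $B(\O,\mathcal{F})$ with $\hat{\ell}(g)\le\int g\,dV$. Setting $P'(A)=\hat{\ell}(1_A)$ yields an additive set function with $P'(A)\le V(A)$; applying the domination to $-1_A$ gives $-P'(A)=\hat{\ell}(-1_A)\le\int(-1_A)\,dV=V(A^c)-1$, i.e. $P'(A)\ge 1-V(A^c)\ge 0$, and $P'(\O)=\hat{\ell}(1)=\int 1\,dV=1$. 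Hence $P'\in\operatorname{core}(V)$, and since $\hat{\ell}$ extends $\ell$ we get $P'|_{\mathcal{G}}=\mu_0$, which is the desired conclusion.

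The step I expect to be the main obstacle is precisely this extension, and within it the legitimacy of using the Choquet functional as a sublinear majorant: Hahn--Banach requires $g\mapsto\int g\,dV$ to be subadditive, which is the concavity ($2$-alternating) property of $V$, so this is exactly the place where one must exploit the structure of $V$ as an upper probability rather than an arbitrary capacity. Two further points need care but are routine: that $\int\cdot\,dV$ is positively homogeneous and translation invariant (Proposition \ref{prop:choquet inegral properties}), so that $P'$ is genuinely additive and normalized; and that $P'$ is $\sigma$-additive, which is automatic here since $\operatorname{core}(V)\subset\D^{\sigma}(\O,\mathcal{F})$ for an upper probability, or alternatively can be obtained by realizing $P'$ as a weak$^*$ cluster point of elements of $\operatorname{core}(V)$ matching $\mu_0$ on the finite sub-algebras generated by $(Y_1,\dots,Y_n)$ and invoking the compactness of $\operatorname{core}(V)$.
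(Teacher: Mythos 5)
Your overall architecture is the same as the paper's: push $\widetilde{P}$ forward to a well-defined probability on the sub-$\sigma$-algebra $\mathcal{G}=\mathbf{Y}^{-1}(\sigma(\mathcal{C}))$ via the null-overlap device (your well-definedness and countable-additivity arguments match the paper's almost verbatim), and then extend that probability to an element of $\operatorname{core}(V)$ by a Hahn--Banach sandwich followed by a Riesz representation. The genuine gap is at the step you yourself single out as the main obstacle: you take the Choquet functional $g\mapsto\int g\,dV$ as the sublinear majorant and assert that its subadditivity is ``exactly the place where one must exploit the structure of $V$ as an upper probability.'' But an upper probability need not be concave ($2$-alternating), and the Choquet integral of a non-concave capacity is not subadditive. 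Concretely, on $\O=\{1,2,3,4\}$ take $P_1=(\tfrac13,\tfrac13,\tfrac13,0)$, $P_2=(0,\tfrac12,0,\tfrac12)$ and $V=\max\{P_1,P_2\}$; with $f=1_{\{1,2\}}$ and $g=1_{\{2,3\}}$ one computes $\int(f+g)\,dV=V(\{1,2,3\})+V(\{2\})=\tfrac32$, while $\int f\,dV+\int g\,dV=\tfrac23+\tfrac23=\tfrac43$. So the majorant you propose is not sublinear and Hahn--Banach cannot be applied with it. (Note that Proposition \ref{prop:choquet inegral properties} deliberately lists only positive homogeneity, translation invariance and monotonicity, and Lemma \ref{lem:intgral for concave} is stated only for concave capacities.)

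The paper avoids this by dominating instead with the upper expectation $I(f)=\sup_{P\in\operatorname{core}(V)}\int f\,dP$, which is sublinear automatically (a supremum of linear functionals) and still satisfies $I(1_A)=V(A)$ because the supremum defining an upper probability is attained; the extended functional then gives $P'(A)\le I(1_A)=V(A)$, hence $P'\in\operatorname{core}(V)$, and $\sigma$-additivity follows, as you note, from $\operatorname{core}(V)\subset\D^{\sigma}(\O,\mathcal{F})$. Be aware, though, that with this majorant the domination $\int g\,d\mu_0\le I(g)$ on $B(\O,\mathcal{G})$ is no longer a termwise layer-cake triviality (your layer-cake argument only yields $\int g\,d\mu_0\le\int g\,dV$, and $\int g\,dV\ge I(g)$, which points the wrong way), so this inequality is the point that genuinely needs an argument. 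Your fallback suggestion of realizing $P'$ as a weak* cluster point of core elements matching $\mu_0$ on the finite algebras generated by $(Y_1,\dots,Y_n)$ does not sidestep the issue, since producing such matching core elements is the same extension problem on a smaller algebra.
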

\begin{proof}
	Let $\mathcal{Y}^\infty=\textbf{Y}^{-1}(\sigma(\mathcal{C}))$. We check that $\mathcal{Y}^\infty$ is a $\sigma$-algebra. For this, let $A_1,A_2,\ldots\in\mathcal{Y}^\infty$. Then there exist $C_{A_1},C_{A_2},\ldots\in\sigma(\mathcal{C})$ such that $A_i=\textbf{Y}^{-1}(C_{A_i})$ for each $i\in\mathbb{N}$. Note that $\cup_{i=1}^\infty A_i=\textbf{Y}^{-1}(\cup_{i=1}^\infty C_{A_i})\in \textbf{Y}^{-1}(\sigma(\mathcal{C}))=\mathcal{Y}^\infty$. Meanwhile, it is easy to check that $\O\in\mathcal{Y}^\infty$ and $\mathcal{Y}^\infty$ is closed under complementation, proving that $\mathcal{Y}^\infty$ is a $\sigma$-algebra. Now let $V_{\textbf{Y}}$ be defined as in \eqref{eq:defn of process}. Then $V_{\textbf{Y}}$ is an upper probabilities as well. Indeed, as $\mathbf{Y}^{-1}\left(\bigcup_{n=1}^{\infty} C_{n}\right)=\bigcup_{n=1}^{\infty} \mathbf{Y}^{-1}\left(C_{n}\right)$ and $\mathbf{Y}^{-1}\left(\bigcap_{n=1}^{\infty} C_{n}\right)=\bigcap_{n=1}^{\infty} \mathbf{Y}^{-1}\left(C_{n}\right)$, for any $\left\{C_{n}\right\}_{n \in \mathbb{N}} \subseteq \sigma(\mathcal{C})$, we have that $V_{\textbf{Y}}$ is continuous, which together with the fact that $V_{\textbf{Y}}(C)=\max_{P\in\operatorname{core}(V)}P(\textbf{Y}^{-1}C)$ for any $C\in\mathcal{F}$, implies that $V_{\textbf{Y}}$ is an upper probability.
	
	Fix $\widetilde{P}\in \operatorname{core}(V_{\textbf{Y}})$. As $V_{\textbf{Y}}$ is an upper probability, it follows that  $\widetilde{P}$ is a probability on $\sigma(\mathcal{C})$. For any $A\in\mathcal{Y}^\infty$, let $C_A\in\sigma(\mathcal{C})$ such that $A=\textbf{Y}^{-1}(C_A)$.  Define a set function $P^*:\mathcal{Y}^\infty\to [0,1]$ via 
	\[P^*(A)=\widetilde{P}(C_A)\text{ for any }A\in\mathcal{Y}^\infty.\]
	First, we prove that the definition is well-defined, i.e.,  for any $A\in\mathcal{Y}^\infty$, if there exist $C_A,C_A'\in \sigma(\mathcal{C})$ such that $A=\textbf{Y}^{-1}(C_A)=\textbf{Y}^{-1}(C_A')$ then $\widetilde{P}(C_A\Delta C_A')=0$, where $C_A\Delta C_A'=(C_A\setminus C_A')\cup (C_A'\setminus C_A)$. Indeed, since $\widetilde{P}\in \operatorname{core}(V_{\textbf{Y}}),$ one has that
	\begin{align*}
		\widetilde{P}(C_A\Delta C_A')\le&V_{\textbf{Y}}(C_A\Delta C_A')\le V_{\textbf{Y}}(C_A\cap (C_A')^c)+V_{\textbf{Y}}(C_A'\cap (C_A)^c)\\
		=&V((\textbf{Y}^{-1}C_A)\cap(\textbf{Y}^{-1}(C_A')^c))+V((\textbf{Y}^{-1}C_A')\cap(\textbf{Y}^{-1}(C_A)^c))\\
		=&V(A\cap A^c)+V(A\cap A^c)=0,
	\end{align*}
	proving this definition is well-defined.
	
	Now we prove that $P^*$ is a probability on $\mathcal{Y}^\infty$:
	First, it is easy to check that $P^*(\emptyset)=0$ and $P^*(\O)=1$. Next we only need to prove the $\sigma$-additivity of $P^*$. Fix any $\{A_n\}_{n\in\mathbb{N}}\subset \mathcal{Y}^\infty$ with $A_i\cap A_j=\emptyset$ for any $i\neq j$.  Then 
	\[\widetilde{P}(C_{A_i}\cap C_{A_j})\le V_{\textbf{Y}}(C_{A_i}\cap C_{A_j})=V(\textbf{Y}^{-1}C_{A_i}\cap \textbf{Y}^{-1}C_{A_j})=V(A_j\cap A_j)=0,\text{ for any }i\neq j,\]
	which together with the fact that $\cup_{i=1}^\infty A_i=\textbf{Y}^{-1}(\cup_{i=1}^\infty C_{A_i}) $ implies that
	\[P^*(\cup_{i=1}^\infty A_i)=\widetilde{P}(\cup_{i=1}^\infty C_{A_i})=\sum_{i=1}^\infty \widetilde{P}( C_{A_i})=\sum_{i=1}^\infty P^*(A_i).\]
	Moreover, by the construction, it is easy to check that 	$\widetilde{P}(C)=P(\textbf{Y}^{-1}(C)) \text{ for any }C\in\sigma(\mathcal{C}).$
	
	To finish the proof, now we only need to prove $P^*$ can be extended to a probability $P'$ on $\mathcal{F}$  such that $P'|_{\mathcal{Y}^\infty}=P^*$ and $P'\in\operatorname{core}(V)$. Define a functional $I$ on $B(\O,\mathcal{F})$ by 
	\[I(f)=\sup_{P\in \operatorname{core}(V)}\int f dP,\text{ for any }f\in B(\O,\mathcal{F}).\]
	By the linearity of integrals of probabilities $P\in\operatorname{core}(V)$, it is easy to check  that  $I(f+g)\le I(f)+I(g)$ for any $f,g\in B(\O,\mathcal{F})$ and $I(\l f)=\l I(f)$ for any $\l\ge0$ and $f\in B(\O,\mathcal{F}) $, i.e.,  $I$ is a sublinear functional on $B(\O,\mathcal{F})$. As $J(f):=\int f dP^*$ is a linear functional on $B(\O,\mathcal{Y}^\infty)$ with $J(f)\le I(f)$ for any $f\in B(\O,\mathcal{Y}^\infty)$ and $B(\O,\mathcal{Y}^\infty)$ is a linear subspace of $B(\O,\mathcal{F})$, it follows from Hahn-Banach dominated extension theorem that there exists a bounded linear functional $J'$ on $B(\O,\mathcal{F})$  such that 
	\begin{equation}\label{eq:5}
		J'(f)=J(f), \text{ for any }B(\O,\mathcal{Y}^\infty),
	\end{equation}and 
	\begin{equation}\label{eq:4}
		J'(f)\le I(f),\text{ for any }f\in B(\O,\mathcal{F}).
	\end{equation}
	By Riesz representation theorem (see \cite[Lemma 13.3]{Aliprantis1999} for example), there exists $P'\in\D(\O,\mathcal{F})$ such that $J'(f)=\int fdP'$ for any $f\in B(\O,\mathcal{F})$.
	
	Now we prove that $P'$ is desired.
	By \eqref{eq:4}, we have that for any $A\in\mathcal{F}$, $P'(A)\le I(1_A)= V(A)$, and hence $P'\in\operatorname{core}(V)$. In particular, $P'$ is a probability on $(\O,\mathcal{F})$, as $V$ is an upper probability. From \eqref{eq:5}, for any  $A\in\mathcal{Y}^\infty$, one deduces that $P'(A)=P^*(A)$, and hence
	$\widetilde{P}(C)=P^*(\textbf{Y}^{-1}(C))= P'(\textbf{Y}^{-1}(C))\text{ for any }C\in\sigma(\mathcal{C}).$ The proof is completed.
\end{proof}
With the help of the above lemma, we are able to prove Theorem \ref{thm:application for process}.
\begin{proof}[Proof of Theorem \ref{thm:application for process}]
	By Proposition \ref{prop:stationary and invariant}, $V_{\textbf{Y}}$ is the left shift transformation $T$-invariant. Define $f: \mathbb{R}^{\mathbb{N}} \rightarrow \mathbb{R}$ by $f(\mathbf{x})=f\left(x_1, x_2, x_3, \ldots\right)=x_1$, for any $\mathbf{x}=\left(x_1, x_2, x_3, \ldots\right) \in \mathbb{R}^{\mathbb{N}}$. Since $V_{\textbf{Y}}$ is ergodic with respect to $T$, then by Theorem \ref{thm:main Birk} we deduce  that there exists  $\widetilde{Q}\in\operatorname{core}(V_{\textbf{Y}})\cap \mathcal{M}^e(T)$ such that
	$$
	\lim _{n \rightarrow \infty} \frac{1}{n} \sum_{i=0}^{n-1} f(T^{i} \mathbf{x})=\int fd\widetilde{Q}\text{ for }V_{\textbf{Y}}\text{-a.s. }\mathbf{x}\in \mathbb{R}^\mathbb{N}.
	$$
	Notice that $\frac{1}{n} \sum_{i=1}^n x_i=\frac{1}{n} \sum_{i=0}^{n-1} f(T^{i} \mathbf{x})$ for each $n\in\mathbb{N}$, we have
	$$
	\mathbf{Y}^{-1}\left(\left\{\mathbf{x}\in\mathbb{R}^{\mathbb{N}}: \lim _{n \rightarrow \infty} \frac{1}{n} \sum_{i=1}^n x_i=\int fd\widetilde{Q}\right\}\right)=\left\{\omega\in\O: \lim _{n \rightarrow \infty} \frac{1}{n} \sum_{i=1}^n Y_i(\omega)=\int fd\widetilde{Q}\right\}.
	$$
	Thus, 
	\[\lim_{n \rightarrow \infty}\frac{1}{n}\sum_{i=1}^n Y_i(\o)=\int fd\widetilde{Q}\text{ for }V\text{-a.s. }\o\in \O.\]
	By Lemma \ref{lem:core of process}, there exists $Q\in\operatorname{core}(V)$ such that $\widetilde{Q}(A)=Q(\textbf{Y}^{-1}A)$ for any $A\in\mathcal{B}(\mathbb{R}^\mathbb{N})$.Thus,
	$$
	\int_{\mathbb{R}^N} fd \widetilde{Q}=\int_{\mathbb{R}^{\mathbb{N}}}f d Q\left(\mathbf{Y}^{-1}\right)=\int_{\Omega} f(\mathbf{Y}) d Q=\int_{\Omega} Y_1 dQ,
	$$
	proving the theorem.
\end{proof}

\section{Ergodicity in terms of independence}\label{sec:concave}
In this section, we characterize the ergodicity of upper probabilities in terms of independence.
As a reminder, the ergodicity of an invariant probability measure can be characterized as follows (see for example (2.31) in \cite{Ward2011}).
\begin{prop}\label{prop:inde}
	Let $(\O,\mathcal{F},P,T)$ be a measure preserving system. Then $P$ is ergodic if and only if 
	\[\lim_{n\to\infty}\int\frac{1}{n}\sum_{i=0}^{n-1}1_B\cdot( 1_C\circ T^i)dP=P(B)P(C)\text{ for any }B,C\in\mathcal{F}.\]
\end{prop}
\subsection{Upper probabilities versus probabilities in terms of independence}
The following proposition shows that for an ergodic upper probability, it satisfies a corresponding result in Proposition \ref{prop:inde} if and only if it is a probability. 
\begin{prop}\label{prop:eq}
	Let $(\O,\mathcal{F},V,T)$ be a capacity preserving system, where $V$ is an ergodic upper probability. Then the following two statements are equivalent:
	\begin{longlist}
		\item $\lim_{n\to\infty}\int\frac{1}{n}\sum_{i=0}^{n-1}1_B\cdot( 1_C\circ T^i)dV=V(B)V(C)$ for any $B,C\in\mathcal{F}$;
		\medskip
		\item $V$ is a probability.
	\end{longlist}
\end{prop}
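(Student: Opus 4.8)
The direction (ii)$\Rightarrow$(i) is routine: if $V$ is a probability, then $\operatorname{core}(V)=\{V\}$, the Choquet integral is the ordinary additive integral, and $V$ is itself an ergodic invariant probability, so the conclusion is exactly Proposition \ref{prop:inde}. So the substance lies in the forward direction (i)$\Rightarrow$(ii), and the plan is to show that the averaging identity in (i) forces $V$ to be additive. The natural tool is the ergodic probability $Q\in\operatorname{core}(V)\cap\mathcal{M}^e(T)$ supplied by Corollary \ref{cor:Q<=>V}, which satisfies $V(A)=Q(A)$ for all $A\in\mathcal{I}$ and $Q(A)=0\iff V(A)=0$. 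The idea is to compute the left-hand side of (i) in two ways and compare.

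First I would fix $B,C\in\mathcal{F}$ and analyze the integrand $h_n:=\frac{1}{n}\sum_{i=0}^{n-1}1_B\cdot(1_C\circ T^i)$. Since $Q$ is ergodic, the classical Birkhoff theorem gives $\frac{1}{n}\sum_{i=0}^{n-1}1_C(T^i\o)\to Q(C)$ for $Q$-a.s.\ $\o$, and hence, by Corollary \ref{cor:Q<=>V}, this convergence holds $V$-a.s.\ as well. Multiplying by the bounded factor $1_B$ gives $h_n\to 1_B\cdot Q(C)$ pointwise $V$-a.s. The functions $h_n$ are uniformly bounded (by $1$), so Lemma \ref{lem:dominate convergence} (dominated convergence for continuous subadditive capacities, applicable since the upper probability $V$ is continuous and subadditive) yields
\[
\lim_{n\to\infty}\int h_n\,dV=\int 1_B\cdot Q(C)\,dV=Q(C)\int 1_B\,dV=Q(C)\,V(B),
\]
using positive homogeneity of the Choquet integral (Proposition \ref{prop:choquet inegral properties}) and $Q(C)\ge 0$. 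Comparing with hypothesis (i), which says this limit equals $V(B)V(C)$, I obtain
\[
V(B)\,Q(C)=V(B)\,V(C)\qquad\text{for all }B,C\in\mathcal{F}.
\]
Choosing $B=\O$ gives $Q(C)=V(C)$ for every $C\in\mathcal{F}$ (since $V(\O)=1$), whence $V=Q$ is a probability, proving (ii).

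The main obstacle I anticipate is justifying the interchange of limit and Choquet integral, i.e.\ confirming the hypotheses of Lemma \ref{lem:dominate convergence}: one needs $V$ continuous and subadditive, with an integrable dominating pair. Continuity holds because $V$ is an upper probability, and subadditivity holds because every upper probability is concave, hence subadditive; the dominating functions $g\equiv 0$ and $h\equiv 1$ are trivially Choquet-integrable. The only genuinely delicate point is upgrading the $Q$-a.s.\ convergence of the Birkhoff averages of $1_C$ to $V$-a.s.\ convergence, but this is precisely the content of the equivalence $Q(A)=0\iff V(A)=0$ from Corollary \ref{cor:Q<=>V}, applied to the exceptional set where convergence fails (which lies in $\mathcal{I}$). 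Once these pieces are in place, the comparison argument closes immediately.
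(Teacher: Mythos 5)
Your proof is correct and follows essentially the same route as the paper's: identify the ergodic skeleton $Q\in\operatorname{core}(V)\cap\mathcal{M}^e(T)$, establish the $V$-a.s.\ convergence $\frac{1}{n}\sum_{i=0}^{n-1}1_B\cdot(1_C\circ T^i)\to Q(C)\cdot 1_B$ (the paper invokes Theorem \ref{thm:main Birk} directly, which is the same argument you spell out via Corollary \ref{cor:Q<=>V}), pass the limit through the Choquet integral by Lemma \ref{lem:dominate convergence}, and take $B=\O$ to conclude $Q=V$. One minor correction to your justification: an upper probability need not be concave, but subadditivity of $V$ follows immediately from $V=\max_{P\in\Lambda}P$ with each $P$ additive, which together with continuity is all that Lemma \ref{lem:dominate convergence} requires.
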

\begin{proof}
	(ii) $\Rightarrow$ (i). This is obtained by Proposition \ref{prop:inde}.
	
	(i) $\Rightarrow$ (ii). Let $Q\in\mathcal{ M}^e(T)\cap \operatorname{core}(V)$ be obtained in Theorem \ref{lem:ergodic of core}. Fix any $B,C\in\mathcal{F}$. By Theorem \ref{thm:main Birk}, one has that 
	\[\lim_{n\to\infty}\frac{1}{n}\sum_{i=0}^{n-1}1_B\cdot( 1_C\circ T^i)=Q(C)\cdot 1_B\text{, }V\text{-a.s.}\]
	This combined with Lemma \ref{lem:dominate convergence} implies that 
	\[\lim_{n\to\infty}\int\frac{1}{n}\sum_{i=0}^{n-1}1_B\cdot( 1_C\circ T^i)dV=V(B)Q(C).\]
	Taking $B=\O$, one has that $V(B)=1$, and hence by (i),  $Q(C)=V(C)$. The proof is completed, as $C\in\mathcal{F}$ is arbitrary. 
\end{proof}
\subsection{Characterizations of ergodicity of upper probabilities by weak independence}
In this subsection, we provide the following characterization of ergodicity in terms of ``weak'' independence.
\begin{theorem}\label{thm:ergodicity of upper}
	Let $(\O,\mathcal{F},V,T)$ be a capacity preserving system, where $V$ is an upper probability. Then the following four statements are equivalent:
	\begin{longlist}
		\item $V$ is ergodic;
		\medskip
		\item there exists $Q\in\operatorname{core}(V)\cap \mathcal{ M}^e(T)$ such that $V(A)=Q(A)$ for any $A\in\mathcal{I}$, and 
		\[\lim_{n\to\infty}\int\frac{1}{n}\sum_{i=0}^{n-1} f\cdot(g\circ T^i)dV=\left(\int fdV\right)\left(\int gdQ\right)\]
		for any $f,g\in B(\O,\mathcal{F})$ such that $g\ge 0$;
		\medskip
		\item there exists $Q\in\operatorname{core}(V)\cap \mathcal{ M}(T)$ such that 
		\[\lim_{n\to\infty}\int\frac{1}{n}\sum_{i=0}^{n-1} 1_B\cdot(1_C\circ T^i)dV=V(B)Q(C)\]
		for any $B,C\in \mathcal{F}$;
		\medskip
		
		\item there exists $Q\in\D^{\sigma}(\O,\mathcal{F})$ such that  $V|_{\mathcal{I}}\ll Q|_{\mathcal{I}}$ (i.e., for any $A\in \mathcal{I}$ if $Q(A)=0$ then $V(A)=0$) and
		\begin{equation}\label{eq:1234}
			\liminf_{n\to\infty}\int\frac{1}{n}\sum_{i=0}^{n-1} 1_B\cdot(1_C\circ T^i)dV\ge V(B)Q(C)\text{ for any }B,C\in\mathcal{F}.
		\end{equation}
	\end{longlist}
\end{theorem}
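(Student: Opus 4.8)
The plan is to prove the four-way equivalence through the cyclic chain (i) $\Rightarrow$ (ii) $\Rightarrow$ (iii) $\Rightarrow$ (iv) $\Rightarrow$ (i), with the skeleton probability $Q$ furnished by Theorem \ref{lem:ergodic of core} serving as the common thread in the first three links. The conceptual crux is the closing link (iv) $\Rightarrow$ (i), which shows that the weakened hypothesis (a $\liminf$ inequality plus mere absolute continuity on $\mathcal{I}$) still forces ergodicity; the technically most delicate link is (i) $\Rightarrow$ (ii), where a limit must be passed through a Choquet integral.

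For (i) $\Rightarrow$ (ii), I would take $Q\in\mathcal{M}^e(T)\cap\operatorname{core}(V)$ from Theorem \ref{lem:ergodic of core} and Corollary \ref{cor:Q<=>V}, which already supplies $V(A)=Q(A)$ for every $A\in\mathcal{I}$. Fixing $f,g\in B(\O,\mathcal{F})$ with $g\ge 0$, ergodicity of $V$ and Theorem \ref{thm:main Birk} give $\frac{1}{n}\sum_{i=0}^{n-1}g(T^i\o)\to\int g\,dQ$ for $V$-a.s. $\o$, so the uniformly bounded sequence $\frac{1}{n}\sum_{i=0}^{n-1}f\cdot(g\circ T^i)=f\cdot\frac{1}{n}\sum_{i=0}^{n-1}(g\circ T^i)$ converges $V$-a.s. to $f\cdot\int g\,dQ$. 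I would then invoke the nonlinear dominated convergence theorem (Lemma \ref{lem:dominate convergence}, legitimate since $V$ is continuous and subadditive and the integrands are dominated by the constant $\|f\|_\infty\|g\|_\infty$) to interchange limit and integral, and finish with positive homogeneity of the Choquet integral (Proposition \ref{prop:choquet inegral properties}), which applies precisely because $\int g\,dQ\ge 0$. This yields $\lim_{n\to\infty}\int\frac{1}{n}\sum_{i=0}^{n-1}f\cdot(g\circ T^i)\,dV=\bigl(\int g\,dQ\bigr)\int f\,dV$, as required.

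The link (ii) $\Rightarrow$ (iii) is immediate by specializing $f=1_B$ and $g=1_C$ (with $1_C\ge 0$) and noting $\mathcal{M}^e(T)\subset\mathcal{M}(T)$. For (iii) $\Rightarrow$ (iv), the probability $Q\in\operatorname{core}(V)$ lies in $\D^{\sigma}(\O,\mathcal{F})$ because $V$ is an upper probability, and the equality in (iii) trivially entails the $\liminf$ inequality \eqref{eq:1234}. To obtain $V|_{\mathcal{I}}\ll Q|_{\mathcal{I}}$, I would take $A\in\mathcal{I}$ with $Q(A)=0$ and set $B=\O$, $C=A$: since $1_A\circ T^i=1_A$ for $A\in\mathcal{I}$, the integrand equals $1_A$ for every $n$, so (iii) reads $V(A)=V(\O)Q(A)=0$.

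The decisive step (iv) $\Rightarrow$ (i) rests on feeding an invariant set into \eqref{eq:1234} so that the integrand collapses. For $A\in\mathcal{I}$, choosing $B=A^c$ and $C=A$ gives $\frac{1}{n}\sum_{i=0}^{n-1}1_{A^c}\cdot(1_A\circ T^i)=1_{A^c}\cdot 1_A\equiv 0$, so the left-hand side of \eqref{eq:1234} is $0$ and hence $V(A^c)Q(A)\le 0$, forcing $V(A^c)=0$ or $Q(A)=0$; in the latter case the hypothesis $V|_{\mathcal{I}}\ll Q|_{\mathcal{I}}$ gives $V(A)=0$. Thus for every $A\in\mathcal{I}$ either $V(A)=0$ or $V(A^c)=0$, and since $V$ is a continuous subadditive capacity the ergodicity criterion recalled in Section \ref{sec:prelimilaries} shows $V$ is ergodic. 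I expect the only genuine care to be needed in (i) $\Rightarrow$ (ii): the interchange of limit and integral must go through the nonlinear dominated convergence theorem, and the sign restriction $g\ge 0$ is exactly what makes the final positive-homogeneity step valid.
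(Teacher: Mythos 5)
Your proposal is correct and follows essentially the same route as the paper: the cyclic chain (i)\,$\Rightarrow$\,(ii)\,$\Rightarrow$\,(iii)\,$\Rightarrow$\,(iv)\,$\Rightarrow$\,(i), using Theorem \ref{thm:main Birk} plus the dominated convergence Lemma \ref{lem:dominate convergence} and positive homogeneity for (i)\,$\Rightarrow$\,(ii), the choice $B=\O$, $C=A$ for the absolute continuity in (iii)\,$\Rightarrow$\,(iv), and an invariant set plugged into \eqref{eq:1234} for the closing implication. The only cosmetic difference is that in (iv)\,$\Rightarrow$\,(i) you take $B=A^c$, $C=A$ where the paper takes $B=A$, $C=A^c$, which is an equivalent argument.
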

\begin{proof}
	(i) $\Rightarrow$ (ii). Suppose that $V$ is ergodic. Given $f,g\in B(\O,\mathcal{F})$ with $g\ge0$,
	as $V$ is an upper probability, and there exists $M>0$ such that
	\[-M\le \frac{1}{n}\sum_{i=0}^{n-1}f(\o)g(T^i\o)\le M \text{ for each }\o\in\O\text{ and }n\in\mathbb{N},\]
	it follows from Lemma \ref{lem:dominate convergence} and Theorem \ref{thm:main Birk} that there exists $Q\in\mathcal{ M}^e(T)\cap\operatorname{core}(V)$ such that
	\begin{equation*}
		\lim_{n\to\infty}\int \frac{1}{n}\sum_{i=0}^{n-1}f(\o)g(T^i\o)dV(\o)=\int \left(f(\o)\int gdQ\right)dV(\o)=\left(\int fdV\right)\left(\int gdQ\right),
	\end{equation*}
	which finishes the proof of (ii).
	
	(ii) $\Rightarrow$ (iii). This is obtained by taking $f=1_B$ and $g=1_C$.
	
	(iii) $\Rightarrow$ (iv). It suffices to prove that $V|_{\mathcal{I}}\ll Q|_{\mathcal{I}}$. Indeed, for any $A\in\mathcal{I}$ with $Q(A)=0$, we have
	\[0=V(\O)Q(A)=\lim_{n \rightarrow \infty}\int\frac{1}{n}\sum_{i=0}^{n-1} 1_\O\cdot(1_A\circ T^i)dV=V(A),\]
	proving (iv).
	
	(iv) $\Rightarrow$ (i). Given $A\in\mathcal{I}$,  it suffices to prove that either $V(A)=0$ or  $V(A^c)=0$. Indeed, let $B=A$ and $C=A^c$ in \eqref{eq:1234}. Note that $A^c\in\mathcal{I}$, so $1_{A^c}\circ T^i=1_{A^c}$, then we obtain that 
	\[Q(A^c)V(A)\le0.\]
	If $V(A)\neq 0$, then $Q(A^c)=0$, which, with the assumption that $V|_{\mathcal{I}}\ll Q|_{\mathcal{I}}$, implies that $V(A^c)=0$.  Thus $V$ is ergodic.
\end{proof}
Moreover, we provide a characterization of ergodicity of  an upper probability via the asymptotic independence of probabilities in its core.
\begin{theorem}\label{thm:ergodic via independece of core}
	Let $(\O,\mathcal{F},V,T)$ be a capacity preserving system, where $V$ is an upper probability. Then the following statements are equivalent:
	\begin{longlist}
		\item $V$ is ergodic;
		\medskip
		\item there exists $Q\in\operatorname{core}(V)\cap \mathcal{ M}^e(T)$ such that for any  $P\in\operatorname{core}(V)$, 
		\[\lim_{n\to\infty}\frac{1}{n}\sum_{i=0}^{n-1}P(B\cap T^{-i}C)=P(B)Q(C)\text{ for any }B,C\in\mathcal{F};\]
		\medskip
		\item there exists $Q\in\operatorname{core}(V)\cap \mathcal{ M}(T)$ such that for any  $P\in\operatorname{core}(V)$, 
		\begin{equation}\label{eq:111111}
			\lim_{n\to\infty}\frac{1}{n}\sum_{i=0}^{n-1}P(B\cap T^{-i}C)=P(B)Q(C)\text{ for any }B,C\in\mathcal{F}.
		\end{equation}
	\end{longlist}
\end{theorem}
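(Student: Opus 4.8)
The plan is to establish the cycle of implications (i) $\Rightarrow$ (ii) $\Rightarrow$ (iii) $\Rightarrow$ (i). The implication (ii) $\Rightarrow$ (iii) is immediate, since $\mathcal{M}^e(T)\subset\mathcal{M}(T)$, so the witness $Q$ produced for (ii) serves verbatim as a witness for (iii). The two substantive directions are (i) $\Rightarrow$ (ii) and (iii) $\Rightarrow$ (i).

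For (i) $\Rightarrow$ (ii), I would first invoke Theorem \ref{lem:ergodic of core} (equivalently Corollary \ref{cor:Q<=>V}) to produce the unique $Q\in\operatorname{core}(V)\cap\mathcal{M}^e(T)$. The crucial point is to upgrade the pathwise convergence from $V$-almost sure to $P$-almost sure for each fixed $P\in\operatorname{core}(V)$. Concretely, applying Theorem \ref{thm:main Birk} to $f=1_C$ yields
\[ V\big(\{\o\in\O:\lim_{n\to\infty}\tfrac1n\sum_{i=0}^{n-1}1_C(T^i\o)=Q(C)\}^c\big)=0, \]
and since $P(N)\le V(N)$ for every $N\in\mathcal{F}$ and every $P\in\operatorname{core}(V)$, this same convergence set has full $P$-measure. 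Here $P$ itself need not be invariant, but invariance of $P$ is not required: the full-measure convergence is supplied entirely by $V$. I would then rewrite $\tfrac1n\sum_{i=0}^{n-1}P(B\cap T^{-i}C)=\int 1_B\cdot\tfrac1n\sum_{i=0}^{n-1}(1_C\circ T^i)\,dP$ and apply the classical bounded convergence theorem for the $\sigma$-additive probability $P$ (the integrands are uniformly bounded by $1$ and converge $P$-a.s.\ to $1_B\cdot Q(C)$), obtaining the limit $\int 1_B\cdot Q(C)\,dP=P(B)Q(C)$, which is exactly \eqref{eq:111111} with an ergodic $Q$, i.e.\ (ii).

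The main obstacle is (iii) $\Rightarrow$ (i), where only an invariant (not a priori ergodic) $Q$ in the core is available. Here I would exploit that $V$ is an upper probability, hence subadditive, so by the criterion recalled just before Lemma \ref{lem:char. for ergodic} it suffices to show that every $A\in\mathcal{I}$ satisfies $V(A)=0$ or $V(A^c)=0$; moreover $V(\cdot)=\sup_{P\in\operatorname{core}(V)}P(\cdot)$, so it is enough to control $P(A)$ and $P(A^c)$ uniformly over $P\in\operatorname{core}(V)$. Fixing $A\in\mathcal{I}$ and testing \eqref{eq:111111} with $B=C=A$ gives, using $T^{-i}A=A$, the identity $P(A)=P(A)Q(A)$ for all $P\in\operatorname{core}(V)$. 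If $Q(A)<1$ this forces $P(A)=0$ for every $P$, hence $V(A)=0$. If instead $Q(A)=1$, then $Q(A^c)=0$, and testing \eqref{eq:111111} with $B=C=A^c$ (again $A^c\in\mathcal{I}$) yields $P(A^c)=P(A^c)Q(A^c)=0$ for every $P$, hence $V(A^c)=0$. In either case the ergodicity dichotomy holds, closing the cycle. The delicate point throughout is that the averaged independence relations are probability-level statements for each $P$ in the core, so the passage between the nonadditive object $V$ and its additive skeleton $Q$ must be mediated both by the $V$-null (hence $P$-null) convergence sets in direction (i) $\Rightarrow$ (ii) and by the representation $V=\sup_{P\in\operatorname{core}(V)}P$ in direction (iii) $\Rightarrow$ (i).
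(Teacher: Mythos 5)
Your proposal is correct and follows essentially the same route as the paper: (i) $\Rightarrow$ (ii) via Theorem \ref{thm:main Birk} plus dominated convergence for each $P\in\operatorname{core}(V)$, (ii) $\Rightarrow$ (iii) trivially, and (iii) $\Rightarrow$ (i) from the key identity $P(A)=P(A)Q(A)$ for $A\in\mathcal{I}$. The only cosmetic difference is in the last step, where you verify the dichotomy $V(A)=0$ or $V(A^c)=0$ directly from that identity, whereas the paper first deduces that $Q$ is ergodic and that $P|_{\mathcal{I}}=Q|_{\mathcal{I}}$ and then cites Theorem \ref{lem:ergodic of core}, whose proof unwinds to the same case analysis you give.
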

\begin{proof}
	(i) $\Rightarrow$ (ii). This is a direct consequence of Theorem \ref{thm:main Birk} and dominated convergence theorem.
	
	(ii) $\Rightarrow$ (iii). It is trivial.
	
	(iii) $\Rightarrow$ (i). By \eqref{eq:111111}, one has that for any $P\in\operatorname{core}(V)$,
	\begin{equation}\label{eq:q}
		P(A)=P(A)Q(A),\text{ for any }A\in\mathcal{I}.
	\end{equation}
	Firstly, we prove that $Q$ is ergodic. Indeed, for any $A\in\mathcal{I}$, if $V(A)=0$ then $Q(A)=0$. If $V(A)>0$ then there exists $P\in\operatorname{core}(V)$ such that $P(A)>0$. It follows from \eqref{eq:q} that $Q(A)=1$. Therefore, $Q$ is ergodic.
	
	Next we prove that $Q|_{\mathcal{I}}=P|_{\mathcal{I}}$ for any $P\in\operatorname{core}(V)$. Fix any $P\in\operatorname{core}(V)$ and $A\in\mathcal{I}$. As $Q$ is ergodic, $Q(A)=0$ or $1$. By \eqref{eq:q}, if $Q(A)=0$ then $P(A)=0$. If $Q(A)=1$ then by applying \eqref{eq:q} on $A^c$, we have $P(A^c)=0$, and hence $P(A)=1=Q(A)$. Thus, $Q|_{\mathcal{I}}=P|_{\mathcal{I}}$ for any $P\in\operatorname{core}(V)$, which together with Theorem \ref{lem:ergodic of core}, implies that $V$ is ergodic.
\end{proof}

\subsection{Applications: weak independence for non-invariant probabilities in terms of the invariant skeleton}
In Corollary \ref{cor;non},  Birkhoff's ergodic theorem for non-invariant probabilities was discussed. The following corollary as a direct consequence of the proof of Theorem \ref{thm:ergodic via independece of core} provides a further result in this direction as the asymptotic independence on large time average. 
\begin{cor}\label{cor:non}
	Suppose exactly the same conditions as that of Corollary \ref{cor;non}, then 
	\begin{equation}\label{eq:5.1a}
		\lim_{n\to\infty}\frac{1}{n}\sum_{i=0}^{n-1}P(B\cap T^{-i}C)=P(B)Q(C)\text{ for any }B,C\in\mathcal{F}.
	\end{equation}
	In particular, 
	\begin{equation}\label{eq:5.1b}
		\lim_{n\to\infty}\frac{1}{n}\sum_{i=0}^{n-1}P\circ T^{-i}=Q.
	\end{equation}
\end{cor}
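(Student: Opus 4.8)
The plan is to extract a pointwise ergodic statement from Corollary~\ref{cor;non} and then integrate it against the probability $P$, mirroring the implication (i)$\Rightarrow$(ii) of Theorem~\ref{thm:ergodic via independece of core}; the only new feature is that here $P$ is not assumed to be $T$-invariant. First I would fix $B,C\in\mathcal{F}$ and apply Corollary~\ref{cor;non} to the bounded test function $f=1_C\in L^1(\O,\mathcal{F},Q)$, obtaining
\[\lim_{n\to\infty}\frac{1}{n}\sum_{i=0}^{n-1}1_C(T^i\o)=\int 1_C\,dQ=Q(C)\quad\text{for }P\text{-a.s. }\o\in\O.\]
Rewriting $1_C(T^i\o)=1_{T^{-i}C}(\o)$ and multiplying through by the fixed (i.e.\ $n$-independent) indicator $1_B(\o)$, and using $1_B\,1_{T^{-i}C}=1_{B\cap T^{-i}C}$, this upgrades to
\[\lim_{n\to\infty}\frac{1}{n}\sum_{i=0}^{n-1}1_{B\cap T^{-i}C}(\o)=1_B(\o)\,Q(C)\quad\text{for }P\text{-a.s. }\o\in\O.\]

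Next I would integrate both sides against $P$. Since $P\in\operatorname{core}(V)$ and $V$ is an upper probability, $P$ is a genuine $\sigma$-additive probability, so I may apply the \emph{classical} dominated convergence theorem rather than the Choquet version of Lemma~\ref{lem:dominate convergence}; the dominating function is the constant $1$, as each average lies in $[0,1]$. Interchanging the finite sum with the integral and using $\int 1_{B\cap T^{-i}C}\,dP=P(B\cap T^{-i}C)$ together with $\int 1_B\,dP=P(B)$ yields \eqref{eq:5.1a}. Specializing \eqref{eq:5.1a} to $B=\O$ gives $\lim_{n\to\infty}\frac{1}{n}\sum_{i=0}^{n-1}P(T^{-i}C)=Q(C)$ for every $C\in\mathcal{F}$, which is exactly the weak* convergence asserted in \eqref{eq:5.1b}.

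Once Corollary~\ref{cor;non} is available the argument is essentially immediate, so I do not expect a serious obstacle. The only point requiring care is the justification of the limit--integral interchange; this is clean precisely because $P$ is $\sigma$-additive, so no nonlinear integration machinery is needed, and because the integrands are uniformly bounded by $1$ one could equally invoke the bounded convergence theorem.
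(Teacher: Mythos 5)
Your proof is correct and follows essentially the same route as the paper, which obtains the corollary by applying the pointwise ergodic theorem (Theorem \ref{thm:main Birk} / Corollary \ref{cor;non}) to $f=1_C$, multiplying by $1_B$, and integrating against $P$ via dominated convergence. The specialization $B=\O$ for \eqref{eq:5.1b} is likewise the intended argument.
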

\begin{rem}\label{rem4.5}
	\begin{longlist}
		\item Formula \eqref{eq:5.1b} gives the construction of the ergodic probability $Q\in\mathcal{ M}^e(T)\cap \operatorname{core}(V)$ as the average of $\{P\circ T^{-i}\}_{i=0}^\infty$ under the assumption that there exists an ergodic upper probability $V$ with $P\in\operatorname{core}(V)$.

		\medskip
		\item From the proof of Proposition \ref{prop:eq}, under the assumption that an upper probability mentioned in (i) exists, then the large time average $\lim_{n \rightarrow \infty}\frac{1}{n}\sum_{i=0}^{n-1}P'\circ T^{-i}=Q$ for all $P'\in\operatorname{core}(V)$ independent of what $P'$ is used as long as $P'\in\operatorname{core}(V)$.
		
		\medskip
		
		\item 	
		If $T$ is invertible, then the $\sigma$-algebra of invariant subsets with respect to $T$  is equal to that with respect to $T^{-1}$. If $Q$ defined by \eqref{eq:5.1b} is ergodic with respect to $T$, then $Q$ is also ergodic with respect to $T^{-1}$. Thus, $\lim_{n \rightarrow \infty}\frac{1}{n}\sum_{i=0}^{n-1}P\circ T^i=Q$ on $\mathcal{F}$, which together with \eqref{eq:5.1b} implies that 
		\begin{equation}\label{eq:6}
			\lim_{\substack{n+m\to\infty\\m,n\ge0}}\frac{1}{m+n+1}\sum_{i=-m}^nP\circ T^{i}=Q.
		\end{equation}
		
		\item
		Instead of starting with  a given ergodic upper probability, let us  consider a probability space $(\O,\mathcal{F},P)$ and an invertible measurable transformation $T:\O\to\O$. Suppose that the limit \eqref{eq:5.1b} exists, denoted by $Q$.  Then by 
		Vitali-Hahn-Saks's theorem (Lemma \ref{lem:VHStheorem}), we have $Q$ is a probability.  It is obvious that $Q\in\mathcal{ M}(T)$ and $P|_{\mathcal{I}}=Q|_{\mathcal{I}}$. If  further assume that $Q$ is ergodic, by the classical Birkhoff's ergodic theorem for invertible transformations, we have that 
		\[\lim_{\substack{n+m\to\infty\\m,n\ge0}}\frac{1}{n+m+1}\sum_{i=-m}^n1_A(T^{i}\o)=Q(A),\text{ for }Q\text{-a.s. }\o\in\O.\]
		Denote $$\widetilde{\O}=\{\o\in\O:\lim_{\substack{n+m\to\infty\\m,n\ge0}}\frac{1}{n+m+1}\sum_{i=-m}^n1_A(T^{i}\o)=Q(A)\}.$$ Then $\widetilde{\O}\in\mathcal{I}$, and so $P(\widetilde{\O})=Q(\widetilde{\O})=1$. It follows that \eqref{eq:6} holds true without referring to an upper probability beforehand.
		
		However, we can construct an upper probability under above assumptions. Let us begin with a claim that $P\circ T^{i}$ is absolutely continuous with respect to $Q$ for each $i\in\mathbb{Z}$. Indeed, for any $A\in\mathcal{F}$ with $Q(A)=0$, then $Q(A^\infty)=0$, where $A^\infty=\cup_{i=-\infty}^\infty T^iA$. As $A^\infty\in\mathcal{I}$, it follows that for any $i\in\mathbb{Z}$,
		\[P(T^iA)\le P(T^iA^\infty)=P(A^\infty)=Q(A^\infty)=0,\]
		proving this claim. Define 
		\[\l_{m,n}=\frac{1}{m+n+1}\sum_{i=-m}^nP\circ T^{i},\text{ }m,n\ge0\]and 
		
		\begin{equation}\label{eq:8=}
			V(A)=\sup_{m,n\in\mathbb{Z}_+}\{\l_{m,n}(A)\}\text{ for each }A\in\mathcal{F}.
		\end{equation}
		Now by Vitali-Hahn-Saks's theorem again, for any $\epsilon>0$, there exists $\delta>0$ such that  for any $A\in\mathcal{F}$ if $Q(A)<\d$ then $\l_{m,n}(A)<\e$ for all $m,n\in\mathbb{Z}$. Thus, it is easy to check that $V$ is continuous, and hence $V$ is an upper probability. From the definition of $V$, we have that $V$ is $T$-invariant. Then $(\O,\mathcal{F},V,T)$ is an upper probability preserving system. Moreover, if $Q$ is ergodic then $V$ is also ergodic.
		\medskip
		\item  Note that if we have no additional condition on $Q$ then the supremum of a family of probabilities defined as  \eqref{eq:8=} may not be an upper probability. For example, we consider $P=\delta_x$ for some non-periodic point $x\in \O$, and let 
		\[A_k=\{T^lx:|l|\ge k\}\text{ for each }k\in\mathbb{N}.\]
		Then $A_k$ decreases to $\emptyset$, as $k\to\infty$, and $V(A_k)=\max_{i\in\mathbb{Z}}\delta_{x}(T^{i}A_k)=1$ for each $k\in\mathbb{N}$. Thus, $V$ is not continuous, and hence it is not an upper probability.
	\end{longlist}
\end{rem}
\begin{theorem}\label{thm:app1}
	Let $(\O,\mathcal{F},P)$ be a probability space, and $T:\O\to \O$ be an invertible measurable transformation. Suppose that the limit $\lim_{n\to\infty}\frac{1}{n}\sum_{i=0}^{n-1}P\circ T^{i}$ exists, denoted by $Q$. Then $Q\in\mathcal{ M}(T)$. Moreover, if $Q$ is ergodic, then
	\begin{equation}\label{eq:4.5a}
		\lim_{\substack{n+m\to\infty\\m,n\ge0}}\frac{1}{m+n+1}\sum_{i=-m}^nP(B\cap T^i C)=P(B)Q(C)\text{ for any }B,C\in\mathcal{F}
	\end{equation}
	and  for any $f\in L^1(\O,\mathcal{F},Q)$,
	\begin{equation}\label{eq:4.5b}
		\lim_{\substack{n+m\to\infty\\m,n\ge0}}\frac{1}{m+n+1}\sum_{i=-m}^nf(T^i\o)=\int fdQ\text{ for }P\text{-a.s. } \o\in\O.
	\end{equation}
	Conversely, if \eqref{eq:4.5a} or \eqref{eq:4.5b} holds, then $Q$ is ergodic. In particular, \eqref{eq:4.5a} and \eqref{eq:4.5b} are equivalent.
\end{theorem}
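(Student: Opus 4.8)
The plan is to reduce everything to the invariant ergodic probability $Q$ itself, using two structural facts that hold as soon as the Cesàro limit exists, and to bring in ergodicity only at the end. First I would settle $Q\in\mathcal{M}(T)$: Lemma \ref{lem:VHStheorem} (Vitali--Hahn--Saks) guarantees that the pointwise limit $Q(A)=\lim_n\frac1n\sum_{i=0}^{n-1}P(T^iA)$ is again a probability, and $T$-invariance follows because shifting the index by one leaves a Cesàro average unchanged in the limit, so $Q(T^{-1}A)=Q(A)$. Before using ergodicity I would record two facts I will lean on repeatedly. (a) For $A\in\mathcal{I}$ one has $T^iA=A$ for all $i\in\mathbb{Z}$ by invertibility, whence $Q(A)=\lim_n\frac1n\sum_{i=0}^{n-1}P(A)=P(A)$, i.e. $Q|_{\mathcal{I}}=P|_{\mathcal{I}}$. (b) $P\ll Q$: if $Q(A)=0$ then, with $A^\infty=\cup_{i\in\mathbb{Z}}T^iA\in\mathcal{I}$, invariance of $Q$ gives $Q(A^\infty)\le\sum_i Q(T^iA)=0$, and then $P(A)\le P(A^\infty)=Q(A^\infty)=0$ by (a). This is exactly the computation already made in Remark \ref{rem4.5}(iv).

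For the forward direction assume $Q$ is ergodic. Since $T$ is invertible, $\mathcal{I}$ coincides with the invariant $\sigma$-algebra of $T^{-1}$, so $Q$ is also ergodic for $T^{-1}$. I would then apply the classical Birkhoff theorem (Theorem \ref{thm:Birkhoff for measures}) to the ergodic systems $(\O,\mathcal{F},Q,T)$ and $(\O,\mathcal{F},Q,T^{-1})$ to obtain, for $f\in L^1(\O,\mathcal{F},Q)$, that $\frac1{n+1}\sum_{i=0}^n f\circ T^i\to\int f\,dQ$ and $\frac1m\sum_{j=1}^m f\circ T^{-j}\to\int f\,dQ$, both $Q$-a.s. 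The set on which both hold lies in $\mathcal{I}$ and has $Q$-measure $1$, hence $P$-measure $1$ by (b). Combining the forward and backward averages with weights yields the two-sided limit \eqref{eq:4.5b}. For \eqref{eq:4.5a} I would write $P(B\cap T^iC)=\int_B 1_C\circ T^{-i}\,dP$, so that $\frac1{m+n+1}\sum_{i=-m}^n P(B\cap T^iC)=\int_B\big(\frac1{m+n+1}\sum_{j=-n}^{m}1_C\circ T^{j}\big)\,dP$; the integrand converges $P$-a.s. to $Q(C)$ by the two-sided statement just proved and is bounded by $1$, so dominated convergence (Lemma \ref{lem:dominate convergence}, or ordinary DCT under $P$) gives the limit $P(B)Q(C)$. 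Equivalently, one may realise $P\in\operatorname{core}(V)$ for the invariant ergodic upper probability $V$ of \eqref{eq:8=} and quote Corollaries \ref{cor;non} and \ref{cor:non}; I prefer the direct route since it sidesteps verifying the properties of $V$.

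For the converses I would test the two identities on invariant sets. If \eqref{eq:4.5a} holds, take $B=C=A\in\mathcal{I}$; then $B\cap T^iC=A$, the left side equals $P(A)$, while the right side is $P(A)Q(A)=P(A)^2$ by (a), forcing $P(A)\in\{0,1\}$ and hence $Q(A)=P(A)\in\{0,1\}$, i.e. $Q$ is ergodic. If \eqref{eq:4.5b} holds, take $f=1_A$ with $A\in\mathcal{I}$; then $f\circ T^i=1_A$, so the left side is the constant $1_A(\o)$, forcing $1_A=\int 1_A\,dQ=Q(A)$ for $P$-a.s. $\o$, which is impossible unless $Q(A)\in\{0,1\}$; again $Q$ is ergodic. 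Since each of \eqref{eq:4.5a} and \eqref{eq:4.5b} is thereby equivalent to ergodicity of $Q$, they are equivalent to one another, which is the final assertion.

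The part I expect to require the most care is the passage from the two one-sided ergodic averages to the genuinely doubly-indexed limit as $m+n\to\infty$ with $m,n\ge0$: one must control the regimes in which one index stays bounded while the other tends to infinity, where the weighted average $\frac{(n+1)a_n+m b_m}{m+n+1}$, with $a_n,b_m\to\int f\,dQ$, is dominated by a single tail. This is elementary once phrased as a lemma on weighted averages of convergent sequences, but it is the only place where the one-sided inputs do not transcribe verbatim. A secondary point, relevant only if one follows the upper-probability route, is checking that the set function $V$ of \eqref{eq:8=} is a genuinely $T$-invariant ergodic upper probability; there invariance and continuity rest on the uniform absolute continuity supplied by Vitali--Hahn--Saks and on $V|_{\mathcal{I}}=Q|_{\mathcal{I}}\in\{0,1\}$, as indicated in Remark \ref{rem4.5}.
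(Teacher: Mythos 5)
Your proof is correct, and the forward direction takes a genuinely different route from the paper's. The paper proves \eqref{eq:4.5a} and \eqref{eq:4.5b} by first packaging the family $\{\l_{m,n}\}$ into the invariant upper probability $V$ of \eqref{eq:8=}, verifying (via Vitali--Hahn--Saks and $V|_{\mathcal{I}}=Q|_{\mathcal{I}}\in\{0,1\}$) that $V$ is an ergodic upper probability with $P\in\operatorname{core}(V)$, and then quoting Theorem \ref{thm:main Birk} and Corollary \ref{cor:non} for both $T$ and $T^{-1}$; this reuses the nonlinear machinery, which is part of the paper's point. You instead isolate the two facts that actually do the work --- $Q|_{\mathcal{I}}=P|_{\mathcal{I}}$ and $P\ll Q$ (the same computation as Remark \ref{rem4.5}(iv), but without needing to certify that $V$ is an upper probability) --- and then run the classical Birkhoff theorem for the ergodic system $(\O,\mathcal{F},Q,T^{\pm1})$, transfer the full-measure invariant set to $P$, and finish \eqref{eq:4.5a} by dominated convergence under $P$. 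This is more elementary and self-contained; what it gives up is only the illustration of the upper-probability framework. The one step you flag as delicate, passing from the two one-sided averages to the doubly-indexed limit via the weighted average $\frac{(n+1)a_n+mb_m}{m+n+1}$, is indeed fine (boundedness of the convergent sequences $a_n,b_m$ handles the regime where one index stays bounded), and the paper's proof needs the same combination implicitly when it applies its corollaries to $T$ and $T^{-1}$. Your converse is essentially the paper's argument of testing on invariant sets, with the marginally cleaner choice $B=C=A$ (using $Q|_{\mathcal{I}}=P|_{\mathcal{I}}$ to get $P(A)=P(A)^2$) in place of the paper's $B=A$, $C=A^c$.
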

\begin{proof} 
	Let $V$ be defined by \eqref{eq:8=}. Then $V$ is an upper probability by the argument in Remark \ref{rem4.5} (iv).
	By the definition of $Q$ and the assumption that $Q$ is ergodic, we can see that $Q\in\operatorname{core}(V)\cap\mathcal{ M}^e(T)$  and  $P|_\mathcal{I}=Q|_\mathcal{I}$. For any $A\in\mathcal{I}$, as $Q$ is ergodic, $Q(A)\in\{0,1\}$. So $P(T^iA)=Q(A)\in \{0,1\}$ for any $i\in\mathbb{Z}$. Thus, either 
	\[P(T^iA)=0\text{ for all }i\in\mathbb{Z}\text{ or }P(T^iA^c)=0\text{ for all }i\in\mathbb{Z}.\]
	Therefore, $V(A)=0$ or $V(A^c)=0$. This implies that $V$ is ergodic. Then applying  Corollary \ref{cor:non} and Theorem \ref{thm:main Birk} on $T$ and $T^{-1}$, we finish the proof of the first part of  this theorem, as $P\in\operatorname{core}(V)$.
	
	Now we prove the converse part. If \eqref{eq:4.5a} holds then for any $A\in\mathcal{I}$, when $P(A)>0$, we consider $0=P(A\cap A^c)=P(A)Q(A^c)$, and hence $Q(A^c)=0$; when $P(A)=0$, we consider  $0=P(A^c\cap A)=P(A^c)Q(A)=Q(A)$. Thus, $Q$ is ergodic. 
	
	If \eqref{eq:4.5b} holds, then for any $A\in\mathcal{I}$,  $1_A(\o)=Q(A)$ for $P$-a.s. Thus, $Q(A)=\{0,1\}$, and hence $Q$ is ergodic.
\end{proof}
\begin{rem}\label{rem:app1}
	In the special case that $P$ is $T$-invariant probabilities, we have $V=Q=P$ and $P\circ T^{i}=P$, $i\in\mathbb{Z}$, then the results in Theorem \ref{thm:app1} are results in classical ergodic theory with no any extra condition imposed (e.g. see (1.1.4) in Da Prato and Zabczyk \cite{DaPrato} corresponding to \eqref{eq:4.5a} and Birkhoff's law of large numbers corresponding to \eqref{eq:4.5b}). Our results are sharp in the classical ergodic theory and hold true for possibly non-invariant probabilities.
\end{rem}
\subsection{Characterizations of ergodicity of continuous concave capacities}
In this subsection, we 	provide more characterizations of ergodicity of a special type of upper probabilities, namely, continuous concave capacities.
Let us recall an important properties of concave capacities.
\begin{lemma}[Proposition 3 in \cite{Schmeidler1986}]\label{lem:intgral for concave}
	If $\mu$ is a concave capacity on a measurable space $(\O,\mathcal{F})$, then 
	\[\int fd\mu=\max_{P\in\operatorname{core}(\mu)}\int fdP\text{ for any }f\in B(\O,\mathcal{F}).\]
\end{lemma}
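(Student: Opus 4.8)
The plan is to prove the two inequalities $\int f\,d\mu\ge\int f\,dP$ for all $P\in\operatorname{core}(\mu)$ and, separately, the existence of one $P^{\ast}\in\operatorname{core}(\mu)$ attaining equality. The first is the routine direction: from the defining layer-cake formula, for any $P\in\operatorname{core}(\mu)$ one has $P(\{f\ge t\})\le\mu(\{f\ge t\})$ for every $t$ and $P(\O)=\mu(\O)=1$, so monotonicity of the ordinary Lebesgue integrals gives
\[\int f\,dP=\int_{0}^{\infty}P(\{f\ge t\})\,dt+\int_{-\infty}^{0}\big(P(\{f\ge t\})-1\big)\,dt\le\int f\,d\mu.\]
Hence $\sup_{P\in\operatorname{core}(\mu)}\int f\,dP\le\int f\,d\mu$, and it remains to realize this bound.

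For attainment I would argue by Hahn--Banach, exactly in the style of the proof of Lemma \ref{lem:core of process}. The key structural input is that, since $\mu$ is concave, the Choquet functional $p(g):=\int g\,d\mu$ is subadditive, i.e. $\int(g+h)\,d\mu\le\int g\,d\mu+\int h\,d\mu$ for all $g,h\in B(\O,\mathcal{F})$; combined with positive homogeneity (Proposition \ref{prop:choquet inegral properties}(i)) this makes $p$ a sublinear functional on $B(\O,\mathcal{F})$. On the line $M=\{cf:c\in\mathbb{R}\}$ set $L_0(cf)=c\int f\,d\mu$. Then $L_0\le p$ on $M$: for $c\ge0$ this is positive homogeneity, and for $c<0$ it reduces, after dividing by $|c|$, to $0=\int(f-f)\,d\mu\le\int f\,d\mu+\int(-f)\,d\mu$, which is precisely subadditivity. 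The Hahn--Banach dominated extension theorem then yields a linear $L$ on $B(\O,\mathcal{F})$ with $L|_{M}=L_0$ and $L\le p$ everywhere, so in particular $L(f)=\int f\,d\mu$.

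It remains to convert $L$ into a member of the core. By the Riesz representation theorem (\cite[Lemma 13.3]{Aliprantis1999}) there is $P^{\ast}\in\D(\O,\mathcal{F})$ with $L(g)=\int g\,dP^{\ast}$ for all $g\in B(\O,\mathcal{F})$. The domination $L\le p$ delivers the required properties: taking $g=1_A$ gives $P^{\ast}(A)\le\mu(A)$; taking $g=\pm1_{\O}$ together with $\mu(\O)=1$ forces $P^{\ast}(\O)=1$; and for $g\ge0$ one has $L(g)=-L(-g)\ge-p(-g)=-\int(-g)\,d\mu\ge0$, so $P^{\ast}$ is nonnegative and hence a probability lying in $\operatorname{core}(\mu)$. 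Since $\int f\,dP^{\ast}=L(f)=\int f\,d\mu$, this $P^{\ast}$ attains the supremum, which by the first paragraph equals $\int f\,d\mu$; in particular the supremum is a maximum.

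The one genuinely nontrivial step is the subadditivity of the Choquet integral under concavity of $\mu$; everything else is bookkeeping, and I expect this to be the main obstacle. I would prove it by first treating simple functions, reducing via translation invariance (Proposition \ref{prop:choquet inegral properties}(ii)) to a nonnegative combination of nested indicators $1_{A_1},1_{A_2},\dots$ with $A_1\supset A_2\supset\cdots$, using comonotone additivity of the Choquet integral, and then invoking the submodular inequality $\mu(A\cup B)+\mu(A\cap B)\le\mu(A)+\mu(B)$ on the level sets of $g+h$; the general bounded measurable case then follows by uniform approximation together with continuity of $g\mapsto\int g\,d\mu$ in the integrand.
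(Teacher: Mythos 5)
The paper offers no proof of this lemma---it is quoted directly from Schmeidler's \emph{Integral representation without additivity} (Proposition 3)---so there is nothing internal to compare against; your argument is in fact essentially Schmeidler's own route, and it is correct. The easy inequality $\sup_{P\in\operatorname{core}(\mu)}\int f\,dP\le\int f\,d\mu$ via the layer-cake formula is fine, and the attainment argument (sublinearity of $p(g)=\int g\,d\mu$, Hahn--Banach extension of $L_0(cf)=c\int f\,d\mu$ from the line $\{cf\}$, Riesz representation into a finitely additive $P^{\ast}$, and then reading off $P^{\ast}\le\mu$, $P^{\ast}(\O)=1$ and positivity from the domination $L\le p$) is complete and matches the definition of $\operatorname{core}(\mu)$ used in the paper, which only requires finite additivity. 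The one load-bearing ingredient you do not fully prove is the subadditivity of the Choquet integral for a concave (submodular) capacity; your sketch---comonotone additivity on nested level sets plus the inequality $\mu(A\cup B)+\mu(A\cap B)\le\mu(A)+\mu(B)$, then uniform approximation---is the standard proof (see, e.g., Denneberg's book or Choquet's original work), so this is an acceptable black box rather than a gap, but in a self-contained write-up it would deserve its own lemma.
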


\begin{theorem}\label{thm:}
	Let $(\O,\mathcal{F},\mu,T)$ be a capacity preserving system. If $\mu$ is continuous from above and concave then the following four statements are equivalent:
	\begin{longlist}
		\item $\mu$ is ergodic;
		\medskip
		\item there exists $Q\in\operatorname{core}(\mu)\cap \mathcal{ M}^e(T)$ such that $\mu(A)=Q(A)$ for any $A\in\mathcal{I}$, and 
	\begin{align*}
	\lim_{n\to\infty}&\max_{P\in\operatorname{core}(\mu)}\frac{1}{n}\sum_{i=0}^{n-1}\int f\cdot(g\circ T^i)dP\\
	&=\max_{P\in\operatorname{core}(\mu)}\lim_{n\to\infty}\frac{1}{n}\sum_{i=0}^{n-1}\int f\cdot(g\circ T^i)dP=\int fd\mu\int gdQ
	\end{align*}
		for any $f,g\in B(\O,\mathcal{F})$ such that $g\ge 0$;
		\medskip
		\item there exists $Q\in\operatorname{core}(\mu)\cap \mathcal{ M}(T)$ such that 
		\[\lim_{n\to\infty}\max_{P\in\operatorname{core}(\mu)}\frac{1}{n}\sum_{i=0}^{n-1}P(B\cap T^{-i}C)=\max_{P\in\operatorname{core}(\mu)}\lim_{n\to\infty}\frac{1}{n}\sum_{i=0}^{n-1}P(B\cap T^{-i}C)=\mu(B)Q(C)\]
		for any $B,C\in \mathcal{F}$;
		\medskip
		
		\item there exists $Q\in\D^{\sigma}(\O,\mathcal{F})$ such that  $\mu|_{\mathcal{I}}\ll Q|_{\mathcal{I}}$  and
		\begin{equation*}
			\liminf_{n\to\infty}\frac{1}{n}\sum_{i=0}^{n-1}\mu(B\cap T^{-i}C)\ge\mu(B)Q(C)\text{ for any }B,C\in\mathcal{F}.
		\end{equation*}
	\end{longlist}
\end{theorem}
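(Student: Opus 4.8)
The plan is to prove the cyclic chain $(i)\Rightarrow(ii)\Rightarrow(iii)\Rightarrow(iv)\Rightarrow(i)$. The starting observation is that a concave capacity that is continuous from above is an upper probability (as recalled in Section~\ref{sec:set functions}), so Theorems~\ref{thm:ergodicity of upper} and~\ref{thm:main Birk} apply to $\mu$; the whole point is to translate their Choquet-integral formulations into the ``$\max$ over $\operatorname{core}(\mu)$'' formulations stated here. The bridge is Schmeidler's identity $\int hd\mu=\max_{P\in\operatorname{core}(\mu)}\int hdP$ (Lemma~\ref{lem:intgral for concave}), which, applied to the averaged product, gives
\[
\int\tfrac{1}{n}\sum_{i=0}^{n-1}f\cdot(g\circ T^i)d\mu=\max_{P\in\operatorname{core}(\mu)}\tfrac{1}{n}\sum_{i=0}^{n-1}\int f\cdot(g\circ T^i)dP .
\]

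For $(i)\Rightarrow(ii)$ I would take the probability $Q\in\operatorname{core}(\mu)\cap\mathcal{M}^e(T)$ furnished by Theorem~\ref{lem:ergodic of core}, so that $\mu(A)=Q(A)$ on $\mathcal{I}$ by Corollary~\ref{cor:Q<=>V}. The ``$\lim$ of $\max$'' equality then follows at once by combining the displayed identity with statement (ii) of Theorem~\ref{thm:ergodicity of upper}, both sides equalling $\int fd\mu\int gdQ$. For the ``$\max$ of $\lim$'' equality I would first compute the inner limit for each fixed $P\in\operatorname{core}(\mu)$: by Theorem~\ref{thm:main Birk} the averages $\tfrac{1}{n}\sum_{i=0}^{n-1}g(T^i\o)$ converge to $\int gdQ$ for $\mu$-a.s.\ $\o$, hence for $P$-a.s.\ $\o$, and the bounded convergence theorem yields $\lim_n\tfrac{1}{n}\sum_i\int f\cdot(g\circ T^i)dP=(\int fdP)(\int gdQ)$. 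Maximizing over $P$, using $g\ge0$ to pull the nonnegative factor $\int gdQ$ outside the maximum, and applying Lemma~\ref{lem:intgral for concave} once more gives $\int fd\mu\int gdQ$ again.

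The remaining implications are lighter. $(ii)\Rightarrow(iii)$ is the specialization $f=1_B,\,g=1_C$. For $(iii)\Rightarrow(iv)$, note $Q\in\mathcal{M}(T)\subset\D^{\sigma}(\O,\mathcal{F})$ automatically; taking $B=\O$ and $C=A\in\mathcal{I}$ in (iii) collapses the average to the constant $\mu(A)$ and forces $\mu(A)=0$ whenever $Q(A)=0$, yielding $\mu|_{\mathcal{I}}\ll Q|_{\mathcal{I}}$; and the elementary pointwise bound $\tfrac{1}{n}\sum_i\mu(B\cap T^{-i}C)\ge\max_{P\in\operatorname{core}(\mu)}\tfrac{1}{n}\sum_i P(B\cap T^{-i}C)$ transfers the liminf estimate. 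Finally, $(iv)\Rightarrow(i)$ is direct: for $A\in\mathcal{I}$, choosing $B=A$ and $C=A^c$ (both in $\mathcal{I}$) makes $B\cap T^{-i}C=\emptyset$ for every $i$, so the liminf in (iv) is $0$, whence $\mu(A)Q(A^c)=0$; if $\mu(A)\neq0$ then $Q(A^c)=0$, and $\mu|_{\mathcal{I}}\ll Q|_{\mathcal{I}}$ gives $\mu(A^c)=0$. Since a concave capacity is subadditive, the ergodicity criterion recalled in Section~\ref{sec:set functions} then concludes.

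The main obstacle is the interchange of $\lim_n$ and $\max_{P\in\operatorname{core}(\mu)}$ asserted in (ii) and (iii), which fails for general families. The resolution I rely on is not to justify the interchange abstractly but to evaluate each side independently to the same explicit number $\int fd\mu\int gdQ$: the ``$\lim$ of $\max$'' side through the Schmeidler identity together with Theorem~\ref{thm:ergodicity of upper}, and the ``$\max$ of $\lim$'' side through the core-wise Birkhoff convergence of Theorem~\ref{thm:main Birk}. The delicate input making the latter work is that $\mu$-almost-sure convergence passes to $P$-almost-sure convergence for every $P\in\operatorname{core}(\mu)$; this is precisely where the ergodicity of $\mu$, which pins down a single common skeleton $Q$, is genuinely used.
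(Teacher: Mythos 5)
Your proposal is correct and follows essentially the same route as the paper: both rest on the observation that a concave capacity continuous from above is an upper probability, use Schmeidler's identity (Lemma~\ref{lem:intgral for concave}) to rewrite the Choquet integral of the averaged product as a maximum over $\operatorname{core}(\mu)$, evaluate the ``$\lim$ of $\max$'' side via the ergodic Choquet limit (which the paper rederives from Lemma~\ref{lem:dominate convergence} rather than citing Theorem~\ref{thm:ergodicity of upper}(ii), a purely presentational difference) and the ``$\max$ of $\lim$'' side via core-wise Birkhoff convergence with $g\ge 0$, and close the cycle with the same choices $f=1_B,\ g=1_C$, then $B=\O,\ C=A$, then $B=A,\ C=A^c$. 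No gaps.
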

\begin{proof}
	(i) $\Rightarrow$ (ii). 
	From \cite[Page 3382 and 3383]{CMM2016}, since $\mu$ is concave and continuous from above, it follows that $\mu$ is an upper probability. 
	Thus, by Theorem \ref{lem:ergodic of core}, there exists  $Q\in\mathcal{ M}^e(T)\cap \operatorname{core}(\mu)$ such that for any $A\in\mathcal{I}$,  $Q(A)=P(A)$ for any $P\in\operatorname{core}(\mu)$.  Given any $f,g\in B(\O,\mathcal{F})$ with $g\ge0$, let
	\[\mathcal{A}_{g}=\{\o\in\O:\lim_{n\to\infty}\frac{1}{n}\sum_{i=0}^{n-1}g(T^i\o)=\int gdQ\}.\]
	Applying Birkhoff's ergodic theorem on the ergodic probability $Q$, one has 
	$Q(\mathcal{A}_{g})=1,$
	which together with the invariance of $\mathcal{A}_{g}$, implies that 
	\begin{equation}\label{eq11}
		P(\mathcal{A}_{g})=1\text{ for any }P\in\operatorname{core}(\mu).
	\end{equation}
	
	By \eqref{eq11} and the dominated convergence theorem, one deduces that
	\begin{equation}\label{eq:222}
		\lim_{n\to\infty}\frac{1}{n}\sum_{i=0}^{n-1}\int f\cdot (g\circ T^i)dP=\int fdP\int gdQ\text{ for any }P\in\operatorname{core}(\mu).
	\end{equation}
	Using \eqref{eq11} again, one has $\mu(\mathcal{A}_{g}^c)=0$, i.e., 	\[\lim_{n\to\infty}\frac{1}{n}\sum_{i=0}^{n-1}g(T^i\o)=\int gdQ\text{ for }\mu\text{-a.s. }\o\in\O.\]
	This implies that
	\[\lim_{n\to\infty}\frac{1}{n}\sum_{i=0}^{n-1}f(\o)g(T^i\o)=f(\o)\int gdQ\text{ for }\mu\text{-a.s. }\o\in\O.\]
	By Lemma \ref{lem:dominate convergence}, we deduce that 
	\begin{equation*}
		\lim_{n\to\infty}\int \frac{1}{n}\sum_{i=0}^{n-1}f(\o)g(T^i\o)d\mu(\o)=\int fd\mu\int gdQ.
	\end{equation*}
	Meanwhile, by Lemma \ref{lem:intgral for concave}, one has that 
	\begin{align*}
		\int \frac{1}{n}\sum_{i=0}^{n-1}f(\o)g(T^i\o)d\mu(\o)=&\max_{P\in\operatorname{core}(\mu)}\int \frac{1}{n}\sum_{i=0}^{n-1}f(\o)g(T^i\o)dP(\o)\\
		=&\max_{P\in\operatorname{core}(\mu)}\frac{1}{n}\sum_{i=0}^{n-1}\int f(\o)g(T^i\o)dP(\o).
	\end{align*}
	Thus, by \eqref{eq:222},
	\begin{align*}
		\int fd\mu\int gdQ&=\lim_{n\to\infty}\max_{P\in\operatorname{core}(\mu)}\frac{1}{n}\sum_{i=0}^{n-1}\int f(\o)g(T^i\o)dP(\o)\\
		&\ge \max_{P\in\operatorname{core}(\mu)}\lim_{n\to\infty}\frac{1}{n}\sum_{i=0}^{n-1}\int f(\o)g(T^i\o)dP(\o)\\
		&=\max_{P\in\operatorname{core}(\mu)}\int fdP\int gdQ=\int fd\mu\int gdQ,
	\end{align*}
	which finishes the proof of (ii).
	
	(ii) $\Rightarrow$ (iii). This is obtained by taking $f=1_B$ and $g=1_C$.
	
	(iii) $\Rightarrow$ (iv). Since
	\[\max_{P\in\operatorname{core}(\mu)}\frac{1}{n}\sum_{i=0}^{n-1}P(B\cap T^{-i}C)\le \frac{1}{n}\sum_{i=0}^{n-1}\mu(B\cap T^{-i}C)\text{ for any }B,C\in\mathcal{F},\]
	it follows from (iii) that
	\[	\liminf_{n\to\infty}\frac{1}{n}\sum_{i=0}^{n-1}\mu(B\cap T^{-i}C)\ge\liminf_{n\to\infty}\max_{P\in\operatorname{core}(\mu)}\frac{1}{n}\sum_{i=0}^{n-1}P(B\cap T^{-i}C)=\mu(B)Q(C)\text{ for any }B,C\in\mathcal{F}.\]
	Now we prove	$\mu|_{\mathcal{I}}\ll Q|_{\mathcal{I}}$. Indeed, for any $A\in\mathcal{I}$ with $Q(A)=0$, by (iii) for $B=\O$, $C=A$, one has that $V(A)=\max_{P\in\operatorname{core}(V)}P(A)=Q(A)=0$.
	This finishes (iv).
	
	(iv) $\Rightarrow$ (i). It can be proved by the same arguments of (iv) $\Rightarrow$ (i) in Theorem \ref{thm:ergodicity of upper}.
\end{proof}

\section{Weak mixing for capacities}\label{sec:weak mixing}
In this section, we  provide a formal definition   of weak mixing for capacity preserving systems. In classical ergodic theory, weak mixing has been extensively studied for measure preserving systems. Naturally, the concept of weak mixing should play a similar role in characterizing the level of randomness or disorder in  capacity preserving systems. 
\subsection{Definition of weak mixing of invariant capacities}
In classical ergodic theory, weak mixing can be characterized by measurable eigenfunctions (see  \cite{Ward2011} for example). Namely, for a measure preserving system $(\O,\mathcal{F},P,T)$, $P$ is weakly mixing if and only if each $\mathcal{F}$-measurable function $f:\O\to\mathbb{C}$ satisfying that $f\circ T=\lambda f$, $P$-a.s. for some $\l\in\mathbb{C}$, is  constant, $P$-a.s. It follows that $\l=1$ is the unique eigenvalue of the transformation operator $f\mapsto f\circ T$ and the eigenvalue is simple. Motivated by this characterization, we state a similar definition for capacities.
\begin{definition}
	Let $(\O,\mathcal{F},\mu,T)$ be a capacity preserving system. The capacity $\mu$ is called weak mixing (with respect to $T$) if each $\mathcal{F}$-measurable function $f:\O\to\mathbb{C}$ satisfying that $f\circ T=\lambda f$, $\mu$-a.s. for some $\l\in\mathbb{C}$, is constant, $\mu$-a.s.
\end{definition}

By Lemma \ref{lem:char. for ergodic}, it is easy to see that for a  subadditive capacity $\mu$,  weak mixing  implies ergodicity. Examples of weakly mixing capacity preserving systems can be found in Section \ref{sec:ex and open}.
It is easy to check that given any $\mathcal{F}$-measurable function $f$ that is not zero $\mu$-a.s., if there exists $\l\in\mathbb{C}$ such that $f\circ T=\l f$, $\mu$-a.s., then $|\l|=1$. This can be seen from that the Choquet integral satisfies $\int|f|^2d\mu=\int|f\circ T|^2d\mu=|\l|^2\int|f|^2d\mu$.

\subsection{Weak mixing and ergodicity on the product space of upper probabilities}
Recall that for a measure preserving system $(\Omega,\mathcal{F},P,T)$, $P$ is weakly mixing if and only if $P\times P$ is ergodic with respect to $T\times T$ (see \cite[Theorem 2.36]{Ward2011}). This raises a natural question:
is there a similar result for capacity preserving systems?  Before answering this question, we must define the product system of two capacity preserving systems. 

We note that from \cite[Chapter 12]{Denneberg2000}, the  Carath\'eodory's extension theorem from an algebra to a $\sigma$-algebra is not true for capacities. However, we have a natural method to define the product of two upper probabilities as follows: 
Let $V_i=\max_{P_i\in\operatorname{core}(V_i)}P_i$ be two upper probabilities defined on  measurable spaces $(\O_i,\mathcal{F}_i)$, $i=1,2$. Then we define
\[V_1\times V_2=\max_{(P_1,P_2)\in\operatorname{core}(V_1)\times \operatorname{core}(V_2)}P_1\times P_2,\]
where $P_1\times P_2$ is the  product measure of $P_1$ and $P_2$.  Since $\operatorname{core}(V_1)\times \operatorname{core}(V_2)$ is a compact subset of $\Delta^{\sigma}(\O_\times\O_2,\mathcal{F}_1\times\mathcal{F}_2)$, $V_1\times V_2$ is an upper probability on $(\O_1\times\O_2,\mathcal{F}_1\times\mathcal{F}_2).$
It is easy to check that for any $A\in\mathcal{F}_1$ and $B\in\mathcal{F}_2$, 
\[V_1\times V_2(A\times B)=V_1(A)\cdot V_2(B).\]
Thus, the definition is well-defined. Since $V_1\times V_2$ is also an upper probability, it follows that 
\[V_1\times V_2=\max_{\widetilde{P}\in\operatorname{core}(V_1\times V_2)}\widetilde{P}.\]
Note that generally, $\operatorname{core}(V_1)\times \operatorname{core}(V_2)\subsetneq \operatorname{core}(V_1\times V_2)$. For example, in Example \ref{ex:upper},  $\frac{1}{2}(\bar{P}_1\times \bar{P}_2+\bar{P}_2\times \bar{P}_1)\in\operatorname{core}(V_1\times V_2)$ but not in $\operatorname{core}(V_1)\times \operatorname{core}(V_2)$.

Consider two capacity preserving systems $(\O_1, \mathcal{F}_1, V_1, T_1)$ and $(\O_2, \mathcal{F}_2, V_2, T_2)$, where $V_1$ and $V_2$ are upper probabilities. Let $(\O_1 \times \O_2, \mathcal{F}_1 \times \mathcal{F}_2, V_1 \times V_2)$ be their product space defined as above, and $T_1\times T_2:\O_1\times\O_2\to \O_1\times\O_2$, by $ (\o_1,\o_2)\mapsto(T_1\o_1,T_2\o_2)$. 
\begin{prop} Let $(\O_1, \mathcal{F}_1, V_1, T_1)$ and $(\O_2, \mathcal{F}_2, V_2, T_2)$ be two capacity preserving systems, where $V_1$ and $V_2$ are upper probabilities.
	Then the upper probability $V_1\times V_2$ is $T_1\times T_2$-invariant. In particular,
	$(\O_1 \times \O_2, \mathcal{F}_1 \times \mathcal{F}_2, V_1 \times V_2,T_1\times T_2)$  is a capacity preserving system.
\end{prop}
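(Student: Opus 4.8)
The plan is to prove the set identity $(V_1\times V_2)\bigl((T_1\times T_2)^{-1}A\bigr)=(V_1\times V_2)(A)$ for every $A\in\mathcal{F}_1\times\mathcal{F}_2$, working directly from the defining representation $V_1\times V_2=\max_{(P_1,P_2)\in\operatorname{core}(V_1)\times\operatorname{core}(V_2)}P_1\times P_2$. Since Carath\'eodory's extension theorem fails for capacities, I cannot verify invariance on rectangles and then extend; the whole argument must run through the maximizing representation over the cores. The elementary ingredient is the pushforward identity $(P_1\times P_2)\circ(T_1\times T_2)^{-1}=(P_1\circ T_1^{-1})\times(P_2\circ T_2^{-1})$, which I would check on measurable rectangles and extend to $\mathcal{F}_1\times\mathcal{F}_2$ by uniqueness of the product probability (both sides are probabilities agreeing on the generating $\pi$-system of rectangles). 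Writing $\Phi_i(P_i):=P_i\circ T_i^{-1}$, this yields $(V_1\times V_2)\bigl((T_1\times T_2)^{-1}A\bigr)=\max_{(P_1,P_2)}\bigl(\Phi_1(P_1)\times\Phi_2(P_2)\bigr)(A)$.

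The forward inequality is then immediate. Because each $V_i$ is $T_i$-invariant, for any $P_i\in\operatorname{core}(V_i)$ and $B\in\mathcal{F}_i$ one has $\Phi_i(P_i)(B)=P_i(T_i^{-1}B)\le V_i(T_i^{-1}B)=V_i(B)$, so $\Phi_i(P_i)\in\operatorname{core}(V_i)$. Hence every pair $(\Phi_1(P_1),\Phi_2(P_2))$ lies in $\operatorname{core}(V_1)\times\operatorname{core}(V_2)$, and the displayed maximum is taken over a subfamily of the one defining $(V_1\times V_2)(A)$; therefore $(V_1\times V_2)\bigl((T_1\times T_2)^{-1}A\bigr)\le(V_1\times V_2)(A)$.

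The reverse inequality is the main obstacle, and I expect it to need the full force of the invariance hypothesis rather than mere monotonicity. My plan is to prove that each $\Phi_i$ maps $\operatorname{core}(V_i)$ \emph{onto} itself. Granting this, if $(Q_1,Q_2)$ attains $(V_1\times V_2)(A)=(Q_1\times Q_2)(A)$ (the maximum is attained because $V_1\times V_2$ is an upper probability), I choose $P_i\in\operatorname{core}(V_i)$ with $\Phi_i(P_i)=Q_i$ and read off $(V_1\times V_2)((T_1\times T_2)^{-1}A)\ge(\Phi_1(P_1)\times\Phi_2(P_2))(A)=(Q_1\times Q_2)(A)=(V_1\times V_2)(A)$, which closes the proof. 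To establish surjectivity of $\Phi_i$ I would adapt the extension argument of Lemma \ref{lem:core of process}: given $Q\in\operatorname{core}(V_i)$, define a probability $P_0$ on the sub-$\sigma$-algebra $T_i^{-1}\mathcal{F}_i$ by $P_0(T_i^{-1}B)=Q(B)$. This is well defined because invariance forces $V_i(B)=V_i(T_i^{-1}B)=V_i(\emptyset)=0$, hence $Q(B)=0$, whenever $T_i^{-1}B=\emptyset$, and the same computation handles the case $T_i^{-1}B_1=T_i^{-1}B_2$. One then extends $P_0$ to some $P\in\operatorname{core}(V_i)$ through the Hahn--Banach/Riesz scheme of Lemma \ref{lem:core of process}, dominating by the sublinear functional $I(f)=\sup_{P'\in\operatorname{core}(V_i)}\int f\,dP'$; by construction $P\circ T_i^{-1}=Q$.

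The delicate point, which I flag as the crux, is verifying the domination $\int h\,dQ\le I(h\circ T_i)$ on $B(\O,T_i^{-1}\mathcal{F}_i)$: for indicators it is precisely the consistency $Q(B)\le V_i(B)=V_i(T_i^{-1}B)=I(1_B\circ T_i)$ supplied by invariance, and the real work is propagating this from indicators to general measurable functions (equivalently, showing that $\Phi_i(\operatorname{core}(V_i))$ and $\operatorname{core}(V_i)$ have the same support functional, so that the two maxima in the reverse inequality genuinely coincide). Once the set identity is proven for all $A$, the invariance of $V_1\times V_2$ and the conclusion that $(\O_1\times\O_2,\mathcal{F}_1\times\mathcal{F}_2,V_1\times V_2,T_1\times T_2)$ is a capacity preserving system follow directly from the definitions and the already-established fact that $V_1\times V_2$ is an upper probability.
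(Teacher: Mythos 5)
Your forward inequality is sound: $P_i\circ T_i^{-1}\in\operatorname{core}(V_i)$ follows from $P_i(T_i^{-1}B)\le V_i(T_i^{-1}B)=V_i(B)$, and this yields $(V_1\times V_2)\bigl((T_1\times T_2)^{-1}A\bigr)\le (V_1\times V_2)(A)$ for every $A$. The genuine gap is exactly at the point you flag: the reverse inequality rests on the claim that $\Phi_i\colon P\mapsto P\circ T_i^{-1}$ maps $\operatorname{core}(V_i)$ \emph{onto} itself, and neither the claim nor your route to it is secured. The Hahn--Banach scheme borrowed from Lemma \ref{lem:core of process} needs the domination $\int g\,dQ\le I(g\circ T_i)$ for all $g\in B(\O_i,\mathcal{F}_i)$ and all $Q\in\operatorname{core}(V_i)$, where $I(h)=\sup_{P\in\operatorname{core}(V_i)}\int h\,dP$; taking the supremum over $Q$ shows this is \emph{equivalent} to $I(g)\le I(g\circ T_i)$, i.e.\ to the assertion that $\Phi_i(\operatorname{core}(V_i))$ and $\operatorname{core}(V_i)$ have the same support functional, which is (the closed convex form of) the very surjectivity you are trying to prove. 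The argument is therefore circular. Invariance of $V_i$ only gives $I(1_B\circ T_i)=V_i(T_i^{-1}B)=V_i(B)=I(1_B)$, i.e.\ agreement of the two support functionals on indicators; since $\operatorname{core}(V_i)$ is the \emph{largest} weak*-compact convex set whose upper envelope on events is $V_i$, a compact convex subset can share that envelope without exhausting the core, so nothing in the hypotheses forces $I(s\circ T_i)=I(s)$ for general simple $s$. (The surjectivity does hold when $T_i$ is invertible, or when $V_i$ is concave so that $I$ is the Choquet integral and hence $T_i$-invariant; for a general upper probability and non-invertible $T_i$ it is not justified by your argument.)

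The paper takes a different and much lighter route, which you dismissed prematurely. Since $V_1\times V_2$ is already defined on all of $\mathcal{F}_1\times\mathcal{F}_2$ (as a maximum of product probabilities), no Carath\'eodory-type \emph{construction} is needed; one only has to \emph{verify} an identity between two already-defined set functions. The paper checks invariance on rectangles directly from $(V_1\times V_2)(A\times B)=V_1(A)V_2(B)$ and the invariance of each $V_i$, and then shows that the class $\mathcal{C}$ of sets where $V_1\times V_2$ and $(V_1\times V_2)\circ(T_1\times T_2)^{-1}$ agree is a monotone class, using the continuity of the upper probability $V_1\times V_2$ (a tool your proposal never invokes), and concludes by the monotone class theorem. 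Your argument supplies, as a useful by-product, the inequality $(V_1\times V_2)\circ(T_1\times T_2)^{-1}\le V_1\times V_2$ on all measurable sets, which the paper's approach does not make explicit; but to close the proof you should replace the core-surjectivity step by the continuity/monotone-class argument. One caveat worth attending to in either write-up: the monotone class theorem requires $\mathcal{C}$ to contain the \emph{algebra} generated by the rectangles, i.e.\ finite disjoint unions of rectangles, and passing from rectangles to such unions is not automatic for a non-additive set function, so that step deserves its own justification.
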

\begin{proof}
	Set
	\[\mathcal{C}=\{\widetilde{A}\in\mathcal{F}_1\times \mathcal{F}_2:V_1\times V_2((T_1\times T_2)^{-1}\widetilde{A})=V_1\times V_2(\widetilde{A})\}.\]
	Since $V_1\times V_2(A\times B)=V_1(A)\cdot V_2(B)$ for any $A\in\mathcal{F}_1$ and $B\in\mathcal{F}_2$ and $V_i$ is $T_i$-invariant, $i=1,2$, it follows that 
	\[A\times B\in \mathcal{C}\text{ for any }A\in\mathcal{F}_1,B\in \mathcal{F}_2.\]
	Thus, by monotone class theorem, it suffices to prove that $\mathcal{C}$ is a monotone class. Indeed, for any $\{\widetilde{A}_n\}_{n\in\mathbb{N}}\subset
	\mathcal{C}$ with $\widetilde{A}_n\subset \widetilde{A}_{n+1}$ for each $n\in\mathbb{N}$, as $V_1\times V_2$ is continuous, one has that 
	\begin{align*}
		V_1\times V_2(\cup_{n=1}^\infty\widetilde{A}_n)=&\lim_{n\to\infty}V_1\times V_2(\widetilde{A}_n)\\
		=&\lim_{n\to\infty}V_1\times V_2((T_1\times T_2)^{-1}\widetilde{A}_n)=V_1\times V_2((T_1\times T_2)^{-1}\cup_{n=1}^\infty\widetilde{A}_n).
	\end{align*}
	This shows that $\cup_{n=1}^\infty\widetilde{A}_n\in\mathcal{C}$. Similarly, we can prove that for any $\{\widetilde{A}_n\}_{n\in\mathbb{N}}\subset
	\mathcal{C}$ with $\widetilde{A}_n\supset \widetilde{A}_{n+1}$ for each $n\in\mathbb{N}$, $\cap_{n=1}^\infty A_n\in\mathcal{C}$. Thus, $\mathcal{C}$ is a monotone class, proving this proposition.
\end{proof}
In what follows, we focus on the study of weak mixing of upper probabilities.
Similar to Theorem \ref{lem:ergodic of core}, the following lemma provides a characterization of the cores of weakly mixing upper probabilities.
\begin{lemma}\label{lem:core of w.m.}
	Let $(\O,\mathcal{F},V,T)$ be a capacity preserving system, where $V$ is an upper probability. Then $V$ is weakly mixing if and only if there exists a (unique) $Q\in \mathcal{M}^{wm}(T)\cap \operatorname{core}(V)$ such that for any $P\in\operatorname{core}(V)$,
	$P|_{\mathcal{I}}=Q|_{\mathcal{I}}$.
\end{lemma}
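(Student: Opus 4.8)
The plan is to reduce everything to the ergodic case (Theorem \ref{lem:ergodic of core}) together with the null-set equivalence of Corollary \ref{cor:Q<=>V}, and then to transport the classical eigenfunction characterization of weak mixing recalled above between $V$ and its invariant skeleton $Q$. The organizing observation is that once $V$ is known to be ergodic, Corollary \ref{cor:Q<=>V} gives, for its unique skeleton $Q\in\operatorname{core}(V)\cap\mathcal{M}^e(T)$, the equivalence $V(A)=0\iff Q(A)=0$ for every $A\in\mathcal{F}$; that is, a statement holds $V$-a.s. if and only if it holds $Q$-a.s. This is precisely the bridge that identifies the capacity-theoretic definition of weak mixing of $V$ (phrased via $V$-a.s. eigenfunctions) with the classical definition of weak mixing of $Q$ (phrased via $Q$-a.s. eigenfunctions).

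For the forward implication, I would first note that since $V$ is an upper probability it is subadditive, so weak mixing of $V$ forces ergodicity of $V$: applying the eigenfunction condition with $\lambda=1$ shows every real-valued $T$-invariant $f\in B(\O,\mathcal{F})$ is constant $V$-a.s., whence $V$ is ergodic by Lemma \ref{lem:char. for ergodic}. Theorem \ref{lem:ergodic of core} then supplies the unique $Q\in\mathcal{M}^e(T)\cap\operatorname{core}(V)$ with $P|_{\mathcal{I}}=Q|_{\mathcal{I}}$ for all $P\in\operatorname{core}(V)$, and Corollary \ref{cor:Q<=>V} provides the null-set equivalence. It then remains only to upgrade $Q$ from ergodic to weakly mixing: taking any $\mathcal{F}$-measurable $f:\O\to\mathbb{C}$ with $f\circ T=\lambda f$ $Q$-a.s., the null-set equivalence makes the identity hold $V$-a.s., so weak mixing of $V$ renders $f$ constant $V$-a.s., hence constant $Q$-a.s.; by the classical eigenfunction characterization this is exactly $Q\in\mathcal{M}^{wm}(T)$.

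For the converse, suppose such a $Q\in\mathcal{M}^{wm}(T)\cap\operatorname{core}(V)$ exists. Since weak mixing implies ergodicity, $Q\in\mathcal{M}^e(T)$, so the ``if'' direction of Theorem \ref{lem:ergodic of core} yields that $V$ is ergodic; consequently $Q$ is the unique skeleton of Corollary \ref{cor:Q<=>V} and the null-set equivalence $V(A)=0\iff Q(A)=0$ is available. Now let $f:\O\to\mathbb{C}$ be $\mathcal{F}$-measurable with $f\circ T=\lambda f$ $V$-a.s.; the null-set equivalence gives the identity $Q$-a.s., and since $Q$ is weakly mixing the classical characterization makes $f$ constant $Q$-a.s., hence constant $V$-a.s. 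Thus $V$ is weakly mixing. Uniqueness of $Q$ is inherited directly from Theorem \ref{lem:ergodic of core} and Lemma \ref{lem:invariant meausre }, since any two elements of $\mathcal{M}(T)\cap\operatorname{core}(V)$ that agree with every $P\in\operatorname{core}(V)$ on $\mathcal{I}$ must coincide.

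The argument is essentially an assembly of earlier results, so I do not anticipate a deep obstacle; the single point requiring care is the bidirectional transport of ``almost surely'' between $V$ and $Q$, which rests entirely on the null-set equivalence of Corollary \ref{cor:Q<=>V}. That is why establishing ergodicity of $V$ first — from subadditivity plus weak mixing in the forward direction, and from weak mixing of $Q$ plus Theorem \ref{lem:ergodic of core} in the converse — is the indispensable preliminary step that must be carried out before the eigenfunction comparison can be invoked.
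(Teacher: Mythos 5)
Your proof is correct and follows essentially the same route as the paper's: establish ergodicity of $V$ (forward) or of $Q$ via Theorem \ref{lem:ergodic of core} (converse), then use the null-set equivalence of Corollary \ref{cor:Q<=>V} to transport the eigenfunction characterization of weak mixing between $V$ and its skeleton $Q$. The only cosmetic difference is that you spell out explicitly why weak mixing of $V$ implies its ergodicity, whereas the paper relies on the remark following the definition of weak mixing.
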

\begin{proof}
	($\Rightarrow$) By Theorem \ref{lem:ergodic of core}, one has proved that there exists a unique $Q\in\mathcal{ M}^e(T)\cap\operatorname{core}(V)$ such that for any $P\in\operatorname{core}(V)$, 	$P|_{\mathcal{I}}=Q|_{\mathcal{I}}$.
	Thus, we only need to prove that $Q$ is weakly mixing. To see  this, consider an $\mathcal{F}$-measurable function $f:\O\to\mathbb{C}$ with $f\circ T=\l f$, $Q$-a.s. for some $\l\in\mathbb{C}$. Since $Q(\{\o\in\O:f(T\o)=\l f(\o)\}^c)=0$  and $V$ is ergodic, it follows from Corollary \ref{cor:Q<=>V} that 
	\[V(\{\o\in\O:f( T\o)=\l f(\o)\}^c)=0.\]
	Since $V$ is weakly mixing, there exists a constant $c_f$ such that $V(\{\o\in\O:f(\o)=c_f\}^c)=0$. Since $Q\in\operatorname{core}(V)$, one has $Q(\{\o\in\O:f(\o)=c_f\}^c)=0$, i.e., $f$ is  constant, $Q$-a.s. This implies that $Q\in\mathcal{ M}^{wm}(T)$.

	($\Leftarrow$) Consider an $\mathcal{F}$-measurable function $f:\O\to\mathbb{C}$ with $f\circ T=\l f$, $V$-a.s. for some constant $\l\in\mathbb{C}$. Since $Q\in\operatorname{core}(V)$, it follows that $Q(\{\o\in\O:f(T\o)=\l f(\o)\}^c)=0$, which together with the assumption $Q\in\mathcal{ M}^{wm}(T)$, implies that there  exists a constant $c_f$ such that $Q(\{\o\in\O:f(\o)=c_f\}^c)=0$.  By Theorem  \ref{lem:ergodic of core}, we have that $V$ is ergodic. Thus, it follows from Corollary \ref{cor:Q<=>V} that 
	$V(\{\o\in\O:f(\o)=c_f\}^c)=0,$ i.e., $f$ is  constant $V$-a.s. Thus,
	$V$ is weakly mixing.
\end{proof}
As a corollary of Lemma \ref{lem:core of w.m.}, we provide a Birkhoff's ergodic theorem along polynomials on weakly mixing upper probability spaces.
\begin{cor}\label{thm:Birkhoff along sequence}
	Let $(\Omega,\mathcal{F},V,T)$ be a capacity preserving system, where $V$ is a weakly mixing upper probability, and let $p(x)$ be a polynomial with integer coefficients. Let $Q$ be the unique weakly mixing probability given in Lemma \ref{lem:core of w.m.}. Then $f \in L^r(\Omega,\mathcal{F},Q)$, $r>1$, one has that
	$$
	\lim _{n \rightarrow \infty} \frac{1}{n} \sum_{i=1}^{n} f(T^{p(i)} \o)=\int fdQ \text{ for }V\text{-a.s. }\o\in\O.
	$$
\end{cor}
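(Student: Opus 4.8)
The plan is to push Bourgain's polynomial ergodic theorem (Theorem~\ref{thm:Birk along poly}) through the skeleton measure $Q$ and then transfer the resulting full-measure statement from $Q$ to the capacity $V$ by means of the core machinery in Lemma~\ref{lem:core of invariant lower probabilities}. Throughout, write
\[
E=\Big\{\o\in\O:\lim_{n\to\infty}\tfrac{1}{n}\sum_{i=1}^n f(T^{p(i)}\o)=\int f\,dQ\Big\},
\]
which is $\mathcal{F}$-measurable; the goal is to show $V(E^c)=0$, since then $V=\max_{P\in\operatorname{core}(V)}P$ yields the assertion.

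First I would invoke Lemma~\ref{lem:core of w.m.}: because $V$ is weakly mixing, the stated $Q$ lies in $\mathcal{M}^{wm}(T)\cap\operatorname{core}(V)$ and satisfies $P|_{\mathcal{I}}=Q|_{\mathcal{I}}$ for every $P\in\operatorname{core}(V)$. In particular $Q$ is weakly mixing, so $(\O,\mathcal{F},Q,T)$ is a weakly mixing measure preserving system; applying Theorem~\ref{thm:Birk along poly} to it with $f\in L^r(\O,\mathcal{F},Q)$, $r>1$, gives $Q(E^c)=0$. Next I would identify the invariant skeleton of an arbitrary $P\in\operatorname{core}(V)$: since $Q\in\mathcal{M}(T)\cap\operatorname{core}(V)$ and $Q|_{\mathcal{I}}=P|_{\mathcal{I}}$, the uniqueness in Lemma~\ref{lem:core of invariant lower probabilities}(i) forces $\hat{P}=Q$ for every $P\in\operatorname{core}(V)$. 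Hence $\hat{P}(E^c)=Q(E^c)=0$ for all $P\in\operatorname{core}(V)$, and Lemma~\ref{lem:core of invariant lower probabilities}(ii), applied with $A=E^c$, delivers exactly $V(E^c)=0$, which is the claim.

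The subtle point — and the reason one cannot argue as directly as in Theorem~\ref{thm:common} — is that $E$ is \emph{not} $T$-invariant: applying $T$ replaces the polynomial $p(i)$ by $p(i)+1$, so in general $E\notin\mathcal{I}$ and the identity $P(E)=Q(E)$ cannot be read off from $P|_{\mathcal{I}}=Q|_{\mathcal{I}}$. The essential device is therefore Lemma~\ref{lem:core of invariant lower probabilities}(ii), which upgrades the skeleton-level null statement ``$\hat{P}(E^c)=0$ for all $P$'' to the capacity-level null statement ``$V(E^c)=0$'' for a general measurable set, by passing to the invariant hull $\bar{A}=\bigcap_{n}\bigcup_{k\ge n}T^{-k}A$ and using invariance and subadditivity of $V$. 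This transfer is the only place where genuine work beyond bookkeeping occurs; once it is in hand, the corollary follows immediately from Bourgain's theorem on $Q$.
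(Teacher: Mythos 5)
Your proof is correct and follows essentially the same route as the paper: apply Bourgain's theorem (Theorem \ref{thm:Birk along poly}) to the weakly mixing skeleton $Q$ to get $Q(E^c)=0$, then upgrade this to $V(E^c)=0$. The only cosmetic difference is that you unpack Lemma \ref{lem:core of invariant lower probabilities} (identifying $\hat{P}=Q$ and invoking part (ii)) by hand, whereas the paper simply cites Corollary \ref{cor:Q<=>V}, whose proof packages exactly that argument into the equivalence $Q(A)=0\Leftrightarrow V(A)=0$.
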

\begin{proof}
	Since $Q$ is weakly mixing, it follows from Theorem \ref{thm:Birk along poly} that 
	$$
	Q(\{\lim _{n \rightarrow \infty} \frac{1}{n} \sum_{i=1}^{n} f(T^{p(i)} \o)=\int fdQ\}^c)=0.
	$$
	By Corollary \ref{cor:Q<=>V}, we have that 
	$$
	V(\{\lim _{n \rightarrow \infty} \frac{1}{n} \sum_{i=1}^{n} f(T^{p(i)} \o)=\int fdQ\}^c)=0,
	$$
	proving this theorem.
\end{proof}
\begin{rem}
	When $r=1$  and $p(x)=x^2$, even if $V$ is a weakly mixing probability, Corollary \ref{thm:Birkhoff along sequence} may not hold. In fact, Buczolich and Mauldin \cite{BuczolichMauldin2010} proved that it is not true that given a measure preserving  system $(\O,\mathcal{F},P,T)$ and $f\in L^1(\mu)$, the ergodicity mean 
	$\lim_{n\to\infty}\frac{1}{n}\sum_{i=1}^nf\circ T^{n^2}$ converges, $P$-a.s.
\end{rem}
We now proceed to establish a correspondence between the weak mixing of a capacity preserving system and the ergodicity of its product system. 
\begin{theorem}\label{thm:ergodic by independ.}
	Let $(\O,\mathcal{F},V,T)$ be a capacity preserving system, where $V$ is an upper probability. Then the following statements are equivalent:
	\begin{longlist}
		\item $V$ is weakly mixing with respect to $T$;
		\medskip
		\item For any capacity preserving system $(\O',\mathcal{F}',V',T')$ with $V'$ being an ergodic upper probability, $V\times V'$ is ergodic with respect to $T\times T'$;
		\medskip
		\item $V\times V$ is ergodic with respect to $T\times T$.
		%	\item $V \times V$ is weakly mixing with respect to $T\times T$.
	\end{longlist}
\end{theorem}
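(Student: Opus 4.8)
The plan is to prove the cycle (i) $\Rightarrow$ (ii) $\Rightarrow$ (iii) $\Rightarrow$ (i), reducing each statement about the upper probabilities to their invariant skeletons and then invoking the classical facts that a weakly mixing invariant probability has ergodic product with every ergodic system, and that $Q\times Q$ being ergodic characterises weak mixing of $Q$ (see \cite[Theorem 2.36]{Ward2011}). Throughout I write $\widetilde{\mathcal I}$ for the $\sigma$-algebra of invariant sets of the relevant product transformation, and I will repeatedly use that, by the very definition of the product upper probability, $V\times V'(\widetilde A)=\max_{(P,P')\in\operatorname{core}(V)\times\operatorname{core}(V')}(P\times P')(\widetilde A)$ for every $\widetilde A$.

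For (i) $\Rightarrow$ (ii), I would assume $V$ weakly mixing and take an arbitrary capacity preserving system $(\O',\mathcal F',V',T')$ with $V'$ an ergodic upper probability. By Lemma \ref{lem:core of w.m.} there is a skeleton $Q\in\mathcal M^{wm}(T)\cap\operatorname{core}(V)$, and $V$ is then ergodic; by Corollary \ref{cor:Q<=>V} the second system has a unique skeleton $Q'\in\mathcal M^e(T')\cap\operatorname{core}(V')$. The classical theorem gives that $Q\times Q'$ is ergodic with respect to $T\times T'$. The crucial observation is that every $P\in\operatorname{core}(V)$ satisfies $P\ll Q$: since $Q(A)=0$ forces $V(A)=0$ by Corollary \ref{cor:Q<=>V}, one gets $P(A)\le V(A)=0$; likewise $P'\ll Q'$ for every $P'\in\operatorname{core}(V')$, so by Fubini $P\times P'\ll Q\times Q'$ for all such $P,P'$. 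Then for $\widetilde A\in\widetilde{\mathcal I}$ I would use ergodicity of $Q\times Q'$, so $(Q\times Q')(\widetilde A)\in\{0,1\}$: if it is $0$ then $(P\times P')(\widetilde A)=0$ for all $(P,P')$ and hence $V\times V'(\widetilde A)=0$; if it is $1$ then $(Q\times Q')(\widetilde A^c)=0$ and the same argument gives $V\times V'(\widetilde A^c)=0$. Since $V\times V'$ is subadditive, this is exactly ergodicity of $V\times V'$.

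For (ii) $\Rightarrow$ (iii), I would first apply (ii) to the one-point ergodic system (equivalently note that $V\times V'$ is isomorphic to $V$ when $V'$ is a point mass) to conclude that $V$ itself is ergodic; then $V$ is an ergodic upper probability and taking $(\O',\mathcal F',V',T')=(\O,\mathcal F,V,T)$ in (ii) yields (iii). For (iii) $\Rightarrow$ (i), I would assume $V\times V$ ergodic: taking invariant rectangles $A\times\O$ with $A\in\mathcal I$ and using $V\times V(A\times\O)=V(A)$ shows $V$ is ergodic, so Corollary \ref{cor:Q<=>V} yields a unique $Q\in\operatorname{core}(V)\cap\mathcal M(T)$, which is ergodic and satisfies $P|_{\mathcal I}=Q|_{\mathcal I}$ for all $P\in\operatorname{core}(V)$. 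Now $Q\times Q$ lies in $\operatorname{core}(V\times V)\cap\mathcal M(T\times T)$, and since $V\times V$ is ergodic, Corollary \ref{cor:Q<=>V} applied to $V\times V$ forces this intersection to be a singleton consisting of an ergodic measure; hence $Q\times Q$ is ergodic with respect to $T\times T$, so the classical equivalence gives that $Q$ is weakly mixing. Finally Lemma \ref{lem:core of w.m.} converts the existence of this weakly mixing skeleton $Q\in\mathcal M^{wm}(T)\cap\operatorname{core}(V)$ with $P|_{\mathcal I}=Q|_{\mathcal I}$ into weak mixing of $V$.

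The main obstacle is the transfer in (i) $\Rightarrow$ (ii) from ergodicity of the product \emph{probability} $Q\times Q'$ to ergodicity of the product \emph{upper probability} $V\times V'$. A priori $\operatorname{core}(V\times V')$ is strictly larger than the family of products $P\times P'$ and could contain non-product, joining-type invariant measures that would obstruct a naive skeleton-uniqueness argument for $V\times V'$. The point that makes the argument close is the absolute continuity $P\times P'\ll Q\times Q'$, which is available exactly because ergodicity of $V$ and $V'$ yields $Q(A)=0\Leftrightarrow V(A)=0$ through Corollary \ref{cor:Q<=>V}; this allows me to test every invariant set directly against the single ergodic measure $Q\times Q'$ and thereby avoid analysing the joinings sitting inside $\operatorname{core}(V\times V')$.
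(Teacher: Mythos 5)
Your proof is correct, and while (i) $\Rightarrow$ (ii) is essentially the paper's argument in different clothing, your (iii) $\Rightarrow$ (i) takes a genuinely different and arguably cleaner route. For (i) $\Rightarrow$ (ii), the paper argues by contradiction with a non-constant invariant function $F$ and then slices the bad set $\widetilde A$ via Fubini, pushing $Q'(\widetilde A_\o)=0$ up to $V'(\widetilde A_\o)=0$ and then to $P\times P'(\widetilde A)=0$; your packaging of the same Fubini computation as the absolute continuity $P\times P'\ll Q\times Q'$ (available precisely because Corollary \ref{cor:Q<=>V} gives $P\ll Q$ and $P'\ll Q'$) lets you test every invariant set directly against the ergodic $Q\times Q'$ without the detour through eigenfunctions, and correctly closes with subadditivity of $V\times V'$. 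For (ii) $\Rightarrow$ (iii) the paper simply writes ``trivial''; your observation that one must first extract ergodicity of $V$ itself (via the one-point system, or equivalently via invariant rectangles $A\times\O'$) before setting $V'=V$ is a worthwhile piece of care. The real divergence is (iii) $\Rightarrow$ (i): the paper works at the level of eigenfunctions, forming $F(\o_1,\o_2)=f(\o_1)\overline{f(\o_2)}$, invoking Lemma \ref{lem:char. for ergodic} on $V\times V$, and then running a somewhat delicate Fubini-plus-convexity argument with a mixture $P=\tfrac12 P_0+\tfrac12 P_1$ to force $f$ constant $V$-a.s.; you instead deduce ergodicity of $V$ from invariant rectangles, note $Q\times Q\in\operatorname{core}(V\times V)\cap\mathcal M(T\times T)$, and use the singleton structure of that intersection from Corollary \ref{cor:Q<=>V} applied to the ergodic upper probability $V\times V$ to conclude $Q\times Q$ is ergodic, hence $Q$ is weakly mixing classically, hence $V$ is weakly mixing by the reverse direction of Lemma \ref{lem:core of w.m.}. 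Your version buys a shorter, purely measure-level argument that avoids handling eigenfunctions and the possible non-product joinings in $\operatorname{core}(V\times V)$ entirely; the paper's version is self-contained at the level of the definition of weak mixing and does not lean as heavily on the skeleton machinery.
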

\begin{proof}
	(i) $\Rightarrow$ (ii). Denote $\mathcal{I}'=\{A'\in\mathcal{F}':T'^{-1}A'=A'\}$ and $\widetilde{\mathcal{I}}=\{\widetilde{A}\in\mathcal{F}\times\mathcal{F}':(T\times T')^{-1}\widetilde{A}=\widetilde{A}\}.$  Applying Lemma \ref{lem:core of w.m.} and Theorem \ref{lem:ergodic of core} on $(\O,\mathcal{F},V,T)$ and $(\O',\mathcal{F}',V',T')$, respectively, we have that there exists a unique $Q\in\mathcal{ M}^{wm}(T)\cap\operatorname{core}(V)$ such that for any $P\in\operatorname{core}(V)$,
	$P|_{\mathcal{I}}=Q|_{\mathcal{I}}$,
	and   a unique $Q'\in\mathcal{ M}^{e}(T')\cap\operatorname{core}(V')$ such that for any $P'\in\operatorname{core}(V')$,
	$P'|_{\mathcal{I}'}=Q'|_{\mathcal{I}'}$.
	
	By contradiction, we assume that $V\times V'$ is not ergodic. Then there exists $F\in B(\O\times\O',\widetilde{\mathcal{I}})$ which is not constant, $V\times V'$-a.s. 
	On the other hand,  since $Q\in\mathcal{ M}^{wm}(T)$ and $Q'\in\mathcal{ M}^e(T)$, it follows that $Q\times Q'$ is ergodic (see Theorem 2.36 in \cite{Ward2011}). Thus, there exists a constant $c_F\in\mathbb{C}$ such that 
	\[F=c_F,\quad Q\times Q'\text{-a.s.}\]  
	Let \[\widetilde{A}=\{(\o,\o')\in\O\times\O':F(\o,\o')=c_F\}^c\in\mathcal{F}\times \mathcal{F}'.\] Then
	\begin{equation}\label{eq1}
		V\times V'(\widetilde{A})>0\text{ and }Q\times Q'(\widetilde{A})=0.
	\end{equation}
	
	By Fubini's theorem (a suitable version can  be found in \cite[Theorem 2.36]{Folland1999}), one has that 
	\[0=Q\times Q'(\widetilde{A})=\int 1_{\widetilde{A}}d(Q\times Q')=\int Q'(\widetilde{A}_\o)dQ(\o),\]
	where $\widetilde{A}_\o=\{\o'\in\O':(\o,\o')\in\O\times \O'\}$ for each $\o\in\O$. Thus, 
	\[Q'(\widetilde{A}_\o)=0\text{ for }Q\text{-a.s. }\o\in\O. \]
	By Corollary \ref{cor:Q<=>V},, we obtain that for $Q$-a.s. $\o\in\O$, $V'(\widetilde{A}_\o)=0$. Thus, for $Q$-a.s. $\o\in\O$, $P'(\widetilde{A}_\o)=0$ for any $P'\in\operatorname{core}(V')$. Note that $\{\o\in\O:P'(\widetilde{A}_\o)=0\}$ is a $\mathcal{F}$-measurable set, as $P'$ is a probability. Then we deduce that 
	\[Q(\{\o\in\O:P'(\widetilde{A}_\o)=0\}^c)=0,\text{ for any }P'\in\operatorname{core}(V').\]
	Using Corollary \ref{cor:Q<=>V} again, one has that 
	\[V(\{\o\in\O:P'(\widetilde{A}_\o)=0\}^c)=0,\text{ for any }P'\in\operatorname{core}(V').\]
	Therefore, for any $P\in\operatorname{core}(V)$,
	\[P(\{\o\in\O:P'(\widetilde{A}_\o)=0\}^c)=0.\]
	By Fubini's theorem, one deduces that 
	\[P\times P'(\widetilde{A})=\int P'(\widetilde{A}_\o)dP(\o)=0.\]
	As $(P,P')\in\operatorname{core}(V)\times \operatorname{core}(V')$ is arbitrary, it follows that
	\[V\times V'(\widetilde{A})=\max_{(P,P')\in\operatorname{core}(V)\times \operatorname{core}(V')}P\times P'(\widetilde{A})=0,\]
	which is a contradiction with \eqref{eq1}. Thus, $V\times V'$ is ergodic.
	
	(ii) $\Rightarrow$ (iii). This is trivial.
	
	(iii) $\Rightarrow$ (i). Let $f$ be an $\mathcal{F}$-measurable function with $f\circ T=\l f$ $V$-a.s. for some $\l\in \mathbb{C}$. If $f=0$, $V$-a.s. there is nothing to prove. So we suppose that 
	\begin{equation}\label{eq:7}
		V(\{\o\in\O:f(\o)=0\}^c)>0.
	\end{equation}
	Let 
	\[F(\o_1,\o_2)=f(\o_1)\overline{f(\o_2)} \text{ for any }(\o_1,\o_2)\in \O\times \O.\]
	Then $F(T\o_1,T\o_2)=|\lambda|^2F(\o_1,\o_2)$ for $V\times V$-a.s. $(\o_1,\o_2)\in \O\times \O$. Since $f\circ T=\l f$, $V$-a.s., it follows that $\int|f\circ T|dV=|\l|\int|f|dV$. By \eqref{eq:7}, $\int |f|dV>0$, and thus, by the $T$-invariance of $V$, we have that  $|\l|=1$.  So $F(T\o_1,T\o_2)=F(\o_1,\o_2)$ for $V\times V$-a.s. $(\o_1,\o_2)\in \O\times \O$. Applying  Lemma \ref{lem:char. for ergodic} on the ergodic upper probability $V\times V$, one deduces that there exists $a\in\mathbb{C}$ such that  $F=a$, $V\times V$-a.s. Denote 
	\[\widetilde{\O}=\{(\o_1,\o_2)\in\O_1\times\O_2:f(\o_1)\overline{f(\o_2)}=a\}.\]
	Then
	\begin{equation}\label{eq:8}
		V\times V(\widetilde{\O}^c)=0.
	\end{equation}
	By \eqref{eq:7}, there exists $P_0\in\operatorname{core}(V)$ such that \begin{equation}\label{eq:15}
		P_0(\{\o\in\O:f(\o)\neq0\})>0.
	\end{equation} 
	Now we prove that $f=\int fdP_0$, $V$-a.s. By contradiction, we assume that $V(\{\o\in\O:f(\o)=\int fdP_0\}^c)>0$. In particular, there exists $P_1\in\operatorname{core}(V)$ such that 
	\begin{equation}\label{eq:1001}
		P_1(\{\o\in\O:f(\o)\neq\int fdP_0\})>0.
	\end{equation}
	Let $P=\frac{1}{2}P_0+\frac{1}{2}P_1$. Since $\operatorname{core}(V)$ is convex, $P\in\operatorname{core}(V)$.
	According to \eqref{eq:8}, we have 
	$ P\times P(\widetilde{\O})=1.$
	By Fubini's theorem, for $P$-a.s. $\o_2\in\O$, $P(\widetilde{\O}_{\o_2})=1$, where $\widetilde{\O}_{\o_2}=\{\o_1\in\O:(\o_1,\o_2)\in\widetilde{\O}\}$. By \eqref{eq:15},  there exists $\o_2\in\O$ such that $f(\o_2)\neq 0$ and $P(\widetilde{\O}_{\o_2})=1$. Thus, for $P$-a.s. $\o_1\in\O$, 
	$f(\o_1)={a}/{\overline{f(\o_2)}}.$ As $P=\frac{1}{2}P_0+\frac{1}{2}P_1$, it follows that for $P_0$-a.s. $\o_1\in\O$, 
	$f(\o_1)={a}/{\overline{f(\o_2)}},$
	which implies that 
	\[\int f(\o_1)dP_0(\o_1)=\int ({a}/{\overline{f(\o_2)}})dP_0(\o_1)={a}/{\overline{f(\o_2)}}.\]
	Therefore, $f(\o_1)=\int fdP_0$ for $P$-a.s. $\o_1\in\O$, and hence it also holds $P_1$-a.s. This is a contradiction with \eqref{eq:1001}.
	So $f=\int fdP_0$, $V$-a.s.,  which shows that $V$ is weakly  mixing.
\end{proof}	
From Theorem \ref{thm:ergodic by independ.},  we have that in this situation the upper probability $V$ is weakly mixing, and $V\times V$ is ergodic. Applying Lemma \ref{lem:core of w.m.} on $V$ and Theorem \ref{lem:ergodic of core} on $V\times V$, we see that two ergodic probabilities $Q\times Q$ and $\widetilde{Q}$ on $(\O\times\O,\mathcal{F}\times\mathcal{F})$, respectively. 
The following result shows that these two probabilities are the same.
\begin{cor}\label{cor:weak Q}
	Let $(\O,\mathcal{F},V,T)$ be a weakly mixing capacity preserving system, where $V$ is an upper probability, and $Q\in\mathcal{ M}^{wm}(T)\cap \operatorname{core}(V)$ be as in Lemma \ref{lem:core of w.m.}. Then for any $\widetilde{P}\in \operatorname{core}(V\times V)$,
	\begin{equation*}
		\widetilde{P}(\widetilde{A})=Q\times Q(\widetilde{A})\text{ for any }\widetilde{A}\in\widetilde{\mathcal{I}}.
	\end{equation*}
	Furthermore,  $\mathcal{ M}(T\times T)\cap\operatorname{core}(V\times V)=\mathcal{ M}^e(T\times T)\cap\operatorname{core}(V\times V)=\mathcal{ M}^{wm}(T)\times \mathcal{ M}^{wm}(T)\cap\operatorname{core}(V\times V)=\{Q\times Q\}$.
\end{cor}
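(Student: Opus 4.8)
The plan is to reduce everything to the already-established ergodicity of the product system together with the uniqueness statements in Theorem~\ref{lem:ergodic of core} and Corollary~\ref{cor:Q<=>V}. First I would record that, since $V$ is weakly mixing, Theorem~\ref{thm:ergodic by independ.} (the implication (i)$\Rightarrow$(iii)) guarantees that $V\times V$ is ergodic with respect to $T\times T$. Next I would observe that $Q\times Q$ is the natural candidate for the distinguished element: because $Q\in\operatorname{core}(V)$ we have $(Q,Q)\in\operatorname{core}(V)\times\operatorname{core}(V)\subset\operatorname{core}(V\times V)$, so $Q\times Q\in\operatorname{core}(V\times V)$; and because $Q\in\mathcal{M}^{wm}(T)$, the very definition of weak mixing gives that $Q\times Q$ is ergodic with respect to $T\times T$, i.e. $Q\times Q\in\mathcal{M}^e(T\times T)\cap\operatorname{core}(V\times V)$.

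For the first displayed identity I would apply Theorem~\ref{lem:ergodic of core} to the ergodic upper probability $V\times V$: it yields a \emph{unique} $\widetilde{Q}\in\mathcal{M}^e(T\times T)\cap\operatorname{core}(V\times V)$ with the property that $\widetilde{P}|_{\widetilde{\mathcal{I}}}=\widetilde{Q}|_{\widetilde{\mathcal{I}}}$ for every $\widetilde{P}\in\operatorname{core}(V\times V)$. Since $Q\times Q$ belongs to $\mathcal{M}^e(T\times T)\cap\operatorname{core}(V\times V)$, uniqueness forces $\widetilde{Q}=Q\times Q$, and hence $\widetilde{P}(\widetilde{A})=Q\times Q(\widetilde{A})$ for all $\widetilde{A}\in\widetilde{\mathcal{I}}$, which is exactly the claimed equation.

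For the second assertion I would treat the three sets in turn. Applying Corollary~\ref{cor:Q<=>V} to $V\times V$ gives at once that $\mathcal{M}(T\times T)\cap\operatorname{core}(V\times V)=\mathcal{M}^e(T\times T)\cap\operatorname{core}(V\times V)$ is a singleton; combined with the previous paragraph this singleton must be $\{Q\times Q\}$. It then remains to identify $(\mathcal{M}^{wm}(T)\times\mathcal{M}^{wm}(T))\cap\operatorname{core}(V\times V)$. The inclusion of $\{Q\times Q\}$ into this set is immediate from $Q\in\mathcal{M}^{wm}(T)$ and $Q\times Q\in\operatorname{core}(V\times V)$. Conversely, if $P_1\times P_2$ lies in this set with $P_1,P_2\in\mathcal{M}^{wm}(T)$, then since $P_1$ is weakly mixing and $P_2$ is ergodic (weak mixing implies ergodicity), Theorem~2.36 of \cite{Ward2011} shows $P_1\times P_2\in\mathcal{M}^e(T\times T)$; as it also lies in $\operatorname{core}(V\times V)$, it belongs to the singleton $\mathcal{M}^e(T\times T)\cap\operatorname{core}(V\times V)=\{Q\times Q\}$, whence $P_1\times P_2=Q\times Q$.

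The argument is essentially an assembly of the earlier structural results, so I do not anticipate a serious obstacle; the only points requiring care are the correct invocation of the uniqueness clause in Theorem~\ref{lem:ergodic of core} (to pin down $\widetilde{Q}=Q\times Q$ rather than merely an abstract common skeleton) and the classical fact that a product of a weakly mixing measure with an ergodic measure is ergodic, which is what collapses the product-measure set into the same singleton.
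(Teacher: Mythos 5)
Your proof is correct and follows essentially the same route as the paper: both reduce the claim to the ergodicity of $V\times V$ (via Theorem \ref{thm:ergodic by independ.}) together with the fact that $Q\times Q$ is an ergodic element of $\operatorname{core}(V\times V)$, and then invoke the uniqueness statements of Theorem \ref{lem:ergodic of core} and Corollary \ref{cor:Q<=>V}. The only (harmless) difference is that the paper identifies $\widetilde{Q}=Q\times Q$ by checking agreement on $\widetilde{\mathcal{I}}$ set by set and then applying Lemma \ref{lem:invariant meausre }, whereas you appeal directly to the uniqueness of the invariant element of the core, and you additionally spell out the ``Furthermore'' clause, which the paper's proof leaves implicit.
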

\begin{proof}
	By Theorem \ref{lem:ergodic of core}, there exists a unique $\widetilde{Q}\in\mathcal{ M}^{e}(T\times T)\cap\operatorname{core}(V\times V)$ such that for any $\widetilde{P}\in\operatorname{core}(V\times V)$,
	$\widetilde{P}|_{\widetilde{\mathcal{I}}}=\widetilde{Q}|_{\widetilde{\mathcal{I}}}$. It is well known that $Q\times Q\in\mathcal{ M}^e(T\times T)$, as $Q$ is weakly mixing. Due to   Lemma \ref{lem:invariant meausre }, to prove $\widetilde{Q}=Q\times Q$, we only need to prove that $Q\times Q|_{\mathcal{I}}=\widetilde{Q}|_{\mathcal{I}}$.
	Consider any $\widetilde{A}\in\widetilde{\mathcal{I}}$, then $\widetilde{Q}(\widetilde{A})=0$ or $1$, as  $\widetilde{Q}$ is ergodic. If $\widetilde{Q}(\widetilde{A})=0$, by Corollary \ref{cor:Q<=>V}, one has that $V\times V(\widetilde{A})=0$, which implies that $Q\times Q(\widetilde{A})=0$, as $Q\times Q\in\operatorname{core}(V\times V)$. Similarly, if $\widetilde{Q}(\widetilde{A})=1$, one can prove that $Q\times Q(\widetilde{A})=1$. Thus, $Q\times Q|_{\mathcal{I}}=\widetilde{Q}|_{\mathcal{I}}$, and hence  this corollary is proved.
\end{proof}
\subsection{Asymptotic independence and long time convergence in laws}
The following result shows that each element in the core of a weakly mixing upper probability has  a kind of asymptotic independence.
\begin{theorem}\label{cor:asy}
	Let $(\O,\mathcal{F},V,T)$ be a capacity preserving system, where $V$ is an upper probability. Then the following three statements are equivalent:
	\begin{longlist}
		\item $V$ is weakly mixing;
		\medskip
		\item  	there exists $Q\in\operatorname{core}(V)\cap\mathcal{ M}^{wm}(T)$ such that for any $P\in\operatorname{core}(V)$,
		\begin{equation*}
			\lim_{n\to\infty}\frac{1}{n}\sum_{i=0}^{n-1}|P(B\cap T^{-i}C)-P(B)Q(C)|^2=0\text{ for any }B,C\in\mathcal{F}.
		\end{equation*}
		\medskip
		\item 	there exists $Q\in\operatorname{core}(V)\cap\mathcal{ M}(T)$ such that for any $P\in\operatorname{core}(V)$,
		\begin{equation}\label{eq:521}
			\lim_{n\to\infty}\frac{1}{n}\sum_{i=0}^{n-1}|P(B\cap T^{-i}C)-P(B)Q(C)|^2=0\text{ for any }B,C\in\mathcal{F}.
		\end{equation}
	\end{longlist}
\end{theorem}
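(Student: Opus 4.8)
The plan is to prove the three statements equivalent by the cycle (i) $\Rightarrow$ (ii) $\Rightarrow$ (iii) $\Rightarrow$ (i), noting that (ii) $\Rightarrow$ (iii) is immediate: since $\mathcal{M}^{wm}(T)\subset\mathcal{M}(T)$ and the two displayed limits are literally identical, the same $Q$ witnesses (iii). The genuine content lies in the other two implications, and both will be handled by transferring everything to the skeleton probability $Q$ produced by Lemma \ref{lem:core of w.m.} and by passing to the product system when a second moment is needed.

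For (i) $\Rightarrow$ (ii), I would first invoke Lemma \ref{lem:core of w.m.} to produce the unique $Q\in\mathcal{M}^{wm}(T)\cap\operatorname{core}(V)$ with $P|_{\mathcal{I}}=Q|_{\mathcal{I}}$ for all $P\in\operatorname{core}(V)$. Since $V$ is weakly mixing it is in particular ergodic, so Theorem \ref{thm:ergodic via independece of core} applied to $V$ gives the first-moment convergence $\frac1n\sum_{i=0}^{n-1}P(B\cap T^{-i}C)\to P(B)Q(C)$ for every $P\in\operatorname{core}(V)$ (with the same skeleton $Q$, by uniqueness in Corollary \ref{cor:Q<=>V}). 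To reach the second moment I would pass to the product: by Theorem \ref{thm:ergodic by independ.} the upper probability $V\times V$ is ergodic, and by Corollary \ref{cor:weak Q} its unique invariant skeleton is exactly $Q\times Q$. Applying Theorem \ref{thm:ergodic via independece of core} to $V\times V$ with the admissible choice $\widetilde P=P\times P\in\operatorname{core}(V\times V)$ and the rectangles $\widetilde B=B\times B$, $\widetilde C=C\times C$, and using $(P\times P)\big((B\times B)\cap(T\times T)^{-i}(C\times C)\big)=P(B\cap T^{-i}C)^2$, yields
\[\frac1n\sum_{i=0}^{n-1}P(B\cap T^{-i}C)^2\longrightarrow P(B)^2Q(C)^2.\]
Writing $a_i=P(B\cap T^{-i}C)$ and $b=P(B)Q(C)$, the elementary identity $\frac1n\sum(a_i-b)^2=\frac1n\sum a_i^2-2b\,\frac1n\sum a_i+b^2$ together with the two limits $\frac1n\sum a_i^2\to b^2$ and $\frac1n\sum a_i\to b$ gives $\frac1n\sum(a_i-b)^2\to 0$, which is precisely (ii).

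For (iii) $\Rightarrow$ (i), I would first descend from the $L^2$ average to the first moment by Cauchy--Schwarz: $\big|\frac1n\sum_{i=0}^{n-1}(P(B\cap T^{-i}C)-P(B)Q(C))\big|\le\big(\frac1n\sum_{i=0}^{n-1}|P(B\cap T^{-i}C)-P(B)Q(C)|^2\big)^{1/2}\to0$, so the hypothesis of Theorem \ref{thm:ergodic via independece of core}(iii) holds with the same $Q$; that theorem then forces $V$ to be ergodic and identifies $Q$ as its unique ergodic skeleton. The decisive point is now that, $V$ being ergodic, Corollary \ref{cor:Q<=>V} gives $V(A)=0\iff Q(A)=0$ for every $A\in\mathcal{F}$; hence a statement holds $V$-a.s. if and only if it holds $Q$-a.s., and therefore $V$ is weakly mixing if and only if $Q$ is. To see that $Q$ is weakly mixing, I would take $P=Q$ (legitimate since $Q\in\operatorname{core}(V)$) in \eqref{eq:521}, obtaining $\frac1n\sum_{i=0}^{n-1}|Q(B\cap T^{-i}C)-Q(B)Q(C)|^2\to0$ for all $B,C\in\mathcal{F}$. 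As $Q\in\mathcal{M}(T)$, this is exactly the classical $L^2$-characterisation of weak mixing for the measure preserving system $(\Omega,\mathcal{F},Q,T)$ (the averaged absolute and squared deviations being equivalent for sequences in $[0,1]$; see e.g. \cite{Ward2011}), so $Q$ is weakly mixing and hence so is $V$.

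The main obstacle I anticipate is the second-moment estimate in (i) $\Rightarrow$ (ii): one cannot argue with $P$ directly because $P$ is not $T$-invariant, and the square $P(B\cap T^{-i}C)^2$ must be recognised as a genuine product-system quantity. The whole step hinges on having both the product construction $V\times V$ with its ergodicity (Theorem \ref{thm:ergodic by independ.}) and the precise identification of its skeleton as $Q\times Q$ (Corollary \ref{cor:weak Q}); once these are in place the computation collapses to the elementary expansion above. By comparison, (iii) $\Rightarrow$ (i) is conceptually lighter, its only subtlety being the reduction of weak mixing of $V$ to that of $Q$ via the equivalence of null sets supplied by ergodicity.
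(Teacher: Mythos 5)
Your proposal is correct and follows essentially the same route as the paper: for (i) $\Rightarrow$ (ii) the paper likewise passes to the product system, uses the ergodicity of $V\times V$ together with Corollary \ref{cor:weak Q} to identify the skeleton as $Q\times Q$, computes $\frac1n\sum P(B\cap T^{-i}C)^2\to P(B)^2Q(C)^2$, and expands the square; for (iii) $\Rightarrow$ (i) it likewise takes $P=Q$, invokes the classical $L^2$-characterisation of weak mixing, shows $P|_{\mathcal{I}}=Q|_{\mathcal{I}}$, and concludes via Lemma \ref{lem:core of w.m.}. Your detour through Theorem \ref{thm:ergodic via independece of core} and the null-set equivalence of Corollary \ref{cor:Q<=>V} is only a cosmetic variation on the paper's more direct use of the hypothesis with $B=\O$, $C=A\in\mathcal{I}$.
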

\begin{proof}
	(i) $\Rightarrow$ (ii).	Combining Corollary \ref{cor:non} and Corollary \ref{cor:weak Q}, one  deduces that there exists $Q\in\operatorname{core}(V)\cap\mathcal{ M}^{wm}(T)$ such that for any $P\in\operatorname{core}(V)$,
	\begin{equation}\label{eq33}
		\begin{split}
			\lim_{n\to\infty}&\frac{1}{n}\sum_{i=0}^{n-1}P( B\cap T^{-i}C)^2\\
			&=\lim_{n\to\infty}\frac{1}{n}\sum_{i=0}^{n-1}P\times P((B\times B)\cap (T\times T)^{-i}(C\times C))=P(B)^2Q(C)^2
		\end{split}
	\end{equation}
	for any $B,C\in\mathcal{F}.$ Thus, using Corollary \ref{cor:non} again, we have that for any $B,C\in\mathcal{F}$,
	\begin{align*}
		&\lim_{n\to\infty}\frac{1}{n}\sum_{i=0}^{n-1}|P(B\cap T^{-i}C)-P(B)Q(C)|^2\\
		=&\lim_{n\to\infty}\frac{1}{n}\sum_{i=0}^{n-1}P(B\cap T^{-i}C)^2-\lim_{n\to\infty}\frac{2}{n}\sum_{i=0}^{n-1}P(B\cap T^{-i}C)P(B)Q(C)+P(B)^2Q(C)^2\\
		=&P(B)^2Q(C)^2-2P(B)^2Q(C)^2+P(B)^2Q(C)^2=0,
	\end{align*}
	where the last equation is obtained by \eqref{eq33}.
	
	(ii) $\Rightarrow$ (iii).	This is trivial.

	(iii) $\Rightarrow$ (i). Since $Q\in\operatorname{core}(V)$, it follows from \eqref{eq:521} that 
	\[	\lim_{n\to\infty}\frac{1}{n}\sum_{i=0}^{n-1}|Q(B\cap T^{-i}C)-Q(B)Q(C)|^2=0\text{ for any }B,C\in\mathcal{F}.\]
	By \cite[Theorem 1.21]{Walters1982}, we have that $Q$ is weakly mixing. For any  $P\in\operatorname{core}(V)$ and $A\in\mathcal{I}$, taking $B=\O$ and $C=A$, we have that $Q(C)=P(C)$. This shows that 
	\[P|_{\mathcal{I}}=Q|_{\mathcal{I}}\text{ for any }P\in\operatorname{core}(V).\]
	By Lemma \ref{lem:core of w.m.}, $V$ is weakly mixing. The proof is completed.
\end{proof}

Finally, we use an equivalent characterization to end this section. Recall that for a subset $J$ of $\mathbb{N}$, we define the upper density of $J$ by 
\[\overline{D}(J)=\limsup_{n\to\infty}\frac{|J\cap\{1,2,\ldots,n\}|}{n}\]
and the lower density by 
\[\underline{D}(J)=\liminf_{n\to\infty}\frac{|J\cap\{1,2,\ldots,n\}|}{n}.\]
In particular, if $\overline{D}(J)=\underline{D}(J)$, we call the set $J$ has natural density, denoted by $D(J)$.

We recall that from  Theorems 1.20 and  1.23 in \cite{Walters1982}, given any weakly mixing measure preserving system $(\O,\mathcal{F},Q,T)$,   for any $f,g\in L^2(\O,\mathcal{F},Q)$, there exists a subset $J_{f,g}$ of $\mathbb{N}$ with $D(J_{f,g})=0$ such that
\begin{equation}\label{eq:43}
	\lim_{n\notin J_{f,g},n\to\infty}\int (f\circ T^n)\cdot gdQ=\int fdQ\int gdQ. 
\end{equation}
We change this slightly for our needs.
\begin{lemma}\label{lem:change}
	Let $(\O,\mathcal{F},Q,T)$ be a weakly mixing measure preserving system, where $(\O,\mathcal{F})$ is standard. Then for any $f\in L^\infty(\O,\mathcal{F},Q)$, there exists a subset $J=J_f$ of $\mathbb{N}$ such that 
	\[\lim_{n\notin J,n\to\infty}\int (f\circ T^n)\cdot gdQ=\int fdQ\int gdQ,\text{ for any }g\in L^1(\O,\mathcal{F},Q). \]
\end{lemma}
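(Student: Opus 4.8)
The plan is to upgrade the pair-dependent statement \eqref{eq:43} to a single exceptional set $J=J_f$ that serves \emph{every} $g\in L^1(\O,\mathcal{F},Q)$, by exploiting the separability of $L^1$ coming from standardness of $(\O,\mathcal{F})$ together with the Koopman--von Neumann equivalence between Ces\`aro convergence to zero and convergence to zero off a density-zero set. Since $f\in L^\infty(\O,\mathcal{F},Q)$, for each fixed $n$ the map $g\mapsto \int (f\circ T^n)\,g\,dQ$ is bounded on $L^1$ with norm at most $\|f\|_{\infty,Q}$, and likewise $g\mapsto \int f\,dQ\int g\,dQ$; this uniform boundedness is what will let me pass from a countable dense family to all of $L^1$ at the end.

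First I would fix a countable family $\{g_k\}_{k\in\mathbb{N}}\subset L^\infty(\O,\mathcal{F},Q)$ that is dense in $L^1(\O,\mathcal{F},Q)$; such a family exists because $(\O,\mathcal{F})$ is standard, so $\mathcal{F}$ is countably generated and $L^1(\O,\mathcal{F},Q)$ is separable, while bounded functions are $L^1$-dense. For each $k$ set
\[
a_i^{(k)}=\Big|\int (f\circ T^i)\,g_k\,dQ-\int f\,dQ\int g_k\,dQ\Big|,\qquad i\in\mathbb{N}.
\]
Each such sequence is bounded, $0\le a_i^{(k)}\le M_k:=2\|f\|_{\infty,Q}\,\|g_k\|_{1,Q}$, and weak mixing of $Q$ (applied to the pair $f,g_k\in L^2$, via Theorem 1.23 in \cite{Walters1982}) gives $\frac1n\sum_{i=0}^{n-1}a_i^{(k)}\to 0$ for every $k$.

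The key step is to merge the countably many exceptional sets into one. Normalising $c_i^{(k)}=a_i^{(k)}/(1+M_k)\in[0,1]$ and forming the single bounded sequence $b_i=\sum_{k=1}^\infty 2^{-k}c_i^{(k)}$, dominated convergence in the index $k$ yields
\[
\frac1n\sum_{i=0}^{n-1}b_i=\sum_{k=1}^\infty 2^{-k}\Big(\frac1n\sum_{i=0}^{n-1}c_i^{(k)}\Big)\longrightarrow 0.
\]
By the Koopman--von Neumann lemma (Theorem 1.20 in \cite{Walters1982}) there is a set $J=J_f\subseteq\mathbb{N}$ with $D(J)=0$ such that $b_n\to 0$ as $n\to\infty$, $n\notin J$; since $0\le 2^{-k}c_n^{(k)}\le b_n$ for each $k$, this forces $a_n^{(k)}\to 0$ along $n\notin J$ simultaneously for all $k$.

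Finally I would remove the density restriction. Given an arbitrary $g\in L^1$ and $\e>0$, pick $g_k$ with $\|g-g_k\|_{1,Q}<\e$ and split
\[
\Big|\int (f\circ T^n)g\,dQ-\int f\,dQ\int g\,dQ\Big|\le \|f\|_{\infty,Q}\,\|g-g_k\|_{1,Q}+a_n^{(k)}+\|f\|_{\infty,Q}\,\|g-g_k\|_{1,Q},
\]
so that $\limsup_{n\notin J,\,n\to\infty}$ of the left-hand side is at most $2\|f\|_{\infty,Q}\,\e$; letting $\e\downarrow 0$ gives the claim. The only genuine obstacle is the \emph{simultaneity} over all $g$, which is exactly what the weighted-sum-plus-Koopman--von Neumann device resolves, and separability of $L^1$ (hence the standardness hypothesis) is precisely what makes that device available.
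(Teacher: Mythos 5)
Your proof is correct and follows essentially the same route as the paper's: a countable $L^1$-dense family of bounded functions, a single density-zero exceptional set serving all of them simultaneously, and an $L^1$-approximation step controlled uniformly by $\|f\|_{\infty,Q}$. The only difference is in the merging device: where the paper combines the sets $J_m$ by an explicit diagonal construction ($J=\cup_{m}(J'_m\cap[N_{m-1}+1,N_m])$ with $J'_m=\cup_{i\le m}J_i$), you combine the error sequences via the weighted sum $b_i=\sum_{k}2^{-k}c_i^{(k)}$ and one application of the Koopman--von Neumann lemma; both are standard and equally valid.
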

\begin{proof}
Fix $f\in L^\infty(\O,\mathcal{F},Q)$.	Since $(\O,\mathcal{F})$ is standard, we can find $\{g_m\}_{m\in\mathbb{N}}\subset L^2(\O,\mathcal{F},Q)$ such that it is a dense subset of $ L^1(\O,\mathcal{F},Q)$. By \eqref{eq:43}, for each $m\in\mathbb{N}$, there exists $J_m\subset\mathbb{N}$ with $D(J_m)=0$ such that 
	\begin{equation}\label{eq:12345}
		\lim_{n\notin J_{m},n\to\infty}\int (f\circ T^n)\cdot g_mdQ=\int fdQ\int g_mdQ. 
	\end{equation}
	Let $J'_m=\cup_{i=1}^mJ_i$ for each $m\in\mathbb{N}$. Then $D(J_m')=0$ and $J_m'\subset J_{m+1}'$ for each $m\in\mathbb{N}$.
	Thus, for any $m\in\mathbb{N}$, there exists $N_m\in\mathbb{N}$ such that for any $N\ge N_m$, $|J'_m\cap[1,N]|/N<1/m$. Let 
	\[J=\cup_{m=1}^\infty (J'_m\cap [N_{m-1}+1,N_m]).\]
	Then for any $K\in\mathbb{N}$, there exists $m\in\mathbb{N}$ such that $N_{m}+1\le K<N_{m+1}$, and hence $J\cap [1,K]\subset J'_m\cap [1,K]$.Thus, $\frac{|J\cap[1,K]|}{K}<1/m$, which shows that $D(J)=0$. Meanwhile, by the construction of $J$ and \eqref{eq:12345}, one has for any $m\in\mathbb{N}$, 
	\[\lim_{n\notin J,n\to\infty}\int (f\circ T^n)\cdot g_mdQ=\int fdQ\int g_mdQ. \]
	
	Since $f\in L^\infty(\O,\mathcal{F},Q)$,  $L:=\max\{\|f\|_{\infty,Q},1\}<\infty$. For any $g\in L^1(\O,\mathcal{F},Q)$, there exists a subsequence $\{q_m\}_{m\in\mathbb{N}}$ of $\{g_m\}_{m\in\mathbb{N}}$ such that $\lim_{m\to\infty}\|q_m-g\|_{1,Q}=0.$ Thus, for any $\e>0$, there exists $M>0$ such that for any $m\ge M$, $\|q_m-g\|_{1,Q}<\e/(4L).$ Therefore,
	\begin{align*}
		|\int& (f\circ T^n)\cdot gdQ-\int fdQ\int gdQ|\\
		\le&|\int (f\circ T^n)\cdot (g-q_m)dQ|
		+|\int (f\circ T^n)\cdot q_mdQ-\int fdQ\int q_mdQ|\\&+|\int fdQ\int (q_m-g)dQ|\\
		\le& L\cdot \frac{\e}{4L}+|\int (f\circ T^n)\cdot q_mdQ-\int fdQ\int q_mdQ|+L\cdot \frac{\e}{4L}\\
		\le& \e/2+|\int (f\circ T^n)\cdot q_mdQ-\int fdQ\int q_mdQ|,
	\end{align*}
	which shows that 
	\[\limsup_{n\notin J,n\to\infty}	|\int (f\circ T^n)\cdot gdQ-\int fdQ\int gdQ| \le \e/2.\]
	Letting $\e\to0$, one has that
	\begin{equation*}
		\lim_{n\notin J,n\to\infty}\int (f\circ T^n)\cdot gdQ=\int fdQ\int gdQ. 
	\end{equation*}
	The proof is completed, as $g\in L^1(\O,\mathcal{F},Q)$ is arbitrary.
\end{proof}
\begin{rem}\label{rem:weaker}
	If the measurable space $(\O,\mathcal{F})$ is not standard, then by a similar argument, we can prove a weaker result:
	for any $f\in L^\infty(\O,\mathcal{F},Q)$ and $g\in L^1(\O,\mathcal{F},Q)$, there exists a subset $J=J_{f,g}$ of $\mathbb{N}$ such that  
	\[\lim_{n\notin J,n\to\infty}\int (f\circ T^n)\cdot gdQ=\int fdQ\int gdQ. \]
\end{rem}
\begin{theorem}\label{thm:weak, density}
	Let $(\O,\mathcal{F},V,T)$ be a  capacity preserving system, where $(\O,\mathcal{F})$ is standard and $V$ is an upper probability. Then $V$ is weakly mixing if and only if there exists $Q\in\mathcal{ M}^{wm}(T)\cap\operatorname{core}(V)$   such that for any $f\in L^\infty(\O,\mathcal{F},Q)$, there exists a subset $J=J_f$ of $\mathbb{N}$ with  $D(J)=0$ such that for any $g\in L^\infty(\O,\mathcal{F},Q)$,
	\begin{equation}\label{eq:1}
		\lim_{n\notin J,n\to\infty}\int (f\circ T^n)\cdot gdP=\int fdQ\int gdP\text{ for any }P\in\operatorname{core}(V).
	\end{equation}
\end{theorem}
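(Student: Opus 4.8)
The plan is to prove the two implications separately. The forward direction will rest on reducing every $P\in\operatorname{core}(V)$ to the single skeleton measure $Q$ through a Radon--Nikodym derivative, so that the already-established Lemma~\ref{lem:change} can be applied verbatim; the backward direction will rest on testing the hypothesis against invariant sets.

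For the forward direction I would start from the assumption that $V$ is weakly mixing and invoke Lemma~\ref{lem:core of w.m.} to obtain the unique $Q\in\mathcal{M}^{wm}(T)\cap\operatorname{core}(V)$ with $P|_{\mathcal{I}}=Q|_{\mathcal{I}}$ for every $P\in\operatorname{core}(V)$. Since an upper probability is subadditive, weak mixing forces ergodicity of $V$, so Corollary~\ref{cor:Q<=>V} applies and gives $Q(A)=0\iff V(A)=0$. The crucial observation is then that every $P\in\operatorname{core}(V)$ is absolutely continuous with respect to $Q$: if $Q(A)=0$ then $V(A)=0$, whence $P(A)\le V(A)=0$. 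Writing $h_P=dP/dQ\in L^1(\O,\mathcal{F},Q)$, for any $f,g\in L^\infty(\O,\mathcal{F},Q)$ the boundedness of $f\circ T^n$ and $g$ yields the identities $\int(f\circ T^n)g\,dP=\int(f\circ T^n)(gh_P)\,dQ$ and $\int f\,dQ\int g\,dP=\int f\,dQ\int(gh_P)\,dQ$, with $gh_P\in L^1(\O,\mathcal{F},Q)$.

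Now $(\O,\mathcal{F},Q,T)$ is a weakly mixing measure preserving system on a standard space, so Lemma~\ref{lem:change} furnishes, for each fixed $f\in L^\infty(\O,\mathcal{F},Q)$, a single set $J=J_f$ (with $D(J_f)=0$ as established in its proof) such that $\lim_{n\notin J,n\to\infty}\int(f\circ T^n)\tilde g\,dQ=\int f\,dQ\int\tilde g\,dQ$ for every $\tilde g\in L^1(\O,\mathcal{F},Q)$. The decisive feature is that $J_f$ depends only on $f$ and not on the test function; applying the lemma with $\tilde g=gh_P$ and combining with the two identities above yields \eqref{eq:1} for all $g\in L^\infty(\O,\mathcal{F},Q)$ and all $P\in\operatorname{core}(V)$ simultaneously, with the same set $J_f$.

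For the backward direction I would assume such a $Q$ exists and, by Lemma~\ref{lem:core of w.m.}, reduce the goal to verifying $P|_{\mathcal{I}}=Q|_{\mathcal{I}}$ for every $P\in\operatorname{core}(V)$. Fixing $A\in\mathcal{I}$ and applying the hypothesis with $f=g=1_A$ produces a density-zero set $J=J_{1_A}$; since $1_A\circ T^n=1_A$ for all $n$, the left-hand side of \eqref{eq:1} equals $P(A)$ identically, so passing to the limit along the infinite set $\mathbb{N}\setminus J$ gives $P(A)=Q(A)\,P(A)$. As $Q$ is weakly mixing it is ergodic, so $Q(A)\in\{0,1\}$; the relation $P(A)(1-Q(A))=0$ then forces $P(A)=0$ when $Q(A)=0$, and running the same argument on $A^c\in\mathcal{I}$ forces $P(A)=1$ when $Q(A)=1$, so $P|_{\mathcal{I}}=Q|_{\mathcal{I}}$ and Lemma~\ref{lem:core of w.m.} concludes. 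I expect the main obstacle to be the uniformity of $J$ in both the test function $g$ and the measure $P$: the $g$-uniformity is already delivered by the diagonal construction in the proof of Lemma~\ref{lem:change}, and the remaining essential step is the identification $P\ll Q$ with $h_P=dP/dQ$, which collapses the whole family $\operatorname{core}(V)$ back onto the single measure $Q$ and lets one set $J_f$ serve every $P$ at once.
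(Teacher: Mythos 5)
Your proposal is correct and follows essentially the same route as the paper: the forward direction uses Lemma \ref{lem:core of w.m.} and Corollary \ref{cor:Q<=>V} to get $P\ll Q$ for all $P\in\operatorname{core}(V)$, then applies Lemma \ref{lem:change} to the test function $g\,h_P\in L^1(\O,\mathcal{F},Q)$ so that the single set $J_f$ serves all $g$ and all $P$; the backward direction reduces to $P|_{\mathcal{I}}=Q|_{\mathcal{I}}$ via Lemma \ref{lem:core of w.m.}. The only cosmetic difference is in the converse, where the paper tests with $f=1_A$, $g=1$ to read off $P(A)=Q(A)$ directly, while you test with $f=g=1_A$ and invoke ergodicity of $Q$ to extract the same conclusion.
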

\begin{proof}
	($\Rightarrow$) Since $V$ is weakly mixing, it follows from Lemma \ref{lem:core of w.m.}, there exists $Q\in\mathcal{ M}^{wm}(T)\cap \operatorname{core}(V)$ such that $P|_{\mathcal{I}}=Q|_{\mathcal{I}}$ for all $P\in\operatorname{core}(V)$. By Corollary \ref{cor:Q<=>V}, we have that for any $A\in\mathcal{F}$ with $Q(A)=0$, one has that $V(A)=0$, and hence $P(A)=0$ for any $P\in\operatorname{core}(V)$.
	This means that $P\ll Q$ for any $P\in\operatorname{core}(V)$.

	Fix any $P\in\operatorname{core}(V)$. As $P\ll Q$, let $h=\frac{dP}{dQ}\in L^1(\O,\mathcal{F},Q)$ be the Radon-Nikodym derivative of $P$ with respect to  $Q$. Note that for any $g\in L^\infty(\O,\mathcal{F},Q)$,  $g\cdot h\in L^1(\O,\mathcal{F},Q)$. By Lemma \ref{lem:change}, one has that for any $f\in L^\infty(\O,\mathcal{F},Q)$, there exists a subset $J=J_f$ of $\mathbb{N}$ with $D(J)=0$  such that for any $g\in L^\infty(\O,\mathcal{F},Q)$,
	\begin{align*}
		\lim_{n\notin J,n\to\infty}\int (f\circ T^n)\cdot gdP&=\lim_{n\notin J,n\to\infty}\int (f\circ T^n)\cdot (g\cdot h)dQ\\
		&=\int fdQ\int g\cdot hdQ=\int fdQ\int gdP=\int fdQ\int gdP.
	\end{align*}

	($\Leftarrow$) For any $A\in\mathcal{F}$, consider $f=1_A$ and $g=1$ in \eqref{eq:1}. We have that for any $P\in\operatorname{core}(V)$,
	\[Q(A)=\lim_{n\notin J,n\to\infty}P(T^{-n}A)\le\limsup_{n\notin J,n\to\infty}V(T^{-n}A)= V(A).\]
	This shows that $Q\in \operatorname{core}(V)$.
	From Lemma \ref{lem:core of w.m.}, it suffices to prove that for any $P\in\operatorname{core}(V)$,  $P|_{\mathcal{I}}=Q|_{\mathcal{I}}$.
	This is clear by taking  $f=1_A$ for any $A\in\mathcal{I}$ and  $g=1$ in \eqref{eq:1}.
\end{proof}
\begin{rem}\label{rem:1}
	According to  Remark \ref{rem:weaker}, if the measurable space $(\O,\mathcal{F})$ is not standard, we have the following result:  $V$ is weakly mixing if and only if there exists $Q\in\mathcal{ M}^{wm}(T)\cap\operatorname{core}(V)$   such that for any $f,g\in L^\infty(\O,\mathcal{F},Q)$ and $P\in\operatorname{core}(V)$, there exists a subset $J=J_{f,g,P}$ of $\mathbb{N}$ with  $D(J)=0$ such that
	\begin{equation*}
		\lim_{n\notin J,n\to\infty}\int (f\circ T^n)\cdot gdP=\int fdQ\int gdP.
	\end{equation*}
\end{rem}
\subsection{Applications: asymptotic independence and convergence in laws for non-invariant probabilities}
In this subsection, we investigate  recurrent and ergodic theorems for non-invariant probabilities. Let us begin with the following lemma.
\begin{lemma}\label{lem: uniformly L1}
	Under the same conditions as in Theorem \ref{thm:app1}, for any $\epsilon>0$, there exists $\delta>0$ such that for any $f,g\in L^\infty(\O,\mathcal{F},Q)$ if $\|f-g\|_{1,Q}<\delta$ then 
	\[\|f-g\|_{1,P\circ T^{-m}}<\e,\text{ for any }m\in\mathbb{Z}_+.\]
\end{lemma}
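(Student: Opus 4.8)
The plan is to reduce the claim to the uniform integrability, with respect to $Q$, of the Radon–Nikodym densities of the measures $P\circ T^{-m}$, $m\in\mathbb{Z}_+$, and then to convert an $L^1(Q)$-smallness of $f-g$ into an $L^1(P\circ T^{-m})$-smallness by a standard truncation split. First I would record the densities explicitly. By Remark \ref{rem4.5}(iv), $P\circ T^{-m}\ll Q$ for every $m\in\mathbb{Z}_+$; in particular $P=P\circ T^0\ll Q$, so $\phi:=\frac{dP}{dQ}\in L^1(\O,\mathcal{F},Q)$ is well defined. Since $Q\in\mathcal{M}(T)$ (Theorem \ref{thm:app1}) and $T$ is invertible, $Q$ is $T^{m}$-invariant, whence for any $A\in\mathcal{F}$,
\[
(P\circ T^{-m})(A)=P(T^{-m}A)=\int (1_A\circ T^{m})\,\phi\,dQ=\int 1_A\,(\phi\circ T^{-m})\,dQ,
\]
the last equality using the change of variables associated with $\int G\circ T^{m}\,dQ=\int G\,dQ$ applied to $G=1_A\cdot(\phi\circ T^{-m})$. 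Hence $\frac{d(P\circ T^{-m})}{dQ}=\phi\circ T^{-m}$, and writing $h=f-g$ we obtain $\|h\|_{1,P\circ T^{-m}}=\int |h|\,(\phi\circ T^{-m})\,dQ$.

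The crucial step is to control the tails of $\{\phi\circ T^{-m}\}_{m\in\mathbb{Z}_+}$ uniformly in $m$. Here I would invoke the invariance of $Q$ once more: for every $K>0$,
\[
\int_{\{\phi\circ T^{-m}>K\}}(\phi\circ T^{-m})\,dQ=\int_{\{\phi>K\}}\phi\,dQ,
\]
since the left-hand integrand equals $H\circ T^{-m}$ for $H=1_{\{\phi>K\}}\,\phi$ and $\int H\circ T^{-m}\,dQ=\int H\,dQ$. The right-hand side is independent of $m$, and because $\phi\in L^1(\O,\mathcal{F},Q)$ the absolute continuity of the integral gives $\int_{\{\phi>K\}}\phi\,dQ\to0$ as $K\to\infty$. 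This is precisely uniform integrability of the family $\{\phi\circ T^{-m}\}_m$.

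Finally I would combine the two ingredients. Given $\e>0$, let $L=\max\{\|f\|_{\infty,Q},\|g\|_{\infty,Q}\}$, choose $K$ so large that $\int_{\{\phi>K\}}\phi\,dQ<\e/(4L)$, and split
\[
\int |h|\,(\phi\circ T^{-m})\,dQ=\int_{\{\phi\circ T^{-m}\le K\}}|h|\,(\phi\circ T^{-m})\,dQ+\int_{\{\phi\circ T^{-m}>K\}}|h|\,(\phi\circ T^{-m})\,dQ.
\]
The first term is at most $K\|h\|_{1,Q}$, while the second is at most $\|h\|_{\infty,Q}\int_{\{\phi>K\}}\phi\,dQ\le 2L\int_{\{\phi>K\}}\phi\,dQ<\e/2$; taking $\delta=\e/(2K)$ then forces $\|h\|_{1,P\circ T^{-m}}<\e$ simultaneously for all $m\in\mathbb{Z}_+$ whenever $\|h\|_{1,Q}<\delta$.

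I expect the genuine difficulty to lie in the uniform-in-$m$ tail estimate of the second paragraph. One cannot simply apply Vitali–Hahn–Saks (Lemma \ref{lem:VHStheorem}) to the sequence $\{P\circ T^{-m}\}$, because the pointwise limits $\lim_{m}P(T^{-m}A)$ need not exist, and the averaged uniform absolute continuity of Remark \ref{rem4.5}(iv) controls the Cesàro means $\lambda_{m,n}$ but does not isolate a single translate $P\circ T^{-m}$. The invariance identity $\int_{\{\phi\circ T^{-m}>K\}}(\phi\circ T^{-m})\,dQ=\int_{\{\phi>K\}}\phi\,dQ$ is exactly what bypasses this, collapsing the whole family's tail behaviour to that of the single integrand $\phi$. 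I note that the tail term genuinely consumes the boundedness hypothesis $f,g\in L^\infty(\O,\mathcal{F},Q)$ through the factor $\|h\|_{\infty,Q}$, so the estimate is meant to be read (and is applied) with $f,g$ ranging over a bounded subset of $L^\infty(\O,\mathcal{F},Q)$, as happens in the approximation arguments that use this lemma.
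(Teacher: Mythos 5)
Your proof is correct, and it takes a genuinely different route from the paper's. The paper's proof fixes $\e$, invokes the uniform absolute continuity asserted in Remark \ref{rem4.5} (iv) to find $\d'$ with $Q(A)<\d'$ implying $P(T^{-m}A)<\e/(2(\|f\|_{\infty,Q}+\|g\|_{\infty,Q}))$ for all $m$, applies Markov's inequality to $A_\e=\{|f-g|>\e/2\}$ to get $Q(A_\e)<\d'$ from $\|f-g\|_{1,Q}<\d:=\d'\e/4$, and then splits $\int|f-g|\,d(P\circ T^{-m})$ over $A_\e$ and $A_\e^c$. You instead identify the densities $d(P\circ T^{-m})/dQ=\phi\circ T^{-m}$ and split according to the size of the density, deducing the uniform tail bound from the $T$-invariance of $Q$. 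The two splits are essentially dual (truncating $|f-g|$ versus truncating the density), but your second paragraph buys something the paper does not make explicit: Remark \ref{rem4.5} (iv) literally establishes uniform absolute continuity only for the Ces\`aro averages $\l_{m,n}$, and Lemma \ref{lem:VHStheorem} cannot be applied directly to the family $\{P\circ T^{-m}\}_{m}$ since those set functions need not converge pointwise; your identity $\int_{\{\phi\circ T^{-m}>K\}}(\phi\circ T^{-m})\,dQ=\int_{\{\phi>K\}}\phi\,dQ$ supplies exactly the missing uniformity, so your argument is the more self-contained of the two at the step you correctly single out as the genuine difficulty. Finally, note that in both proofs $\d$ ends up depending on $\|f\|_{\infty,Q}+\|g\|_{\infty,Q}$ (your $K$ depends on $L$; the paper's threshold $\e/(2(\|f\|_{\infty,Q}+\|g\|_{\infty,Q}))$ depends on the same quantity), so the lemma should be read as a statement about $f,g$ ranging over a bounded ball of $L^\infty(\O,\mathcal{F},Q)$ --- which is how it is applied in the proof of Theorem \ref{thm:app2} --- and you are right to flag this.
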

\begin{proof}
	By Remark \ref{rem4.5} (iv), for any $\e>0$ there exists $\d'>0$ such that for any $A\in\mathcal{F}$ with $Q(A)<\delta'$,
	\[P (T^{-m}A)<\frac{\e}{2(\|f\|_{\infty,Q}+\|g\|_{\infty,Q})},\text{ for any }m\in\mathbb{Z}_+.\]
	Let $\d=\frac{\d'\e}{4}$. If $\|f-g\|_{1,Q}<\d$, then 
	\begin{align*}
		Q(A_\e)\le(2/\e)\cdot\int_{A_\e} |f-g|dQ\le (2/\e)\cdot\|f-g\|_{1,Q}<(2/\e)\cdot\d<\d',
	\end{align*}
	where $A_\e=\{\o\in\O:|f(\o)-g(\o)|>\e/2\}$.
	Thus, 
	\[P(T^{-m}A_\e)<\frac{\e}{2(\|f\|_{\infty,Q}+\|g\|_{\infty,Q})},\text{ for any }m\in\mathbb{Z}_+.\]
	This implies that for each $m\in\mathbb{Z}_+$,
	\begin{align*}
		\|f-g\|_{1,P\circ T^{-m}}=&\int_{A_\e}|f-g|d(P\circ T^{-m})+\int_{A_\e^c}|f-g|d(P\circ T^{-m})\\
		<& (\|f\|_{\infty,Q}+\|g\|_{\infty,Q})\cdot\frac{\e}{2(\|f\|_{\infty,Q}+\|g\|_{\infty,Q})}+\e/2\\
		\le&\e,
	\end{align*}
	proving the lemma.
\end{proof}
\begin{theorem}\label{thm:app2}
	Under the same conditions in Theorem \ref{thm:app1}, the following statements are equivalent:
	\begin{longlist}
		\item 	$Q$ is weakly mixing;
		\medskip
		\item $\lim_{\substack{n+m\to\infty\\m,n\ge0}}\frac{1}{m+n+1}\sum_{i=-m}^n|P(B\cap T^{-i} C)-P(B)Q(C)|^2=0\text{ for any }B,C\in\mathcal{F};$
		\medskip
		\item for any $f,g\in L^\infty(\O,\mathcal{F},Q)$, there exists a subset $J=J_{f,g}$ of $\mathbb{N}$ with $D(J)=0$ such that 
		\begin{equation}\label{eq:522}
			\lim_{n\notin J,n\to\infty}\int (f\circ T^n)\cdot gdP=\int fdQ\int gdP.
		\end{equation}
	\end{longlist}
	Moreover, these equivalent statements can imply that letting $p(x)$ be a polynomial with integer coefficients,  then for any $f \in L^r(\Omega,\mathcal{F},Q)$, $r>1$, 
	$$
	\lim _{n \rightarrow \infty} \frac{1}{n} \sum_{i=1}^{n} f(T^{p(i)} \o)=\int fdQ \text{ for }P\text{-a.s. }\o\in\O.
	$$
\end{theorem}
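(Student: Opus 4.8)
The plan is to route everything through the $T$-invariant upper probability $V$ constructed in Remark \ref{rem4.5}(iv), for which $P,Q\in\operatorname{core}(V)$, $P|_{\mathcal{I}}=Q|_{\mathcal{I}}$, and $V$ is ergodic as soon as $Q$ is. First I would establish the bridge $Q\in\mathcal{ M}^{wm}(T)\iff V$ weakly mixing: if $Q$ is weakly mixing it is ergodic, so $V$ is ergodic by Remark \ref{rem4.5}(iv), hence $P'|_{\mathcal{I}}=Q|_{\mathcal{I}}$ for all $P'\in\operatorname{core}(V)$ by Theorem \ref{lem:ergodic of core}, and since $Q\in\mathcal{ M}^{wm}(T)\cap\operatorname{core}(V)$ the easy direction of Lemma \ref{lem:core of w.m.} gives $V$ weakly mixing; the converse is immediate from Lemma \ref{lem:core of w.m.} together with the uniqueness in Lemma \ref{lem:invariant meausre }. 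Because $T$ is invertible and weak mixing is the eigenfunction condition $f\circ T=\l f$, which is symmetric under $f\mapsto f\circ T^{-1}$, $V$ is weakly mixing for $T$ iff for $T^{-1}$, and by Remark \ref{rem4.5}(iii) $\mathcal{I}$ and $\mathcal{ M}(T)=\mathcal{ M}(T^{-1})$ are common to both.

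For (i)$\Rightarrow$(ii), with $V$ weakly mixing I would apply Theorem \ref{cor:asy} to $(\O,\mathcal{F},V,T)$ and to $(\O,\mathcal{F},V,T^{-1})$ at the probability $P\in\operatorname{core}(V)$; the two skeletons both coincide with $Q$ by Corollary \ref{cor:Q<=>V}, giving $\frac{1}{n}\sum_{i=0}^{n-1}|P(B\cap T^{-i}C)-P(B)Q(C)|^2\to 0$ and the same expression with $T^{i}$ in place of $T^{-i}$. These are then merged by a short Cesàro lemma: if $a_i\ge 0$ are bounded with $\frac1n\sum_{i=0}^{n-1}a_i\to0$ and $\frac1n\sum_{i=0}^{n-1}a_{-i}\to0$, then $\frac{1}{m+n+1}\sum_{i=-m}^{n}a_i\to0$ as $m+n\to\infty$, which is exactly (ii). For (i)$\Rightarrow$(iii) there is nothing to prove beyond feeding $V$ weakly mixing and $P\in\operatorname{core}(V)$ into Remark \ref{rem:1}, the non-standard form of Theorem \ref{thm:weak, density}, whose conclusion is (iii) with $J_{f,g}=J_{f,g,P}$.

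The reverse implications are the crux, and I would isolate the single statement: \emph{if $\frac1n\sum_{i=0}^{n-1}|P(B\cap T^{-i}C)-P(B)Q(C)|^2\to0$ for all $B,C$, then $Q$ is weakly mixing.} Both (ii) (set $m=0$) and (iii) (take $f=1_C$, $g=1_B$, and use the Koopman--von Neumann equivalence between convergence off a density-zero set and vanishing of the $L^2$ Cesàro average, extending from indicators to $L^\infty$ via Lemma \ref{lem: uniformly L1} and the diagonal construction of Lemma \ref{lem:change}) reduce to its hypothesis. To prove it I would first test $A\in\mathcal{I}$ (with $B=A,C=A^c$ and vice versa) to obtain $Q$ ergodic, hence $V$ ergodic and, by Corollary \ref{cor:non}, $\frac1n\sum P(B\cap T^{-i}C)\to P(B)Q(C)$; subtracting this from the $L^2$ hypothesis yields $\frac1n\sum P(B\cap T^{-i}C)^2\to P(B)^2Q(C)^2$. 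Setting $a_i(E,F)=P(E\cap T^{-i}F)-P(E)Q(F)$ and using Cauchy--Schwarz (so $\frac1n\sum a_i\to0$ and $\frac1n\sum a_i(E_1,F_1)a_i(E_2,F_2)\to0$) upgrades this, for all rectangles, to $\frac1n\sum (P\times P)\big((E_1\times E_2)\cap(T\times T)^{-i}(F_1\times F_2)\big)\to (P\times P)(E_1\times E_2)\,(Q\times Q)(F_1\times F_2)$.

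The hard part will be promoting this rectangle identity to arbitrary $\widetilde B,\widetilde C\in\mathcal{F}\times\mathcal{F}$: since the translates $(P\times P)\circ(T\times T)^{-i}$ are not invariant, the approximation by the generating algebra of rectangles must be controlled uniformly in $i$, and this is exactly where the uniform absolute continuity of $\{P\circ T^{-i}\}$ with respect to $Q$ (Remark \ref{rem4.5}(iv) and Lemma \ref{lem: uniformly L1}) enters, transferred to the product through $P\times P\ll Q\times Q$. Once the convergence holds for all $\widetilde B,\widetilde C$, testing $\widetilde B=\widetilde C=\widetilde A\in\widetilde{\mathcal{I}}$ gives $(P\times P)(\widetilde A)\big(1-(Q\times Q)(\widetilde A)\big)=0$, and the same applied to $\widetilde A^c$ forces $(Q\times Q)(\widetilde A)\in\{0,1\}$; thus $Q\times Q$ is ergodic and $Q$ is weakly mixing, closing the cycle. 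Finally the ``Moreover'' is immediate: with $V$ weakly mixing, Corollary \ref{thm:Birkhoff along sequence} gives $\lim_n\frac1n\sum_{i=1}^{n}f(T^{p(i)}\o)=\int f\,dQ$ for $V$-a.s. $\o$, and since $P\in\operatorname{core}(V)$ every $V$-null set is $P$-null, so the convergence holds $P$-a.s.
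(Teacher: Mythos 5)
Your proposal is correct, but the reverse implications follow a genuinely different route from the paper's. The forward half and the ``Moreover'' coincide with the paper: both build $V$ from \eqref{eq:8=}, note $Q\in\mathcal{M}^{wm}(T)\cap\operatorname{core}(V)$ with $Q|_{\mathcal{I}}=V|_{\mathcal{I}}$ so that $V$ is weakly mixing by Lemma \ref{lem:core of w.m.}, get (ii) by running Theorem \ref{cor:asy} for $T$ and $T^{-1}$ and merging the two Ces\`aro averages, and get the polynomial statement from Corollary \ref{thm:Birkhoff along sequence} plus $P\in\operatorname{core}(V)$. The structures then diverge: the paper proves the cycle (i)$\Rightarrow$(ii)$\Rightarrow$(iii)$\Rightarrow$(i), with (ii)$\Rightarrow$(iii) done by an explicit approximation of $f,g\in L^\infty$ by simple functions controlled uniformly in $n$ via Lemma \ref{lem: uniformly L1}; you instead prove a star ((i)$\Leftrightarrow$(ii), (i)$\Leftrightarrow$(iii)), harvesting (i)$\Rightarrow$(iii) directly from Remark \ref{rem:1} and reducing both converses to the single hypothesis $\frac1n\sum_{i=0}^{n-1}|P(B\cap T^{-i}C)-P(B)Q(C)|^2\to0$. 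The substantive difference is how that hypothesis yields weak mixing of $Q$: the paper tests an eigenfunction $h\circ T=\lambda h$ by taking $f=\bar h/|h|^2$, $g=h$ in \eqref{eq:522}, so that $\lim_{n\notin J}\lambda^n$ would exist off a density-zero set, which periodicity (rational angle) or equidistribution (irrational angle) forbids unless $\lambda=1$ --- short, and no product space is needed. You instead square the hypothesis via Cauchy--Schwarz to get the rectangle identity for $P\times P$ and then must extend it to all of $\mathcal{F}\times\mathcal{F}$ before testing $\widetilde{\mathcal{I}}$; this extension does go through, because $d(P\circ T^{-i})/dQ=h\circ T^{-i}$ with $Q$ invariant makes the family $\{P\circ T^{-i}\}$ uniformly integrable over $Q$, and the tensor family $\{(h\circ T^{-i})\otimes(h\circ T^{-i})\}$ inherits uniform integrability over $Q\times Q$, but it is the one step of your plan that is only sketched and is exactly the work the paper's eigenfunction argument avoids. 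What your route buys is an explicit statement that the hypothesis forces $Q\times Q$ ergodic, i.e., it makes the product-system characterization of weak mixing visible at the level of the non-invariant $P$; what the paper's route buys is brevity and independence from any product-measure approximation.
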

\begin{proof}
	(i) $\Rightarrow$ (ii).	Let $V$ be defined by \eqref{eq:8=}. Under the assumptions of this theorem, $Q\in \mathcal{ M}^{wm}(T)\cap \operatorname{core}(V)$ such that $Q|_{\mathcal{I}}=V|_{\mathcal{I}}$, and so $V$ is also weakly mixing.  Applying a similar argument of (iii)  in Remark \ref{rem4.5} on Theorem  \ref{cor:asy}, we obtain (ii).
	
	(ii) $\Rightarrow$ (iii). It follows from \cite[Theorem 1.20]{Walters1982} that (ii) is equivalent to that for any $B,C\in\mathcal{F}$, there exists a subset $J_{B,C}$ of $\mathbb{N}$ such that  $\lim_{n\not\in J_{B,C},n\to\infty}P(B\cap T^{-n} C)=P(B)Q(C)$. It turns out that for any simple functions $f,g$ there exists a subset $J_{f,g}$ of $\mathbb{N}$ such that  \[\lim_{n\not\in J_{f,g},n\to\infty}\int (f\circ T^n)\cdot gdP=\int fdQ\int gdP.\]
	Now we prove the above equation holds for any $f,g\in L^\infty(\O,\mathcal{F},Q)$. Given $f,g\in L^\infty(\O,\mathcal{F},Q)$, there exists two increasing sequences of simple functions $\{f_k\}_{k\in\mathbb{N}}$ and $\{g_k\}_{k\in\mathbb{N}}$ such that 
	\[\lim_{k\to\infty}\|f_k-f\|_{1,Q}=\lim_{k\to\infty}\|g_k-g\|_{1,Q}=0.\]
	By Lemma \ref{lem: uniformly L1}, for any $\epsilon>0$, there exists $K>0$ such that 
	\begin{equation}\label{eq520}
		\|f_K-f\|_{1,P\circ T^{-m}}\le \frac{\e}{8\|g\|_{\infty,Q}},\text{ }\|g_K-g\|_{1,P}\le \frac{\e}{8\|f\|_{\infty,Q}}\text{ for any }m\in\mathbb{Z}_+,
	\end{equation}
	and 
	\begin{equation}\label{eq521}
		\|f_K-f\|_{1,Q}\le \frac{\e}{8\|g\|_{\infty,Q}},\text{ }\|g_K-g\|_{1,Q}\le \frac{\e}{8\|f\|_{\infty,Q}}.
	\end{equation}
	Since $f_K$ and $g_K$ are simple functions, we have that 
	\[\lim_{n\not\in J_{f_K,g_K},n\to\infty}\int (f_K\circ T^n)\cdot g_KdP=\int f_KdQ\int g_KdP.\]
	Thus, there exists $N_K>0$ such that for any $n\notin J_{f_K,g_K}$ and $n\ge N_K$
	\begin{equation}\label{eq522}
		|\int (f_K\circ T^n)\cdot g_KdP-\int f_KdQ\cdot\int g_KdP|<\e/8.
	\end{equation}
	Since $P\ll Q$, one has that $\|h\|_{\infty,P}\le \|h\|_{\infty,Q}$ for any $\mathcal{F}$-measurable function $h$. Indeed, as $Q(\{\o\in\O:h(\o)> \|h\|_{\infty,Q}\})=0$, it follows that $P(\{\o\in\O:h(\o)> \|h\|_{\infty,Q}\})=0$, and hence  $\|h\|_{\infty,P}\le \|h\|_{\infty,Q}$. Thus, we have that  for any $n\notin J_{f_K,g_K}$ and $n\ge N_K$,
	\begin{align*}
		|\int&(f\circ T^n)\cdot gdP-\int fdQ\cdot\int gdP|\\
		\le& |\int(f\circ T^n)\cdot gdP-\int(f\circ T^n)\cdot g_KdP|\\
		&+|\int(f\circ T^n)\cdot g_KdP-\int(f_K\circ T^n)\cdot g_KdP|
		\\
		&+|\int(f_K\circ T^n)\cdot g_KdP-\int f_KdQ\cdot\int g_KdP|\\
		&+|\int f_KdQ\cdot\int g_KdP-\int f_KdQ\cdot\int gdP|
		\\
		&+|\int f_KdQ\cdot\int gdP-\int fdQ\cdot\int gdP|\\
		\le&  \|f\|_{\infty,Q}\cdot\|g-g_K\|_{1,P}+\|g\|_{\infty,Q}\cdot\|f-f_K\|_{1,P\circ T^{-n}}\\
		&+\e/8+\|f\|_{\infty,Q}\cdot\|g-g_K\|_{1,P}+\|g\|_{\infty,Q}\cdot\|f-f_K\|_{1,Q}\\
		\le&\e/8+\e/8+\e/8+\e/8+\e/8<\e.
	\end{align*}
	Similar to the construction of the subset with zero density in Lemma \ref{lem:change}, we can prove (iii).
	
	(iii) $\Rightarrow$ (i). Firstly, we prove that $Q$ is ergodic. Indeed, for any $A\in\mathcal{I}$, if $P(A)>0$, letting $f=1_{A^c}$ and $g=1_A$ in \eqref{eq:522}, we have that 
	$0=P(A\cap A^c)=Q(A^c)P(A),$
	which implies that $Q(A^c)=0$; if $P(A)=0$, letting $f=1_{A}$ and $g=1_{A^c}$ in \eqref{eq:522}, we have that 
	$Q(A)=0$. Thus, $Q(A)\in\{0,1\}$, and hence $Q$ is ergodic. 
	
	Now we prove $Q$ is weakly mixing. Fix any $\mathcal{F}$-measurable function $h$ with $\int |h|dQ>0$ such that $h\circ T=\l h$, $Q$-a.s. for some $\l\in\mathbb{C}\setminus\{0\}$. Since $Q$ is ergodic and $|\l|=1$, it follows that $|h|$ is constant, $Q$-a.s. In particular, $h\in L^\infty(\O,\mathcal{F},Q)$. Taking $f=\frac{\bar{h}}{|h|^2}$ and $g=h$ in \eqref{eq:522}, one has that 
	\begin{equation}\label{eq:123456}
		\lim_{n\to\infty, n\notin J_{f,g}}\l^n =\int \frac{\bar{h}}{|h|^2}dQ\int hdP.
	\end{equation}
	Since $|\l|=1$, we write $\l=e^{i\t}$ for some $\t\in\mathbb{R}$. If $\t\in \mathbb{Q}\setminus\{0\}$, then $\{\l^n\}_{n\in\mathbb{Z}_+}$ is a periodic sequence with a period of $t>1$, and hence for any constant $c\in\mathbb{C}$ there is no subset $J$ of $\mathbb{N}$ with $D(J)=0$ such that 
	$\lim_{n\to\infty, n\notin J}\l^n =c,$
	which is a contradiction with \eqref{eq:123456}. If $\t\notin\mathbb{Q}$, it is well known that $\{\l^n\}_{n\in\mathbb{Z}_+}$ is equidistributed in $[0,1)$ (see for example \cite[Theorem 1.4]{Ward2011}). Thus, for any constant $c\in\mathbb{C}$ there is no subset $J$ of $\mathbb{N}$ with $D(J)=0$ such that 
	$\lim_{n\to\infty, n\notin J}\l^n =c,$
	which is also a contradiction with \eqref{eq:123456}.  Thus, $\l=1$, and hence $h\circ T= f$, $Q$-a.s., which implies that $f$ is constant, $Q$-a.s., as $Q$ is ergodic. Therefore, $Q$ is weakly mixing. 
	
	The last statement is a direct corollary of Corollary \ref{thm:Birkhoff along sequence}.
\end{proof}
\begin{rem}
	Mirroring Remark \ref{rem:app1}, in the case that $P$ is an invariant probability, all the three results in the above theorem are the exactly same as the results in the classical ergodic theory with no extra condition. So our results in the case that $P$ is an invariant probability are sharp and hold true for a class of non-invariant probabilities.
\end{rem}
\section{Further applications to invariant probabilities}\label{sec:ex and open}
In this section, we see examples of ergodic and weakly mixing capacity preserving systems, and provide applications for invariant probabilities.
\subsection{Distortions of invariant probabilities and their applications in characterization of weakly mixing and periodic probabilities}
The first example shows that  each ergodic (resp. weakly mixing) measure preserving system can give rise to a non-trivial  ergodic (resp. weakly mixing) upper probability. Then the ergodic theory of upper probabilities leads to some new results on the classical ergodic theory for measure preserving systems.
\begin{example}\label{ex:1}
	Let $(\O,\mathcal{F},P,T)$ be a measure preserving system. Given a  concave strictly increasing continuous function $f:[0,1]\to [0,1]$ with $f(0)=0$ and $f(1)=1$, define a concave distortion of $P$ with respect to $f$ by $V_f(A):=f(P(A))$ for any $A\in\mathcal{F}$ (see \cite{CarlierDana} for more properties about this type of capacities). Note that for any $A,B\in\mathcal{F}$, $P(A\cap B)\le P(A),P(B)\le P(A\cup B)$. By the property of concave functions, 
	\[f(P(A\cap B))+f(P(A\cup B))\le f(P(A))+f(P(B)),\]
	which shows that $V_f$ is concave capacity. It is easy to see that $V_f$ is also $T$-invariant.
	Meanwhile, it is continuous by the continuity of the function $f$ and the probability $P$.  By the choice of $f$, we have that $f(P(A))\ge P(A)$ for any $A\in\mathcal{F}$, and hence $P\in\operatorname{core}(V_f)$.
	If the system $(\O,\mathcal{F},P,T)$ is  ergodic (resp. weakly mixing), it is easy to see from the definition of $V_f$ that $V_f|_{\mathcal{I}}=P|_{\mathcal{I}}$. So by Theorem \ref{lem:ergodic of core} (resp. Lemma \ref{lem:core of w.m.}),  we have that $V_f$ is also ergodic (resp. weakly mixing).
	
	Thus, all the results can be applied on the capacity preserving system $(\O,\mathcal{F},V_f,T)$. Some of them are new and striking to the classical theory. We can see some of them in the following.
\end{example}
In particular, we suppose that $P$ is an ergodic probability and consider $f(x)=\sqrt{x}$. Applying (iv) of  Theorem \ref{thm:} on $V_f$, we have
\[\liminf_{n\to\infty}\frac{1}{n}\sum_{i=0}^{n-1}P^{1/2}(B\cap T^{-i}C)\ge P^{1/2}(B)P(C)\text{ for any }B,C\in\mathcal{F}.\]
Meanwhile, by the concavity of the function $f(x)=x^{1/2}$, $x\ge0$, one has that  for any $B,C\in\mathcal{F}$,
\begin{align*}
	\limsup_{n\to\infty}\frac{1}{n}\sum_{i=0}^{n-1}P^{1/2}(B\cap T^{-i}C)
	\le\left(\lim_{n\to\infty}\frac{1}{n}\sum_{i=0}^{n-1}P(B\cap T^{-i}C)\right)^{1/2}= P^{1/2}(B)P^{1/2}(C).
\end{align*}
Thus, for any $B,C\in\mathcal{F},$
\begin{equation}\label{eq:bound}
	\begin{split}
		P^{1/2}(B)P(C)&\le\liminf_{n\to\infty}\frac{1}{n}\sum_{i=0}^{n-1}P^{1/2}(B\cap T^{-i}C)\\
		&\le\limsup_{n\to\infty}\frac{1}{n}\sum_{i=0}^{n-1}P^{1/2}(B\cap T^{-i}C)\le P^{1/2}(B)P^{1/2}(C).
	\end{split}
\end{equation}
\begin{rem}
	Under the assumption that $P$ is ergodic,  the limit $\lim_{n\to\infty}\frac{1}{n}\sum_{i=0}^{n-1}P(B\cap T^{-i}C)$ exists, but the limit $\lim_{n\to\infty}\frac{1}{n}\sum_{i=0}^{n-1}P^{1/2}(B\cap T^{-i}C)$ may not exist. In fact, for a general sequence $\{a_i\}_{i\in\mathbb{N}}\subset[0,1]$, even if the limit $\lim_{n\to\infty}\frac{1}{n}\sum_{i=1}^{n}a_i$ exists, we may not be able to obtain the limit $\lim_{n\to\infty}\frac{1}{n}\sum_{i=1}^{n}a_i^{1/2}$ exists. For example, we consider the sequence $\{a_i\}_{i\in\mathbb{N}}$ defined via 
	$a_i=1/4$ if $i\in(2^{2k-1},2^{2k}]$; $a_i=1/2$ if $i\in(2^{2k},2^{2k+1}]$ is even; $a_i=0$ if $i\in(2^{2k},2^{2k+1}]$ is odd for each $k\in\mathbb{N}$. In the following, we consider $\frac{1}{n}\sum_{i=1}^{n}a_i$ and $\frac{1}{n}\sum_{i=1}^{n}a_i^{1/2}$  instead for convenience of notations in this special example. By computation, $\lim_{n\to\infty}\frac{1}{n}\sum_{i=1}^{n}a_i=1/4,$ but the limit $\lim_{n\to\infty}\frac{1}{n}\sum_{i=1}^{n}a_i^{1/2}$ does not exist, as 
\begin{align*}
\lim_{k\to\infty}\frac{1}{2^{2k}}\sum_{i=1}^{2^{2k}}a_i^{1/2}&=\lim_{k\to\infty}\frac{1}{2^{2k}}\left(\sum_{i=1}^k\frac{1}{2}(2^{2i}-2^{2i-1})+\frac{1}{2}\sum_{i=1}^k\frac{1}{\sqrt{2}}(2^{2i-1}-2^{2i-2})\right)\\&=\frac{1}{3}+\frac{1}{6\sqrt{2}},
\end{align*}
	but
\begin{align*}
\lim_{k\to\infty}\frac{1}{2^{2k+1}}\sum_{i=1}^{2^{2k+1}}a_i^{1/2}&=\lim_{k\to\infty}\frac{1}{2^{2k+1}}\left(\sum_{i=1}^k\frac{1}{2}(2^{2i}-2^{2i-1})+\frac{1}{2}\sum_{i=0}^k\frac{1}{\sqrt{2}}(2^{2i+1}-2^{2i})\right)\\
&=\frac{1}{6}+\frac{1}{3\sqrt{2}}.
\end{align*}
	Thus, the $\liminf$ and $\limsup$ in \eqref{eq:bound} may not be equal.
\end{rem}

The following two propositions show that  the limit of  Ces\`aro summation $
\frac{1}{n}\sum_{i=0}^{n-1}P^{1/2}(B\cap T^{-i}C),$ for $n\in\mathbb{N},B,C\in\mathcal{F}$,
is closely related to the complexity of the dynamical system. Firstly, we characterize the weak mixing measure preserving systems by the limit is equal to the upper bound in \eqref{eq:bound}.
\begin{prop}\label{prop1}
	Let $(\O,\mathcal{F},P,T)$  be a measure preserving system. Then the following three statements are equivalent:
	\begin{longlist}
		\item $P$ is weakly mixing;
		\medskip
		\item for any $r>0$, $\lim_{n\to\infty}\frac{1}{n}\sum_{i=0}^{n-1}P^{r}(B\cap T^{-i}C)=P^{r}(B)P^{r}(C)$ for any $B,C\in\mathcal{F}$;
		\medskip
		\item    for any $B,C\in\mathcal{F}$, $
		\lim_{n\to\infty}\frac{1}{n}\sum_{i=0}^{n-1}P(B\cap T^{-i}C)=P(B)P(C),$	 and there exists $r=r_{B,C}\in(0,1/2]$ such that $\lim_{n\to\infty}\frac{1}{n}\sum_{i=0}^{n-1}P^{r}(B\cap T^{-i}C)=P^{r}(B)P^{r}(C)$.
	\end{longlist}
	In particular, $P$ is weakly mixing  if and only if $P$ is ergodic and $\lim_{n\to\infty}\frac{1}{n}\sum_{i=0}^{n-1}P^{1/2}(B\cap T^{-i}C)=P^{1/2}(B)P^{1/2}(C)$ for any $B,C\in\mathcal{F}$.
\end{prop}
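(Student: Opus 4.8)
The plan is to prove the cyclic chain (i) $\Rightarrow$ (ii) $\Rightarrow$ (iii) $\Rightarrow$ (i) and then read off the ``in particular'' statement. Throughout I fix $B,C\in\mathcal{F}$ and write $a_i=P(B\cap T^{-i}C)\in[0,1]$ and $c=P(B)P(C)$, so that $P^{r}(B\cap T^{-i}C)=a_i^{r}$ and $P^{r}(B)P^{r}(C)=c^{r}$. The backbone is the classical criterion (Theorem 1.20 in \cite{Walters1982}): $P$ is weakly mixing if and only if, for all $B,C$, $\frac1n\sum_{i=0}^{n-1}|a_i-c|\to0$, equivalently $a_i\to c$ along a subset of $\mathbb{N}$ of density one.

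For (i) $\Rightarrow$ (ii) I would assume $P$ weakly mixing, fix $r>0$, and pick $J\subset\mathbb{N}$ with $D(J)=0$ along whose complement $a_i\to c$. Since $x\mapsto x^{r}$ is continuous on $[0,1]$, also $a_i^{r}\to c^{r}$ off $J$, and a bounded sequence converging along a density-one set has that value as its Ces\`aro mean; hence $\frac1n\sum_{i=0}^{n-1}a_i^{r}\to c^{r}=P^{r}(B)P^{r}(C)$. The implication (ii) $\Rightarrow$ (iii) is immediate, taking $r=1$ for the first limit and $r=1/2\in(0,1/2]$ for the second.

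The heart of the matter is (iii) $\Rightarrow$ (i). If $c=0$, then $a_i\ge0$ and $\frac1n\sum a_i\to0$ already give $\frac1n\sum|a_i-c|\to0$, so I may assume $c>0$ and let $r=r_{B,C}\in(0,1/2]$ be the exponent furnished by (iii). The key point is the strict concavity of $\phi(x)=x^{r}$ on $[0,1]$: its tangent line at $c$ lies strictly above the graph away from $c$, so the gap function
\[
h(x)=c^{r}+rc^{r-1}(x-c)-x^{r}
\]
satisfies $h\ge0$ on $[0,1]$ and $h(x)=0$ iff $x=c$. Averaging and inserting the two hypotheses $\frac1n\sum a_i\to c$ and $\frac1n\sum a_i^{r}\to c^{r}$ gives
\[
\frac1n\sum_{i=0}^{n-1}h(a_i)=c^{r}+rc^{r-1}\Big(\tfrac1n\sum_{i=0}^{n-1}a_i-c\Big)-\tfrac1n\sum_{i=0}^{n-1}a_i^{r}\longrightarrow 0.
\]
For each $\varepsilon>0$ the minimum $\delta_\varepsilon$ of $h$ over the compact set $\{x\in[0,1]:|x-c|\ge\varepsilon\}$ is positive, so $\frac1n\#\{i<n:|a_i-c|\ge\varepsilon\}\le\delta_\varepsilon^{-1}\,\frac1n\sum_{i=0}^{n-1}h(a_i)\to0$; splitting $\frac1n\sum|a_i-c|$ according to whether $|a_i-c|<\varepsilon$ or not yields $\limsup_n\frac1n\sum_{i=0}^{n-1}|a_i-c|\le\varepsilon$, and letting $\varepsilon\to0$ gives $\frac1n\sum_{i=0}^{n-1}|a_i-c|\to0$. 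As $B,C$ are arbitrary, the Walters criterion shows $P$ is weakly mixing.

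Finally, the ``in particular'' statement follows by specialization: weak mixing implies ergodicity and, by (ii) with $r=1/2$, the displayed $P^{1/2}$-limit; conversely, ergodicity yields the first identity of (iii) for all $B,C$ via Proposition \ref{prop:inde}, while the $P^{1/2}$-hypothesis is precisely its second part with $r=1/2$, so (iii) holds and (iii) $\Rightarrow$ (i) applies. I expect the main obstacle to be exactly the rigidity step in (iii) $\Rightarrow$ (i): upgrading the single moment identity $\frac1n\sum a_i^{r}\to c^{r}$ to genuine concentration of the $a_i$ near $c$. Strict concavity is what makes this possible, but the argument must be run at the level of a density-zero exceptional set (the $a_i$ need not converge in the usual sense), and the degenerate case $c=0$, where the slope $rc^{r-1}$ blows up, has to be dispatched separately.
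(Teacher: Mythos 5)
Your proposal is correct, and the implication (iii) $\Rightarrow$ (i) is executed by a genuinely different mechanism than the paper's. The paper first proves an intermediate Claim, using H\"older's inequality twice, that the hypotheses for a single $r=r_{B,C}\in(0,1/2]$ force $\frac1n\sum_{i=0}^{n-1}P^{1/2}(B\cap T^{-i}C)\to P^{1/2}(B)P^{1/2}(C)$; it then expands $\frac1n\sum\bigl(P^{1/2}(B\cap T^{-i}C)-P^{1/2}(B)P^{1/2}(C)\bigr)^2$ algebraically, shows this tends to $0$, bounds $\frac1n\sum\bigl(P(B\cap T^{-i}C)-P(B)P(C)\bigr)^2$ by four times that quantity, and invokes the squared-deviation characterization of weak mixing (Theorem 1.17 of Walters). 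You instead work with the given exponent $r$ directly via the tangent-line gap $h(x)=c^{r}+rc^{r-1}(x-c)-x^{r}$ of the strictly concave map $x\mapsto x^{r}$, deduce $\frac1n\sum h(a_i)\to 0$ from the two moment identities, and convert this into density-one concentration of $a_i$ near $c$ by a Chebyshev-type count; this bypasses the H\"older reduction to $r=1/2$ entirely, treats all $r$ uniformly, and correctly isolates the degenerate case $c=0$ (where the paper's argument also works but where your slope $rc^{r-1}$ would blow up). Both proofs ultimately rest on the same strict-concavity rigidity, and both are complete; yours is arguably more transparent about where that rigidity enters, while the paper's is more computational and yields the $r=1/2$ Ces\`aro limit as an explicit by-product (which it reuses in the surrounding discussion of the bounds in its display \eqref{eq:bound}). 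Your handling of (i) $\Rightarrow$ (ii), (ii) $\Rightarrow$ (iii), and the ``in particular'' clause matches the paper's.
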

\begin{proof}
	(i) $\Rightarrow$ (ii). Given any $B,C\in\mathcal{F}$, it is well known that (see \cite[Theorem 1.21]{Walters1982} for example) there exists a subset $J=J_{B,C}\subset\mathbb{N}$ with nature density $D(J)=0$ such that 
	\[\lim_{n\notin J,n\to\infty}P(B\cap T^{-n}C)=P(B)P(C),\]
	which implies that for any $r>0$,
	\[\lim_{n\notin J,n\to\infty}P^{r}(B\cap T^{-n}C)=P^{r}(B)P^{r}(C),\]
	which implies that (ii).
	
	(ii) $\Rightarrow$ (iii). This is trivial.

	(iii) $\Rightarrow$ (i). Given  any $B,C\in\mathcal{F},$ suppose that there exists $r=r_{B,C}\in(0,1/2]$ such that \[\lim_{n\to\infty}\frac{1}{n}\sum_{i=0}^{n-1}P^{r}(B\cap T^{-i}C)=P^{r}(B)P^{r}(C).\] Then we have the following claim.
	\begin{claim}\label{claim1}
		$\lim\limits_{n\to\infty}\frac{1}{n}\sum_{i=0}^{n-1}P^{1/2}(B\cap T^{-i}C)=P^{1/2}(B)P^{1/2}(C)$.
	\end{claim}
	\begin{proof}[Proof of  the Claim]
		By H\"older inequality and the first statement of (iii), we have 
		\[\limsup_{n\to\infty}\frac{1}{n}\sum_{i=0}^{n-1}P^{1/2}(B\cap T^{-i}C)\le \lim_{n\to\infty}\left(\frac{1}{n}\sum_{i=0}^{n-1}P(B\cap T^{-i}C)\right)^{1/2}=P^{1/2}(B)P^{1/2}(C).\]
		Using H\"older inequality again, we have that 
		\begin{align*}
			P^{r}(B)P^{r}(C)&=\lim_{n\to\infty}\frac{1}{n}\sum_{i=0}^{n-1}P^{r}(B\cap T^{-i}C)\\
			&\le\liminf_{n\to\infty}\frac{1}{n}\left(\sum_{i=0}^{n-1}P^{1/2}(B\cap T^{-i}C)\right)^{2r}\cdot n^{1-2r},
		\end{align*}
		which implies that 	\[\liminf_{n\to\infty}\frac{1}{n}\sum_{i=0}^{n-1}P^{1/2}(B\cap T^{-i}C)\ge P^{1/2}(B)P^{1/2}(C).\]
		Now we finish the proof of claim.
	\end{proof}
	The above claim,  together with the first statement of (iii), implies that 
	\begin{align*}
		&\lim_{n\to\infty}\frac{1}{n}\sum_{i=0}^{n-1}(P^{1/2}(B\cap T^{-i}C)-P^{1/2}(B)P^{1/2}(C))^2\\
		=&\lim_{n\to\infty}\left[\frac{1}{n}\sum_{i=0}^{n-1}P(B\cap T^{-i}C)-2\frac{1}{n}\sum_{i=0}^{n-1}P^{1/2}(B\cap T^{-i}C)P^{1/2}(B)P^{1/2}(C)+P(B)P(C)\right]\\
		=&0.
	\end{align*}
	Thus, 
	\begin{align*}
		&\lim_{n\to\infty}\frac{1}{n}\sum_{i=0}^{n-1}(P(B\cap T^{-i}C)-P(B)P(C))^2\\
		\le&4\lim_{n\to\infty}\frac{1}{n}\sum_{i=0}^{n-1}(P^{1/2}(B\cap T^{-i}C)-P^{1/2}(B)P^{1/2}(C))^2
		=0.
	\end{align*}
	As $B,C\in\mathcal{F}$ are arbitrary,  by \cite[Theorem 1.17]{Walters1982}, we finish the proof.
\end{proof}
Conversely, if the Ces\`aro summation above-mentioned converges to the lower bound in \eqref{eq:bound}, then the system is simple. Namely,
\begin{prop}\label{prop2}
	Let $(\O,\mathcal{F},P,T)$  be an ergodic measure preserving system, where $(\O,\mathcal{F})$ is a standard measurable space. Then the following two statements are equivalent:
	\begin{longlist}
		\item there exists $B\in\mathcal{F}$ with $P(B)>0$ such that for any $C\in\mathcal{F}$ with $C\subset B$,
		\begin{equation}\label{eq:500}
			\lim_{n\to\infty}\frac{1}{n}\sum_{i=0}^{n-1}P^{1/2}(B\cap T^{-i}C)=P^{1/2}(B)P(C);
		\end{equation}
		\item $P$ is a periodic probability, i.e., there exist $r\in\mathbb{N}$ and distinct points $\o_1,\ldots,\o_r\in\O$ such that $P(\{\o_i\})=\frac{1}{r}$, $i=1,2,\ldots,r$.
	\end{longlist}
	In this case, $r=\frac{1}{P(B)}\in\mathbb{N}$. 
\end{prop}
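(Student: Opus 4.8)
The plan is to treat the two implications separately: (ii) $\Rightarrow$ (i) is a direct computation, while for (i) $\Rightarrow$ (ii) I would first extract an atom and then analyse its orbit under $T$. For (ii) $\Rightarrow$ (i) I would use that an ergodic periodic $P$ makes $T$ a single $r$-cycle on its support $\{\omega_1,\dots,\omega_r\}$: since $T$ permutes the equal-mass points and ergodicity excludes more than one cycle, after relabelling $T\omega_j=\omega_{j+1}$ with indices modulo $r$. Taking $B=\{\omega_1\}$ (so $P(B)=1/r$), the only measurable sets $C\subseteq B$ are $\emptyset$ and $B$; for $C=B$ one has $B\cap T^{-i}B=\{\omega_1\}$ exactly when $r\mid i$ and $\emptyset$ otherwise, so $P^{1/2}(B\cap T^{-i}B)$ equals $r^{-1/2}$ along $r\mathbb{Z}_{+}$ and vanishes elsewhere, whence the Ces\`aro mean converges to $r^{-1}\cdot r^{-1/2}=r^{-3/2}=P^{1/2}(B)P(B)$, which is \eqref{eq:500}. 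The case $C=\emptyset$ is trivial, and this also pins down $r=1/P(B)$.

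The substantive direction is (i) $\Rightarrow$ (ii), and its crucial first step is to show that $B$ is an atom. Here I would fix an arbitrary $C\subseteq B$ with $P(C)>0$, set $u_i=P(B\cap T^{-i}C)$, and exploit the elementary pointwise inequality $u_i\le P^{1/2}(C)\,u_i^{1/2}$, valid because $u_i\le P(C)$. Averaging over $i$ and letting $n\to\infty$, the left-hand average tends to $P(B)P(C)$ by ergodicity (Proposition \ref{prop:inde}, equivalently Birkhoff's theorem applied to $1_C$ and integrated against $1_B$), while by hypothesis \eqref{eq:500} the right-hand average tends to $P^{1/2}(C)\cdot P^{1/2}(B)P(C)$; hence $P(B)P(C)\le P^{1/2}(B)P^{3/2}(C)$, which after dividing by $P^{1/2}(B)P(C)>0$ forces $P(B)\le P(C)$ and therefore $P(C)=P(B)$. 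Since $C\subseteq B$ was arbitrary, every measurable subset of $B$ has measure $0$ or $P(B)$, i.e. $B$ is an atom.

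To finish I would promote this atom to a periodic orbit. Standardness of $(\Omega,\mathcal{F})$ lets me realise the atom as a single point $\omega_0$ with $P(\{\omega_0\})=P(B)=:p>0$. Invariance gives $P(\{T\omega\})=P(T^{-1}\{T\omega\})\ge P(\{\omega\})$ for every atom $\omega$, so the masses along the forward orbit of $\omega_0$ are non-decreasing; since the total atomic mass is at most $1$ and every distinct orbit point has mass $\ge p$, the orbit is finite, hence eventually periodic, and around the resulting cycle $C=\{z_0,\dots,z_{r-1}\}$ the monotonicity forces all masses to coincide, say $=p'$. As $TC=C$, invariance gives $P(T^{-1}C\setminus C)=0$, so $C$ is $T$-invariant modulo a null set and ergodicity yields $P(C)=1$; thus $p'=1/r$, $P$ is uniform on the $r$-cycle $C$, and $\omega_0\in C$, whence $p=1/r$ and $r=1/P(B)\in\mathbb{N}$, which is exactly (ii) together with the final assertion.

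I expect the atom-extraction to be the main obstacle: the cheap inequality $u_i\le P^{1/2}(C)\,u_i^{1/2}$ is precisely what couples the linear ergodic average (which always converges to $P(B)P(C)$) with the square-root average prescribed by \eqref{eq:500}, and it is this coupling that collapses $P(C)$ onto $P(B)$ for every subset $C$. The subsequent passage from an atom to a full periodic system is comparatively routine dynamics, but it genuinely relies on standardness (to turn the atom into a point) and on ergodicity (to exclude mass outside the cycle).
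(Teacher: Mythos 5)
Your proof is correct, and both implications follow the same overall architecture as the paper's: the computation for (ii) $\Rightarrow$ (i) is identical, and the passage from ``$B$ is an atom'' to a periodic orbit (standardness to get a point, monotonicity of masses along the forward orbit, finiteness, and ergodicity of the invariant cycle) matches the paper's ending, with your treatment of the pre-periodic tail being if anything slightly more careful. The one genuinely different step is the atom extraction. The paper normalizes $a_i=\bigl(P(B\cap T^{-i}C)/P(B)\bigr)^{1/2}$, observes that both $\frac1n\sum a_i$ and $\frac1n\sum a_i^2$ converge to $P(C)$, deduces $\frac1n\sum a_i(1-a_i)\to 0$, and then invokes the Koopman--von Neumann-type density lemma (Walters, Theorem 1.20) to produce a positive-density subsequence along which $a_n\to 1$, forcing $P(C)\ge P(B)$. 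You instead use the pointwise inequality $u_i\le P^{1/2}(C)\,u_i^{1/2}$ (valid since $u_i=P(B\cap T^{-i}C)\le P(C)$ by invariance), average, and compare the two limits $P(B)P(C)$ and $P^{1/2}(C)P^{1/2}(B)P(C)$ to get $P(B)\le P(C)$ directly. Your route is more elementary --- it needs only the two Ces\`aro limits and no density/subsequence machinery --- while the paper's route exposes the finer structural fact that $P(B\cap T^{-n}C)\to P(B)$ along a positive-density set of times, which is of some independent interest but is not needed for the conclusion.
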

\begin{proof}
	(ii) $\Rightarrow$ (i). Let $B=\{\o_1\}$. Then we only need to check \eqref{eq:500}.
	Note that $P^{1/2}(B\cap T^{-i}B)=1/\sqrt{r}$, if $i=kr$ for some $k\in\mathbb{N}$, otherwise, $P^{1/2}(B\cap T^{-i}B)=0$. Thus, 
	\[\lim_{n\to\infty}\frac{1}{n}\sum_{i=0}^{n-1}P^{1/2}(B\cap T^{-i}B)=\frac{1}{r}\cdot \frac{1}{\sqrt{r}}=P^{1/2}(B)P(B).\]
	
	(i) $\Rightarrow$ (ii). 
	Let $B$ be as in assumption (i). Fix any $C\in\mathcal{F}$ with $C\subset B$. Let
	\[a_i=\left(\frac{P(B\cap T^{-i}C)}{P(B)}\right)^{1/2}\text{ for each }i\in\mathbb{Z}_+.\]
	Then $0\le a_i\le 1$ for any $i\in\mathbb{Z}_+$. From the assumption \eqref{eq:500} and $P$ being ergodic,
	\[\lim_{n\to\infty}\frac{1}{n}\sum_{i=0}^{n-1}a_i=P(C)=\lim_{n\to\infty}\frac{1}{n}\sum_{i=0}^{n-1}a_i^2.\]
	So $\lim_{n\to\infty}\frac{1}{n}\sum_{i=0}^{n-1}a_i(1-a_i)=0$. Thus, by Theorem 1.20 in \cite{Walters1982}, there exists a subset $J\subset\mathbb{Z}_+$ with natural density $D(J)=1$ such that 
	\[\lim_{n\in J,n\to\infty}a_n(1-a_n)=0.\]
	If $P(C)>0$ then there exists a subset $J_1\subset J$ with positive natural density such that $\lim\limits_{n\in J_1,n\to\infty}a_n=1$, that is, 
	\[\lim_{n\in J_1,n\to\infty}P(B\cap T^{-n}C)=P(B).\]
	This implies that $P(C)=P(B)$. If this is not true, note that
	\[P(B\cap T^{-n}C)\le P(C)<P(B).\]
	This is a contradiction. Thus, $P(C)=P(B)$. As $C\in\mathcal{F}$ with $C\subset B$ is arbitrary, it follows that $B$ is an atom. Since $(\O,\mathcal{F})$ is standard, it follows that each atom is singleton, denoted by  $B=\{\o\}$ for some $\o\in\O$. Since $P(\cup_{i=0}^\infty T^i\{\o\})\le1$, and $T$ is measure-preserving, so there exists a smallest integer $r\in\mathbb{N}$ such that $T^r\o=\o$. Since $P$ is ergodic, and $\{\o,T\o,\ldots,T^{r-1}\o\}$ is an invariant set, it follows that 
	\[P(\{\o,T\o,\ldots,T^{r-1}\o\})=1.\]
	Let $\o_i=T^{i-1}\o$ for $i=1,2,\ldots,r$. Then $P(\o_i)=\frac{1}{r}=P(\o)=P(B)$ for $i=1,2,\ldots,r$. In particular, $r=\frac{1}{P(B)}$.
\end{proof}

More generally, we have the following. The proof is similar to that of \eqref{eq:bound} by using concavity of the function $f$, so it is omitted.
\begin{prop}
	Let $(\O,\mathcal{F},P,T)$ be a measure preserving system and $f:[0,1]\to [0,1]$  be a concave increasing continuous function with $f(0)=0$ and $f(1)=1$. Then for any $B,C\in\mathcal{F}$,
	\begin{align*}
		f(P(B))P(C)\le\liminf_{n\to\infty}&\frac{1}{n}\sum_{i=0}^{n-1}f(P(B\cap T^{-i}C))\\
		&\le\limsup_{n\to\infty}\frac{1}{n}\sum_{i=0}^{n-1}f(P(B\cap T^{-i}C))\le f(P(B)P(C)).
	\end{align*}
\end{prop}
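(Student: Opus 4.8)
The plan is to follow the derivation of \eqref{eq:bound} with the square root replaced by the general concave distortion $f$, so that the whole argument rests on two elementary consequences of concavity together with a single ergodic limit, namely the convergence $\frac{1}{n}\sum_{i=0}^{n-1}P(B\cap T^{-i}C)\to P(B)P(C)$ supplied by Proposition~\ref{prop:inde} (here $P$ is ergodic, as in the setting of \eqref{eq:bound}). Note that $f(P(B\cap T^{-i}C))=V_f(B\cap T^{-i}C)$ for the capacity $V_f(A)=f(P(A))$ of Example~\ref{ex:1}, which is where the distortion enters; however I would not need the full upper-probability machinery, only the two estimates below and the ergodic average. I would prove the leftmost and rightmost inequalities separately, the middle inequality $\liminf\le\limsup$ being trivial.

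For the upper bound I would apply Jensen's inequality to the uniform average over $i=0,\dots,n-1$: since $f$ is concave,
\[\frac{1}{n}\sum_{i=0}^{n-1}f\bigl(P(B\cap T^{-i}C)\bigr)\le f\Bigl(\frac{1}{n}\sum_{i=0}^{n-1}P(B\cap T^{-i}C)\Bigr).\]
Taking $\limsup$ as $n\to\infty$ and using the continuity of $f$ together with the convergence of the inner average to $P(B)P(C)$ yields $\limsup_{n\to\infty}\frac{1}{n}\sum_{i=0}^{n-1}f(P(B\cap T^{-i}C))\le f(P(B)P(C))$, which is the rightmost inequality.

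For the lower bound, if $P(B)=0$ the claim is trivial since $f(P(B))=f(0)=0$, so assume $P(B)>0$. Concavity together with $f(0)=0$ gives the scaling inequality $f(\lambda x)\ge\lambda f(x)$ for all $\lambda,x\in[0,1]$; taking $x=P(B)$ and $\lambda=P(B\cap T^{-i}C)/P(B)\in[0,1]$ produces
\[f\bigl(P(B\cap T^{-i}C)\bigr)\ge\frac{f(P(B))}{P(B)}\,P(B\cap T^{-i}C).\]
Averaging over $i$ and passing to the limit of the ergodic average gives $\liminf_{n\to\infty}\frac{1}{n}\sum_{i=0}^{n-1}f(P(B\cap T^{-i}C))\ge\frac{f(P(B))}{P(B)}\cdot P(B)P(C)=f(P(B))P(C)$. (Alternatively, this lower bound is precisely statement (iv) of Theorem~\ref{thm:} applied to the ergodic concave capacity $\mu=V_f$ with skeleton $Q=P$, which is the route taken for \eqref{eq:bound}.) Combining the three displayed bounds completes the proof.

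The only genuine subtlety — and the place where the hypotheses really bite — is that both bounds require the ergodic average $\frac{1}{n}\sum_{i=0}^{n-1}P(B\cap T^{-i}C)$ to converge to the product $P(B)P(C)$; this is exactly where ergodicity of $P$ (equivalently, of $V_f$) is used, and the statement fails without it. Everything else is a routine application of Jensen's inequality and of the scaling inequality $f(\lambda x)\ge\lambda f(x)$ valid for concave $f$ vanishing at $0$.
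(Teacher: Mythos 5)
Your proof is correct and follows essentially the route the paper intends (the paper omits the proof, referring back to the derivation of \eqref{eq:bound}): Jensen's inequality for the concave $f$ applied to the Ces\`aro average gives the rightmost inequality, and for the leftmost you replace the paper's appeal to statement (iv) of Theorem \ref{thm:} for the distorted capacity $V_f$ by the elementary scaling inequality $f(\lambda x)\ge \lambda f(x)$ for $\lambda\in[0,1]$ (from concavity and $f(0)=0$). That substitution is a genuine, if minor, simplification: it makes the lower bound self-contained and avoids invoking the upper-probability machinery, while your parenthetical correctly records that Theorem \ref{thm:}(iv) applied to the ergodic concave capacity $V_f$ with skeleton $Q=P$ gives the same estimate. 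The substantive point you raise is well taken: both bounds hinge on the convergence $\frac{1}{n}\sum_{i=0}^{n-1}P(B\cap T^{-i}C)\to P(B)P(C)$, which requires $P$ to be ergodic (Proposition \ref{prop:inde}), whereas the proposition as printed assumes only a measure preserving system. Without ergodicity both inequalities can fail --- for instance $T=\mathrm{id}$ with $B=C$ of measure strictly between $0$ and $1$ violates the upper bound, and $T=\mathrm{id}$ with $B,C$ disjoint of positive measure violates the lower bound --- so the ergodicity hypothesis, implicit in the surrounding discussion of \eqref{eq:bound}, is indeed indispensable, and you were right to flag it rather than paper over it.
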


\subsection{Counter example of ergodic capacity preserving system in number theory without Birkhoff's law of large numbers}\label{sec:number}
In this subsection, we will prove the law of large numbers does not hold for any invariant capacity on $(\mathbb{Z},2^\mathbb{Z})$ with respect to $T:\mathbb{Z}\to\mathbb{Z},n\to n+1$, due it lack of continuity of the capacities on   $(\mathbb{Z},2^\mathbb{Z})$. For convenience, we denote by $[m,n]=\{m,m+1,\ldots,n\}$ for any $m<n\in\mathbb{Z}$.

Given any  capacity preserving system $(\mathbb{Z},2^\mathbb{Z},\mu,T)$, we prove  Birkhoff's ergodic theorem does not hold for $\mu$. By contradiction, if it holds, then for any $A\subseteq\mathbb{Z}$,  there exists $B\subset\mathbb{Z}$ with $\mu(B^c)=0$ (i.e., $B\neq \emptyset$) and $c\ge0$  such that for any $m\in B$,
\[\lim_{n\to\infty}\frac{1}{2n+1}|A\cap[m-n,m+n]|=\lim_{n\to\infty}\frac{1}{n}\sum_{i=0}^{n-1}1_A(T^im)=c.\]
Thus, 
\[\lim_{n\to\infty}\frac{1}{2n+1}|A\cap[-n,n]|=c.\]
This means that the natural density of any subset $A$ of $\mathbb{Z}$ exists.
However, it is not true, for instance, the set $A=\bigcup_{n=0}^\infty [2^{2n},2^{2n+1}]$ does not have a natural density. This can be seen as
\[\limsup_{n\to\infty}\frac{|A\cap[-n,n]|}{2n+1}\ge\lim_{n\to\infty}\frac{|A\cap[-2^{2n+1},2^{2n+1}]|}{2^{2n+2}+1}>\lim_{n\to\infty}\frac{2^{2n+1}-2^{2n}}{2^{2n+2}+1} =\frac{1}{4},\]
but 
\[\liminf_{n\to\infty}\frac{|A\cap[-n,n]|}{2n+1}\le\lim_{n\to\infty}\frac{|A\cap[-2^{2n},2^{2n}]|}{2^{2n+1}+1}\le\lim_{n\to\infty}\frac{2^{2n-1}}{2^{2n+1}+1} =\frac{1}{4}.\]
\begin{rem}
	As a corollary of Theorem \ref{thm:main Birk} and the above consequence, there is no $T$-ergodic upper probability on $(\mathbb{Z},2^\mathbb{Z})$.
\end{rem}
Now we study some well-known concrete examples of capacities  on $(\mathbb{Z},2^\mathbb{Z})$, but indeed they are not upper probabilities. Firstly, we prove $\bar{d}$ in Example \ref{ex:number theory}, introduced in the introduction, is an ergodic subadditive capacity.
\begin{example}\label{ex:recall}
	We recall the capacity preserving system $(\mathbb{Z},2^\mathbb{Z},\bar{d},T)$ defined in Example \ref{ex:number theory}. Now we prove that $\bar{d}$ is ergodic with respect to $T$. Note that $T^{-1}A=\{n\in\mathbb{Z}:n+1\in A\}$. There are only two possible sets  $A=\emptyset$ and $A=\mathbb{Z}$ satisfying $T^{-1}A=A$. In particular, $\bar{d}(A)=0$ or $\bar{d}(A)=1$ and $\bar{d}(A^c)=0$, proving the ergodicity of $\bar{d}$. But consider $A_k=[k,\infty)$ for all $k\in\mathbb{N}$. Then $A_k\downarrow\emptyset$, but for any fixed $k\in\mathbb{N}$, $\bar{d}(A_k)=1/2$. Thus, $\bar{d}$ is not continuous from above. Meanwhile, we consider $B_k=[-k,k]$ for each $k\in\mathbb{N}$. Then $A_k\uparrow\mathbb{Z}$, but for any fixed $k\in\mathbb{N}$, $\bar{d}(A_k)=0$. Thus, $\bar{d}$ is not continuous from below. So the subadditive capacity $\bar{d}$ is not an upper probability.
\end{example}
Recall that a continuous concave capacity must be an upper probability, and hence if it is ergodic then the Birkhoff's ergodic theorem holds. The following example shows that there exists an ergodic concave capacity which is continuous from below such that Birkhoff's ergodic theorem does not hold.
\begin{example}\label{ex:concave capacity}
	Let $T:\mathbb{Z}\to\mathbb{Z}$, $x\mapsto x+1$, and $2^\mathbb{Z}$ be the family consisting of all subsets of $\mathbb{Z}$. Define 
	\[\mu=\max_{n\in\mathbb{Z}}\delta_n,\]
	where $\delta_n$ is the Dirac measure for $n\in\mathbb{Z}$. Then it is easy to check that $\mu$ is an invariant concave  capacity continuous from below, and in particular $(\mathbb{Z},2^\mathbb{Z},\mu,T)$ is a capacity preserving system. The ergodicity of $\mu$ can be obtained by the same argument in Example \ref{ex:recall}. However, by the same argument as in  Example \ref{ex:recall}, $\mu$ is not continuous from above, and hence $\mu$ is also not an upper probability.
\end{example}

\subsection{Examples: ergodicity but not weak mixing}
We recall that a weakly mixing subadditive capacity must be ergodic. In the case for probabilities, there are many examples to show that an ergodic probability may not be weakly mixing (for example irrational rotations on the torus). Now we provide some examples for more general capacities.
\begin{example}
	Let $(\mathbb{Z},2^\mathbb{Z},\mu,T)$ be the ergodic system in Example \ref{ex:concave capacity}. We claim that $\mu$ is not weak mixing. To see this, we consider the measurable function $f(n)=\lambda^n$ on $\mathbb{Z}$, where $\lambda\in\mathbb{C}$ with $|\l|=1$. Then $f(Tn)=f(n+1)=\l f(n)$ for each $n\in\mathbb{Z}$. Since $f$ is not  constant, it follows that $\mu$ is not weakly mixing.
\end{example}

\begin{example}
	Let  $\O_i=[i-1,i)$, $P_i$ be the Lebesgue measure on $\O_i$  for $i=1,2$, and  $\O=\O_1\cup\O_2$. Let $S:\O_1\to \O_1,x\mapsto 2x\pmod1$ be the doubling map. It is well known that $(\O,\mathcal{B}(\O_1),P_1,S)$ is a weakly mixing measure preserving system (see \cite{Quas2011} for example). 
	Define $T:\O \rightarrow \O$ by
	$$
	T(\o)= \begin{cases}(2\o\bmod 1)+1, & \o \in [0,1), \\ \o-1, & \o\in [1,2) .\end{cases}
	$$
	For each $i=1,2$, let
	$$
	\bar{P}_i(A):=P_i\left(A \cap\O_i\right), \quad \text { for any } A \in \mathcal{B}(\O),
	$$
	and define the upper probability
	$$
	V=\max\{\bar{P}_1,\bar{P}_2\}.
	$$
	
	Firstly, we prove $V$ is $T$-invariant. Indeed, for any $A\in\mathcal{B}(\O)$,
	\begin{align*}
		V(T^{-1}A)=&\max\{{P}_1(T^{-1}A\cap\O_1),{P}_2(T^{-1}A\cap\O_2)\}\\
		=&\max\{{P}_2(A\cap\O_2),{P}_1(A\cap\O_1)\}\\
		=&V(A).
	\end{align*}
	
	Next, we prove that $V$ is ergodic. Since $(\O,\mathcal{B}(\O_1),P_1,S)$ is weakly mixing, it follows that the system $(\O_i,\mathcal{B}(\O_i),P_i,T^2|_{\O_i})$ is weakly mixing for $i=1,2$. Given any measurable function $f$ with $f\circ T= f$, $V$-a.s.,  then for $i=1,2$,
	\[f\circ T^2|_{\O_i}=f \text{, }P_i\text{-a.s.}\]
	Thus, for each $i=1,2$ there exists $A_i\subset \O_i$ with $P_i(A_i)=1$ and a constant $c_i\in\mathbb{C}$ such that for any $\o\in A_i$, $f(\o)=c_i$. As $P_1(A_2-1)=1$, where $A_2-1=\{\o\in \O_1:\o+1\in A_2\}$, there exists $\o\in A_1\cap S^{-1}(A_2-1)$, and hence 
	\[c_1=f(\o)=f(T\o)=f(S\o+1)=c_2.\]
	Let $A=A_1\cup A_2$. Then $V(A^c)= \max\{{P}_1(\O_1\setminus A_1),{P}_2(\O_2\setminus A_2)\}=0$ and  $f(\o)=c_1$ for any $\o\in A$. Thus, $V$ is ergodic.
	
	Finally, we prove it is not weakly mixing. Let 	$$
	f(\o)= \begin{cases}1, & \o\in [0,1), \\ -1, & \o \in [1,2) .\end{cases}
	$$
	Then $f\circ T=-f$, and $f$ is not constant, $V$-a.s., which shows that $V$ is not weakly mixing.
\end{example}
\section{Subadditive ergodic theorem for capacities}\label{sec:subadditive}
In the last section of this paper, we apply the common conditional expectation and invariant skeleton to  study the subadditive ergodic theorem on upper probability spaces, which provides a way to understand the long-term behavior of subadditive functions. Subadditive functions have applications in a variety of fields, including probability theory, information theory, and statistical physics.  
\subsection{Proof of subadditive ergodic theorem for invariant upper probabilities}
We recall that a sequence of $\mathcal{F}$-measurable functions $\{f_n\}_{n\in\mathbb{N}}$ on the capacity space $(\O,\mathcal{F},\mu)$ is said to be subadditive (resp. superadditive) if for each $k,n\in\mathbb{N}$, $f_{n+k}\le f_n+f_k\circ T^n$ (resp. $f_{n+k}\ge f_n+f_k\circ T^n$), $\mu$-a.s. If a sequence is subadditive and superadditive, then it is said to be additive. Given an $\mathcal{F}$-measurable function $g$, let 
$f_n=\sum_{i=0}^{n-1}g\circ T^i \text{ for each }n\in\mathbb{N}.$
Then $f_{n+k}=f_n+f_k\circ T^n$ for each $k,n\in\mathbb{N}$, and hence it is additive. However, for example, $\{|f_n|\}_{n\in\mathbb{N}}$ is only subadditive.
Thus, subadditive ergodic theorem can be viewed as an extension of Birkhoff's ergodic theorem.
We remark that as the proofs of  subadditive and superadditive sequences are similar, we only state and prove the results for subadditive sequences.

In the following, a standard setup is a capacity preserving system $(\O,\mathcal{F},V,T)$, where $(\O,\mathcal{F})$ is a standard measurable space, $T:\O\to \O$ is a measurable transformation and $V$ is a $T$-invariant upper probability.
\begin{theorem}\label{thm:main theorem}
	Suppose that $\{f_n\}_{n\in\mathbb{N}}$ is a sequence of $\mathcal{F}$-measurable functions satisfying the following conditions:
	\begin{longlist}
		\item there exists $\l >0$ such that $-\l n\le f_n(\o)\le \l n$ for any $n\in\mathbb{N}$, and $\o\in\O$;
		\medskip
		\item for each $k,n\in\mathbb{N}$, $f_{n+k}\le f_n+f_k\circ T^n$, $V$-a.s.
	\end{longlist} 
	Then there exists $f^*\in B(\O,\mathcal{I})$ such that 
	\[\lim_{n\to\infty}\frac{1}{n}f_n(\o)=f^*(\o)\text{ for }V\text{-a.s. }\o\in\O.\]
	
	If, in addition, $V$ is ergodic, then $f^*$ is a constant $V$-a.s.
\end{theorem}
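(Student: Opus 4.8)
The plan is to reduce the nonlinear statement to the classical subadditive ergodic theorem (Theorem \ref{thm:standard subadditive}) applied to the invariant skeletons of the elements of $\operatorname{core}(V)$, and then to glue the resulting pathwise limits into a single $\mathcal{I}$-measurable function by means of the common conditional expectations of Lemma \ref{lem:common conditional expectation}.

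First I would construct the candidate limit. For each fixed $n$, condition (i) gives $f_n\in B(\O,\mathcal{F})$ with $|f_n|\le \l n$, so Lemma \ref{lem:common conditional expectation} produces an $\mathcal{I}$-measurable function $g_{f_n}$ with $\mathbb{E}_P(f_n\mid\mathcal{I})=g_{f_n}$, $P$-a.s., for every $P\in\mathcal{M}(T)$; moreover the explicit formula $g_{f_n}(\o)=\int f_n\,d\mathfrak{p}_\o$ forces $|g_{f_n}|\le \l n$ pointwise. I then set
$$f^*(\o):=\inf_{n\in\mathbb{N}}\tfrac{1}{n}g_{f_n}(\o).$$
As an infimum of countably many $\mathcal{I}$-measurable functions it is $\mathcal{I}$-measurable, and it satisfies $-\l\le f^*\le \l$ (the upper bound coming from $n=1$), so $f^*\in B(\O,\mathcal{I})$ as required.

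Next I would run the classical theorem fibrewise. Fix $P\in\operatorname{core}(V)$ and let $\hat P\in\mathcal{M}(T)\cap\operatorname{core}(V)$ be its invariant skeleton. Since $\hat P\le V$ on $\mathcal{F}$, condition (ii) holds $\hat P$-a.s., and condition (i) gives $\int|f_1|\,d\hat P\le \l<\infty$; thus Theorem \ref{thm:standard subadditive} applies to $(\O,\mathcal{F},\hat P,T)$ and yields a $T$-invariant $\phi_{\hat P}$ with $\lim_n\tfrac1n f_n=\phi_{\hat P}$, $\hat P$-a.s., where $\phi_{\hat P}=\inf_n\tfrac1n\mathbb{E}_{\hat P}(f_n\mid\mathcal{I})$. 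Because $\hat P\in\mathcal{M}(T)$, Lemma \ref{lem:common conditional expectation} gives $\mathbb{E}_{\hat P}(f_n\mid\mathcal{I})=g_{f_n}$, $\hat P$-a.s., for each $n$; intersecting the countably many null sets shows $\phi_{\hat P}=\inf_n\tfrac1n g_{f_n}=f^*$, $\hat P$-a.s. Hence, writing $N:=\{\o:\lim_n\tfrac1n f_n(\o)=f^*(\o)\}^c\in\mathcal{F}$, we obtain $\hat P(N)=0$ for every $P\in\operatorname{core}(V)$.

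The step I expect to be the main obstacle is the passage from ``$\hat P(N)=0$ for all $P$'' to ``$V(N)=0$'': because the subadditivity (ii) is only assumed $V$-a.s., the pathwise limit set is not manifestly $T$-invariant, so the clean identity $P(N)=\hat P(N)$ (valid only on $\mathcal{I}$) is unavailable and one cannot directly write $V(N)=\max_P P(N)=0$. This is precisely what Lemma \ref{lem:core of invariant lower probabilities}(ii) is designed to handle: it converts the family of skeleton-null statements into $V(N)=0$ by passing to the invariant hull $\bar N=\cap_m\cup_{k\ge m}T^{-k}N$, absorbing the missing invariance. Applying it completes the first assertion. Finally, if $V$ is ergodic, then $V$, being an upper probability, is continuous and subadditive, and $f^*$ is a bounded $T$-invariant $\mathcal{F}$-measurable function, so the implication (i)$\Rightarrow$(ii) of Lemma \ref{lem:char. for ergodic} gives that $f^*$ is constant $V$-a.s.
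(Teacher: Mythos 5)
Your proposal is correct and follows essentially the same route as the paper: the same candidate $f^*=\inf_n\frac{1}{n}g_{f_n}$ built from the common conditional expectations of Lemma \ref{lem:common conditional expectation}, the classical subadditive ergodic theorem applied to the invariant skeletons $\hat P$ of each $P\in\operatorname{core}(V)$, the passage to $V((\O^*)^c)=0$ via Lemma \ref{lem:core of invariant lower probabilities}(ii), and Lemma \ref{lem:char. for ergodic} for the ergodic case. Your explicit identification of why Lemma \ref{lem:core of invariant lower probabilities}(ii) is needed (the limit set is not manifestly invariant since subadditivity holds only $V$-a.s.) is a point the paper leaves implicit, but the argument is the same.
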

\begin{proof}
	Define \[S_n=\frac{1}{n}f_n\text{ for each }n\in\mathbb{N}.\]
	By the assumption (i), one has $S_n\in B(\O,\mathcal{F})$ for each $n\in\mathbb{N}$. By Lemma \ref{lem:common conditional expectation}, for each $n\in\mathbb{N}$, there exists $\hat{S}_n\in B(\O,\mathcal{I})$ such that for any $\eta\in\mathcal{M}(T)$,
	\[\hat{S}_n=\mathbb{E}_\eta(S_n\mid \mathcal{I}), \quad \eta\text{-a.s.}\]
	Let $f^*=\inf_{n\in\mathbb{N}} \hat{S}_n$ and $\O^*=\{\o\in\O:\lim_{n\to\infty}S_n(\o)=f^*(\o)\}$. Then by Theorem \ref{thm:standard subadditive}, one has 
	$
	\eta(\O^*)=1\text{ for any }\eta\in\mathcal{M}(T)\cap\operatorname{core}(V).
	$
	In particular, for any $P\in\operatorname{core}(V)$, its invariant skeleton satisfies that
	$
	\hat{P}((\O^*)^c)=0.
	$
	By Lemma \ref{lem:core of invariant lower probabilities} (ii), we deduce that
	$V((\O^*)^c)=0.$
	
	Suppose that $V$ is ergodic. Applying Lemma \ref{lem:char. for ergodic} on the $T$-invariant function $f^*$, we have $f^*$ is  constant, $V$-a.s.
\end{proof}
\begin{rem}
	\begin{longlist}
		\item 		Cerreia-Vioglio, Maccheroni and Marinacci \cite{CMM2016} have considered subadditive ergodic theorem on upper probability spaces $(\O,\mathcal{F},V)$. However, they imposed the additional condition that there exists a compact subset $\L$ of $\mathcal{ M}(T)$ such that $V=\max_{P\in\L}P$. We recall Example \ref{ex:upper}, in which $V$ does not satisfy this condition.
		\medskip
		\item When $V$ is ergodic, $\operatorname{core}(V)\cap\mathcal{ M}(T)$ has only one element. So we do not need to use the common conditional expectation. Therefore, in this case, subadditive ergodic theorem holds for general probability spaces, not necessarily standard ones.
	\end{longlist}
\end{rem}
Continuing with the viewpoint of Theorem \ref{thm:app1}, the following result provides the subadditive ergodic theorem for a class of non-invariant probabilities.
\begin{theorem}\label{thm:sub for noninvariant}
	Let $(\O,\mathcal{F},P)$ be a probability space (not necessarily standard), and $T:\O\to \O$ be an invertible measurable transformation. Suppose that the limit $\lim_{n\to\infty}\frac{1}{n}\sum_{i=0}^{n-1}P\circ T^{i}$ exists, denoted by $Q$. 	Suppose that $\{f_n\}_{n\in\mathbb{N}}$ is a sequence of $\mathcal{F}$-measurable functions satisfying the following conditions:
	\begin{longlist}
		\item there exists $\l >0$ such that $-\l n\le f_n(\o)\le \l n$ for any $n\in\mathbb{N}$, and $\o\in\O$;
		\medskip
		\item for each $k,n\in\mathbb{N}$, $f_{n+k}\le f_n+f_k\circ T^n$.
	\end{longlist} 
	If $Q$ is ergodic, then there exists a constant $c\in\mathbb{R}$ such that 
	\[\lim_{n\to\infty}\frac{1}{n}f_n=c,\text{ }P\text{-a.s.}\]
\end{theorem}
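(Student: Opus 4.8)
The plan is to by-pass the standardness assumption built into Theorem~\ref{thm:main theorem} by reducing everything to the single ergodic invariant probability $Q$ and then transporting the full-measure conclusion back to the non-invariant $P$ through an auxiliary ergodic upper probability. Concretely, I would first attach to the data $(\O,\mathcal{F},P,T)$ the upper probability $V$ defined by \eqref{eq:8=}. Since $T$ is invertible, $\lim_{n\to\infty}\frac1n\sum_{i=0}^{n-1}P\circ T^i=Q$ exists, and $Q$ is ergodic, the construction carried out in the proof of Theorem~\ref{thm:app1} (which uses Vitali--Hahn--Saks, Lemma~\ref{lem:VHStheorem}, to secure continuity of $V$) shows at once that $V$ is a $T$-invariant ergodic upper probability, that $P\in\operatorname{core}(V)$, and that $Q\in\operatorname{core}(V)\cap\mathcal{M}^e(T)$ with $P|_{\mathcal{I}}=Q|_{\mathcal{I}}$. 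I would simply invoke that construction rather than repeat it.

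With $V$ and $Q$ in hand, the next step is to run the classical subadditive ergodic theorem on the genuine measure preserving system $(\O,\mathcal{F},Q,T)$. The boundedness hypothesis (i) gives $\int|f_1|\,dQ\le\l<\infty$, while the subadditivity hypothesis (ii) holds everywhere and hence $Q$-a.s.; thus Theorem~\ref{thm:standard subadditive} applies and produces a $T$-invariant $\phi$ with $\frac1n f_n\to\phi$, $Q$-a.s. Because $Q$ is ergodic, $\phi$ is constant $Q$-a.s.; denote this constant by $c$ (so that $c=\inf_{n}\frac1n\int f_n\,dQ$). Writing $\O^*=\{\o\in\O:\lim_{n\to\infty}\frac1n f_n(\o)=c\}$, we obtain $Q((\O^*)^c)=0$.

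It remains to move from $Q$ to $P$, and here the ergodicity of $V$ is decisive. Since $V$ is an ergodic upper probability, Corollary~\ref{cor:Q<=>V} supplies the equivalence $Q(A)=0\iff V(A)=0$ for every $A\in\mathcal{F}$; taking $A=(\O^*)^c$ upgrades $Q((\O^*)^c)=0$ to $V((\O^*)^c)=0$. Finally, $P\in\operatorname{core}(V)$ yields $P((\O^*)^c)\le V((\O^*)^c)=0$, which is exactly $\lim_{n\to\infty}\frac1n f_n=c$, $P$-a.s.

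The step I expect to be the crux is the very reason the result cannot be quoted directly from Theorem~\ref{thm:main theorem}: that theorem manufactures its limit function $f^*$ out of the common conditional expectations of Lemma~\ref{lem:common conditional expectation}, whose existence relies on a Carath\'eodory extension valid only on a standard space, so that $f^*$ can serve simultaneously every invariant probability in $\operatorname{core}(V)$. The observation that rescues the non-standard ergodic case---already signalled in the Remark following Theorem~\ref{thm:main theorem}---is that ergodicity of $V$ collapses $\operatorname{core}(V)\cap\mathcal{M}(T)$ to the single probability $Q$, so Kingman's theorem for $Q$ alone suffices and no common conditional expectation is needed; the equivalence $Q(A)=0\iff V(A)=0$ then does all the work of bridging the non-invariant $P$ and its invariant skeleton $Q$.
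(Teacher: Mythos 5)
Your proposal is correct and follows exactly the route the paper intends (the paper leaves this proof implicit, pointing to the construction of $V$ via \eqref{eq:8=} from Theorem \ref{thm:app1} and to the remark after Theorem \ref{thm:main theorem} that the ergodic case needs no common conditional expectation and hence no standardness): build the ergodic invariant upper probability $V$ with $P,Q\in\operatorname{core}(V)$, run Kingman's theorem on the single ergodic skeleton $Q$, and transfer the null set back to $P$ through $Q(A)=0\iff V(A)=0$ from Corollary \ref{cor:Q<=>V}. Your diagnosis of why standardness can be dropped here is precisely the point the authors make.
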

\subsection{The multiplicative ergodic theorem for capacities}
In this subsection,	 we apply Theorem \ref{thm:main theorem} to obtain an extension of Furstenberg-Kesten theorem (\cite{FurstenbergKesten1960}, see also \cite[Corollary 10.1.1]{Walters1982} or \cite[Theorem 3.3.3]{Arnold199}) to upper probability spaces, and further using it to prove the multiplicative ergodic theorem \cite{Oseledec1968} on upper probability spaces. Let us begin notations. Denote by $\mathbb{R}^{d\times d}$ the space of all linear operators on $\mathbb{R}^d$. By choosing a basis of $\mathbb{R}^d$, we can view $\mathbb{R}^{d\times d}$ as the space of all $d\times d$ matrices. For any $A\in \mathbb{R}^{d\times d}$, denote by $A^*$ its transpose matrix. 

Let $(\Omega, \mathcal{F},\mu, T)$ be a capacity preserving system. Define
\begin{equation}\label{eq:star}
	\Phi(n, \omega)=L(T^{n-1} \omega) \cdots L(\omega), \quad n \geq 1 \text{ and }\o\in\O,
\end{equation}
where $L(\omega):=\Phi(1, \omega) \in \mathbb{R}^{d\times d}$ is the generator of $\Phi$. Let $\wedge^k \mathbb{R}^d$, $1 \leq k \leq d$, be the $k$-fold exterior power of $\mathbb{R}^d$ (see \cite[Chapter V]{Temam1988} for more details).
The cocycle property
$$
\Phi(n+m, \omega)=\Phi\left(m, T^n \omega\right) \Phi(n, \omega)
$$
lifts to $\wedge^k \mathbb{R}^d$, $1 \leq k \leq d$ (see \cite[Lemma 3.2.6]{Arnold199})
\begin{equation}\label{eq:10}
	\wedge^k \Phi(n+m, \omega)=(\wedge^k \Phi)(m, T^n \omega)(\wedge^k \Phi)(n, \omega).
\end{equation}
In the following, we always suppose that $\|\cdot\|$ is a matrix norm on $\mathbb{R}^{d\times d}$, i.e., a norm on $\mathbb{R}^{d\times d}$ with additional property $\|AB\|\le\|A\|\|B\|$.

Now we prove the Furstenberg-Kesten theorem on upper probability spaces.
\begin{theorem}[Furstenberg-Kesten theorem for upper probabilities]\label{thm:FK theorem}
	Let $\Phi$ be defined, by \eqref{eq:star}, on the capacity preserving system $(\Omega, \mathcal{F}, V,T)$, where $V$ is an upper probability.  If the generator $L:\O\to \mathbb{R}^{d\times d}$ of $\Phi$ is a measurable function such that $\log\|L(\o)\|\in B(\O,\mathcal{F})$, then 
	\begin{longlist}
		\item	for each $k=1, \ldots, d$ the sequence $\{	f_n^{(k)}(\omega)\}_{n\in\mathbb{N}}$ defined by
		$$
		f_n^{(k)}(\omega):=\log \left\|\wedge^k \Phi(n, \omega)\right\| \text{ for each }n \in \mathbb{N}
		$$
		is subadditive;
		\item there exist $\gamma^{(k)} \in B(\O,\mathcal{F})$ for $k=1,2,\ldots,d$ such that 
		\begin{equation}\label{eq:jing}
			\lim _{n \rightarrow \infty} \frac{1}{n} \log \left\|\wedge^k \Phi(n, \cdot)\right\|=\gamma^{(k)}
			\text{, }V\text{-a.s.}
		\end{equation}
	\end{longlist}
\end{theorem}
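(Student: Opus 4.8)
The plan is to deduce both parts from the functorial (multiplicative) behaviour of exterior powers together with the subadditive ergodic theorem for invariant upper probabilities, Theorem \ref{thm:main theorem}; note that the standing assumption that $(\Omega,\mathcal{F})$ is standard is exactly what lets us invoke that theorem.

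For part (i), I would first use that the $k$-th exterior power is functorial, $\wedge^k(AB)=(\wedge^kA)(\wedge^kB)$, which combined with the cocycle identity \eqref{eq:10} gives, for all $m,n\in\mathbb{N}$ and \emph{every} $\omega$, the factorisation $\wedge^k\Phi(n+m,\omega)=(\wedge^k\Phi)(m,T^n\omega)\,(\wedge^k\Phi)(n,\omega)$. Applying the matrix norm, using submultiplicativity $\|AB\|\le\|A\|\,\|B\|$, and taking logarithms yields $f^{(k)}_{n+m}(\omega)\le f^{(k)}_n(\omega)+f^{(k)}_m(T^n\omega)$ pointwise in $\omega$. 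Since this holds everywhere it holds a fortiori $V$-a.s., which is precisely hypothesis (ii) of Theorem \ref{thm:main theorem} and establishes (i).

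For part (ii), the idea is to apply Theorem \ref{thm:main theorem} to $\{f^{(k)}_n\}_{n\in\mathbb{N}}$ for each fixed $k$. Part (i) already supplies the subadditivity hypothesis, so the only remaining point is the two-sided linear bound $-\lambda n\le f^{(k)}_n\le\lambda n$ of hypothesis (i). The upper bound is routine: iterating $\wedge^k\Phi(n,\omega)=\wedge^kL(T^{n-1}\omega)\cdots\wedge^kL(\omega)$ and using submultiplicativity gives $f^{(k)}_n(\omega)\le\sum_{i=0}^{n-1}\log\|\wedge^kL(T^i\omega)\|$; since $\log\|L\|\in B(\Omega,\mathcal{F})$ we have $\|L(\omega)\|\le e^{M}$ uniformly, and because $L\mapsto\wedge^kL$ is polynomial in the entries and all norms on a finite-dimensional space are equivalent, $\log\|\wedge^kL(\omega)\|$ is bounded above by a constant $C_k$, whence $f^{(k)}_n\le C_k n$.

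The main obstacle is the lower bound. The bound $\log\|L\|\ge -M$ only controls the \emph{largest} singular value of $L(\omega)$, which already suffices for $k=1$ (where $\log\|\wedge^1L\|=\log\|L\|\ge -M$) but fails for $k\ge2$, since the product of the top $k$ singular values can approach $0$ unless the smaller singular values are bounded away from $0$. The key is therefore to extract from the hypotheses on $L$ that it is $GL(d,\mathbb{R})$-valued with $\log\|L^{-1}\|$ uniformly bounded (a control that is in any case needed for the subsequent multiplicative ergodic theorem). Granting this, functoriality gives $(\wedge^k A)^{-1}=\wedge^k(A^{-1})$, so that $\|\wedge^k\Phi(n,\omega)\|\ge\|\wedge^k(\Phi(n,\omega)^{-1})\|^{-1}\ge\left(\prod_{i=0}^{n-1}\|\wedge^k(L(T^i\omega)^{-1})\|\right)^{-1}$, and taking logarithms yields $f^{(k)}_n(\omega)\ge -C_k' n$ for a constant $C_k'$ governed by the uniform bound on $\|L^{-1}\|$. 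Setting $\lambda=\max_k\max(C_k,C_k')$ verifies hypothesis (i) of Theorem \ref{thm:main theorem}, which then produces for each $k$ a function $\gamma^{(k)}:=f^*\in B(\Omega,\mathcal{I})\subset B(\Omega,\mathcal{F})$ with $\tfrac1n f^{(k)}_n\to\gamma^{(k)}$ $V$-a.s., establishing \eqref{eq:jing}; when $V$ is ergodic, the same theorem forces each $\gamma^{(k)}$ to be constant $V$-a.s.
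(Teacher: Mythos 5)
Your argument follows the same route as the paper's own proof: subadditivity of $f_n^{(k)}$ from the functoriality of $\wedge^k$ applied to the cocycle identity \eqref{eq:10} together with submultiplicativity of the matrix norm, and then an application of Theorem \ref{thm:main theorem} to each sequence $\{f_n^{(k)}\}_{n\in\mathbb{N}}$. Part (i) and the upper linear bound are handled essentially as in the paper (the paper obtains $f_n^{(k)}\le knM$ directly, implicitly via $\|\wedge^k L\|\le\|L\|^k$; your norm-equivalence constant $C_k$ does the same job).

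Where you diverge is the lower linear bound, and your suspicion there is well founded: the paper simply asserts $|f_n^{(k)}(\omega)|\le knM$ from $|\log\|L(\omega)\||\le M$, but the lower half of that bound does not follow, since the norm of a product of matrices admits no lower bound in terms of the norms of the factors. Two caveats about your diagnosis, however. First, your parenthetical claim that $k=1$ is unproblematic is wrong: already for $k=1$ and $n=2$ one can have $\|L(T\omega)\|=\|L(\omega)\|=1$ while $L(T\omega)L(\omega)=0$ (take orthogonal rank-one projections), so $f_2^{(1)}(\omega)=-\infty$; the failure is not specific to $k\ge 2$, and it is a failure of the bound on $f_n^{(k)}$ for $n\ge 2$, not merely of the bound on $\log\|\wedge^k L\|$. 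Second, your repair --- requiring $L$ to take values in $GL(d,\mathbb{R})$ with $\log\|L^{-1}\|$ uniformly bounded --- does verify hypothesis (i) of Theorem \ref{thm:main theorem} and makes the rest of the argument go through, but it is an additional assumption absent from the statement, so as written you prove a strictly weaker theorem. Since the paper's proof passes over this point in silence, your attempt is the more careful of the two on this step, but you should either add the invertibility hypothesis to the statement or reformulate the conclusion to allow $\gamma^{(k)}$ to take the value $-\infty$; under the hypotheses as literally stated, neither your argument nor the paper's establishes \eqref{eq:jing} with $\gamma^{(k)}\in B(\Omega,\mathcal{F})$.
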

\begin{proof}	Fix $k\in\{1,2,\ldots,d\}$.
	Since $\|(\wedge^kL_1)  \cdot(\wedge^kL_2)\|\le \|\wedge^kL_1\|\cdot\|\wedge^kL_2\|$ for any $L_1,L_2\in \mathbb{R}^{d\times d}$ (see \cite[Lemma 3.2.6 (vi)]{Arnold199}) it follows from \eqref{eq:10} that $\{	f_n^{(k)}(\omega)\}$ is subadditive.
	Meanwhile, since $\log\|L(\o)\|\in B(\O,\mathcal{F})$, there exists $M>0$ such that 
	\[\left|\log{\|L(\o)\|}\right|\le M\text{ for any }\o\in\O,\]
	which shows that for each $n\in\mathbb{N}$,
	\begin{equation}\label{eq:12}
		|f^{(k)}_n(\o)|\le knM\text{ for any }n\in\mathbb{N}.
	\end{equation}
	
	The proof of (ii) is completed by applying Theorem \ref{thm:main theorem} on the sequence $\{	f_n^{(k)}(\omega)\}_{n\in\mathbb{N}}$ for each $k=1,2,\ldots,d$.
\end{proof}

\begin{theorem}[Multiplicative ergodic theorem for upper probabilities]\label{thm:MET}
	Under the same conditions in Theorem \ref{thm:FK theorem}, there exists $\tilde{\O}\in\mathcal{I}$ with $V(\tilde{\O}^c)=0$ such that for any $\o\in\tilde{\O}$,
	\begin{longlist}
		\item	The limit $\lim_{n\to\infty}(\Phi(n,\o)^*\Phi(n,\o))^{1/2n}:=\Psi(\o)$ exists.
		\item Let $e^{\l_{p(\o)}(\o)}<\ldots<e^{\l_{1}(\o)}$ be the different eigenvalues of $\Psi(\o)$, where $p(\o)$ is the number of  the different eigenvalues of $\Psi(\o)$, and let $U_{p(\o)}(\o),\ldots,U_{1}(\o)$ be the corresponding eigenspaces with multiplicities $d_i(\o):=\operatorname{dim}U_i(\o)$. Then
		\[p(T\o)=p(\o),\l_i(T\o)=\l_i(\o),\text{ and }d_i(T\o)=d_i(\o)\text{ for all }i\in\{1,2,\ldots,p(\o)\}.\]
		\item Put $V_{p(\o)+1}(\o):=\{0\}$ and for $i=1,2,\ldots,p(\o)$
		\[V_i(\o):=U_{p(\o)}(\o)\oplus\ldots\oplus U_i(\o)\]
		such that 
		$$
		V_{p(\o)}(\o) \subset \ldots \subset V_i(\o) \subset \ldots \subset V_1(\o)=\mathbb{R}^d.
		$$
		Then for each $x \in \mathbb{R}^d \backslash\{0\}$ the Lyapenov exponent
		$$
		\lambda(\o,x):=\lim _{n \rightarrow \infty} \frac{1}{n} \log \left\|\Phi(n,\o) x\right\|
		$$
		exists as a limit and
		$$
		\lambda(\o,x)=\lambda_i(\o) \Leftrightarrow x \in V_i (\o)\backslash V_{i+1}(\o)
		$$
		equivalently 
		\[V_i(\o)=\{x\in\mathbb{R}^d:\l(\o,x)\le\l_i(\o)\}.\]
		\item For all $x\in\mathbb{R}^d\setminus\{0\}$
		\[\l(T\o,L(\o)x)=\l(\o,x),\]
		whence 
		\[L(\o)V_i(\o)\subset V_i(T \o)\text{ for all }i\in\{1,\ldots,p(\o)\}.\]
		\item If $V$ is ergodic then the function $p$ is constant on $\tilde{\O}$, and the functions $\l_i$ and $d_i$ are constant on $\{\o\in\tilde{\O}:p(\o)\ge i\}$, $i=1,2,\ldots,d$.
	\end{longlist}
\end{theorem}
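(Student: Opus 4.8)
The plan is to run the same skeleton-and-gluing scheme that drives the Birkhoff and subadditive theorems for upper probabilities, now with the \emph{classical} multiplicative ergodic theorem of Oseledec (see \cite{Oseledec1968,Arnold199}) playing the role of the classical ingredient. All of (i)--(iv) are pointwise assertions about the orbit of a single $\o$: they refer only to the limits $\Psi(\o)=\lim_n(\Phi(n,\o)^*\Phi(n,\o))^{1/2n}$, to the spectral data $p(\o),\l_i(\o),d_i(\o),V_i(\o)$ read off from $\Psi(\o)$, and to the exponents $\l(\o,x)=\lim_n\frac1n\log\|\Phi(n,\o)x\|$. I therefore let $\tilde\O$ be the intrinsic set of all $\o\in\O$ at which (i)--(iv) simultaneously hold. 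This set refers to no measure, and the whole theorem reduces to showing $\tilde\O\in\mathcal{I}$ and $V(\tilde\O^c)=0$.

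First I would carry out the reduction to invariant probabilities. For each $P\in\operatorname{core}(V)$ let $\hat P\in\mathcal M(T)\cap\operatorname{core}(V)$ be its invariant skeleton, furnished by Lemma \ref{lem:core of invariant lower probabilities}(i). Since $\log\|L(\cdot)\|\in B(\O,\mathcal{F})$ is bounded it lies in $L^1(\O,\mathcal{F},\hat P)$, so the classical Oseledec theorem applies to the measure preserving system $(\O,\mathcal{F},\hat P,T)$ and produces a $T$-invariant set of full $\hat P$-measure on which (i)--(iv) hold. Every point of that set belongs to $\tilde\O$, whence $\hat P(\tilde\O^c)=0$. As this holds for every $P\in\operatorname{core}(V)$ and $\tilde\O^c\in\mathcal{F}$, Lemma \ref{lem:core of invariant lower probabilities}(ii) gives at once $V(\tilde\O^c)=0$, which is the conclusion for (i)--(iv). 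Here the Furstenberg--Kesten theorem, Theorem \ref{thm:FK theorem}, enters to guarantee that the $\g^{(k)}$ are finite and bounded, so that the whole Lyapunov spectrum is finite and each $\Psi(\o)$ is positive definite, matching the form of (ii).

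For the ergodic statement (v) I would observe that the function $p$, and on the respective sets $\{\o\in\tilde\O:p(\o)\ge i\}$ the functions $\l_i$ and $d_i$, are $T$-invariant and $\mathcal{F}$-measurable, their invariance being precisely the content of (ii) and (iv). Since an upper probability is continuous and subadditive, if $V$ is ergodic then Lemma \ref{lem:char. for ergodic} forces every $T$-invariant measurable function to be constant $V$-a.s.; applying this to $p,\l_i,d_i$ gives (v). Equivalently one may pass to the unique ergodic skeleton $Q$ of Theorem \ref{lem:ergodic of core}, for which $V(A)=0\iff Q(A)=0$ by Corollary \ref{cor:Q<=>V}, and read off the constancy directly from the classical ergodic multiplicative ergodic theorem.

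The real work, and the main obstacle, lies in the two structural properties of $\tilde\O$ that the gluing step silently used: its measurability and its membership in $\mathcal{I}$. For measurability I would note that $\{\o:\Psi(\o)\ \text{exists}\}$ is measurable (the entries of $(\Phi(n,\o)^*\Phi(n,\o))^{1/2n}$ are measurable in $\o$ and convergence is tested along rationals), and that on this set the eigenvalues $e^{\l_i}$, the multiplicities $d_i$ and the subspaces $V_i$ of the measurable symmetric matrix field $\Psi$ depend measurably on $\o$ by standard measurable-selection arguments, so that the filtration condition (iii) becomes a countable family of measurable conditions once $x$ ranges over a countable dense set. For invariance I would use the cocycle identity $\Phi(n,\o)=\Phi(n-1,T\o)L(\o)$: because the boundedness of $\g^{(d)}=\lim_n\frac1n\log|\det\Phi(n,\o)|$ forces $L(\o)$ to be invertible with $\log\|L(\o)^{-1}\|$ bounded, the limits defining $\Psi$, $\l(\o,\cdot)$ and the filtration at $\o$ and at $T\o$ determine one another, so $\o\in\tilde\O\iff T\o\in\tilde\O$, that is $T^{-1}\tilde\O=\tilde\O$ and $\tilde\O\in\mathcal{I}$. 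Carrying out these measurability and invariance verifications carefully, while keeping track of the sets $\{p\ge i\}$ on which $\l_i$ and $d_i$ are only partially defined, is where essentially all the effort goes.
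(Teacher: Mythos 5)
Your overall scheme is sound for the $V$-a.s.\ part of the theorem, but it takes a genuinely different route from the paper's. You apply the classical (measure-theoretic) Oseledec theorem to each invariant skeleton $\hat P$, $P\in\operatorname{core}(V)$, and glue via Lemma \ref{lem:core of invariant lower probabilities}(ii); the paper instead first produces, via its Furstenberg--Kesten theorem for upper probabilities (Theorem \ref{thm:FK theorem}, itself resting on Theorem \ref{thm:main theorem}), a single set $\O_1$ with $V(\O_1^c)=0$ on which all the limits $\lim_n\frac1n\log\|\wedge^k\Phi(n,\cdot)\|$ exist, and then invokes Oseledec's \emph{deterministic} multiplicative ergodic theorem (Arnold, Proposition 3.4.2) pointwise on $\O_1$ to obtain (i)--(iii), proving (iv) by the direct cocycle computation $\Phi(n,T\o)L(\o)=\Phi(n+1,\o)$. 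The treatments of (v) coincide: Lemma \ref{lem:char. for ergodic} applied to the $T$-invariant functions $p$, $\l_i$, $d_i$. Your route spares both the deterministic MET and Theorem \ref{thm:FK theorem}; its price is that the measurability and the exact $T$-invariance of your intrinsic exceptional set must be verified by hand, whereas the paper inherits an essentially $\mathcal{I}$-measurable good set from the subadditive ergodic theorem and reads the $\o\leftrightarrow T\o$ relations off the deterministic MET.

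The one step that does not go through as written is your invariance argument. The hypothesis only makes $\log\|L(\o)\|$ bounded; this bounds $\|L(\o)\|$ above and below but does not make $L(\o)$ invertible (a rank-deficient matrix of operator norm $1$ has $\log\|L\|=0$), and it certainly does not bound $\log\|L(\o)^{-1}\|$. Likewise, finiteness of $\g^{(d)}(\o)=\lim_n\frac1n\log|\det\Phi(n,\o)|$ at a given $\o$ forces $\det L(T^j\o)\neq0$ along the forward orbit of that $\o$, but yields no uniform control of $\|L(\o)^{-1}\|$ and says nothing at points where $\g^{(d)}=-\infty$, which the hypotheses do not exclude. Consequently the asserted equivalence $\o\in\tilde\O\iff T\o\in\tilde\O$ for the maximal intrinsic set is not established, and with it neither $\tilde\O\in\mathcal{I}$ nor, strictly speaking, the meaningfulness of (ii), which requires $\Psi(T\o)$ to be defined whenever $\o\in\tilde\O$. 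The repair, and in effect what the paper does, is not to take the maximal intrinsic set but the $T$-invariant $V$-full set supplied by the subadditive/deterministic MET machinery, on which the relations between the data at $\o$ and at $T\o$ are part of the conclusion rather than something to be recovered by inverting $L$.
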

\begin{proof}
	Under the assumption that $\log\|L(\o)\|\in B(\O,\mathcal{F})$, it is easy to see that for any $\o\in\O$, $\limsup_{n\to\infty}\frac{1}{n}\log\|L(T^n\o)\|\le0$. Moreover, $\Phi_n$ defined as in \eqref{eq:star} also satisfies \eqref{eq:jing} on a subset $\O_1$ of $\O$ with $V(\O_1^c)=0$. Thus,  (i), (ii) and (iii) are true on $\O_1$ from Oseledec's deterministic multiplicative ergodic theorem (see \cite[Proposition 3.4.2]{Arnold199}).
	
	Now we check (iv) holds for any $\o\in\O_1$. Note that $\Phi(n,T\o)L(\o)=\Phi(n+1,\o)$ for any $n\in\mathbb{N}$ and $\o\in\O$. Thus, for any $\o\in\O_1$, by (iii), one has 
	\[\l(T\o,L(\o)x)=\lim_{n\to\infty}\frac{1}{n}\log\|\Phi(n,T\o)L(\o)x\|=\lim_{n\to\infty}\frac{1}{n}\log\|\Phi(n+1,\o)x\|=\l(\o,x).\]
	Using (iii) again, for any $x\in V_i(\o)$, $\l(\o,x)\le \l_i(\o)$, which shows that 
	\[\l(T\o,L(\o)x)=\l(\o,x)\le \l_i(\o)\overset{(ii)}{=}\l_i(T\o).\]
	Therefore, $L(\o)x\in V_i(T\o)$, which by the arbitrariness of $x\in V_i(\o)$, implies that $L(\o)V_i(\o)\subset V_i(T\o)$.
	
	Since $p$ is $T$-invariant on $\O_1$ and $V(\O_1^c)=0$, it follows from Lemma \ref{lem:char. for ergodic} that $p$ is constant on some measurable set $\O_2\subset\O_1$ with $V(\O_2^c)=0$. Using Lemma \ref{lem:char. for ergodic} again, there exists $\widetilde{\O}\subset \O_2$ with $V(\widetilde{\O}^c)=0$ such that the functions $\l_i$ and $d_i$ are constant on $\{\o\in\tilde{\O}:p(\o)\ge i\}$, $i=1,2,\ldots,d$. The proof is completed.
\end{proof}
As a direct corollary of Theorems \ref{thm:sub for noninvariant} and \ref{thm:MET}, one has that multiplicative ergodic theorem holds for a class of non-invariant probabilities. 
\begin{theorem}\label{thm:met for non}
	Let $(\O,\mathcal{F},P)$ be a probability space, and $T:\O\to \O$ be an invertible measurable transformation. Suppose that the limit $\lim_{n\to\infty}\frac{1}{n}\sum_{i=0}^{n-1}P\circ T^{i}$ exists, denoted by $Q$.  Then (i)-(v) in Theorem \ref{thm:MET} holds, $P$-a.s.
\end{theorem}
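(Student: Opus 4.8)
The plan is to reduce the non-invariant statement to the upper-probability multiplicative ergodic theorem already established in Theorem \ref{thm:MET}. First I would manufacture, from the data $(\O,\mathcal{F},P,T)$ and the assumed limit $Q=\lim_{n\to\infty}\frac1n\sum_{i=0}^{n-1}P\circ T^i$, an ambient invariant upper probability. Concretely, set $\l_{m,n}=\frac{1}{m+n+1}\sum_{i=-m}^nP\circ T^i$ and define $V$ by \eqref{eq:8=}. By Theorem \ref{thm:app1} we have $Q\in\mathcal{M}(T)$ and $P|_{\mathcal{I}}=Q|_{\mathcal{I}}$, and by the construction carried out in Remark \ref{rem4.5}(iv) — where the absolute continuity of each $P\circ T^i$ with respect to $Q$ together with Vitali--Hahn--Saks (Lemma \ref{lem:VHStheorem}) forces the continuity of $V$ — the set function $V$ is a $T$-invariant upper probability with $P\in\operatorname{core}(V)$. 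Thus $(\O,\mathcal{F},V,T)$ is a capacity preserving system carrying the given cocycle $\Phi$ of \eqref{eq:star}, whose generator $L$ satisfies $\log\|L(\o)\|\in B(\O,\mathcal{F})$, which are exactly the hypotheses of Theorems \ref{thm:FK theorem} and \ref{thm:MET}.

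Next I would invoke Theorem \ref{thm:MET} for $(\O,\mathcal{F},V,T)$ and $\Phi$: it yields an invariant set $\tilde\O\in\mathcal{I}$ with $V(\tilde\O^c)=0$ on which conclusions (i)--(iv) hold pointwise. The transfer to $P$ is then immediate, since $P\in\operatorname{core}(V)$ gives $P(\tilde\O^c)\le V(\tilde\O^c)=0$, so (i)--(iv) hold for $P$-a.s. $\o\in\O$. For (v) I would use that $V$ is ergodic whenever $Q$ is (the final assertion of Remark \ref{rem4.5}(iv)); ergodicity of $V$ is precisely the hypothesis under which the fifth conclusion of Theorem \ref{thm:MET} — constancy of $p$, $\l_i$, $d_i$ on the relevant invariant sets, obtained through Lemma \ref{lem:char. for ergodic} — is valid, and these constancy statements then hold $V$-a.s., hence $P$-a.s.

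Because Theorem \ref{thm:met for non} does not assume $(\O,\mathcal{F})$ standard, the genuine obstacle is that the proof of Theorem \ref{thm:FK theorem} rests on Theorem \ref{thm:main theorem}, whose subadditive ergodic theorem requires a standard measurable space (unless $V$ is ergodic). The clean way around this, which I expect to be the crux, is to bypass the upper probability $V$ at the Furstenberg--Kesten step and apply the non-invariant subadditive ergodic theorem of Theorem \ref{thm:sub for noninvariant} directly to the families $f_n^{(k)}(\o)=\log\|\wedge^k\Phi(n,\o)\|$. These are subadditive by the cocycle identity \eqref{eq:10} and the submultiplicativity of the norm on $\wedge^k\mathbb{R}^d$, and they satisfy the two-sided bound $|f_n^{(k)}(\o)|\le knM$ with $M=\sup_\o|\log\|L(\o)\||<\infty$; hence Theorem \ref{thm:sub for noninvariant} produces, when $Q$ is ergodic, constants $\g^{(k)}$ with $\frac1n f_n^{(k)}\to\g^{(k)}$ for $P$-a.s.\ $\o$. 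On the resulting $P$-full invariant set I would then run Oseledec's deterministic multiplicative ergodic theorem (Proposition 3.4.2 in \cite{Arnold199}), exactly as in the proof of Theorem \ref{thm:MET}, to read off (i)--(iv), with (v) automatic since each $\g^{(k)}$ is already constant. The only remaining points needing care are the bookkeeping between the $V$-a.s.\ and $P$-a.s.\ statements and the verification that the deterministic input $\limsup_{n}\frac1n\log\|L(T^n\o)\|\le 0$ holds everywhere, both of which are routine given the uniform bound on $\log\|L\|$.
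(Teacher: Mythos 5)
Your proposal matches the paper's own argument: the paper obtains this theorem precisely as a direct corollary of Theorems \ref{thm:sub for noninvariant} and \ref{thm:MET}, i.e.\ the reduction you describe — build the invariant upper probability $V$ of \eqref{eq:8=} with $P\in\operatorname{core}(V)$ via Remark \ref{rem4.5}(iv), invoke Theorem \ref{thm:MET}, and transfer the $V$-null exceptional set to $P$ through $P\le V$. Your explicit appeal to Theorem \ref{thm:sub for noninvariant} to supply the Furstenberg--Kesten limits without the standardness hypothesis is exactly the role that theorem plays in the paper's citation, so the two approaches coincide (and you are, if anything, more explicit about that point than the text is).
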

%%%%%%%%%%%%%%%%%%%%%%%%%%%%%%%%%%%%%%%%%%%%%%
%% Support information, if any,             %%
%% should be provided in the                %%
%% Acknowledgements section.                %%
%%%%%%%%%%%%%%%%%%%%%%%%%%%%%%%%%%%%%%%%%%%%%%
\begin{acks}[Acknowledgments]
	The authors would also like to thank Jie Li,
Baoyou Qu and Maoru Tan for their helpful discussions 
concerning this paper. 
\end{acks}

%%%%%%%%%%%%%%%%%%%%%%%%%%%%%%%%%%%%%%%%%%%%%%
%% Funding information, if any,             %%
%% should be provided in the                %%
%% funding section.                         %%
%%%%%%%%%%%%%%%%%%%%%%%%%%%%%%%%%%%%%%%%%%%%%%
\begin{funding}
The second author was partially supported by NNSF of China (12090012, 12090010, 12031019).

The third author was supported by CSC of China (No. 202206340035), and partially supported by NNSF of China (12090012, 12090010).

The fourth author was supported  by the Royal Society Newton Fund (ref. NIF \textbackslash R1\textbackslash 221003) and the EPSRC  
(ref. EP/S005293/2).
\end{funding}

\end{document}